\setlist[enumerate,1]{label={\roman*.}}
\theoremstyle{plain}
\newtheorem{theorem}{Theorem}[subsection]
\newtheorem{lemma}[theorem]{Lemma}
\newtheorem{proposition}[theorem]{Proposition}
\newtheorem{corollary}[theorem]{Corollary}
\newtheorem{theoremalpha}{Theorem}
\theoremstyle{definition}
\newtheorem{conjecture}[theorem]{Conjecture}
\newtheorem{definition}[theorem]{Definition}
\newtheorem{example}[theorem]{Example}
\newtheorem{question}[theorem]{Question}
\newtheorem{remark}[theorem]{Remark}
\newcommand{\Einfty}{{\mathbb{E}_\infty}}
\newcommand{\Fp}{\mathbb{F}_p}
\newcommand{\Fpbar}{\bar{\mathbb{F}}_p}
\newcommand{\g}{\mathfrak{g}}
\newcommand{\OO}{\mathcal{O}}
\newcommand{\Sp}{\mathrm{Sp}}
\newcommand{\V}{\mathbf{V}}
\newcommand{\Z}{\mathbb{Z}}
\newcommand{\cX}{\mathcal{X}}
\newcommand{\cY}{\mathcal{Y}}
\newcommand{\filS}{\sone_{\mathrm{fil}}}
\newcommand{\Gacheck}{\Ghat_a^\vee}
\newcommand{\Gahat}{\Ghat_a}
\newcommand{\Ghat}{\hat{\mathbb{G}}}
\newcommand{\Glhat}{\Ghat_\lambda}
\newcommand{\sone}{\mathbf{S}^1}
\newcommand{\Lfil}{\mathcal{L}_{\mathrm{fil}}}
\newcommand{\Ga}{\mathbb{G}_a}
\newcommand{\Gm}{\mathbb{G}_m}
\DeclareMathOperator{\ad}{ad}
\DeclareMathOperator{\Ass}{Ass}
\DeclareMathOperator{\Aut}{Aut}
\DeclareMathOperator{\Bock}{Bock}
\DeclareMathOperator{\ch}{char}
\DeclareMathOperator{\cofib}{cofib}
\DeclareMathOperator{\Der}{Der}
\DeclareMathOperator{\End}{End}
\DeclareMathOperator{\Ext}{Ext}
\DeclareMathOperator{\fib}{fiber}
\DeclareMathOperator{\gr}{gr}
\DeclareMathOperator{\Hom}{Hom}
\DeclareMathOperator{\Homs}{\underline{Hom}}
\DeclareMathOperator{\im}{im}
\DeclareMathOperator{\Isom}{Isom}
\DeclareMathOperator{\Lie}{Lie}
\DeclareMathOperator{\Maps}{Maps}
\DeclareMathOperator{\Mapsu}{\underline{\Maps}}
\DeclareMathOperator{\Mod}{Mod}%category of modules
\DeclareMathOperator{\QCoh}{QCoh}
\DeclareMathOperator{\Spec}{Spec}
\DeclareMathOperator{\Specu}{\underline{Spec}}
\DeclareMathOperator{\Spect}{\widetilde{Spec}}
\DeclareMathOperator{\Spf}{Spf}
\DeclareMathOperator{\Sym}{Sym}
\DeclareMathOperator{\Vect}{Vect}
\DeclareMathOperator{\colimnolimits}{colim}
\newcommand{\colim}[1]{\underset{#1}{\colimnolimits}}
\newcommand{\id}{\mathrm{id}}
\newcommand{\red}[1]{{}^{\mathrm{red}}#1}
\newcommand{\HH}{\mathrm{HH}}
\newcommand{\HHs}{\underline{\HH}}
\newcommand{\HKR}{\mathrm{HKR}}
\newcommand{\fhkr}{F_{\HKR}}
\newcommand{\HHfil}{\HH_{\mathrm{fil}}}
\newcommand{\dR}{\mathrm{dR}}
\newcommand{\pop}{ {[p]} }
\DeclareMathOperator{\aff}{{aff}}
\newcommand{\lotimes}[1]{\underset{#1}{\overset{L}{\otimes}}}
\newcommand{\Xtilde}{\tilde{X}}
\newcommand{\Spc}{\mathrm{Spc}}   %spaces
\newcommand{\CAlg}{\mathrm{CAlg}}   %animated rings
\newcommand{\DAlg}{\mathrm{DAlg}}  %derived rings
\newcommand{\modc}{\text{-}\mathrm{mod}} %modules
\DeclareMathOperator{\PreStk}{PreStk}   %prestacks
\newcommand{\Excast}{\mathrm{Exc}_\ast}  %excisive reduced functors in the sense of Lurie
\newcommand{\can}{\mathsf{can}} %canonical action of LX 
\DeclareMathOperator{\FuntcA}{Fun^{\otimes, \leq 0}_A}
\DeclareMathOperator{\FunsthA}{\underline{Fun}^{\Theta}_A}
\DeclareMathOperator{\FunsthcA}{\underline{Fun}^{\Theta,\leq 0}_A}
\DeclareMathOperator{\FunthcA}{Fun^{\Theta, \leq 0}_A}
\newcommand{\Cat}{\mathsf{Cat}}
\newcommand{\Fun}{\mathsf{Fun}}
\newcommand{\thetaCat}{\Cat^\Theta}
\newcommand{\thetaAlg}{\Theta\mathrm{Alg}}
\newcommand{\DD}{\mathfrak{D}}
\newcommand{\dd}{\mathfrak{d}}
\newcommand{\dact}{\mathrm{d}act}
\newcommand{\series}[1]{[\![#1]\!]}
\title
[
  On the differentials of the HKR spectral sequence
]
{
  On the differentials of the Hochschild-Kostant-Rosenberg spectral sequence
}
\author{Joshua Mundinger}
\address{University of Wisconsin-Madison\\ Madison\\ WI}
\email{jmundinger@wisc.edu}
\date{March 11, 2026}
\subjclass[2020]{Primary: 
13D03, %(co)homology of commutative rings and algebras
Secondary:
14G17, %positive characteristic ground fields in algebraic geometry
16E40 %(co)homology of associative rings and algebras
}
\begin{document}

\begin{abstract}
  The Hochschild-Kostant-Rosenberg theorem implies the existence of a spectral sequence computing the Hochschild homology of a variety in terms of the cohomology of differential forms.
  When the base field \(k\) has characteristic \(p>0\),
  we show that the differentials in this spectral sequence are zero before page \(p\); when the variety admits a lift to \(W_2(k)\), we give a formula for the differential on page \(p\). The formula involves the Bockstein associated to the lift and a \(p\)th power operation for the Atiyah class.
  Along the way, we also discuss rudiments of Tannakian reconstruction for derived stacks using the $\Theta$-categories of Nuiten and Toën.
\end{abstract}

\maketitle
\vspace{-2em}
\setcounter{tocdepth}{1} %part,chapters,sections
\tableofcontents
%\vspace{-2em}

\section{Introduction}

The Hochschild-Kostant-Rosenberg theorem describes the Hochschild homology of a ring through differential forms. For an algebraic variety, a spectral sequence controls whether this description extends to a global decomposition of Hochschild homology; this spectral sequence is nondegenerate only in positive characteristic. The goal of this work is to give formulas for differentials in this spectral sequence.

To fix ideas, let \(k\) be a field and \(R\) be a commutative \(k\)-algebra. The Hochschild homology of \(R\) is by definition
\[ \HH(R/k) = R \lotimes{R\underset{k}{\otimes} R} R.\]
The Hochschild homology of an algebraic variety over \(k\) is defined similarly.
The Hochschild-Kostant-Rosenberg theorem, going back to \cite{HKR62}, states that if \(R\) is a smooth \(k\)-algebra, then there are natural isomorphisms \(\HH_i(R/k) \cong \Omega^i_{R/k}\) for all \(i\).
This implies that for a smooth algebraic variety \(X/k\), there is a spectral sequence
\begin{align} \label{equation: hkr spectral sequence}
  E_2^{s,t} = H^s(X,\Omega^{-t}_{X/k}) \implies \HH_{-s-t}(X/k),
\end{align}
called the \emph{Hochschild-Kostant-Rosenberg spectral sequence} (HKR spectral sequence). If \(\dim X < \ch k\), then the HKR spectral sequence canonically degenerates and yields a direct sum decomposition \(\HH_i(X/k) \cong \oplus_s H^s(X, \Omega^{i+s}_{X/k})\), known as the Hochschild-Kostant-Rosenberg decomposition \cite{Swa96,Yek02}. One may wonder whether or not such a decomposition holds in all characteristics; this was unknown until 2019, when Antieau, Bhatt, and Mathew produced for each prime \(p\) a \(2p\)-dimensional smooth projective variety \(X/\Fpbar\) such that the HKR spectral sequence has a nonzero differential on page \(p\) \cite{ABM21}. For this \(X\), it follows that \(\sum_i \dim_{\Fpbar} \HH_i(X/\Fpbar) < \sum_{s,t} \dim_{\Fpbar} H^s(X,\Omega^{-t}_{X/\Fpbar})\), and thus a functorial Hochschild-Kostant-Rosenberg decomposition is impossible.

The goal of this paper is to study the failure of the Hochschild-Kostant-Rosenberg decomposition. To do so, we analyze the differentials of the HKR spectral sequence. First, one can ask which differentials \(d_r\) are identically zero. If \(r\) is least such that \(d_r\) is not identically zero, then \(E_2 \cong E_r\) as groups, and one can ask for a formula for \(d_r\) in terms of the groups on the \(E_2\)-page.
%This paper answers both of these questions:

\begin{theoremalpha}\label{maintheorem: differential}
  Let \(k\) be a perfect field of characteristic \(p > 0\) and \(X/k\) be a smooth variety.
  Consider the Hochschild-Kostant-Rosenberg spectral sequence 
  \[ E_2^{s,t} = H^s(X, \Omega^{-t}_{X/k}) \implies \HH_{-s-t}(X/k).\]
  \begin{enumerate}
    \item The differential \(d_r\) is zero if \(r < p\).
    \item There exists a natural map \(V: \Omega^1_{X/k}[1] \to \Omega^p_{X/k}[p]\) in the derived category of quasi-coherent sheaves on \(X\) such that if \(X\) admits a lift \(\Xtilde / W_2(k)\), then the differential \(d_p\) is induced by
      \[
        [V,\Bock_{\Xtilde}]: \Omega^1_{X/k} \to \Omega^p_{X/k}[p],
      \]
      the commutator of \(V\) with the Bockstein \(\Bock_{\Xtilde}\) associated to \(\Xtilde\).
  \end{enumerate}
\end{theoremalpha}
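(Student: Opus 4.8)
The plan is to realise the HKR spectral sequence as the hypercohomology spectral sequence of a canonical multiplicatively filtered complex on $X$ and then to compute its $k$-invariants. Write $\HHs = \HHs(X/k)$ for sheafified Hochschild homology (functions on the derived loop space $X\times_{X\times_k X}X$), with its HKR filtration $\fhkr$ — a multiplicative filtration with $\grhkr^n\HHs\simeq\Omega^n_{X/k}[n]$; the HKR spectral sequence is that of $R\Gamma(X,\fhkr\,\HHs)$. Because $\fhkr$ is multiplicative, every differential $d_r$ is a derivation of the bigraded algebra with $E_2$-page $R\Gamma(X,\bigoplus_n\Omega^n_{X/k}[n])$; and, once $d_s=0$ for $s<r$, the differential $d_r$ is induced by a single $k$-invariant, a derivation of the sheaf of graded algebras $\bigoplus_n\Omega^n_{X/k}[n]$, hence determined by its restriction to the degree-one summand, i.e.\ by one natural class in $\Ext^r_X(\Omega^1_{X/k},\Omega^r_{X/k})$. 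Everything reduces to these weight-one classes.

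For part (i) I would show that the weight-$r$ class in $\Ext^r_X(\Omega^1_{X/k},\Omega^r_{X/k})$ vanishes for $r<p$ by a torsion argument. It is pulled back from a universal class; since the weight-$\le r$ part of $\fhkr\,\HHs$ is assembled — via the filtered circle $\filS$, which degenerates $\sone$ to a stack built from $\Gacheck$ — from the derived functors $\mathrm{LSym}^{\le r}(\Omega^1_{X/k}[1])$, whose behaviour is governed by the homology of $\Sigma_{\le r}$, and $H_\ast(\Sigma_n;\Z)$ is $p$-torsion-free for $n<p$, this universal class lies in a $p$-torsion-free group. As it vanishes rationally, by the classical Hochschild-Kostant-Rosenberg degeneration, it vanishes over $k$. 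By the derivation property $d_r=0$ for all $r<p$, and in particular $E_2\cong E_p$ as bigraded algebras.

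For part (ii) the first step is to produce the natural map $V\colon\Omega^1_{X/k}[1]\to\Omega^p_{X/k}[p]$ — the $p$th power operation applied to the Atiyah class $\mathrm{at}_X\in\Ext^1_X(\Omega^1_{X/k},\Omega^1_{X/k}\otimes\Omega^1_{X/k})$. Conceptually it is the restricted $[p]$-operation carried in characteristic $p$ by the shifted tangent complex $\Spec_X\Sym_{\OO_X}(\Omega^1_{X/k}[1])$, regarded as a Lie algebra object with bracket the Atiyah class — the infinitesimal trace of the Verschiebung on $\Gacheck$ — and one must check it is defined over $k$ (the a priori denominator of the naive iterated Atiyah class being absorbed into an integral divided refinement) and natural in $X$; since $k$ is perfect the Frobenius twist is trivialised, leaving only a Frobenius-semilinearity to carry along. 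The second, decisive step is to compute the $p$th $k$-invariant when $X$ lifts: comparing $\fhkr\,\HHs(X/k)$ with the mod-$p$ reduction of the analogous filtered complex attached to $\Xtilde/W_2(k)$, one should be able to pry the obstruction to splitting the $p$th stage apart into the operation $V$ and the Bockstein $\Bock_{\Xtilde}$ arising from $0\to\OO_X\to\OO_{\Xtilde}/p^2\to\OO_X\to0$; since $V$ and $\Bock_{\Xtilde}$ both act as derivations of $\bigoplus_n\Omega^n_{X/k}[n]$, of bidegrees whose graded commutator has exactly the bidegree of $d_p$ (carrying $\Omega^1_{X/k}$ to $\Omega^p_{X/k}[p]$), the $p$th $k$-invariant equals $[V,\Bock_{\Xtilde}]$, whence $d_p$ is induced by $[V,\Bock_{\Xtilde}]\colon\Omega^1_{X/k}\to\Omega^p_{X/k}[p]$.

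The crux is this last identification. Matching the purely deformation-theoretic Bockstein of $\Xtilde$ with the abstract $p$th-stage extension class of $\fhkr\,\HHs(X/k)$, while correctly tracking the Frobenius- or Cartier-semilinear operator forced onto the scene near the diagonal — reconciling in one computation the restricted Lie structure, the passage from $\sone$ to $\Gacheck$, and the entrance of Frobenius — is where the real difficulty lies; part (i) and the construction of $V$ are comparatively formal once the filtered-circle framework is in place.
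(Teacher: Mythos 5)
Your framework is the right one — the filtered circle, the reduction to weight-one extension classes, the Atiyah-class/restricted-Lie picture of $V$ — and part~(i) contains a plausible alternative sketch, but part~(ii) does not actually prove the theorem: you lay out the shape of the answer and then explicitly concede that the decisive identification (``matching the purely deformation-theoretic Bockstein of $\Xtilde$ with the abstract $p$th-stage extension class'') is left undone. That matching is the entire content of the result, and the paper's proof of it relies on a specific mechanism you never invoke: the Sekiguchi--Suwa short exact sequence $0 \to \Glhat^\vee \to W \xrightarrow{F - [\lambda^{p-1}]} W \to 0$. This presentation lets one compute the deformation class of $\Glhat^\vee$ to order $\lambda^p$ as an explicit crossed homomorphism (Theorem~\ref{theorem: first formula for deformation class}, giving $-\lambda^{p-1}\tau\partial_{S_0}$), then recognize it as a Bockstein via the congruence $F^\ast dT_i \equiv p\,dT_{i+1} \pmod{p^2}$ on Witt vectors (Lemma~\ref{lemma: bockstein on co-Lie}), so that $[\Glhat^\vee] = -\lambda^{p-1}\Bock(\tau\partial_{T_1})$; finally the Leibniz rule for Bocksteins under a $W_2(k)$-lift (Lemma~\ref{lemma: leibniz for Bockstein}) converts the universal Bockstein into $[V,\Bock_{\tilde X}]$ after transport through the action morphism. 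Without some such concrete presentation of $\Glhat^\vee$, there is no way to ``pry apart'' the $p$th $k$-invariant into $V$ and $\Bock_{\tilde X}$ as you propose — the separation is not formal, and the bidegree count you cite shows only that $[V,\Bock_{\Xtilde}]$ has the \emph{right degree} to be $d_p$, not that it \emph{is} $d_p$.

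On part~(i): the paper's proof is shorter and sharper than your torsion argument — it exhibits the splitting to order $p-2$ directly, via the truncated exponential $E_\lambda(u) = \sum_{n=1}^{p-1}\lambda^{n-1}u^n/n!$ trivializing $\Glhat$ modulo $\lambda^{p-1}$ over $\Z_{(p)}$ (Theorem~\ref{theorem: initial-split}). This buys more than vanishing of $d_r$ for $r<p$: it fixes a \emph{specific} natural splitting (hence a specific identification $E_2 \simeq E_p$), which is needed to give $d_p$ a well-defined formula in part~(ii). Your argument, relying on $p$-torsion-freeness of $H_\ast(\Sigma_n;\Z)$ for $n<p$ and rational vanishing, would at best show the $k$-invariants vanish but would not single out a splitting, so it feeds less cleanly into part~(ii). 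You would also need to be precise about the ``universal class'' — it lives in a deformation-theoretic $\Ext$-group over $B\Gacheck$ rather than in something obviously controlled by $H_\ast(\Sigma_{\le r})$, and the asserted torsion-freeness of that group needs proof, not just motivation.

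Finally, one small mismatch: you place $V$ on the Atiyah side before establishing part~(ii), but in the paper $V$ is first \emph{defined} via the vector field $\tau\partial_{T_1}$ on $B\Gacheck$ (Definition~\ref{definition: V}) — a definition that makes the Bockstein computation tractable — and only \emph{afterwards} identified with the Atiyah $p$th-power operation (Theorem~\ref{maintheorem: restricted}). Taking the Atiyah-class characterization as the definition, as you do, leaves you without an anchor for the deformation-theoretic computation.
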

Note that a map \(\Omega^1_{X/k} \to \Omega^p_{X/k}[p]\) induces a derivation \(\Omega^{\bullet}_{X/k} \to \Omega^{\bullet+p-1}_{X/k}[p]\) of \(\OO_X\)-algebras, and in this way \([V,\Bock_{\Xtilde}]\) induces the differential on page \(p\).

The Bockstein in Theorem \ref{maintheorem: differential} shows that the differentials in the HKR spectral sequence are closely related to torsion in the Hodge cohomology of a lift. Antieau and Vezzosi previously observed that if \(X/k\) lifts all the way to \(W(k)\) with \(p\)-torsion-free Hodge cohomology, then the HKR spectral sequence degenerates \cite[Remark 1.6]{AV20}. Theorem \ref{maintheorem: differential} sharpens their observation: the differential \(d_p\) can only be nonzero when \(k\) appears as a \(W(k)\)-summand of the Hodge cohomology of the lift.
Theorem \ref{maintheorem: differential} also recovers Antieau-Bhatt-Mathew's example of a nondegenerate HKR spectral sequence (Example \ref{example: mup}).

While the appearance of a Bockstein in Theorem \ref{maintheorem: differential} is already interesting, applying Theorem \ref{maintheorem: differential} requires computing \(V\). The map \(V\) is closely tied to the Atiyah class \(at: \Omega^1_{X/k}[1] \to \Omega^1_{X/k}[1] \otimes \Omega^1_{X/k}[1]\). Kapranov observed that the Atiyah class makes the shifted tangent bundle \(T_{X/k}[-1]\) of \(X\) into a Lie algebra in the derived category of \(X\), and that every \(E \in \QCoh(X)\) is a module over \(T_{X/k}[-1]\) via the Atiyah class \(at_E: E \to E \otimes \Omega^1_{X/k}[1]\) \cite{Kap99}. Markarian later explained that in characteristic 0, the Hochschild cohomology of \(X\) is the enveloping algebra of \(T_{X/k}[-1]\), and Markarian used this isomorphism to explain the relationship between the Duflo isomorphism and the Grothendieck-Riemann-Roch theorem \cite{Mar09}. For our purposes, it is better to work dually with the Lie coalgebra \(\Omega^1_{X/k}[1]\) with the Atiyah cobracket.

\begin{theoremalpha}\label{maintheorem: restricted}
  Let \(k\) be an algebraically closed field of characteristic \(p > 0\) and \(X/k\) be a smooth variety.
  The map \(V: \Omega^1_{X/k}[1] \to \Omega^p_{X/k}[p]\) of Theorem \ref{maintheorem: differential}
  %makes \(\Omega^1_{X/k}[1]\) with the Atiyah cobracket into a restricted Lie coalgebra in the derived category of quasi-coherent sheaves on \(X\).
  is a \(p\)th power map for the Atiyah class on \(\QCoh(X)\), in the sense that for \(E \in \QCoh(X)\),
  \[ (1\otimes V) \circ at_E \sim at_E^p: E \to E\otimes \Omega^p_{X/k}[p].\]
\end{theoremalpha}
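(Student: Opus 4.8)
The plan is to prove the identity of maps $E\to E\otimes\Omega^p_{X/k}[p]$ by recognizing both sides as natural transformations of colimit-preserving endofunctors of $\QCoh(X)$ and comparing the associated Fourier--Mukai kernels on $X\times X$. Write $\HHs := \Delta^*\Delta_*\OO_X$ for sheafy Hochschild homology, with its HKR filtration $F_\bullet$, so $F_0 = \OO_X$ and $\gr_i = \Omega^i_{X/k}[i]$. The functor carrying a kernel $K\in\QCoh(X\times X)$ to $E\mapsto p_{1*}(K\otimes p_2^*E)$ is fully faithful, and $\id_{\QCoh(X)}$, $(-)\otimes\Omega^p_{X/k}[p]$ are its transforms along $\Delta_*\OO_X$, $\Delta_*\Omega^p_{X/k}[p]$; hence natural transformations between them are computed by
\[ \Hom_{\QCoh(X\times X)}\bigl(\Delta_*\OO_X,\,\Delta_*\Omega^p_{X/k}[p]\bigr)\;\cong\;\Hom_{\QCoh(X)}\bigl(\HHs,\,\Omega^p_{X/k}[p]\bigr). \]
The Atiyah class is such a natural transformation --- convolution with the universal Atiyah class $\Delta_*\OO_X\to\Delta_*\Omega^1_{X/k}[1]$ --- so $at_{(-)}$ corresponds to a map $a\colon\HHs\to\Omega^1_{X/k}[1]$; since $at_{\OO_X} = 0$ the map $a$ kills $F_0 = \OO_X$, and classically (Markarian, Kapranov) the induced map $\HHs/F_0\to\Omega^1_{X/k}[1]$ is, up to a universal sign, the truncation onto the bottom layer $\tau_{\le 1}(\HHs/F_0)\simeq\Omega^1_{X/k}[1]$. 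Thus $(1\otimes V)\circ at_{(-)}$ corresponds to the composite $V\circ a$, and Theorem \ref{maintheorem: restricted} reduces to showing $at^{p}_{(-)}$ corresponds to the same composite.

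Here $at^{p}_{(-)}$ is the composite of the $p$-fold iterated Atiyah coaction $E\to E\otimes(\Omega^1_{X/k}[1])^{\otimes p}$ with the multiplication $(\Omega^1_{X/k}[1])^{\otimes p}\to\Sym^p(\Omega^1_{X/k}[1]) = \Omega^p_{X/k}[p]$ of the graded algebra $\gr_\bullet\HHs = \bigoplus_i\Omega^i_{X/k}[i]$; since $\Omega^1_{X/k}[1]$ is odd, this is the nonabelian refinement of the (vanishing, for $p$ odd) naive $p$-th power. It corresponds under the dictionary to a map $\mu_p\colon\HHs\to\Omega^p_{X/k}[p]$: the map $\HHs\to(\Omega^1_{X/k}[1])^{\otimes p}$ classifying the iterated coaction --- the coalgebra-side incarnation of the $p$-fold product in Markarian's enveloping algebra $\HH^\bullet(X) = \mathcal U(T_{X/k}[-1])$ --- followed by that symmetrization. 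So the theorem becomes the identity $\mu_p = V\circ a$. Both sides factor through $\HHs/F_0$, which lives in homological degrees $\ge 1$ with $\pi_i = \Omega^i_{X/k}$; since $\Omega^p_{X/k}[p]$ is concentrated in degree $p$, both sides further factor through $\tau_{\le p}(\HHs/F_0)$, whose only nonzero homotopy sheaves are $\Omega^i_{X/k}$ for $1\le i\le p$ and whose two extreme layers are $\Omega^1_{X/k}[1]$ and $\Omega^p_{X/k}[p]$. By its construction in the proof of Theorem \ref{maintheorem: differential} --- via the $p$-th power operation on the $E_\infty$-algebra $\HHs$, equivalently the Verschiebung attached to the filtered circle $\filS$ --- the map $V$ is exactly the $p$-th-power/restricted-structure map between these two layers of $\tau_{\le p}(\HHs/F_0)$, which is also the map governing $d_p$. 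The theorem thus says that the iterated-Atiyah-class map $\mu_p$, restricted to $\tau_{\le p}(\HHs/F_0)$, agrees with this $p$-th-power map; concretely, that $\mu_p$ already factors through the further truncation $\tau_{\le 1}(\HHs/F_0)\simeq\Omega^1_{X/k}[1]$, via $V$.

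To get the factorization I would run the characteristic-$p$ enveloping-algebra calculus on the Atiyah Lie coalgebra $\Omega^1_{X/k}[1]$: dualizing Jacobson's formula $(x+y)^p = x^p + y^p + \sum_i s_i(x,y)$ and the semilinearity $(\lambda x)^p = \lambda^p x^p$, and using that the PBW-associated-graded of the $p$-th power map is the $p$-th power $\Sym^1\to\Sym^p$, to conclude that the symmetrized iterated coaction $\mu_p$ is controlled by a single co-$p$-operation out of $\gr_1\HHs = \Omega^1_{X/k}[1]$, and then identifying that operation with $V$. The main obstacle is exactly this last identification --- reconciling the \emph{representation-theoretic} class $\mu_p$, built from iterated Atiyah classes and the enveloping algebra of $T_{X/k}[-1]$, with the \emph{$E_\infty$/filtered-circle} description of $V$ --- together with the Koszul-sign bookkeeping forced by the odd shift in $\Omega^1_{X/k}[1]$ and by the identification $\Sym^p(\Omega^1_{X/k}[1]) = \Omega^p_{X/k}[p]$. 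If the proof of Theorem \ref{maintheorem: differential} instead \emph{defines} $V$ to be the operation extracted from $\mu_p$ on $\tau_{\le p}(\HHs/F_0)$, then Theorem \ref{maintheorem: restricted} records the two nonformal inputs used along the way: that $\mu_p$ does factor through $\tau_{\le 1}$, and that the resulting $V$ is natural in $E$.
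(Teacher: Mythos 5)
Your framework is sound in outline --- translating both sides into maps out of $\HHs = \Delta^*\Delta_*\OO_X$, reducing to a truncated piece, and recognizing that the issue is whether the symmetrized iterated Atiyah coaction factors through $\gr_1\HHs = \Omega^1_{X/k}[1]$ --- but you have not closed the gap you yourself flag as ``the main obstacle.'' That obstacle \emph{is} the theorem. Everything before it is formal bookkeeping; the actual mathematical content is the identification of the specific endomorphism arising from the filtered circle ($V$, defined as the action of the vector field $\tau\partial_{T_1}$ on $B\Ghat_a^\vee$, pushed to mapping stacks) with the $p$th power operation for the Atiyah coaction, and no mechanism for that identification appears in your argument. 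Appealing to ``characteristic-$p$ enveloping-algebra calculus'' and Jacobson's formula does not help here: Jacobson's formula governs how a $p$-operation interacts with sums and brackets once you \emph{have} one, it does not show that a map constructed by an unrelated recipe \emph{is} the $p$-operation, nor that $\mu_p$ factors through $\tau_{\le 1}$. Your last paragraph hedges by saying ``if $V$ is instead defined via $\mu_p$, then the theorem records two nonformal inputs''; it is not, so this escape hatch is closed.

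The paper resolves the obstacle by working on the other side of a Fourier--Mukai-type duality, but one-categorical-level down from where you placed yours. Via Theorem~\ref{theorem: general tangent stack as mapping stack}, a point $x$ of $T[-1]X$ over $\Spec S$ \emph{is} a map $B\Ghat_a^\vee\times\Spec S\to X$, and by Lemma~\ref{lemma: category of Ga check reps} pulling $E$ back along it lands in $S[\theta]$-modules; Theorem~\ref{theorem: mapping stack by Atiyah class} then computes that $\theta$ is contraction of $at_E$ with $\Phi(x)$. The decisive step --- where your proposal has nothing --- is the explicit computation that the derivation $\partial_{T_1}$ of $\OO(\Ghat_a^\vee)$ sends $s^{(i)}\mapsto s^{(i-p)}$, from which $\partial_{T_1}$ applied to the coaction evaluates to $\theta^p$. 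Coupled with the $\tau$-Jordan-block factor, this shows $\delta^*(U,\theta)\simeq(U[\epsilon],\,\theta+\epsilon\theta^p)$ for the deformation $\delta$ classified by $\tau\partial_{T_1}$, and hence $(1\otimes\Phi(x))(1\otimes V)at_E\sim((1\otimes\Phi(x))at_E)^p$ for every $x$; taking the limit over points finishes. In short, the $p$ appears for a Witt-vector reason on $\Ghat_a^\vee$, not from a formal PBW/symmetrization argument, and that is the ingredient you would still need to supply. A secondary point: the paper's proof is stated for any prestack with a $(-1)$-connective cotangent complex, whereas your Fourier--Mukai-kernel reduction on $X\times X$ implicitly restricts to schemes.
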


In the main body of the text, the existence and properties of \(V\) are established not just for smooth varieties, but rather for a large class of derived stacks. Thus, calculations for smooth varieties can be made by approximation of stacks, following the idea of Antieau, Bhatt, and Mathew. For example, if \(X = BG\), the classifying stack of a group scheme \(G\), then Theorem \ref{maintheorem: restricted} recovers the classical statement that the Lie algebra of \(G\) is a restricted Lie algebra, where the map \(V\) recovers the \(p\)-operation. The example of Antieau-Bhatt-Mathew is when $G = \mu_p$.
%, and in this way Theorems \ref{maintheorem: differential} and \ref{maintheorem: restricted} recover their example without ``bare-hands'' calculations.

The main input to Theorems \ref{maintheorem: differential} and \ref{maintheorem: restricted} is the filtered circle of Moulinos, Robalo, and Toën \cite{MRT22} and Raksit \cite{Rak20}. Recall that the Hochschild homology of a variety \(X\) can be described as the ring of global functions on the derived loop space
\begin{align*}
  \mathcal{L} X = X {\underset{{X \times X}}{\times}}X = \Mapsu(\sone,X).
\end{align*}
The idea of Moulinos-Robalo-Toën is to realize the filtration inducing the Hochschild-Kostant-Rosenberg spectral sequence universally as a filtration on an approximation of the circle \(\sone\). By definition, the \emph{filtered circle} \(\filS\) is the classifying stack of the Cartier dual of a formal group \(\Glhat\) over \(k[\lambda]\) with group law
\begin{equation*}
  v,w \mapsto v + w + \lambda v w .
\end{equation*}
If \(X\) is an affine \(k\)-scheme, then the ring of global functions on \(\Mapsu(\filS, X)\) is the Rees construction on the Hochschild homology of \(X\) with respect to the Hochschild-Kostant-Rosenberg filtration. Thus, calculations for the HKR spectral sequence follow from calculations on the filtered circle. We view \(\filS\) as a deformation of the special fiber \(B\Ghat_a^\vee\). This deformation to first order determines a derived vector field on \(B\Ghat_a^\vee\), which in turn induces a derived vector field on every mapping stack \(\Mapsu(B\Ghat_a^\vee, X)\), which finally induces the differential \(d_p\). The proof of Theorem \ref{maintheorem: differential} involves analyzing this derived vector field. Theorem \ref{maintheorem: restricted} involves relating \(\Mapsu(B\Ghat_a,X)\) and the Atiyah class of \(X\).
The name of the map \(V: \Omega^1[1] \to \Omega^p[p]\) is to suggest that \(V\) is the Verschiebung on the Lie algebra of the groupoid \(\mathcal{L} X \to X\).

To work with the mapping stack $\Mapsu(\filS, X)$, in §\ref{subsection: tannakian} we introduce rudiments of Tannakian reconstruction for derived stacks. In the context of spectral algebraic geometry, one can often recover a map $f: \cX \to \cY$ of stacks from the data of the symmetric monoidal functor $f^*: \QCoh(\cY)^{\otimes} \to \QCoh(\cX)^\otimes$. In the derived context, one needs more data to account for the difference between $\Einfty$-rings and animated rings. To this end, Nuiten and Toën recently introduced an appropriate upgrade of the structure of a symmetric monoidal category called a $\Theta$-category \cite{NT25}, which keeps track of the derived symmetric algebra monad. In this paper, we introduce a notion of a derived stack being Tannakian in the sense of \cite{BH17}, and use it to identify 
\[ T[-1]\cX \simeq \Mapsu(B\Ga^\vee, \cX)\]
when $\cX$ is a Tannakian derived stack (Theorem \ref{theorem: general tangent stack as mapping stack}).

The statement of Theorem \ref{maintheorem: differential} is similar to Petrov's recent result on the Hodge-de Rham spectral sequence \cite{Pet23}. Deligne and Illusie proved that if \(X/k\) admits a lift to \(W_2(k)\) and \(\dim X < p\), then the Hodge-de Rham spectral sequence degenerates \cite{DI87}. Later, Achinger and Suh proved that if \(X/k\) admits a lift to \(W_2(k)\), then the differentials of the conjugate Hodge-de Rham spectral sequence are zero before page \(p\) \cite{AS23}. In this situation, Petrov provided a formula for the differential on page \(p\) of the conjugate spectral sequence in terms of the Bockstein associated to the lift.
Petrov's methods differ from those of this paper, and the relationship between the two results is unclear.
It is known that the Hodge-to-de Rham and HKR spectral sequences fit into a ``commutative square'' of spectral sequences
% https://q.uiver.app/#q=WzAsNCxbMCwxLCJIXlxcYnVsbGV0KFgsTFxcT21lZ2FeXFxidWxsZXRfe1gva30pIl0sWzEsMCwiSF5cXGJ1bGxldF97ZFJ9KFgvaykiXSxbMSwyLCJcXG1hdGhybXtISH1fXFxidWxsZXQoWC9rKSJdLFsyLDEsIlxcbWF0aHJte0hQfV9cXGJ1bGxldChYL2spIl0sWzAsMiwiXFx0ZXh0e0hLUn0iLDIseyJzdHlsZSI6eyJib2R5Ijp7Im5hbWUiOiJzcXVpZ2dseSJ9fX1dLFsxLDMsIlxcdGV4dHtkZSBSaGFtLUhQfSIsMCx7InN0eWxlIjp7ImJvZHkiOnsibmFtZSI6InNxdWlnZ2x5In19fV0sWzIsMywiXFx0ZXh0e1RhdGV9IiwyLHsic3R5bGUiOnsiYm9keSI6eyJuYW1lIjoic3F1aWdnbHkifX19XSxbMCwxLCJcXHRleHR7SG9kZ2UtZGUgUmhhbX0iLDAseyJzdHlsZSI6eyJib2R5Ijp7Im5hbWUiOiJzcXVpZ2dseSJ9fX1dXQ==
\[
  \begin{tikzcd}[ampersand replacement=\&]
    \& {H^\bullet_{dR}(X/k)} \\
    {H^\bullet(X,L\Omega^\bullet_{X/k})} \&\& {\mathrm{HP}_\bullet(X/k)} \\
    \& {\mathrm{HH}_\bullet(X/k)}
    \arrow["{\text{de Rham-HP}}", dashed, from=1-2, to=2-3]
    \arrow["{\text{Hodge-de Rham}}", dashed, from=2-1, to=1-2]
    \arrow["{\text{HKR}}"', dashed, from=2-1, to=3-2]
    \arrow["{\text{Tate}}"', dashed, from=3-2, to=2-3]
  \end{tikzcd}
\]
(see \cite[Remark 3.6]{ABM21} for definitions and discussion).
However, Petrov's result is about the conjugate spectral sequence and does not give formulae for differentials in the Hodge-de Rham spectral sequence.
It would be interesting to know whether the results of this paper can be extended to the other three spectral sequences in this square.

The paper closes in §\ref{section: closing} with more open problems.

\subsection*{Acknowledgments}
The author thanks Victor Ginzburg and Andrei Căldăraru for asking the questions that led to this work. Special thanks to Dima Arinkin for many discussions on derived algebraic geometry, which were essential to the writing of this paper.
The author thanks
Ben Antieau,
Bhargav Bhatt,
Akhil Mathew,
Tasos Moulinos,
Sasha Petrov,
Catherine Ray,
Shubhankar Sahai,
Vadim Vologodsky,
and the anonymous referee
for useful comments and conversations.
Thanks to Sanath Devalapurkar for pointing out an error in an earlier version of this paper.

In the course of this work, the author was supported by the National Science Foundation under Award No. 2503534. Any opinions, findings, and conclusions or recommendations expressed in this material are those of the authors and do not necessarily reflect the views of the National Science Foundation.

\section{Preliminaries}

\subsection{Conventions}

Fix a prime integer \(p > 0\). Throughout, \(A\) is a commutative \(\Z_{(p)}\)-algebra. For the main theorems, it will suffice to work over \(k\) or \(W_2(k)\) where \(k\) is a perfect field of characteristic \(p\).

By category we mean \((\infty,1)\)-category in the sense of Lurie \cite{Lur09}. By 2-category we mean \((\infty,2)\)-category (this notion will only appear in §\ref{subsection: tannakian}). All functors are derived. The category of spaces (animated sets) is denoted \(\Spc\).
We write \([1]\) for the suspension/shift functor.
If \(C\) is a chain complex or module spectrum, we take the conventions \(\pi_i(C) = H_i(C) = H^{-i}(C)\). The shift satisfies \(H_i(C[1]) = H_{i-1}(C)\).

\subsection{Derived algebraic geometry}

We follow the standard setup of derived algebraic geometry, see e.g.\ \cite{Toe14, GR17I}. Let \(\CAlg\)be the category of animated rings. A \emph{derived prestack} is a functor \(\CAlg \to \Spc\). 
A derived prestack is \emph{accessible} if it commutes with sufficiently filtered colimits.
A \emph{derived stack} is an accessible derived prestack which is a sheaf with respect to the fpqc topology. Inside of derived stacks are \emph{derived affine schemes}, those stacks representable by an animated ring. Let \(\Spec R\) denote the derived affine scheme represented by \(R \in \CAlg\). A \emph{derived scheme} is a derived stack which admits a Zariski cover by derived affine schemes. From now on, we drop the word ``derived,'' and additionally assume all prestacks are over \(\Spec A\) unless otherwise noted. We write \(\PreStk_A\) for the category of prestacks over \(\Spec A\), and \(\PreStk^{acc}_A\) for the full subcategory of accessible prestacks.

Given two stacks \(\cX\) and \(\cY\) over a base stack \(\mathcal Z\), the mapping stack \(\Mapsu_{\mathcal Z}(\cX, \cY)\) is the \(\mathcal Z\)-stack with functor of points
\[ 
  \Mapsu_{\mathcal Z}(\cX,\cY)(S) = \Maps_{\mathcal Z}(X \times_{\mathcal Z} \Spec S, \cY),
\]
where we remind that fiber products are taken in \(\PreStk_A\), that is, are derived.

If \(\cX\) is a prestack, then we define the category of \emph{quasi-coherent sheaves} on \(\cX\) by
\[
  \QCoh(\cX) = \lim_{\Spec R \to \cX} \Mod_R
\]
\cite[Chapter 3, §1.1]{GR17I}.
An \(R\)-module \(M\) is \emph{connective} if \(H^{>0}(M) = 0\),  \emph{\(n\)-connective} if \(M[-n]\) is connective, and \emph{almost connective} if \(M\) is \(n\)-connective for some \(n\in \Z\).
By taking the limit, the same concepts extend to quasi-coherent sheaves.
We write \(\QCoh(\cX)^{\leq 0} \subseteq \QCoh(\cX)\) and \(\QCoh(\cX)^- \subseteq \QCoh(\cX)\) for the full subcategories of \( \QCoh(\cX)\) spanned by connective and almost connective objects, respectively.

\subsection{Affine stacks and derived rings}\label{subsection: affine stacks}

The notion of an affine stack goes back to the work of Toën \cite{Toe06} and Lurie \cite{luriedagviii}. In Toën's work, an affine stack is defined on discrete commutative algebras. By definition, if \(C\) is a cosimplicial algebra, then the associated affine stack \(\Spec^\Delta(C)\) is defined by \(\Spec^\Delta(C)(S) = \Maps(C, S)\), where the discrete commutative algebra \(S\) is viewed as a cosimplicial commutative algebra.

The notion of an affine stack may be extended to the derived setting using \emph{derived rings} \(\DAlg_A\) in the sense of A.\ Mathew.
The key observation, going back to Illusie \cite{Ill71}, is that the symmetric algebra monad \(\Sym_A\), defined as a left derived functor on connective \(A\)-modules, extends to a monad on all \(A\)-modules. The category \(\DAlg_A\) of derived \(A\)-algebras is then the category of algebras over the monad \(\Sym_A : \Mod_A \to \Mod_A\). The restriction of \(\Sym_A\) to coconnective \(A\)-modules is exactly the right Kan extension of the symmetric algebra functor, and thus the category of cosimplicial \(A\)-algebras is equivalent to the full subcategory of coconnective derived rings \(\DAlg_A^{\geq 0}\). See \cite[§3]{BM25} or \cite[§4]{Rak20} for details about derived rings.

\begin{definition}
  The functor $\Spect: \DAlg_A \to \PreStk_A^{op}$ is defined by sending $C \in \DAlg_A$ to 
  \[ \Spect(C)(S) = \Maps_{\DAlg_A}(C,S).\]
  A prestack is a \emph{derived affine stack} if it is equivalent to $\Spect(C)$ for some $C \in \DAlg_A$.\footnote{This definition differs slightly from the definition of affine stack in \cite{AM25}, but \cite[Theorem 1.3]{AM25} states that the definitions agree when $C$ is eventually connective.}
  A morphism $\mathcal X \to \mathcal Y$ of prestacks is a \emph{relative derived affine stack} if for all affine schemes $\Spec R$ and $y \in \mathcal Y(\Spec R)$, the pullback $\mathcal X \times_{\mathcal Y,y} \Spec R$ is a derived affine stack.
\end{definition}

By construction, \(\Spect\) sends colimits to limits, so derived affine stacks are closed under limits.
The functor $\Spect$ is the left adjoint of $\mathcal X \mapsto R\Gamma(\cX, \OO_{\cX})$ \cite[2]{MM25}. 

\begin{remark}
  The work of Moulinos-Robalo-Toën on the filtered circle \cite{MRT22} and the original work of Toën \cite{Toe06} consider affine stacks as functors on discrete commutative rings.
  However, if \(C \in \DAlg_A^{\geq 0}\), then \(\Spect(C): \CAlg_A \to \Spc\) is left Kan extended from discrete rings \cite[Corollary 3.7]{MM25}\cite[Proposition 4.4.6]{luriedagviii}. Thus, results from \cite{MRT22} about affine stacks may be directly applied in the derived setting. To keep this paper self-contained, we include some direct proofs of derived analogues of results already known for non-derived affine stacks.
\end{remark}

\begin{example}\label{example: K G_a is affine}
  For \(i \geq 0\), let \(K(\mathbb G_a, i)\) be the prestack over \(\Spec A\) with functor of points
  \[ K(\mathbb G_a, i)(S) = S[i]. \]
  Since \(S[i] \simeq \Hom_{\Mod_A}(A[-i],S)\),
  \[ K(\mathbb G_a,i) \simeq \Spect(\Sym_A(A[-i])),\]
  so \(K(\mathbb G_a,i)\) is a derived affine stack.
\end{example}

%Now we can prove a derived version of Theorem \ref{theorem: filtered circle is affinization}.

Another example of an affine stack is the classifying stack of the Cartier dual of a formal Lie group.
For any formal Lie group \(\Ghat \to \mathcal Z\), let \(\Ghat^\vee \to \mathcal Z\) be the relative Cartier dual of \(\Ghat \to \mathcal Z\); it is an affine group scheme over \(\mathcal Z\).

\begin{theorem}[\cite{MM25}, Lemma 6.2]\label{theorem: B H dual affine stack}
  Let $R$ be a classical commutative ring. If $H \to \Spec R$ is a formal Lie group, then the classifying stack $BH^\vee$ of the Cartier dual of $H$ over $\Spec R$ is a (derived) affine stack.
\end{theorem}

We give another proof of this theorem in the case that $R$ is a $\mathbb{Z}_{(p)}$-algebra. In this case, the computation can be reduced to the calculation that $BW$ is an affine stack where $W$ is the group scheme of $p$-typical Witt vectors.

\begin{proof}[Proof when $R$ is a $\mathbb{Z}_{(p)}$-algebra.]
  Let $W$ be the group scheme of $p$-typical Witt vectors and $\hat{W}$ its Cartier dual.
  Let $\mathbb{E}_p = \End(\hat{W})^{op} = \End(W)$.
  By $p$-typical Cartier theory (see e.g. \cite[Theorem 4.23]{Zin84}), there is a finitely generated left $\mathbb{E}_p$-module $M$ such that $H = \hat{W} \otimes_{\mathbb{E}_p} M$. 
  Thus $H^\vee = \Hom_{\mathbb{E}_p}(M, W)$.
  The claim is Zariski local on $R$, so we can assume that $M/VM$ is a free $R$-module of finite rank $r$.
  By the structure equations for a Cartier module \cite[Theorem 4.39]{Zin84}, there is a short exact sequence 
  \[0 \to \mathbb{E}_p^r \overset{\varphi}{\to} \mathbb{E}_p^r \to M \to 0\]
  where $L$ is a finite free $\mathbb{E}_p$-module and $\varphi = F - A$ where $A$ is a matrix with entries in $W(R)\{V\} \subseteq \mathbb{E}_p$.
  The map $\varphi: W^r \to W^r$ is surjective in fppf topology since $F - [x]: W \to W$ is for all $x \in R$.
  Thus $H^\vee$ fits into a short exact sequence 
  \[ 0 \to H^\vee \to W^r \overset{\varphi}{\to} W^r \to 0\]
  and $BH^\vee$ is the homotopy fiber of $BW^r \overset{\varphi}{\to} BW^r$.
  Since affine stacks are closed under limits, it suffices to show that $BW$ is an affine stack.

  First, we show that \(BW = \varprojlim_m BW_m\). Observe that for an animated ring \(S\),
  \[ \pi_i \Maps(\Spec S, BW_m) = \pi_{i-1} W_m(S)\]
  for \(m \leq \infty\), as \(W_m\) is an iterated extension of \(\mathbb G_a\) and \(\Spec S\) is affine.
  As a simplicial set, \(W_m(S)\) is the \(m\)-fold product of \(S\), and thus the maps \(\pi_{i-1}W_{m+1}(S) \to \pi_{i-1}W_m(S)\) are surjective.
  It follows that \(R^1\varprojlim_m \pi_i \Maps(\Spec S, BW_m) = 0\) for all \(i\), and thus \(\Maps(\Spec S, BW) \simeq \varprojlim_m \Maps(\Spec S, BW_m)\).

  Now \(K(\mathbb G_a,i)\) is an affine stack for all $i \geq 0$ by Example \eqref{example: K G_a is affine}.
  Since \(W_m\) is an iterated extension of \(\mathbb G_a\), we have pullback squares
  % https://q.uiver.app/#q=WzAsNCxbMCwwLCJCV197bSsxfSJdLFswLDEsIkJXX20iXSxbMSwxLCJLKFxcbWF0aGJiIEdfYSwgMikiXSxbMSwwLCJcXGFzdCJdLFswLDNdLFswLDFdLFsxLDJdLFszLDJdLFswLDIsIiIsMSx7InN0eWxlIjp7Im5hbWUiOiJjb3JuZXIifX1dXQ==
  \[
    \begin{tikzcd}
      {BW_{m+1}} & \ast \\
      {BW_m} & {K(\mathbb G_a, 2)}
      \arrow[from=1-1, to=1-2]
      \arrow[from=1-1, to=2-1]
      \arrow["\lrcorner"{anchor=center, pos=0.125}, draw=none, from=1-1, to=2-2]
      \arrow[from=1-2, to=2-2]
      \arrow[from=2-1, to=2-2]
    \end{tikzcd}
  \]
  for all \(m \geq 0\).
  By induction, \(BW_m\) is a limit of affine stacks and hence is an affine stack for all \(m \geq 1\).
\end{proof}

\subsection{Filtrations and spectral sequences}\label{subsection: filtrations}

The Hochschild-Kostant-Rosenberg spectral sequence arises from a filtered complex. Instead of working with only the differentials, our results on the spectral sequence will be formulated in terms of the underlying filtered complex. In this section, the notion of a filtered complex being \emph{split to order \(n\)} is introduced. This notion refines the statement that the first \(n\) differentials in the associated spectral sequence are zero.

The category of filtered complexes can be formulated as sheaves on a stack. Let \(\mathbb A^1 = \Spec A[\lambda]\) where \(\mathbb G_m\) acts by dilating the coordinate \(\lambda\) with weight\footnote{Choosing weight \(-1\) corresponds to viewing our filtrations as descending. In the derived context, the morphisms in a filtration need not be monomorphisms; the theories of increasing and decreasing filtrations are equivalent.} \(-1\). Then the category of quasi-coherent sheaves on the quotient stack \(\mathbb A^1/\mathbb G_m\) is equivalent to the \(\infty\)-category of filtered \(A\)-modules via the Rees construction \cite{Mou21}.
We call a morphism \(\cY \to \mathbb A^1/\mathbb G_m\) a \emph{filtered stack}. Given a filtered stack \(\cY\), write \(\cY^u\), the \emph{underlying stack}, for the pullback along \(\mathbb G_m/\mathbb G_m \to \mathbb A^1/\mathbb G_m\); write \(\cY^{gr}\), the \emph{associated graded}, for the pullback along \(0/\mathbb G_m \to \mathbb A^1/\mathbb G_m\).

\begin{definition} Let
  $Z_n = \Spec(A[\lambda]/(\lambda^{n+1}))/\Gm$
  be the
  \(n\)th order neighborhood of \(0 \in \mathbb A^1/\mathbb G_m\),
  and let
  \(\rho_n: Z_n \to \mathbb A^1/\mathbb G_m\)
  be the inclusion.
  If \(C \in \QCoh(\mathbb A^1/\mathbb G_m)\), then a \emph{splitting to order \(n\)} of \(C\) is an equivalence
  \[
    \sigma: A[\lambda]/\lambda^{n+1}\otimes_A (\rho_0)^\ast C\ \simeq (\rho_n)^\ast C
  \]
  in \(\QCoh(Z_n)\).
\end{definition}

Now we explain the relationship between splittings to order \(n\) and spectral sequences.
Let \(F^\bullet C\) be a complex of \(A\)-modules equipped with a descending filtration \(\cdots \to F^{t+1} C \to F^t C \to \cdots\).
The associated graded is defined by \(\gr^t C = F^t C / F^{t+1}C = \cofib(F^{t+1}C \to F^t C)\).
The complex \(F^\bullet C\) is split to order \(n\) if we are given equivalences
\[ F^t C / F^{t+n+1} C \simeq \bigoplus_{i=0}^n \gr^{t+i}C \]
for all \(t\), together with certain compatibilities between these equivalences.
Being split to order \(0\) is a null statement.
Associated to \(F^\bullet C\) is a spectral sequence with
\[ E_2^{s,t} = H^{s+t} \gr^{-t} C\]
and differentials \(d_r: E_r^{s,t} \to E_r^{s+r,t-r+1}\) of bidegree \((r, 1-r)\)\footnote{See \cite{Ant24} for a discussion of the modern perspective on spectral sequences, as well as a discussion of degree conventions in §7.}.
If \(F^\bullet C\) is split to order \(n\), then in the associated spectral sequence,
the differentials \(d_r\) are zero for \(r \leq n+1\). Further, from the canonical triangle
\[
  \gr^{t+n+1} C \to F^t C / F^{t+n+2} C \to F^{t}C / F^{t+n+1}C \overset{+1}{\to}
\]
the boundary morphism furnishes a morphism
\begin{equation}\label{eq: canonical extension for n split}
  e: \gr^{t} C \to \bigoplus_{i=0}^n \gr^{t+i}C \simeq F^tC / F^{t+n+1}C \to \gr^{t+n+1}C[1],
\end{equation}
whose null-homotopy for all \(t\) is equivalent to an extension of the splitting to order \(n+1\). As \(d_r = 0\) for \(r \leq n+1\), we can identify the \(E_2\) and \(E_{n+2}\)-pages; the differential \(d_{n+2}: H^{s+t}\gr^{-t}C \to H^{s+t+1}\gr^{-t+n+1}C\) in our spectral sequence is induced by the morphism \(e\) from \eqref{eq: canonical extension for n split}.

\begin{remark}\label{remark: e by deformation theory}
  The extension $e$ has an interpretation in terms of deformation theory. Consider the pullback square of animated rings 
  \[
  % https://q.uiver.app/#q=WzAsNCxbMCwwLCJBW1xcbGFtYmRhXS8oXFxsYW1iZGFee24rMn0pIl0sWzAsMSwiQSJdLFsxLDAsIkFbXFxsYW1iZGFdLyhcXGxhbWJkYV57bisxfSkiXSxbMSwxLCJBIFxcb3BsdXMgKFxcbGFtYmRhXntuKzF9KS8oXFxsYW1iZGFee24rMn0pWzFdIl0sWzAsMV0sWzEsM10sWzAsMl0sWzIsM11d
\begin{tikzcd}
	{A[\lambda]/(\lambda^{n+2})} & {A[\lambda]/(\lambda^{n+1})} \\
	A & {A \oplus (\lambda^{n+1})/(\lambda^{n+2})[1]}
	\arrow[from=1-1, to=1-2]
	\arrow[from=1-1, to=2-1]
	\arrow[from=1-2, to=2-2]
	\arrow[from=2-1, to=2-2]
\end{tikzcd}.
\]
  This square induces the square of affine schemes
  \[
  % https://q.uiver.app/#q=WzAsNCxbMSwxLCJaX3tuKzF9Il0sWzAsMSwiWl9uIl0sWzEsMCwiWl8wIl0sWzAsMCwiXFx0aWxkZSBaX3tuKzF9Il0sWzMsMV0sWzMsMl0sWzEsMF0sWzIsMF1d
\begin{tikzcd}
	{\tilde Z_{n+1}} & {Z_0} \\
	{Z_n} & {Z_{n+1}}
	\arrow[from=1-1, to=1-2]
	\arrow[from=1-1, to=2-1]
	\arrow[from=1-2, to=2-2]
	\arrow[from=2-1, to=2-2]
\end{tikzcd}
\]
where $\tilde{Z}_{n+1} = \Spec(A \oplus (\lambda^{n+1})/(\lambda^{n+2})[1])/\Gm$. Taking $\QCoh$, we find a functor
\begin{equation}\label{eq: e by deformation theory}
  \QCoh(Z_{n+1}) \to \QCoh(Z_0) \underset{\QCoh(\tilde{Z}_{n+1})}{\times} \QCoh(Z_n)
\end{equation}
which is fully faithful by Lemma \ref{lemma: modules over pullback}.
Suppose that $C \in \QCoh(Z_{n+1})$ and we are given a splitting of $C|_{Z_n}$. Then if $\gr C = C|_{Z_0}$, then applying \eqref{eq: e by deformation theory} to $C$ 
yields the object $(\gr C, A[\lambda]/(\lambda^{n+1})\otimes_A \gr C, \gamma)$ where $\gamma$ is an isomorphism 
\[ \gamma: (A \oplus (\lambda^{n+1})/(\lambda^{n+2})[1]) \otimes_A \gr C \to (A \oplus (\lambda^{n+1})/(\lambda^{n+2})[1]) \otimes_A \gr C.\]
Hence $\gamma$ is determined by a $\Gm$-equivariant morphism 
\[ \gr C \to \gr C \otimes_A (\lambda^{n+1})/(\lambda^{n+2})[1],\]
which is exactly the morphism $e$ from \eqref{eq: canonical extension for n split}. 

As a consequence of the full faithfulness of \eqref{eq: e by deformation theory}, we find that the space of nullhomotopies of \eqref{eq: canonical extension for n split} is equivalent to the space of extensions of the splitting from order $n$ to order $n+1$.
\end{remark}

\section{Hochschild homology and the filtered circle}\label{section: hh and filtered circle}

\subsection{The Hochschild-Kostant-Rosenberg filtration and the filtered circle}

\begin{definition}
  If \(X\) is a prestack, the \emph{free loop space} on \(X\) is the stack
  \[
    \mathcal{L}X = \Mapsu(\sone,X) = X \underset{X \times X}{\times} X.
  \]
\end{definition}
Here we consider \(\sone\) as the constant prestack assigning to each input the space $\sone$. The identification \(\Mapsu(\sone,X) = X \times_{X \times X} X\) follows from realizing \(\sone\) as the homotopy pushout \(\ast \sqcup \ast \rightrightarrows \ast\) of two points mapping to a point.

The loop space comes equipped with a projection \(\pi: \mathcal{L}X \to X\), which evaluates a loop at the base point.

\begin{definition}
  If \(X\) is a stack with affine diagonal, then the \emph{Hochschild homology sheaf} of \(X\) is
  \[
    \HHs(X/A) = \pi_* \OO_{\mathcal{L} X} \in \QCoh(X).
  \]
\end{definition}
If \(\Delta: X \to X \times X\) is the diagonal and \(\pi_1:X \times X \to X\) is the first projection, then the Hochschild homology sheaf of \(X\) has equivalent descriptions as
\[
  \HHs(X/A) = (\pi_1)_*(\Delta_*\OO_X \otimes \Delta_* \OO_X) = \Delta^*\Delta_*\OO_X.
\]
If \(X\) is a quasi-compact scheme with affine diagonal, then we define the Hochschild homology of \(X\) by
\[
  \HH(X/A) = R\Gamma(X, \HHs(X/A)) = R\Gamma(\mathcal{L} X, \OO_{\mathcal{L}X}).
\]

\begin{definition}[\cite{ABM21}, Definition 3.1(a)]
  \label{definition: hkr filtration}
  If \(X\) is a quasi-compact \(A\)-scheme with affine diagonal, the \emph{Hochschild-Kostant-Rosenberg filtration} (HKR filtration) \(\fhkr \HHs(X/A)\) on \(\HHs(X/A)\) is the descending filtration obtained by left Kan extension of the Postnikov filtration on the Hochschild homology of affine spaces $\mathbb{A}^n$ for all $n$.
\end{definition}

The HKR spectral sequence from the introduction is the spectral sequence associated to the filtered complex \(R\Gamma(X, \fhkr \HHs(X/A))\) \cite[Definition 3.1(a)]{ABM21}.
In this context, the Hochschild-Kostant-Rosenberg theorem identifies the associated graded of the filtered complex and thus the initial page of the spectral sequence:

\begin{lemma}[Hochschild-Kostant-Rosenberg theorem]
  For quasi-compact \(A\)-schemes \(X\) with affine diagonal, there are natural isomorphisms
  \[ \gr^t \HHs(X/A) \cong L\Omega^t_{X/A}[t]\]
 in \(\QCoh(X)\) for all \(t\).
\end{lemma}
\begin{proof}
  Since both sides are Zariski sheaves \cite[Lemma 4.2]{BN12}, it suffices to consider when \(X\) is affine. Both sides are left Kan extensions from smooth morphisms \(X/A\), in which case it follows from the usual Hochschild-Kostant-Rosenberg theorem \cite[3.4.4 Theorem]{Lod92}.
\end{proof}

We now discuss the filtered circle, discovered independently by Moulinos-Robalo-Toën \cite{MRT22} and Raksit \cite{Rak20}; our treatment follows \cite{MRT22}.
Consider the following formal group law over \(A[\lambda]\):
\[ \Glhat: v,w \mapsto  v + w + \lambda vw.\]
As \(\lambda\) has weight \(-1\), this formal group law is homogeneous when \(v\) and \(w\) are given weight \(1\), and so descends to a filtered formal group \(\Glhat \to \mathbb A^1/\mathbb G_m\). The underlying formal group is \(\Ghat_m\) and the associated graded formal group is \(\Ghat_a\).

\begin{definition}\cite[Proposition 6.3.3]{MRT22}
  The \emph{filtered circle} is
  \[ \filS = B\Glhat^\vee,\]
  the classifying stack of the Cartier dual of \(\Glhat \to \mathbb A^1/\mathbb G_m\).
\end{definition}
The filtered circle is a filtered stack, that is, it is equipped with \(\filS \to \mathbb A^1/\mathbb G_m\).
This stack ought to be called a circle because of one of the main calculations of Moulinos, Robalo, and Toën \cite{MRT22}.
If \(\cY\) is a stack, then the \emph{affinization} of \(\cY\) is $\Spect(R\Gamma(\cY,\OO_{\mathcal Y}))$. By definition, there is a map $\cY \to \Spect(R\Gamma(\cY,\OO_{\cY}))$ initial among maps from $\cY$ to affine stacks.
By Theorem \ref{theorem: B H dual affine stack}, $\filS \to \mathbb{A}^1/\mathbb{G}_m$ is a relative affine stack.

\begin{theorem}\cite[Theorem 1.2.4]{MRT22}\label{theorem: filtered circle is affinization}
  \((\filS)^u = B\Ghat_m^\vee\) is the affinization of \(\sone\).
\end{theorem}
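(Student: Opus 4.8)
The plan is to deduce the statement from Toën's identification of the affinization with the spectrum of global functions, thereby reducing everything to (i) a cohomology computation on $B\Ghat_m^\vee$ and (ii) the fact that $B\Ghat_m^\vee$ is itself an affine stack.

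\emph{Step 1 (reduction).} By construction $(\filS)^u$ is the pullback of $\filS = B\Glhat^\vee$ along $\mathbb G_m/\mathbb G_m \to \mathbb A^1/\mathbb G_m$. Since forming the relative Cartier dual and forming the classifying stack both commute with base change, $(\filS)^u = B(\Glhat^u)^\vee$, and the underlying formal group $\Glhat^u$ is $\Ghat_m$ (put $\lambda = 1$ and substitute $u = 1+v$ in $v+w+\lambda vw$). So it suffices to show $B\Ghat_m^\vee$ is the affinization of $\sone$.

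\emph{Step 2 (strategy).} For a suitably finite stack $\mathcal Y$, the unit map $\mathcal Y \to \Spec R\Gamma(\mathcal Y, \OO)$ is the affinization provided $\Spec R\Gamma(\mathcal Y,\OO)$ is an affine stack; in particular $(\sone)^{\aff} = \Spec C^\ast(\sone;A)$. Moreover, since affine stacks are (anti)equivalent to the relevant category of animated $A$-algebras, a map $\sone \to Z$ to an affine stack $Z$ is initial among such maps precisely when the induced map $R\Gamma(Z,\OO) \to C^\ast(\sone;A)$ is an equivalence. Hence it is enough to: (a) construct a morphism $\sone \to B\Ghat_m^\vee$; (b) show $B\Ghat_m^\vee$ is an affine stack; and (c) show the morphism in (a) induces an equivalence $R\Gamma(B\Ghat_m^\vee,\OO) \simeq C^\ast(\sone;A)$.

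\emph{Step 3 (the map and the computation).} For (a), write $\sone = B\underline{\Z}$ and use the canonical homomorphism $\underline{\Z} \to \Ghat_m^\vee$ sending $n$ to the $n$th-power endomorphism of $\Ghat_m \subseteq \mathbb G_m$ (concretely, $\underline{\Z}$ sits densely inside the $\Z_p$-flavored affine group scheme $\Ghat_m^\vee$, whose coordinate ring is the ``binomial'' Hopf algebra dual to $A\series{t}$); applying $B$ yields $\sone \to B\Ghat_m^\vee$. For (c), compute $R\Gamma(B\Ghat_m^\vee,\OO)$ by fppf descent along $\Spec A \to B\Ghat_m^\vee$, i.e.\ as the totalization of the bar cosimplicial algebra $[n]\mapsto \OO\big((\Ghat_m^\vee)^{\times n}\big)$. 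Rationally, $\Ghat_m^\vee \otimes \mathbb Q \cong \mathbb G_a$, so the totalization is $C^\ast(B\mathbb G_a;\mathbb Q) \simeq \Lambda_{\mathbb Q}(x)$ with $x$ in cohomological degree $1$. Modulo $p$, $\Ghat_m^\vee$ is étale with geometric fiber the constant pro-group $\underset{n}{\lim}\,\underline{\Z/p^n}$, so $B\Ghat_m^\vee \simeq \underset{n}{\lim}\,B\underline{\Z/p^n}$ and $R\Gamma(B\Ghat_m^\vee,\OO) \simeq \colim{n} R\Gamma(B\underline{\Z/p^n},\OO)$ along the restriction maps dual to $\Z/p^{n+1}\twoheadrightarrow \Z/p^n$; the classical fact that these maps annihilate all of $H^{>0}$ in the colimit (equivalently, that $\Z_p$ has cohomological dimension $1$) leaves exactly $\Lambda(x)$, $|x|=1$, as a simplicial commutative ring. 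The general $\Z_{(p)}$-statement is assembled from the rational and $p$-complete pictures by an arithmetic-fracture/descent argument, matching the degree-$1$ generators. In every case the answer is $C^\ast(\sone;A)$, and one checks the map of (a) realizes the identification: on $H^1$ it is an isomorphism because a homomorphism out of $\Ghat_m^\vee$ is determined by its restriction to $\underline{\Z}$, and the rest of the algebra is generated by $H^1$. (Alternatively, one can run the cobar computation directly using the explicit binomial Hopf-algebra structure on $\OO(\Ghat_m^\vee)$.)

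\emph{Step 4 (affineness and the main obstacle).} For (b), invoke Toën's structural results: affine stacks are closed under limits, $K(\Z/p^n,1)$ is affine over $\Fp$, $B\mathbb G_a$ is affine over $\mathbb Q$, and affineness can be tested after the faithfully flat cover $\Z_{(p)}\to \mathbb Q\times \Z_p$ together with $p$-completeness; hence $B\Ghat_m^\vee$, being a limit of affine stacks after base change, is affine. The main obstacle is precisely this step together with making the comparison honest: one must understand the non-finite-type pro-group $\Ghat_m^\vee$ well enough to verify affineness of $B\Ghat_m^\vee$ uniformly over $\Z_{(p)}$ rather than merely over a field, and one must pin down that it is the canonical map $\sone \to B\Ghat_m^\vee$ of Step 3 (not just some abstract equivalence) that exhibits the affinization; the cohomology vanishing $\colim{n} H^{>0}(B\underline{\Z/p^n}) = 0$ is the essential computational input that makes $B\Ghat_m^\vee$ ``look like'' a circle rather than like $B\mathbb Z_p$.
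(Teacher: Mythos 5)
The paper does not prove this statement; it cites it directly as \cite[Theorem 1.2.4]{MRT22}, and the only argument in the paper adjacent to it is the proof of Theorem~\ref{theorem: filtered circle is derived coaffine}, which establishes the \emph{derived} coaffineness of $\filS$ via the Sekiguchi--Suwa presentation. So I will evaluate your proposal on its own terms and against the strategy that the paper itself gestures at for the closely related derived statement.

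Your overall strategy (reduce to: construct $\sone\to B\Ghat_m^\vee$, show $B\Ghat_m^\vee$ is affine, and show the map induces an equivalence on global sections) is the right one and is recognizably the Mou\-li\-nos--Ro\-ba\-lo--To\"en strategy. But there are two issues. First, a small but real slip in the cohomology computation: you write $\colim_n H^{>0}(B\underline{\Z/p^n})=0$, which is false and contradicts your own conclusion that the colimit is $\Lambda(x)$ with $|x|=1$. What vanishes in the colimit along the inflations $H^{*}(\Z/p^n;\Fp)\to H^{*}(\Z/p^{n+1};\Fp)$ is $H^{\geq 2}$ (the polynomial generator in degree $2$ dies because the extension $\Z/p\to\Z/p^{n+1}\to\Z/p^n$ is nonsplit); the degree-$1$ class survives and is exactly what makes $B\Ghat_m^\vee$ look like a circle. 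Second, and more seriously, your Step~4 is genuinely incomplete: you acknowledge that verifying affineness of $B\Ghat_m^\vee$ uniformly over $\Z_{(p)}$ is ``the main obstacle,'' and the arithmetic-fracture argument you sketch does not obviously close it. To\"en's criteria for affineness (``pointed connected $H^0$-stacks'') are proved over a field, and it is not clear that affineness in To\"en's sense satisfies descent along $\Z_{(p)}\to\mathbb Q\times\Z_p$; the affinization functor is a reflector, not a sheafification, so ``affine after a faithfully flat cover'' does not formally imply ``affine.'' This is precisely the point where the Sekiguchi--Suwa exact sequence $0\to\Ghat_m^\vee\to W\to W\to 0$ is used in \cite{MRT22} (and mirrored in the proof of Theorem~\ref{theorem: filtered circle is derived coaffine} in this paper): it exhibits $B\Ghat_m^\vee$ as a limit of $BW$, and $BW=\varprojlim_m BW_m$ is an iterated limit of $K(\mathbb G_a,\ast)$, all of which are affine over any base; affine stacks are closed under limits, and no rational/$p$-complete case split is needed. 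With that replacement for Step~4 and the $H^{>0}\mapsto H^{\geq 2}$ fix, your argument becomes an honest proof.
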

The affinization map $\sone = B\mathbb{Z} \to B\Ghat_m^\vee$ is induced by the homomorphism $\mathbb{Z} \to \Ghat_m^\vee$ Cartier dual to the inclusion $\Ghat_m \to \mathbb{G}_m$.

\begin{definition}
  The \emph{filtered loop space} of $X/A$ is the mapping stack 
  \[ \Lfil X = \Mapsu_{{\mathbb{A}^1/\Gm}}(\filS, X).\]
\end{definition}

For any affine $A$-scheme \(\Spec(R)\), Theorem \ref{theorem: filtered circle is affinization} implies that the pullback map 
\[
  (\Lfil \Spec(R))^u = \Mapsu((\filS)^u, \Spec (R)) \to \Mapsu(\sone, \Spec(R)) = \mathcal{L}(\Spec R)
\]
is an equivalence, that is, the underlying object of the filtered loop space is the loop space.
\begin{theorem}\cite[Theorem 1.2.4]{MRT22}\label{theorem: filtered circle and HKR}
  For affine \(A\)-schemes \(\Spec(R)\), there is a natural equivalence
  \[
    %\Mapsu(\filS, \Spec(R)) 
    \Lfil \Spec(R)
    \simeq \Specu_{\mathbb{A}^1/\Gm} \fhkr\HH(\Spec(R)/A)
  \]
  of stacks over \(\mathbb{A}^1/\Gm\).
\end{theorem}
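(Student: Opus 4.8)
The plan is to treat this as the filtered refinement of Theorem~\ref{theorem: filtered circle is affinization}. Both sides are functors from $\CAlg_A$ to filtered animated $A$-algebras, namely $R \mapsto \OO(\Mapsu_{\mathbb A^1/\mathbb G_m}(\filS, \Spec R \times \mathbb A^1/\mathbb G_m))$ and $R \mapsto \fhkr\HH(\Spec R/A)$; I would prove that each is lax symmetric monoidal and is left Kan extended from the full subcategory $\mathrm{Poly}_A \subset \CAlg_A$ of finitely generated polynomial $A$-algebras (equivalently: preserves sifted colimits and is determined by its restriction to $\mathrm{Poly}_A$), and then that the two restrictions to $\mathrm{Poly}_A$ coincide. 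For the right-hand functor this is immediate: by Definition~\ref{definition: hkr filtration} the HKR filtration is by construction the left Kan extension from $\mathrm{Poly}_A$ of the Postnikov filtration on Hochschild homology, and it is multiplicative.

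For the left-hand functor, monoidality is formal: $\filS = B\Glhat^\vee$ is a cocommutative coalgebra object in filtered stacks (it carries the pinch comultiplication of the circle), so $\Mapsu(\filS, \Spec R \times \Spec R') \simeq \Mapsu(\filS, \Spec R) \times_{\mathbb A^1/\mathbb G_m} \Mapsu(\filS, \Spec R')$. The substantive claims are that $\Mapsu(\filS, \Spec R)$ is again an \emph{affine} filtered stack and that $R \mapsto \OO(\Mapsu(\filS, \Spec R))$ commutes with sifted colimits. Using the bar presentation $\filS = \colim{\Delta^{\mathrm{op}}}(\Glhat^\vee)^{\times \bullet}$ one writes $\Mapsu(\filS, \Spec R)$ as a cosimplicial totalization of mapping stacks out of the $(\Glhat^\vee)^{\times n}$; the key input that makes the totalization affine and colimit-friendly is that $\OO(\filS)$ is perfect of Tor-amplitude $1$ over $A[\lambda]$. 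One sees the latter from $\OO((\filS)^u) \simeq C^\ast(\sone; A)$ and $\OO((\filS)^{gr}) = R\Gamma(B\Gacheck, \OO) \simeq A \oplus A[-1]$; the second holds because $\QCoh((\filS)^{gr}) = \mathrm{Rep}(\Gacheck)$ is the category of discrete modules over the power series ring $A\series{v}$, in which $v$ is a regular element, so $R\Hom_{A\series{v}}(A, A)$ lives in cohomological degrees $0$ and $1$ only.

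Granting these, it remains to identify the two functors on $\mathrm{Poly}_A$. For the underlying stack this works for every affine $\Spec R$: maps into any affine scheme depend only on the ring of functions of the source, and $\OO((\filS)^u \times \Spec S) \simeq \OO(\sone \times \Spec S)$ by Künneth (valid since $C^\ast(\sone; A)$ is perfect) together with $\OO((\filS)^u) = \OO(\sone)$ from Theorem~\ref{theorem: filtered circle is affinization}; hence $\Mapsu((\filS)^u, \Spec R) \simeq \mathcal L\Spec R = \Spec\HH(R/A)$, matching the underlying object of $\Spec\fhkr\HH(\Spec R/A)$. For the filtration it then suffices, by monoidality, to compute the associated graded on $R = A[x]$: granting that forming the associated graded commutes with $\Mapsu$, this is $\Mapsu(B\Gacheck, \mathbb A^1)$, which from the computation $\OO(B\Gacheck) \simeq A \oplus A[-1]$ above and the linear-stack formula has ring of functions $\OO_{\mathbb A^1} \oplus \Omega^1_{\mathbb A^1/A}[1]$, agreeing as a weighted complex with $\bigoplus_{t \geq 0} L\Omega^t_{A[x]/A}[t] = \grhkr\HH(A[x]/A)$ (here $L\Omega^{\geq 2}_{A[x]/A} = 0$). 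With naturality in $R$ over $\mathrm{Poly}_A$, which is built into the constructions, this gives the filtered equivalence for all $R$.

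The main obstacle is the pair of substantive claims above: that $\Mapsu(\filS, \Spec R)$ is affine and that its ring of functions depends on $R$ in a sifted-colimit-preserving way, given that $\Mapsu(\filS, -)$ is a priori only limit-preserving and is presented by an infinite cosimplicial totalization --- together with the closely related compatibility of the associated graded with $\Mapsu$, which rests on the flatness of $\OO_{\Glhat^\vee}$ over $\mathbb A^1/\mathbb G_m$. This is the technical heart of \cite[Theorem~1.2.4]{MRT22}; granting it, the whole statement reduces to the one-variable computation above.
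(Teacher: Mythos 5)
This theorem is not proved in the paper: it is cited verbatim from \cite[Theorem 1.2.4]{MRT22}, and the text offers no argument of its own. Your proposal is therefore being measured not against an in-paper proof but against MRT22. With that caveat, the overall shape you propose --- reduce both sides to their restrictions on finitely generated polynomial $A$-algebras via left Kan extension and (lax) symmetric monoidality, then settle the generator $A[x]$ by a direct computation, while isolating the affineness and sifted-colimit-preservation of $R \mapsto \OO(\Mapsu(\filS, \Spec R))$ as the technical core --- is a reasonable reconstruction, and your fiberwise cohomology computations and the $T[-1]\mathbb A^1$ calculation on the associated graded are correct.

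However, there is a genuine gap in the filtered comparison. You verify (i) the underlying stacks agree for every affine $\Spec R$ and (ii) the associated gradeds agree on $A[x]$, and then assert that naturality yields ``the filtered equivalence for all $R$.'' This is a non-sequitur: a filtered object $F^\bullet C$ is a diagram, not merely the pair $(F^{-\infty}C, \gr C)$, and two filtered objects with equivalent underlying objects and equivalent associated gradeds need not be equivalent as filtered objects, let alone canonically so. You never produce a natural filtered map between the two sides, so there is nothing whose associated graded you could check to be an equivalence. The repair is the observation you are implicitly using but do not state: for $R = A[x]$ (or any smooth $A$-algebra) the HKR filtration on $\HH(R/A)$ \emph{is} the Postnikov filtration by Definition~\ref{definition: hkr filtration}, and its graded piece $L\Omega^t_{R/A}[t]$ sits in homological degree exactly $t$; once you also show that the filtration coming from $\filS$ is complete, exhaustive, and has graded pieces in those same degrees, it too is forced to be the Whitehead tower, which is a \emph{canonical} filtration, and the two coincide on the nose, not merely up to some unspecified equivalence. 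Making this degree-counting argument explicit is what turns the check on underlying and graded into a genuine identification of filtered functors. A smaller point: the strong monoidality you need to reduce to $A[x]$ does not come from the pinch comultiplication on $\filS$; $\Mapsu(\filS, -)$ preserves limits in the target for purely formal reasons, and what actually requires an input is the K\"unneth isomorphism $\OO(\mathcal X)\otimes\OO(\mathcal Y)\xrightarrow{\sim}\OO(\mathcal X\times\mathcal Y)$ for these mapping stacks --- which is again the affineness claim you already flagged.
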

Since \(\gr \HH(R/A) = \bigoplus_t L\Omega^t_{R/A}[t] = \Sym (L\Omega^1_{R/A}[1])\), which is the ring of functions on the shifted tangent bundle \(T[-1]\Spec(R)\),
\begin{corollary}\label{corollary: associated graded is shifted tangent bundle}
  For affine \(A\)-schemes \(\Spec(R)\), there is a natural equivalence of stacks
  \[
    \Mapsu(B\Gahat^\vee, \Spec(R)) \simeq T[-1]\Spec(R).
  \]
\end{corollary}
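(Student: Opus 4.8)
The plan is to deduce the corollary from Theorem~\ref{theorem: filtered circle and HKR} by passing to associated gradeds. For a filtered stack $\mathcal{Z} \to \mathbb{A}^1/\mathbb{G}_m$ the associated graded $\mathcal{Z}^{gr}$ is, by definition, the pullback along $0/\mathbb{G}_m \hookrightarrow \mathbb{A}^1/\mathbb{G}_m$. Since Theorem~\ref{theorem: filtered circle and HKR} asserts an equivalence $\Mapsu(\filS, \Spec R) \simeq \Spec \fhkr\HH(\Spec R/A)$ of stacks over $\mathbb{A}^1/\mathbb{G}_m$, applying $(-)^{gr}$ yields an equivalence between the two fibers over $0/\mathbb{G}_m$; the task is to identify each fiber.

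The right-hand fiber is immediate. Under the Rees correspondence between filtered $A$-modules and $\QCoh(\mathbb{A}^1/\mathbb{G}_m)$, pullback to $0/\mathbb{G}_m$ corresponds to passing to the associated graded module, and relative $\Spec$ commutes with this base change; hence the fiber of $\Spec \fhkr\HH(\Spec R/A)$ over $0/\mathbb{G}_m$ is $\Spec$ of $\gr \fhkr\HH(\Spec R/A)$. By the Hochschild--Kostant--Rosenberg theorem (recalled after Definition~\ref{definition: hkr filtration}), $\gr^t\fhkr\HH(\Spec R/A) \simeq L\Omega^t_{R/A}[t]$, so $\gr \fhkr\HH(\Spec R/A) \simeq \bigoplus_t L\Omega^t_{R/A}[t] \simeq \Sym_R(L\Omega^1_{R/A}[1])$, whose relative $\Spec$ over $\Spec R$ is by definition the shifted tangent bundle $T[-1]\Spec R$.

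For the left-hand fiber I would use that formation of mapping stacks commutes with base change in the source: writing $g: 0/\mathbb{G}_m \to \mathbb{A}^1/\mathbb{G}_m$, one has $g^*\Mapsu_{\mathbb{A}^1/\mathbb{G}_m}(\filS, \Spec R \times \mathbb{A}^1/\mathbb{G}_m) \simeq \Mapsu_{0/\mathbb{G}_m}(\filS^{gr}, \Spec R \times 0/\mathbb{G}_m)$. Now $\filS = B\Glhat^\vee$ with $\Glhat$ the filtered formal group with law $v + w + \lambda v w$; Cartier duality commutes with base change, and the associated graded of $\Glhat$ is $\Gahat$ (set $\lambda = 0$), so $\filS^{gr} = B\Gahat^\vee$. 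Forgetting the residual grading, i.e.\ base changing further along $\Spec A \to 0/\mathbb{G}_m$, identifies the left-hand fiber with $\Mapsu(B\Gahat^\vee, \Spec R)$. Combining the two identifications gives the stated equivalence, and naturality in $R$ is inherited from Theorem~\ref{theorem: filtered circle and HKR} together with the functoriality of $(-)^{gr}$.

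The one step meriting care is the base-change identification of the left-hand fiber: a priori one must know that the mapping stack out of $\filS$ relative to $\mathbb{A}^1/\mathbb{G}_m$ is tame enough for its fiber over $0/\mathbb{G}_m$ to compute $\Mapsu(\filS^{gr}, -)$. This is ensured by Theorem~\ref{theorem: filtered circle and HKR} itself, which exhibits the mapping stack as $\Spec$ of a filtered ring, hence as a relative affine scheme over $\mathbb{A}^1/\mathbb{G}_m$, for which base change is immediate; alternatively one appeals to the general compatibility of mapping stacks with base change in the source. Either way the argument is short and I anticipate no essential difficulty.
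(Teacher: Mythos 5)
Your proof is correct and follows the paper's own route: the corollary is obtained by passing to associated gradeds in Theorem~\ref{theorem: filtered circle and HKR}, identifying the right-hand fiber via $\gr \fhkr\HH(R/A) \simeq \Sym_R(L\Omega^1_{R/A}[1])$ and the left-hand fiber as $\Mapsu(B\Gahat^\vee, \Spec R)$ via base change and $\filS^{gr} = B\Gahat^\vee$. The paper leaves this deduction implicit (the displayed identity of associated gradeds just before the corollary is its entire justification), whereas you spell out the base-change step; as you note, that step is automatic since relative mapping stacks commute with arbitrary base change in the base, so no extra hypothesis is needed.
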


Theorem \ref{theorem: filtered circle and HKR} implies that we can study the HKR filtration, and thus the HKR spectral sequence, of an affine scheme by studying the filtered stack \(\filS\). As the Hochschild homology sheaf functor \(\HHs(-/A)\) satisfies Zariski descent \cite[Lemma 4.2]{BN12}, this analysis extends to schemes by Zariski descent.

\begin{remark}
  The mapping stack \(\Mapsu(\filS, \cX)\) does not compute the Hochschild homology of a general stack \(\cX\); instead, it computes a completion of the Hochschild homology of \(\cX\), as originally observed in characteristic zero by Ben-Zvi and Nadler \cite{BN12}.
  See Appendix \ref{appendix: loops on stacks} for more details.
  We study the discrepancy between the shifted tangent bundle and ${\Mapsu((\filS)^{gr}, \mathcal X)}$ in §\ref{subsection: shifted tangent}.
\end{remark}

\subsection{The filtered circle as a deformation of its special fiber}
\label{subsection: filtered circle as deformation}

In this section, we study the deformation theory of the filtered circle as a family over \(\mathbb A^1/\mathbb G_m\). The special fiber of \(\filS\) is \(B\Gahat^\vee\), so we can view \(\filS\) as a graded deformation of \(B\Gahat^\vee\). Splittings of the HKR filtration will arise from isomorphisms of $\filS$ with $B\Gahat^\vee$ over substacks of $\mathbb{A}^1/\mathbb{G}_m$.

A full splitting of the HKR filtration, natural in affine derived \(A\)-schemes, is equivalent to an isomorphism of \(\Ghat_a\) with \(\Ghat_m\) \cite{Rob23}. Such an isomorphism is necessarily an exponential map, which does not exist in characteristic \(p\) because of the denominators in the Taylor expansion of the exponential. However, the truncated exponential \(1 + u + u^2/2 + \cdots + u^{p-1}/(p-1)!\) makes sense since \(A\) is a \(\Z_{(p)}\)-algebra.

\begin{theorem}\label{theorem: initial-split}
  There is a splitting to order \(p-2\) of the HKR filtration on \(\HHs(\Spec(R)/A)\), natural in \(\Spec(R)\).
\end{theorem}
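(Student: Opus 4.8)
The plan is to construct the splitting on the filtered circle itself and then transport it to Hochschild homology via Theorem~\ref{theorem: filtered circle and HKR}. The key point is that, although the exponential isomorphism $\Gahat \simeq \Ghat_m$ trivializing $\Glhat$ does not exist in characteristic $p$, its truncation does exist modulo $\lambda^{p-1}$. Since the logarithm (resp.\ exponential) of $\Glhat$ over $A[\lambda]$ is $\log_{\Glhat}(v) = \lambda^{-1}\log(1+\lambda v) = \sum_{n\ge 1}(-1)^{n-1}\lambda^{n-1}v^n/n$ (resp.\ $\exp_{\Glhat}(t)=\lambda^{-1}(e^{\lambda t}-1)=\sum_{n\ge 1}\lambda^{n-1}t^n/n!$), reducing modulo $\lambda^{p-1}$ kills every term with $n\ge p$, so only the denominators $1,2,\dots,p-1$ (resp.\ $1!,\dots,(p-1)!$) survive; these are units in $\mathbb Z_{(p)}$, hence in $A$. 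So over the $(p-2)$-nd order neighborhood $\Spec(A[\lambda]/\lambda^{p-1})/\mathbb G_m$ one obtains a well-defined map $\overline{\exp}\colon \Gahat\otimes_A A[\lambda]/\lambda^{p-1}\to \rho_{p-2}^\ast\Glhat$. It is homogeneous of weight $1$ because each monomial $\lambda^{n-1}t^n$ has total weight $-(n-1)+n=1$, and it is a homomorphism of formal groups: the identity $\exp_{\Glhat}(s+t)=\exp_{\Glhat}(s)+_{\Glhat}\exp_{\Glhat}(t)$ holds over $\mathbb Q[\lambda]$, and its reduction modulo $\lambda^{p-1}$ involves only $p$-integral coefficients on both sides, hence descends to $A[\lambda]/\lambda^{p-1}$. (For $p=2$ the neighborhood is $\Spec A/\mathbb G_m$ and the statement is vacuous, consistent with splitting to order $0$ being a null statement.)

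Next I would Cartier-dualize and take classifying stacks. Relative Cartier duality commutes with base change, so $\overline{\exp}$ induces an isomorphism $\rho_{p-2}^\ast\Glhat^\vee\simeq \Gacheck\otimes_A A[\lambda]/\lambda^{p-1}$ over $\Spec(A[\lambda]/\lambda^{p-1})/\mathbb G_m$, and applying $B(-)$ gives an isomorphism
\[
\rho_{p-2}^\ast\, \filS \;\simeq\; (\filS)^{gr}\times_{0/\mathbb G_m}\Spec(A[\lambda]/\lambda^{p-1})/\mathbb G_m
\]
of stacks over the $(p-2)$-nd order neighborhood; this is precisely a splitting to order $p-2$ of the filtered stack $\filS$, in the sense dual to the Definition in §\ref{subsection: filtrations}.

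Finally I would push this through the mapping-stack functor. For an affine $A$-scheme $\Spec R$, apply $\Mapsu_{\mathbb A^1/\mathbb G_m}(\filS,\Spec R\times\mathbb A^1/\mathbb G_m)$ and restrict the base along $\rho_{p-2}$. Since forming mapping stacks commutes with base change of the base stack, and since $\Mapsu(B\Gacheck,\Spec R)\simeq T[-1]\Spec R$ computes the associated graded (Corollary~\ref{corollary: associated graded is shifted tangent bundle}), the splitting of $\filS$ yields an equivalence of filtered stacks $\rho_{p-2}^\ast\Mapsu(\filS,\Spec R)\simeq \Mapsu(B\Gacheck,\Spec R)\times_{0/\mathbb G_m}\Spec(A[\lambda]/\lambda^{p-1})/\mathbb G_m$, natural in $R$. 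Taking global functions and invoking Theorem~\ref{theorem: filtered circle and HKR}, which identifies $\Mapsu(\filS,\Spec R)$ with $\Spec\fhkr\HH(\Spec R/A)$, converts this into the required splitting to order $p-2$ of $\fhkr\HHs(\Spec R/A)$; naturality in $\Spec R$ is automatic because the isomorphism $\overline{\exp}$ of formal groups does not depend on $R$.

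I expect the main obstacle to be the bookkeeping that turns the stack-level splitting into the module-level one: one must check carefully that restriction along $\rho_{p-2}$ commutes with $\Mapsu(\filS,-)$ and with passage to (relative) global functions, and track the $\mathbb G_m$-weights so that the resulting $\sigma$ is genuinely an equivalence in $\QCoh(\Spec(A[\lambda]/\lambda^{p-1})/\mathbb G_m)$ matching the Definition, including whatever coherence is bundled into the $\rho_n$-formulation. The truncated-exponential computation itself — $p$-integrality of the surviving coefficients, homogeneity, and persistence of the homomorphism identity modulo $\lambda^{p-1}$ — is elementary, and the only structural input is Theorem~\ref{theorem: filtered circle and HKR}, which is quoted.
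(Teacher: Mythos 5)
Your proof is correct and follows the same route as the paper: use the truncated rescaled exponential $E_\lambda(u)=\sum_{n=1}^{p-1}\lambda^{n-1}u^n/n!$ to trivialize $\Glhat$ over $A[\lambda]/\lambda^{p-1}$, split $\filS$ to order $p-2$ by Cartier duality and $B(-)$, and push through $\Mapsu(\filS,-)$ via Theorem~\ref{theorem: filtered circle and HKR}. The paper's proof is terser and leaves implicit the points you spell out (the $p$-integrality/homomorphism check modulo $\lambda^{p-1}$ and the compatibility of Cartier duality and mapping stacks with base change), but the content is the same.
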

\begin{proof}
  Since \((p-1)!\) is invertible in \(A\), we can write the truncated rescaled exponential
  \[
    E_\lambda(u) = \sum_{n=1}^{p-1} \frac{\lambda^{n-1}u^n}{n!} \in A[\lambda]/(\lambda^{p-1})\series{u}.
  \]
  The truncated exponential \(E_\lambda\) defines an isomorphism of \(\rho_{p-2}^*\Glhat\) with \(\Ghat_a\) over \(A[\lambda]/\lambda^{p-1}\).
  This also provides a splitting of \(\filS\) to order \(p-2\), which provides a natural splitting of \(\fhkr\) by Theorem \ref{theorem: filtered circle and HKR}.
\end{proof}

The remainder of this section is the calculation of the deformation class of $\filS$ at order $p-1$, that is, along the square-zero extension 
\[
  0 \to (\lambda^{p-1})/(\lambda^p) \to A[\lambda]/(\lambda^p) \to A[\lambda]/(\lambda^{p-1})\to 0.
\]
In §\ref{subsection: proof of A}, that calculation will be used to prove Theorem \ref{maintheorem: differential}. 
For by Theorem \ref{theorem: initial-split}, there is a natural splitting of the HKR filtration to order $p-2$, which by §\ref{subsection: filtrations} gives an extension \(L\Omega^\bullet[\bullet] \to L\Omega^{\bullet + p-1}[\bullet + p]\) of functors. 
That extension will be induced by the deformation class associated to $\filS$ at order $p-1$.
%Since the filtration is multiplicative, this extension is induced by an extension \(L\Omega^1 \to L\Omega^p[p]\) of functors. 
%As the filtration and its splitting are induced by the filtered circle, so is this extension, which corresponds to the deformation class of \(\filS\) at order $p-1$, that is, along the square-zero extension 
%The remainder of this section is devoted to this class.

To describe the deformation theory of \(\filS\), we consider deforming the classifying stack of a group scheme.
Let \(G/\Spec S\) be a group scheme with identity section \(e\).
The cotangent complex \(L\Omega^1_{BG/S}\) of the classifying stack \(BG\) is equivalent to \(\ell_{G/S}[-1]\) where \(\ell_{G/S} := e^*L\Omega^1_{G/S}\) is the co-Lie complex of \(G\).
\footnote{This observation essentially goes back to Illusie \cite[Chapter VII]{Ill72}. For a modern perspective, see e.g.\ \cite[Example A.3.9]{KP24}.}
If $0 \to I \to \tilde S \to S \to 0$ is a square-zero extension, then deformations of the group scheme $G$ to $\Spec \tilde S$, if they exist, are a torsor over 
\[ \Ext^1_{BG}(L\Omega^1_{BG/S},I) = \Ext^2_{BG}(\ell_{G/S},I)\]
\cite[VII, Theorem 3.2.1]{Ill72}.
In our situation, $G = \Gahat^\vee$ and $S = A[\lambda]/(\lambda^{p-1})$. The group $G$ admits a lift to $\tilde S = A[\lambda]/(\lambda^p)$, namely $\Gahat^\vee$. This provides a basepoint for the torsor of lifts, and so we may associate to $\Glhat^\vee$ a class 
\[ [\Glhat] \in \Ext^2_{B\Gahat^\vee}(\ell_{\Gahat^\vee}, (\lambda^{p-1})/(\lambda^p)).
\] 
The main result of this section is Theorem \ref{theorem: first formula for deformation class}, which gives a formula for this class modulo $p$.

We begin by calculating the filtered co-Lie complex of \(\Glhat^\vee\) when \(A = \Z_{(p)}\).
It is possible to calculate the co-Lie complex of \(\Gacheck\) directly from the definition as the Cartier dual of \(\Ghat_a\)\footnote{The deformation class $[\Glhat^\vee]$ may also be computed directly using the formula \cite[Proposition D.2.2]{Dri24group}
\[ \Glhat^\vee = \Spec A[\lambda]\left[\lambda^n\binom{x/\lambda}{n} \mid n \geq 0 \right] ,\]
from which it follows that $\Glhat^\vee|_{\Fp[\lambda]/(\lambda^p)} = \Spec \Fp[\lambda][x_0,x_1,x_2\ldots]/(\lambda^p, x_0^p - \lambda^{p-1}x_0, x_1^p,x_2^p,\ldots)$. }.
However, in the direct approach, it is difficult to calculate the deformation class of \(\Glhat^\vee\). Instead, there is another approach to the formal group \(\Glhat\) using the group scheme of \(p\)-typical Witt vectors \(W\), discovered by Sekiguchi and Suwa \cite{SS01}. Moulinos, Robalo, and To\"en took this approach when they introduced the filtered circle \cite{MRT22}.

We recall standard properties of the Witt vectors, following the notation of \cite{SS01}. Let \(W\) be the \(p\)-typical Witt vector ring scheme over \(\Z\). As a scheme, \(W = \Spec \Z[T_0,T_1,T_2,\ldots]\). The ring structure of \(W\) is uniquely defined by requiring that the ghost maps
\[
  \Phi_i: W \to \mathbb A^1, \qquad \Phi_i(T_0,T_1,\ldots) = T_0^{p^i} + pT_1^{p^{i-1}} + \cdots + p^i T_i
\]
are ring homomorphisms.
A point of \(W\) is denoted \((x_0,x_1,\ldots)\).
The Witt vector scheme \(W\) has two endomorphisms \(F\) and \(V\), the Witt vector Frobenius and Verschiebung.
There is a multiplicative map \([-]: \mathbb{A}^1 \to W\) sending \(a\) to \((a,0,0,\ldots)\), called the \emph{multiplicative lift}.
Let \(\hat{W}\) be the formal Witt vector group, the completion of \(W\) at the identity.
\begin{proposition}[Sekiguchi-Suwa \cite{SS01}]
  \label{proposition: sekiguchi-suwa isomorphism}
  There is a short exact sequence
  % https://q.uiver.app/#q=WzAsNSxbMCwwLCIwIl0sWzEsMCwiXFxHbGhhdF5cXHZlZSJdLFsyLDAsIlciXSxbMywwLCJXIl0sWzQsMCwiMCJdLFswLDFdLFsxLDJdLFsyLDMsIkYtW1xcbGFtYmRhXntwLTF9XSJdLFszLDRdXQ==
  \[
    \begin{tikzcd}[column sep = large]
      0 & {\Glhat^\vee} & W & W & 0
      \arrow[from=1-1, to=1-2]
      \arrow[from=1-2, to=1-3]
      \arrow["{F-[\lambda^{p-1}]}", from=1-3, to=1-4]
      \arrow[from=1-4, to=1-5]
    \end{tikzcd}
  \]
  of filtered group schemes over \(\Z_{(p)}\). Further, the resulting isomorphism 
  \(\rho_{p-2}^* \Glhat^\vee \cong \Gahat^\vee\)
  agrees with the Cartier dual of the truncated exponential
  \[ E_\lambda(u) = u + \lambda \frac{u^2}{2!} + \cdots + \lambda^{p-2}\frac{u^{p-1}}{(p-1)!}.\]
\end{proposition}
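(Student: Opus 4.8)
The first assertion is the main construction of Sekiguchi--Suwa \cite{SS01}, made $\mathbb{G}_m$-equivariant as in \cite{MRT22}; I would organize the argument around Cartier duality, since that is what makes the kernel computation conceptual. All objects live over $\mathbb{Z}_{(p)}[\lambda]$. Recall that the Witt scheme $W = \Spec \mathbb{Z}[T_0,T_1,\dots]$ is Cartier dual to the formal Witt group $\hat{W}$ under the Artin--Hasse exponential pairing, a pairing exchanging the Witt Frobenius $F$ with the Verschiebung $V$ and self-adjoint for multiplication by any fixed Witt vector; in particular multiplication by $[\lambda^{p-1}]$ is carried to itself. Since Cartier duality is an exact contravariant equivalence on the relevant categories, producing the short exact sequence
\[ 0 \longrightarrow \Glhat^\vee \longrightarrow W \xrightarrow{\ F-[\lambda^{p-1}]\ } W \longrightarrow 0 \]
is equivalent to producing the Cartier-dual sequence
\[ 0 \longrightarrow \hat{W} \xrightarrow{\ V-[\lambda^{p-1}]\ } \hat{W} \longrightarrow \Glhat \longrightarrow 0,\]
that is, to identifying $\Glhat$ with $\coker\!\big(V-[\lambda^{p-1}]\colon \hat{W}\to \hat{W}\big)$.

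For this I would, following \cite{SS01}, write down an explicit homomorphism $q\colon \hat{W}\to \Glhat$ by a universal formula built from the truncated exponential $E_\lambda(u)=u+\lambda u^2/2!+\dots+\lambda^{p-2}u^{p-1}/(p-1)!$: after inverting $\lambda$, $q$ becomes a twist of the classical Artin--Hasse homomorphism $\hat{W}\to \Ghat_m$, and the denominators obstructing an integral lift of that map are precisely absorbed by the $\lambda^{p-1}$-twist in $V-[\lambda^{p-1}]$. A computation on ghost coordinates then gives $q\circ(V-[\lambda^{p-1}])=0$, so $q$ descends to $\bar q\colon \coker(V-[\lambda^{p-1}])\to \Glhat$; one normalizes $q$ so that modulo $\lambda$ it is the $0$th Witt component. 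Injectivity of $V-[\lambda^{p-1}]$ is immediate: comparing Witt components in $(V-[\lambda^{p-1}])(w)=0$, and using that $\lambda$ is a non-zero-divisor of $\mathbb{Z}_{(p)}[\lambda][[T_0,T_1,\dots]]$, forces $w_0=w_1=\dots=0$.

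The main obstacle is proving $\bar q$ is an isomorphism. I would filter everything $\lambda$-adically: $W$, $\hat{W}$, $\Glhat$ are $\lambda$-torsion-free and $\lambda$-adically complete, so it suffices to check modulo $\lambda$. There $[\lambda^{p-1}]\equiv 0$, the map $V-[\lambda^{p-1}]$ becomes $V$, whose cokernel is $\Gahat$ via the $0$th Witt component, and by the normalization $\bar q$ reduces to the identity; a Nakayama/completeness argument then upgrades this to an isomorphism over $\mathbb{Z}_{(p)}[\lambda]$. (Equivalently, modulo $\lambda$ the whole sequence degenerates to the Artin--Schreier--Witt-type presentation $0\to \Gacheck\to W\xrightarrow{F}W\to 0$ of the Cartier dual of $\Gahat$, i.e.\ the special fiber of $\filS$ from \cite{MRT22}.) A second, more technical point is the $\mathbb{G}_m$-equivariance: one must weight $W$, hence $\hat{W}$, so that Witt addition is homogeneous and $F-[\lambda^{p-1}]$ is graded. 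Here $F$ naturally lands in a Frobenius twist of the grading, and the exponent $p-1$ on $\lambda$ is forced precisely so that multiplication by $[\lambda^{p-1}]$ absorbs that twist; I expect this bookkeeping, carried out in \cite{MRT22}, to be the most delicate part of the write-up.

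Finally, for the compatibility statement, set $R=\mathbb{Z}_{(p)}[\lambda]/\lambda^{p-1}$. Since $\lambda^{p-1}=0$ in $R$ we have $[\lambda^{p-1}]=0$ in $W(R)$, so reduction modulo $\lambda^{p-1}$ carries the exact sequence to the base change along $\mathbb{Z}_{(p)}\to R$ of $0\to \Gacheck\to W\xrightarrow{F}W\to 0$; the induced identification $\Glhat^\vee\otimes R\simeq \Gacheck\otimes R$ is ``the resulting trivialization,'' and by construction it restricts to the identity at $\lambda=0$. On the other hand, by the proof of Theorem~\ref{theorem: initial-split}, $E_\lambda$ is a graded isomorphism $\rho_{p-2}^{*}\Glhat\xrightarrow{\sim}\Gahat$, and its Cartier dual is a graded trivialization $\Gacheck\otimes R\xrightarrow{\sim}\Glhat^\vee\otimes R$ which is also the identity at $\lambda=0$, since $E_0(u)=u$. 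The two trivializations therefore differ by a graded automorphism of $\Gahat$ over $R$. But $p$ is a non-zero-divisor in $R$, so every additive power series over $R$ is linear, $u\mapsto cu$; homogeneity of weight $1$ forces $c$ into the weight-$0$ part $\mathbb{Z}_{(p)}^{\times}$, which is constant in $\lambda$, and agreement at $\lambda=0$ then forces $c=1$. Hence the trivializations coincide.
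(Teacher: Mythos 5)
The paper itself does not prove this Proposition: it cites Sekiguchi--Suwa (and Drinfeld's \cite{Dri24group}) for the short exact sequence, and then constructs the map $\Glhat^\vee \to W$ explicitly---equivalently, the Cartier-dual map $\Psi\colon \hat W/(V-[\lambda^{p-1}])\hat W \to \Glhat$---via ghost coordinates and the Artin--Hasse exponential, and reads off the compatibility with $E_\lambda$ from the one-line congruence $\Psi(T_0,T_1,\ldots) \equiv T_0 + \lambda T_0^2/2! + \cdots + \lambda^{p-2}T_0^{p-1}/(p-1)! \bmod \lambda^{p-1}$. Your plan is more ambitious: you sketch a proof of exactness from scratch, and you settle the compatibility by a rigidity argument instead of a direct calculation. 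The two routes are aligned at the key step: your $q$ is the paper's $\Psi$, and the ghost-coordinate verification is identical. Your rigidity argument for the compatibility is correct and cleaner in that it avoids inspecting Artin--Hasse coefficients, though it relies on the normalization ``$\bar q$ mod $\lambda$ is the $0$th Witt component,'' which the paper's computation makes visible directly. The genuine soft spots in your sketch are the ones the paper avoids by citing \cite{SS01}: (i) Cartier duality between the infinite-dimensional unipotent affine group scheme $W$ and the formal group $\hat W$, and the claim that it is an exact contravariant equivalence taking kernels to cokernels, is not formal and requires care with topologies and the exactness of the Artin--Hasse pairing; (ii) the $\lambda$-adic Nakayama step needs the cokernel sheaf $\hat W/(V-[\lambda^{p-1}])\hat W$ to be $\lambda$-torsion-free and $\lambda$-adically complete, which is not automatic for fppf-sheaf quotients and would have to be established. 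Both points are substantive, and filling them in would essentially reproduce the proof in \cite{SS01} or \cite{Dri24group}; the plan is a reasonable outline of that proof, not a shortcut around it.
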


\begin{proof}
The isomorphism \(\Ghat_\lambda^\vee \cong \ker(F - [\lambda^{p-1}]:W \to W)\) is also proved by Drinfeld in \cite[Proposition D.4.10]{Dri24group}. In Drinfeld's notation, \(G^! = \Glhat^\vee\) and \(G^{!?} = \ker(F - [\lambda^{p-1}]:W \to W)\).
We do not give a new proof of this isomorphism here, but we do give a formula for the map \(\Glhat^\vee \to W\).

It suffices to give a formula for the Cartier dual map \(W^\vee \to \Glhat\). Now \(W^\vee \cong \hat W\), the formal Witt vector group, via the Artin-Hasse-Witt pairing. Under this pairing, \(F\) and \(V\) are dual, while multiplication by an element of \(W\) is self-dual. Thus, it suffices to define a map
\[ \hat W / (V - [\lambda^{p-1}])\hat W \to \Glhat.\]
Recall that the ghost maps \(\Phi_i: W \to \mathbb A^1\) give an isomorphism \(W_{\mathbb Q} \cong \prod_{i \geq 0} \mathbb A^1_{\mathbb Q}\) and thus an isomorphism \(\hat W_{\mathbb Q} \cong \bigoplus_{i \geq 0} (\Gahat)_{\mathbb Q}\). In terms of the ghost coordinates,
\(V^*\Phi_i = p \Phi_{i-1},\)
while
\([a]^*\Phi_i = \Phi_i([a])\Phi_i = a^{p^i} \Phi_i.\)
Hence
\[ \hat W_{\mathbb Q}/(V -[\lambda^{p-1}])\hat W_{\mathbb Q} \cong (\Gahat)_{\mathbb Q}\]
via the map
\[ (T_0,T_1,\ldots) \mapsto \Phi_0 + \frac{\lambda^{p-1} \Phi_1}{p} + \frac{\lambda^{p^2-1}\Phi_2}{p^2} + \cdots.\]
Since every homomorphism \(\Ghat_a \to \Glhat\) is of the form \(u \mapsto (\exp(c \lambda u) - 1)/\lambda\) for some constant \(c\), the map \(\hat W_{\mathbb Q} \to (\Glhat)_{\mathbb Q}\) must be of the form
\[
  \frac{ \exp\left( c \lambda \left( \Phi_0 +
  \frac{\lambda^{p-1}\Phi_1}{p}  + \frac{\lambda^{p^2-1}\Phi_2}{p^2} + \cdots \right)\right) -1}{\lambda}.
\]
Observe that
\[ \sum_{i=0}^\infty \frac{\lambda^{p^i}\Phi_i}{p^i} = \sum_{j=0}^\infty \sum_{r=0}^\infty (\lambda^{p^j}T_j)^{p^r}{p^r},\]
and so
\begin{align*}
  \Lambda&: \left(\hat W / (V -[\lambda^{p-1}])\hat W\right)_{\mathbb Q} \to (\Glhat)_{\mathbb Q} \\
  \Lambda&(T_0,T_1,\ldots) = \frac{\prod_{j=0}^\infty  \exp(\sum_{r \geq 0} \frac{(\lambda^{p^j}T_j)^{p^r}}{p^r}) - 1}{\lambda}
\end{align*}
is a homomorphism.
The power series \(\exp(\sum_{r \geq 0} T^{p^r}/p^r)\) is the famous Artin-Hasse exponential and has coefficients in \(\Z_{(p)}\). Thus \(\Lambda \in \Z_{(p)}[\lambda]\series{T_0,T_1,\ldots}\) also, and thus \(\Lambda\) defines our homomorphism \(\Lambda: \hat W/ (V - [\lambda^{p-1}])\hat W \to \Glhat\) over \(\Z_{(p)}[\lambda]\).
Furthermore, \(\Lambda\) agrees with the canonical projection \(\hat W /V \hat W \cong \Gahat\) at \(\lambda = 0\).
The morphism \(\Lambda\) is the only homomorphism \(\hat W /(V - [\lambda^{p-1}])\hat W \to \Glhat\) which agrees with the canonical projection \(\hat W / V \hat W \to \Gahat\), as both sides are flat over \(\Z_{(p)}\) and \(\Lambda\) is the unique such homomorphism over \(\mathbb Q\). Finally,
\[ \Lambda(T_0,T_1,\ldots) \equiv T_0 + \lambda\frac{ T_0^2}{2!} + \cdots + \lambda^{p-2}\frac{T_0^{p-1}}{(p-1)!} \mod \lambda^{p-1},\]
so that the map \(\hat W \to \Glhat\) modulo \(\lambda^{p-1}\) is the truncation \(\hat W \to \hat W /V\hat W = \Gahat\) followed by the truncated exponential.
\end{proof}

%\begin{proof}[Sketch of proof]
%  Let \(e_p\) be the Artin-Schreier exponential
%  \[ e_p(T) = \exp\left(\sum_{r=1}^\infty \frac{T^{p^r}}{p^r}\right) \in \Z_{(p)}\series{T},\]
%  and let \(E\) be the Artin-Schreier-Witt exponential
%  \[ E((x_0,x_1,x_2,\ldots)) = \prod_{i=0}^\infty e_p(x_i^{p^i}).\]
%  Then \(W\) and \(\hat W\) are Cartier dual to each other over \(\Z_{(p)}\) via the Artin-Schreier-Witt pairing \(x,y \mapsto E(xy)\).
%  The endomorphisms \(V\) and \(F\) of \(W\) and \(\hat W\) are interchanged under Cartier duality. At \(\lambda = 0\), we find
%  \[
%    \ker(F: W \to W) \cong (\hat W / V\hat W)^\vee \cong \hat{\mathbb G}_a
%  \]
%  by the zeroth coordinate map \(T_0: \hat W \to \hat{\mathbb G}_a\). Sekiguchi and Suwa found a filtered deformation \(E_\lambda\) of this pairing between \(\Glhat\) and the kernel of \(F - [\lambda^{p-1}]\).
%  Their deformed pairing modulo \(\lambda^{p-1}\) maps \(\hat{W}/V\hat{W} \to \Glhat\) by \(\lambda^{-1}(\exp(\lambda T_0) - 1)\).
%\end{proof}

Using Proposition \ref{proposition: sekiguchi-suwa isomorphism}, we can compute the co-Lie complex of \(\Glhat^\vee \to \mathbb A^1/\mathbb G_m\).
\begin{corollary}\label{corollary: co-Lie of Ga dual}
  The filtered co-Lie complex of \(\Glhat^\vee\) over \(\Z_{(p)}\) is given by
  \begin{align*}
    \ell_{\Glhat^\vee} &\simeq \left(
      % https://q.uiver.app/#q=WzAsMixbMCwwLCJcXGVsbF9XIl0sWzIsMCwiXFxlbGxfVyJdLFswLDEsImQoRiAtIFtcXGxhbWJkYV57cC0xfV0pIl1d
      \begin{tikzcd}[ampersand replacement=\&]
        {\ell_W} \&\& {\ell_W}
        \arrow["{d(F - [\lambda^{p-1}])}", from=1-1, to=1-3]
      \end{tikzcd}
    \right).
  \end{align*}
\end{corollary}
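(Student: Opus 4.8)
The plan is to read off the filtered co-Lie complex of $\Glhat^\vee$ directly from the short exact sequence of Proposition \ref{proposition: sekiguchi-suwa isomorphism}, by passing to classifying stacks and feeding the resulting Cartesian square into the base-change and transitivity properties of the cotangent complex. Since the sequence $0 \to \Glhat^\vee \to W \xrightarrow{F-[\lambda^{p-1}]} W \to 0$ is short exact, $\Glhat^\vee = \ker(F-[\lambda^{p-1}])$ and $F-[\lambda^{p-1}]$ is an fpqc epimorphism, so applying the relative classifying-stack functor over $\mathbb A^1/\mathbb G_m$ produces a fiber sequence of filtered stacks, i.e. a Cartesian square
\[ B\Glhat^\vee \simeq BW \times_{BW} (\mathbb A^1/\mathbb G_m), \]
in which the lower horizontal map is $B(F-[\lambda^{p-1}])$ and the right vertical map is the basepoint. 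Working with $B\Glhat^\vee$ rather than $\Glhat^\vee$ itself is convenient: this square is genuinely Cartesian as a diagram of prestacks (the loop space of a fiber of connected stacks recovers the honest kernel of $0$-truncated sheaves), so base change for the cotangent complex applies with no flatness hypothesis on $F-[\lambda^{p-1}]$.

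The second step is to compute the cotangent complex of $BW$. As a scheme over $\mathbb A^1/\mathbb G_m$, $W$ is the graded infinite-dimensional affine space $\Spec \Z_{(p)}[\lambda][T_0,T_1,\dots]$; since the cotangent complex commutes with the filtered colimit over finitely many coordinates, $L\Omega^1_W$ is the free module $\bigoplus_i \OO_W\, dT_i$ concentrated in degree $0$. In particular the co-Lie complex $\ell_W = e^*L\Omega^1_W$ is a flat graded module in degree $0$, it is pulled back from $\mathbb A^1/\mathbb G_m$, and by the identification $L\Omega^1_{BG} \simeq \ell_G[-1]$ recalled above we get $L\Omega^1_{BW} \simeq \ell_W[-1]$, again pulled back from $\mathbb A^1/\mathbb G_m$.

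Now I combine the two. Base change along the basepoint in the Cartesian square identifies $L\Omega^1_{B\Glhat^\vee}$ with the pullback of the cotangent complex of the map $B(F-[\lambda^{p-1}]): BW \to BW$; as every cotangent complex in sight is pulled back from $\mathbb A^1/\mathbb G_m$, this pullback is harmless. The transitivity triangle for $BW \xrightarrow{B(F-[\lambda^{p-1}])} BW \to \mathbb A^1/\mathbb G_m$ then gives
\[ L\Omega^1_{B\Glhat^\vee} \simeq \cofib\big(\ell_W[-1] \xrightarrow{\,d(F-[\lambda^{p-1}])[-1]\,} \ell_W[-1]\big), \]
where $d(F-[\lambda^{p-1}]): \ell_W \to \ell_W$ is the map of co-Lie complexes induced by the endomorphism $F-[\lambda^{p-1}]$ of $W$ — equivalently, its differential at the unit section, which is defined because $F-[\lambda^{p-1}]$ fixes the unit. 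Desuspending via $\ell_{\Glhat^\vee} = L\Omega^1_{B\Glhat^\vee}[1]$ yields
\[ \ell_{\Glhat^\vee} \simeq \cofib\big(\ell_W \xrightarrow{\,d(F-[\lambda^{p-1}])\,} \ell_W\big), \]
which is exactly the two-term complex in the statement. Every functor used is natural and compatible with the grading, so this is an equivalence of filtered complexes.

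I expect the argument to be essentially formal once Proposition \ref{proposition: sekiguchi-suwa isomorphism} is available; the only real work is bookkeeping. One must track the degree shift between $L\Omega^1_{BG}$ and $\ell_G$, keep straight the direction of the induced map on co-Lie complexes (pullback of Kähler differentials runs from the co-Lie complex of the target to that of the source, which here happen to coincide), and remember that $W$, although not of finite type, still has cotangent complex concentrated in degree $0$, so that the transitivity triangle genuinely produces the asserted two-term complex rather than something with spurious extra homology. One could instead argue directly with the closed immersion $\Glhat^\vee \hookrightarrow W$ and its conormal complex, but that route additionally demands checking that $F-[\lambda^{p-1}]$ is flat so that the scheme-theoretic kernel agrees with the derived fibre product; routing through classifying stacks removes this point.
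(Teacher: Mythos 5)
Your argument is correct and follows essentially the same route as the paper: both read the two-term complex off the Sekiguchi--Suwa short exact sequence via the fiber sequence of co-Lie complexes $u^*\ell_G \to \ell_F \to \ell_K$ attached to a short exact sequence $0 \to K \to F \to G \to 0$ of group schemes. The paper obtains that fiber sequence by citing \cite[Proposition 3.1.1.5]{Ill72} and then applies it, whereas you rederive the same statement via base change and transitivity for the Cartesian square of classifying stacks --- the same underlying mathematics, spelled out rather than cited.
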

\begin{proof}
  Given a short exact sequence of groups \(0 \to K \to H \overset{u}{\to} G \to 0\), the associated fiber sequence of cotangent complexes gives a fiber sequence \(u^*\ell_G \to \ell_H \to \ell_K \to^{+1}\) of \(H\)-representations \cite[VII, Proposition 3.1.1.5]{Ill72}. Apply this to Proposition \ref{proposition: sekiguchi-suwa isomorphism}.
\end{proof}

Now that we have calculated the co-Lie complex of $\Glhat$, to find the deformation class 
\[[\Glhat^\vee] \in \Ext^2_{B\Gahat^\vee/\Fp}(\ell_{\Gahat^\vee/\Fp}, \Fp \otimes (\lambda^{p-1})/(\lambda^p)),\]
we must compute extensions in the category of \(\Gacheck\)-representations.
The following Lemma describes representations of \(\Gacheck\) by Cartier duality.
Here it is more convenient to work with \(\Gacheck\) by hand.
If \(\OO(\Ghat_a) = A\series{u}\), then \(\OO(\Ghat_a)\) has a basis of divided powers \(\{s^{(i)} \mid i \geq 0\}\) dual to the basis of powers of \(u\).
\footnote{The description \(\OO(\Gahat^\vee) = A\{s^{(i)} \mid i \geq 0\}\) shows \(\Gahat^\vee\) is the group scheme \(\mathbb G_a^\sharp\), the divided power neighborhood of the identity in \(\mathbb G_a\). This group scheme appears in the theory of prismatic cohomology \cite{Dri24prism}, and the computation of its cohomology after Lemma \ref{lemma: category of Ga check reps} is similar to computations for \(\mathbb G_m^\sharp\) in \cite[§3.5]{BL22}.}
There is a derivation \(\Theta\) of \(\OO(\Gacheck)\) which sends \(s^{(i)}\) to \(s^{(i-1)}\). 
Under linear duality, \(u:\OO(\Gacheck) \to A\) is exactly \(\Theta\) followed by the counit of \(\Gacheck\).
In terms of Proposition \ref{proposition: sekiguchi-suwa isomorphism}, \(\Theta\) is the restriction of \(\partial/\partial T_0\) from \(W\) to \(W[F]\).

\begin{lemma}\label{lemma: category of Ga check reps}
  Let \(S\) be an animated ring.
  The functor
  \[ \QCoh(B\Gacheck/S) \to \Mod_{S[\theta]}\]
  assigning an \(S\)-linear representation \(M\) of \(\Gacheck\) to the infinitesimal action of \(\Theta \in \Lie(\Gacheck)\)
  is fully faithful. The essential image of the functor is \(\Mod_{S[\theta]}^{loc.nilp.}\), the full subcategory of those \(M\) where \(\theta\) acts locally nilpotently on all cohomology groups.
  The functor is symmetric monoidal with respect to the convolution monoidal structure
  \[ \theta_{M \otimes M'} \simeq \theta_M \otimes 1 + 1 \otimes \theta_{M'}\]
  on \(\Mod_{S[\theta]}\).
\end{lemma}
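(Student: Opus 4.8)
The plan is to realize $\QCoh(B\Gacheck/S)$ as a category of comodules and then cut it down to finite-dimensional data. First I would compute $\OO(\Gacheck)$ by Cartier duality: since $\OO(\Gahat) = S\series{u}$ carries the additive comultiplication, its continuous $S$-linear dual is the divided power Hopf algebra $\Gamma_S[s] = \bigoplus_{i\ge 0} S\, s^{(i)}$, where $s^{(i)}$ is dual to the monomial $u^i$, so that $s^{(i)}s^{(j)} = \binom{i+j}{i}s^{(i+j)}$, $\Delta(s^{(n)}) = \sum_{a+b=n} s^{(a)}\otimes s^{(b)}$, and $\epsilon(s^{(i)}) = \delta_{i,0}$. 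In particular $\OO(\Gacheck)$ is free over $S$, so $\Spec S \to B\Gacheck$ is a faithfully flat affine cover and Lurie's comonadicity theorem gives $\QCoh(B\Gacheck/S) \simeq \mathrm{coMod}_{\OO(\Gacheck)}(\Mod_S)$, comonadic over $\Mod_S$ with comonad $-\otimes_S\OO(\Gacheck)$. Unwinding this, the functional $\epsilon\circ\Theta\colon \OO(\Gacheck)\to S$ extracts the coefficient of $s^{(1)}$, and the functor of the Lemma sends a comodule $(M,\rho)$ to $M$ together with $\theta_M := (\id_M\otimes(\epsilon\circ\Theta))\circ\rho$, which is precisely the infinitesimal action of $\Theta$.

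Next I would exploit the exhaustive coalgebra filtration $C_n := \bigoplus_{i\le n} S\, s^{(i)}\subseteq\OO(\Gacheck)$. Each $C_n$ is a finite free sub-coalgebra (closed under $\Delta$, since $\Delta(s^{(n)})$ involves only $s^{(a)}$ with $a\le n$) and $\OO(\Gacheck) = \colim{n} C_n$. Linear duality of the finite free coalgebra $C_n$ produces the finite free $S$-algebra $C_n^\vee = S[\theta]/\theta^{n+1}$, with $\theta^j$ dual to $s^{(j)}$ and the multiplication $\theta^a\theta^b = \theta^{a+b}$ dual to $\Delta$; hence $\mathrm{coMod}_{C_n}(\Mod_S)\simeq\Mod_{S[\theta]/\theta^{n+1}}$, and the inclusion $C_n\hookrightarrow C_{n+1}$ corresponds to restriction of scalars along $S[\theta]/\theta^{n+2}\twoheadrightarrow S[\theta]/\theta^{n+1}$. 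Passing to comodules, $\QCoh(B\Gacheck/S) = \mathrm{coMod}_{\OO(\Gacheck)}(\Mod_S)\simeq \colim{n}\Mod_{S[\theta]/\theta^{n+1}}$, the colimit formed in the $\infty$-category of presentable $\infty$-categories and colimit-preserving functors. To identify this colimit, note that the functors $\Mod_{S[\theta]/\theta^{n+1}}\to\Mod_{S[\theta]}$ given by restriction along $S[\theta]\twoheadrightarrow S[\theta]/\theta^{n+1}$ are fully faithful and compatible with the transition maps, and the smallest cocomplete subcategory of $\Mod_{S[\theta]}$ containing all their images is the localizing subcategory generated by $S = S[\theta]/\theta$; this consists of the modules $N$ with $N[\theta^{-1}] = 0$, which --- since inverting $\theta$ is exact on homotopy --- is exactly $\Mod_{S[\theta]}^{loc.nilp.}$. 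Thus the induced functor identifies $\colim{n}\Mod_{S[\theta]/\theta^{n+1}}$ with $\Mod_{S[\theta]}^{loc.nilp.}$; tracking $\theta_M$ through the equivalences shows this identification is implemented by the functor of the Lemma, which is therefore fully faithful with essential image $\Mod_{S[\theta]}^{loc.nilp.}$.

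For the monoidal statement, the symmetric monoidal structure on $\QCoh(B\Gacheck/S)$ is $-\otimes_S-$ with the diagonal coaction and unit $S$ with trivial coaction; since $\OO(\Gacheck)$ is a bialgebra, the comonad $-\otimes_S\OO(\Gacheck)$ is lax symmetric monoidal and the equivalence above upgrades to a symmetric monoidal one. On objects, the coefficient of $s^{(1)}$ in $\rho_{M\otimes N}(m\otimes m') = \sum_{i,j}\theta_M^i(m)\otimes\theta_N^j(m')\otimes\binom{i+j}{i}s^{(i+j)}$ is $\theta_M(m)\otimes m' + m\otimes\theta_N(m')$, so the transported tensor product is $-\otimes_S-$ with $\theta_{M\otimes N} = \theta_M\otimes\id + \id\otimes\theta_N$ --- the co-Leibniz rule for the primitive element $s^{(1)}$ --- as claimed; that $\Mod_{S[\theta]}^{loc.nilp.}$ is closed under this follows from the binomial estimate $(\theta\otimes\id + \id\otimes\theta)^{a+b-1}(x\otimes y) = 0$ when $\theta^a x = 0$ and $\theta^b y = 0$, applied to the spectral sequence computing $\pi_*(M\otimes_S N)$.

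The step I expect to be the main obstacle is the passage $\mathrm{coMod}_{\colim{n} C_n}(\Mod_S)\simeq\colim{n}\mathrm{coMod}_{C_n}(\Mod_S)$: taking comodules should turn the filtered union of coalgebras $\OO(\Gacheck) = \colim{n} C_n$ into a filtered colimit of $\infty$-categories, and one must be careful that this colimit is strictly larger than the naive union (it has to contain, e.g., $\bigoplus_n S[\theta]/\theta^{n+1}$). On homotopy groups this is the classical fact that comodules over a divided power coalgebra are the locally conilpotent modules over its dual power series ring, so the real content is the $\infty$-categorical enhancement; I would derive it from comonadicity together with the dualizability of each $C_n$ --- dualizability letting one commute the filtered colimit of comonads $-\otimes_S\OO(\Gacheck) = \colim{n}(-\otimes_S C_n)$ past the comodule construction --- and I would use the same dualizability to reduce the full faithfulness asserted in the second paragraph to the tautological finite free duality $C_n^\vee\simeq S[\theta]/\theta^{n+1}$.
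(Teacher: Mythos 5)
Your proof is correct and runs along the same skeleton as the paper's: descent along $\Spec S \to B\Gacheck$ realizes $\QCoh(B\Gacheck/S)$ as comodules over the divided-power coalgebra $\OO(\Gacheck)$; that category is identified with $\theta$-torsion $S[\theta]$-modules; and the primitive element $s^{(1)}$ is tracked to show the forgetful functor lands on the infinitesimal action of $\Theta$ and that the tensor product transports to the co-Leibniz formula. Where you diverge is in the middle identification. The paper observes (``by linear duality'') that comodules over the continuous dual of $S\series{u}$ are quasi-coherent sheaves on the formal scheme $\Spf S\series{u}$, and then cites \cite[Proposition~7.1.3]{GR14-indschemes} for the equivalence with the full subcategory of $\Mod_{S[u]}$ of modules set-theoretically supported at $u=0$. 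You instead decompose $\OO(\Gacheck)$ as a filtered colimit of finite free sub-coalgebras $C_n$, dualize each to $S[\theta]/\theta^{n+1}$, and directly analyze $\colim_n \Mod_{S[\theta]/\theta^{n+1}}$ in $\mathrm{Pr}^L$ as the localizing subcategory of $\Mod_{S[\theta]}$ generated by $S$, i.e.\ the $\theta$-torsion modules. That is essentially a self-contained reproof of the formal-scheme statement the paper cites. What your version buys is independence from the ind-scheme formalism and a hands-on description; what the paper's version buys is brevity and an explicit geometric interpretation via $\Spf$. You correctly flag the delicate point, the passage $\mathrm{coMod}_{\colim_n C_n} \simeq \colim_n \mathrm{coMod}_{C_n}$, and your plan to handle it via dualizability of the $C_n$ (so that each comonad $-\otimes_S C_n$ is simultaneously a monad, giving the identification with $\Mod_{S[\theta]/\theta^{n+1}}$, and so that the colimit of comonads can be pushed past the comodule construction) is the right idea; this is exactly the content absorbed into the Gaitsgory--Rozenblyum citation in the paper, so it should be carried out in the same level of detail one would want there.
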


\begin{proof}
  By base change, it suffices to prove the claim when \(S = \Z\) \cite[Chapter 3, Proposition 3.4.2]{GR17I}.
  In this case \cite[Corollary 10.4.6.8]{Lurie-SAG} shows that \(\QCoh(B\Gacheck/\Z)\) is the left completion of the derived \(\infty\)-category of its heart \(\QCoh(B\Gacheck/\Z)^\heartsuit\).
  By fpqc descent along \(\Spec \Z \to B\Gacheck\), we find that \(\QCoh(B\Gacheck/\Z)^\heartsuit\) is the abelian category of \(\OO(\Gacheck)\)-comodules.
  By linear duality, the abelian category of \(\OO(\Gacheck)\)-comodules is equivalent to the abelian category of continuous \(\OO(\Gahat) = \Z\series{u}\)-modules.
  Now the left completion of the derived \(\infty\)-category of continuous \(\Z\series{u}\)-modules is exactly the full subcategory of \(\Mod_{\Z[u]}\) where \(u\) acts locally nilpotently on all cohomology groups.

  To complete the proof, we must relate the \(\Z[u]\)-module structure to the action of the Lie algebra of \(\Gacheck\). If \(M \in \Mod_{\Z[u]}^{loc.nilp.}\), then the corresponding dual coaction must be of the form \(\mathrm{coact} = \sum_i s^{(i)}u^i: M \to M \otimes \OO(\Gacheck)\). Since \(\Theta: \OO(\Gacheck) \to \OO(\Gacheck)\) is the derivation \(s^{(i)} \mapsto s^{(i-1)}\), we find
  \[ u = \Theta(\mathrm{coact})|_{s=0}: M \to M.\]
  Thus \(u\) agrees with the infinitesimal action of \(\Theta\).
\end{proof}
\begin{remark}
  Lemma \ref{lemma: category of Ga check reps} is essentially a consequence of Cartier duality, and is known to experts. 
  The proof here is reproduced from \cite[§4.5]{MM25} for the reader's convenience.
  In characteristic zero, Lemma \ref{lemma: category of Ga check reps} appears in \cite[Proposition 5.17]{NS23}. %It is an example of Laumon's generalized Fourier transform \cite{Lau96}.
  See \cite{AM25} and in particular \cite[Theorem 5.4.9]{AM25} for more details on the relationship between Lemma \ref{lemma: category of Ga check reps} and Cartier duality.
\end{remark}

By Lemma \ref{lemma: category of Ga check reps}, the trivial representation \(\Z\) of \(\Gahat^\vee\) over \(\Z\) has a resolution
\[
  % https://q.uiver.app/#q=WzAsNSxbMCwwLCIwIl0sWzEsMCwiXFxaIl0sWzIsMCwiXFxPTyhcXEdhaGF0XlxcdmVlKSJdLFszLDAsIlxcT08oXFxHYWhhdF5cXHZlZSkiXSxbNCwwLCIwIl0sWzIsMywiXFxUaGV0YSJdLFswLDFdLFsxLDJdLFszLDRdXQ==
\begin{tikzcd}[ampersand replacement=\&]
	0 \& \Z \& {\OO(\Gahat^\vee)} \& {\OO(\Gahat^\vee)} \& 0
	\arrow[from=1-1, to=1-2]
	\arrow[from=1-2, to=1-3]
	\arrow["\Theta", from=1-3, to=1-4]
	\arrow[from=1-4, to=1-5]
\end{tikzcd}
\] 
by copies of the regular representation \(\OO(\Gahat^\vee)\) of \(\Gahat^\vee\). Being the regular representation, \(R\Gamma(B\Gahat^\vee, \OO(\Gahat^\vee)) = \Z\), and thus
\begin{equation} \label{eq: cohomology of classifying stack}
  R^i\Gamma(B\Gacheck,\Z) =
  \begin{cases} \Z & i=0,1 \\ 0 & i \neq 0,1
  \end{cases}.
\end{equation}

\begin{definition}\label{definition: jordan block extension}
  Let \(\tau: \Z \to \Z[1]\) be the boundary of the extension of \(\Gacheck\) representations \(0 \to \Z \to \Z^2 \to \Z \to 0\) where \(\Theta\) acts on \(\Z^2\) by the matrix
  \[ \theta_\tau =
    \begin{pmatrix} 0 & 0 \\ 1 & 0
  \end{pmatrix}.\]
\end{definition}
The class $[\tau]$ is a basis for \(R^1\Gamma(B\Gacheck/A, A)\) for any discrete ring \(A\).

Now we return to computing \(\Ext^2_{B\Gacheck/\Fp}(\ell_{\Gacheck/\Fp}, \Fp \otimes (\lambda^{p-1})/(\lambda^p))\).
Let \(\{T_0,T_1,\ldots\}\) and \(\{S_0,S_1,\ldots\}\) be coordinates for the domain and codomain of \(F - [\lambda^{p-1}]: W \to W\), respectively.
Then \(\{dT_i\}_{i=0}^\infty\) is the basis for the degree \(0\) cochains of the complex \((\ell_W \to \ell_W)\) and \(\{dS_i\}_{i=0}^\infty\) is a basis for the degree \(-1\) cochains.
Note that \(dF \equiv 0 \mod p\), so the differential in the complex \(\ell_W \to \ell_W\) is zero modulo \((p,\lambda^{p-1})\). Since \(W\) is a commutative group scheme, the \(\Gacheck\)-action on this complex is canonically trivial.
Let \(\{\partial_{S_i}\}_{i=0}^\infty\) be dual to \(\{dS_i\}\) and let \(\{\partial_{T_i}\}\) be dual to \(\{dT_i\}\).
As \(\Ext^*_{B\Gacheck/\Fp}(\Fp,\Fp) = \Fp \oplus \Fp \tau\),

\[ \Ext^*_{B\Gacheck/\Fp}(\ell_{\Gacheck/\Fp},\Fp) =
  \begin{cases} \bigoplus_i \Fp \partial_{T_i} & \ast = 0\\
    \bigoplus_i \Fp \tau \partial_{T_i} \oplus \bigoplus_i \Fp \partial_{S_i} & \ast = 1 \\
    \bigoplus_i \Fp \tau \partial_{S_i} & \ast = 2 \\
    0 & \ast > 2
\end{cases}.\]
%is concentrated in degrees \([0,2]\), and in degree \(2\) has basis \(\{\tau \partial_{S_i}\}_{i=0}^\infty\).

\begin{theorem}\label{theorem: first formula for deformation class}
  The deformation class \([\Glhat^\vee] \in \Ext^2_{B\Gacheck}(\ell_{\Gacheck/\Fp},\Fp\otimes (\lambda^{p-1})/(\lambda^p))\) is
  \[ [\Glhat^\vee] = -\tau \partial_{S_0} \cdot \lambda^{p-1} \in \Ext^2_{B\Gacheck}(\ell_{\Gacheck/\Fp},\Fp \otimes (\lambda^{p-1})/(\lambda^p)).\]
\end{theorem}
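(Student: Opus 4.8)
The plan is to read off $[\Glhat^\vee]$ from the Sekiguchi--Suwa presentation of Proposition \ref{proposition: sekiguchi-suwa isomorphism}, using the explicit co-Lie complex of Corollary \ref{corollary: co-Lie of Ga dual} together with the computation of $\Ext^\ast_{B\Gacheck/\Fp}(\ell_{\Gacheck/\Fp},\Fp)$ recorded just above. By Proposition \ref{proposition: sekiguchi-suwa isomorphism} the truncated exponential trivializes $\Glhat^\vee$ modulo $\lambda^{p-1}$, so $\Glhat^\vee \bmod \lambda^p$ is a square-zero deformation of $\Gacheck$ over $\Spec(\Fp[\lambda]/\lambda^{p-1})/\mathbb G_m \hookrightarrow \Spec(\Fp[\lambda]/\lambda^p)/\mathbb G_m$ by the trivial representation $\Fp$ placed in weight $p-1$. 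Since $L\Omega^1_{B\Gacheck}\simeq\ell_{\Gacheck}[-1]$, deformations along this extension form a torsor under $\Ext^1_{B\Gacheck}(\ell_{\Gacheck}[-1],\mathcal O\otimes\Fp\lambda^{p-1})\cong\Ext^2_{B\Gacheck}(\ell_{\Gacheck},\Fp\otimes\lambda^{p-1}/(\lambda^p))$, and $[\Glhat^\vee]$ is by definition the corresponding class.

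The core computation is to evaluate the cochain differential $d(F-[\lambda^{p-1}])$ of Corollary \ref{corollary: co-Lie of Ga dual} modulo $(p,\lambda^p)$ in the bases $\{dT_i\}$, $\{dS_i\}$ fixed before the theorem. Two standard facts about Witt vectors suffice. First, $F$ reduces modulo $p$ to the $p$-power map on coordinates, so $dF$ vanishes at the identity modulo $p$; hence over $\Fp$ the entire $F$-part of the differential is zero (consistent with ``$dF\equiv 0 \bmod p$'' above). Second, the multiplicative lift acts diagonally in Witt coordinates, $[a]\cdot(w_0,w_1,w_2,\dots)=(aw_0,a^pw_1,a^{p^2}w_2,\dots)$, so multiplication by $[\lambda^{p-1}]$ scales the $i$-th component by $\lambda^{(p-1)p^i}$; because $(p-1)p^i\ge p$ for $i\ge 1$, this map is $0$ modulo $\lambda^p$ on all components except the $0$-th, where it is $\lambda^{p-1}$. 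Therefore, modulo $(p,\lambda^p)$, the differential sends $dS_0\mapsto-\lambda^{p-1}\,dT_0$ and $dS_i\mapsto 0$ for $i\ge 1$. At $\lambda=0$ this recovers $\ell_{\Gacheck/\Fp}\simeq(\ell_W\xrightarrow{0}\ell_W)$, with $\{dS_i\}$ in cohomological degree $-1$, compatibly with the $\Ext$ computation above.

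It remains to convert this differential into the class in $\Ext^2_{B\Gacheck}(\ell_{\Gacheck},\Fp\otimes\lambda^{p-1}/(\lambda^p))$. I would present the Sekiguchi--Suwa sequence as a fiber sequence of classifying stacks $B\Glhat^\vee\to BW\xrightarrow{B(F-[\lambda^{p-1}])}BW$, with source and target $BW$ held constant over $\mathbb A^1/\mathbb G_m$. Then $[\Glhat^\vee]$ is obtained from the deformation class of the homomorphism $F-[\lambda^{p-1}]$ relative to $F$ by pulling back to $B\Gacheck$ and applying the boundary map of this fiber sequence; the target decomposes as $\bigoplus_i\Fp\tau\partial_{S_i}$, and the contribution lands in the $\partial_{S_0}$-summand because the deformation direction computed above --- the homomorphism $w\mapsto(w_0,0,0,\dots)$, scaled by $-\lambda^{p-1}$ --- touches only the $0$-th coordinate $S_0$ of the codomain. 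Restricting further to $B\Gacheck$, the homomorphism in play is the coordinate map $T_0\colon\Gacheck\to\mathbb G_a$, which is an isomorphism on Lie algebras since $\Theta=\partial/\partial T_0|_{\Gacheck}$; hence it pulls the standard Jordan-block representation of $\mathbb G_a$ back to the Jordan-block representation of $\Gacheck$ of Definition \ref{definition: jordan block extension} (recall by Lemma \ref{lemma: category of Ga check reps} that representations of $\Gacheck$ are detected by the $\Theta$-action), i.e.\ to the generator $\tau$ of $R^1\Gamma(B\Gacheck/\Fp,\Fp)$. Assembling the pieces and checking the sign against the conventions for $\tau$ and for the boundary yields $[\Glhat^\vee]=-\tau\partial_{S_0}\cdot\lambda^{p-1}$.

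The step I expect to be the real obstacle is this last one: matching the deformation-theoretic class of the stack $B\Glhat^\vee$ --- a priori an element of an $\Ext^1$ of its cotangent complex --- with the cochain-level datum of the deformed co-Lie complex, while tracking (i) the shift relating $L_{B\Gacheck}$ and $\ell_{\Gacheck}$, (ii) the passage from deforming the group scheme and its short exact sequence to deforming the classifying stack and its fiber presentation, and (iii) the identification of the relevant $\Ext^1$ with the line spanned by $\partial_{S_0}$ inside $\bigoplus_i\Fp\tau\partial_{S_i}$, all compatibly with the chosen generator $\tau$, so as to pin down not merely the class up to scalar but the exact coefficient $-1$. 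The Witt-vector manipulations in the first two steps, by contrast, are routine.
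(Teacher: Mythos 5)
Your proposal follows the same strategy as the paper's proof: Sekiguchi--Suwa presentation, reading off the deformation of $F$ from the Witt-coordinate action of $[\lambda^{p-1}]$, restricting to $B\Gacheck$, and identifying the class in the $\Ext$-basis built from $\{\partial_{T_i},\partial_{S_i}\}$ and $\tau$. You have also correctly located the single delicate step. Where the paper's argument goes further is in resolving that step by moving from the co-Lie-complex level to the group level: it identifies the torsor $\Ext^1_{BW}(F^*\ell_W,I)$ with Hochschild group cohomology $H^1_{Hoch}(W,\Homs(F^*\ell_W,I))$, i.e.\ with crossed homomorphisms $u:W\to\Homs(F^*\ell_W,I)$, and finds that $F-[\lambda^{p-1}]$ corresponds to $u=-\lambda^{p-1}T_0\partial_{S_0}$. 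The point worth stressing against your exposition is that your ``core computation'' $dS_0\mapsto-\lambda^{p-1}\,dT_0$ is only the linearization of $u$ at the identity; since it lives in the two-term complex of Corollary \ref{corollary: co-Lie of Ga dual}, it cannot by itself detect the factor $\tau\in H^1(B\Gacheck,\Fp)$, which is precisely the extra cohomological degree the crossed-homomorphism picture supplies. The crossed homomorphism remembers the coordinate \emph{function} $T_0$ on $W$, not merely the covector $dT_0$: the corresponding extension of representations has action matrix $\bigl(\begin{smallmatrix}1&0\\T_0&1\end{smallmatrix}\bigr)$, and the identification of its restriction to $\Gacheck$ with the Jordan block of Definition \ref{definition: jordan block extension} is exactly the derivative computation $\frac{\partial}{\partial T_0}\bigl(\begin{smallmatrix}1&0\\T_0&1\end{smallmatrix}\bigr)=\bigl(\begin{smallmatrix}0&0\\1&0\end{smallmatrix}\bigr)$, via Lemma \ref{lemma: category of Ga check reps}. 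With that in place, chasing through the square of $\Ext$ groups (connecting map for $F^*\ell_W\to\ell_W\to\ell_{\Gacheck}$, then restriction along $B\Gacheck\to BW$) produces $-\lambda^{p-1}\tau\partial_{S_0}$ on the nose, sign included; this is the bookkeeping you flagged as the obstacle.
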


\begin{proof}
  Let \(I = (\lambda^{p-1})/(\lambda^p) \subseteq \Fp[\lambda]/(\lambda^p)\).
  The Sekiguchi-Suwa exact sequence 
  % https://q.uiver.app/#q=WzAsNSxbMCwwLCIwIl0sWzEsMCwiXFxHbGhhdF5cXHZlZSJdLFsyLDAsIlciXSxbMywwLCJXIl0sWzQsMCwiMCJdLFswLDFdLFsxLDJdLFsyLDMsIkYtW1xcbGFtYmRhXntwLTF9XSJdLFszLDRdXQ==
  \[
    \begin{tikzcd}[column sep = large]
      0 & {\Glhat^\vee} & W & W & 0
      \arrow[from=1-1, to=1-2]
      \arrow[from=1-2, to=1-3]
      \arrow["{F-[\lambda^{p-1}]}", from=1-3, to=1-4]
      \arrow[from=1-4, to=1-5]
    \end{tikzcd},
  \]
  of Proposition \ref{proposition: sekiguchi-suwa isomorphism}, exhibits \(\Glhat^\vee\) as the kernel of a deformation of the endomorphism \(F: W \to W\) of the group scheme \(W\).
  We will express $[\Glhat^\vee]$ in terms of this deformation. 

  We review various compatibilities of deformations of group schemes and morphisms.
  %Deformations of the morphism \(F\) are a torsor over \(\Ext^1_{BW}(F^*\ell_W, I)\) 
  Let \(f: G \to H\) be a morphism of group schemes, and suppose that $f$ admits a deformation to $f_0: G_0 \to H_0$ along a square-zero extension with kernel $I$.
  Trivialize all torsors in sight with the basepoint corresponding to $f_0$.
  Deformations of $f$ with fixed domain and codomain are a torsor over \( \Ext^1_{BG}(f^*\ell_H, I)\)
  \cite[VII, §3.3]{Ill72}.
  Instead of just deforming a morphism, consider also deforming the domain.
  Deformations of \((G,f)\) with fixed codomain are a torsor over \(\Ext^2_{BG}(\ell_{\ker(f)}, I)\) \cite[VII, Theorem 3.3.2]{Ill72}. If $G \to H$ is surjective, the map 
  \[ \Ext^1_{BG}(f^*\ell_H,I) \to \Ext^2_{BG}(\ell_{\ker(f)},I)\]
  induced by the triangle $f^*\ell_{H} \to \ell_G \to \ell_{\ker(f)} \to^{+1}$ 
  corresponds to sending a deformation of $f$ to a deformation of $(G,f)$.
  Since deforming group schemes and morphisms is natural under pullback, if $(\tilde G, \tilde f)$ is a deformation of $(G,f)$, then the class $[\ker \tilde f] \in \Ext^2_{B\ker(f)}(\ell_{\ker(f)}, I)$ is the image of $[(\tilde G, \tilde f)]$ under 
  \[ \Ext^2_{BG}(\ell_{\ker(f)},I) \to \Ext^2_{B\ker(f)}(\ell_{\ker(f)}, I).\]

  %A deformation of \(F\) gives a deformation of \((W,F)\) where the domain is constant; this assignment is induced by the map of groups
  %\begin{equation}
  %  \label{eq: deform morphism to deform pair}\Ext^1_{BW}(F^*\ell_W, I) \to \Ext^2_{BW}(\ell_{\Gacheck},I)
  %\end{equation}
  %induced by the connecting morphism of the triangle \(F^*\ell_W \to \ell_W \to \ell_{\Gacheck} \to^{+1}\).
%
  %Now we compare deforming \((W,F)\) to deforming \(\Gacheck\), the kernel of \(F\).
  %Pulling back along \(B\Gacheck \to BW\) induces a morphism on cohomology
  %\begin{equation}
  %  \label{eq: pullback of deformation class}
  %  \Ext^2_{BW}(\ell_{\Gahat^\vee}, I) \to \Ext^2_{B\Gacheck}(\ell_{\Gahat^\vee}, I).
  %\end{equation}
  %Since the question of deforming group schemes and morphisms is natural under pullback, this morphism corresponds to pulling back along the horizontal arrows of the commutative square
  %% https://q.uiver.app/#q=WzAsNCxbMSwxLCJCVyJdLFsxLDAsIkJXIl0sWzAsMCwiQlxcR2FoYXReXFx2ZWUiXSxbMCwxLCJcXGFzdCJdLFszLDBdLFsyLDNdLFsyLDFdLFsxLDAsIkYiXSxbMiwwLCIiLDEseyJzdHlsZSI6eyJuYW1lIjoiY29ybmVyIn19XV0=
  %\[
  %  \begin{tikzcd}
  %    {B\Gacheck} & BW \\
  %    \ast & BW
  %    \arrow[from=1-1, to=1-2]
  %    \arrow[from=1-1, to=2-1]
  %    \arrow["\lrcorner"{anchor=center, pos=0.125}, draw=none, from=1-1, to=2-2]
  %    \arrow["F", from=1-2, to=2-2]
  %    \arrow[from=2-1, to=2-2]
  %\end{tikzcd}.\]
  %Thus, we send deformations of \((W,F)\) to deformations of \((\Gacheck, \ast)\), which is just the same as deformations of \(\Gacheck\) as a group scheme since there is a unique morphism to the trivial group.

  Thus, \([\Glhat^\vee]\) is the image of the deformation class of \(F - [\lambda^{p-1}]\) under the composition 
  \[ 
    \Ext^1_{BW}(F^*\ell_W,I) \to \Ext^2_{BW}(\ell_{\Gahat^\vee},I) \to \Ext^2_{B\Gahat^\vee}(\ell_{\Gahat^\vee},I).
  \]
  Since \(W\) is affine, we can identify \(\Ext^1_{BW}(F^*\ell_W, I)\) with the Hochschild homology \(H^1_{Hoch}(W, \Homs(F^*\ell_W,I))\). This is the space of crossed homomorphisms \(u: W \to \Homs(F^*\ell_W,I)\) up to equivalence.
  A crossed homomorphism \(u\) corresponds to the extension \(I \rtimes F^*\ell_W  \to F^*\ell_W \to I[1]\) of $W$-representations, where $W$ acts on $I \rtimes F^*\ell_W$ by  
  \[ act_{u} =
    \begin{pmatrix} 1 & 0 \\ u & act_{\ell_W}
  \end{pmatrix}.\]
  Note that \(F - [\lambda^{p-1}]\) differs from the basepoint \(F\) of our deformation problem by \((-[\lambda^{p-1}]): W\to W\). Multiplication by \(-[\lambda^{p-1}]\) is given by
  \begin{align*}
    -[\lambda^{p-1}](x_0,x_1,x_2,\ldots) & = -(\lambda^{p-1}x_0, \lambda^{p(p-1)}x_1,\lambda^{p^2(p-1)}x_2,\ldots) \\
    & \equiv (-\lambda^{p-1}x_0,0,0,\ldots) \mod \lambda^{p}.
  \end{align*}
  Thus \(F - [\lambda^{p-1}]\) corresponds to the crossed homomorphism \(u: W \to \Homs(F^*\ell_W,I)\) defined by
  \begin{equation*}
    u: x \mapsto -\lambda^{p-1} x_0 \partial_{S_0}.
  \end{equation*}
  Since \(T_0 \in \OO_W\) is the function \(x \mapsto x_0\), we see \(u = -\lambda^{p-1}T_0\partial_{S_0}\) as a crossed homomorphism.
  We have found the class of \(F - [\lambda^{p-1}]\) as a deformation of \(F\).

  There is a commutative square
  % https://q.uiver.app/#q=WzAsNCxbMSwxLCJcXEV4dF4yX3tCXFxHYWhhdF5cXHZlZX0oXFxlbGxfe1xcR2FoYXReXFx2ZWV9LEkpIl0sWzEsMCwiXFxFeHReMl97Qld9KFxcZWxsX3tcXEdhaGF0XlxcdmVlfSxJKSJdLFswLDAsIlxcRXh0XjFfe0JXfShGXipcXGVsbF9XLEkpIl0sWzAsMSwiXFxFeHReMV97QlxcR2FoYXReXFx2ZWV9KEZeKlxcZWxsX1csSSkiXSxbMywwXSxbMiwzXSxbMiwxXSxbMSwwXV0=
  \[
    \begin{tikzcd}
      {\Ext^1_{BW}(F^*\ell_W,I)} & {\Ext^2_{BW}(\ell_{\Gacheck},I)} \\
      {\Ext^1_{B\Gacheck}(F^*\ell_W,I)} & {\Ext^2_{B\Gacheck}(\ell_{\Gacheck},I)}
      \arrow[from=1-1, to=1-2]
      \arrow[from=1-1, to=2-1]
      \arrow[from=1-2, to=2-2]
      \arrow[from=2-1, to=2-2]
    \end{tikzcd}
  \]
  and the class we seek is the image of the crossed homomorphism \(-\lambda^{p-1} T_0\partial_{S_0}\) along the top and right. Now we go the other way around the commutative square.
  Note that \(-\lambda^{p-1}T_0\partial_{S_0}\) is the composition of \(-\partial_{S_0} \in \Hom(F^*\ell_W,A)\) and the crossed homomorphism \(\lambda^{p-1}T_0: W \to \Homs(A,I) = I\).
  The crossed homomorphism \(T_0: W \to A\) corresponds to the extension $0 \to A \to E \to A \to 0$ of \(W\)-representations whose underlying $A$-module is $A^2$ and with $W$-action given by the matrix 
  \[
  \begin{pmatrix} 1 & 0 \\ T_0 & 1
  \end{pmatrix}. 
  \]
  By Lemma \ref{lemma: category of Ga check reps}, we can compute the restriction of $E$ to $\Gacheck$ by computing the infinitesimal action of $\partial/\partial T_0$. As
  \begin{equation*}
    \frac{\partial}{\partial T_0}
    \begin{pmatrix} 1 & 0 \\ T_0 & 1
    \end{pmatrix} =
    \begin{pmatrix} 0 & 0 \\ 1 & 0
    \end{pmatrix},
  \end{equation*}
  the crossed homomorphism \(T_0\) restricted to \(\Gacheck\) corresponds to the extension \(\tau\) of Definition \ref{definition: jordan block extension}. This completes the proof.
\end{proof}
\begin{remark}
  Note that \(\mathbb G_m\) acts on everything in sight, where \(T_i\) has weight \(p^i\) and \(S_i\) has weight \(p^{i+1}\). This forces the class \([\Glhat]\) to be a scalar multiple of \(\tau \partial_{S_0}\cdot \lambda^{p-1}\). 
\end{remark}

\subsection{Bocksteins}\label{subsection: Bocksteins}
It turns out that the deformation class \([\Glhat^\vee]\) of Theorem \ref{theorem: first formula for deformation class} is in the image of a Bockstein homomorphism.
Recall that if \(M \in \Mod_{\Z/p^2\Z}\) and \(M_0 = M \otimes \Z/p\Z\), then the \emph{Bockstein} associated to \(M\) is the coboundary \(\Bock_M: M_0 \to M_0[1]\) associated to the distinguished triangle \(M \otimes (\Z/p \to \Z/p^2 \to \Z/p \overset{+1}{\to})\).
Since \(\Gacheck\) is defined over \(\Z/p^2\Z\) and the cotangent complex satisfies base change, we obtain a Bockstein homomorphism on \(\ell_{\Gacheck/\Fp}\).

\begin{lemma}\label{lemma: bockstein on co-Lie}
  If \(\Bock\) is the Bockstein on \(\ell_{\Gacheck/\Fp}\) induced by  \(\Gacheck\) over \(\Z/p^2\Z\), then under the equivalence \(\ell_{\Gacheck} \simeq (\ell_W \overset{dF}{\to} \ell_W)\) of Corollary \ref{corollary: co-Lie of Ga dual},
  \[ \Bock(dS_i) = dT_{i+1}.\]
\end{lemma}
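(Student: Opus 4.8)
The plan is to compute the Bockstein on an explicit model of $\ell_{\Gacheck\otimes\Z/p^2}$ as a two-term complex of free $\Z/p^2$-modules, so that everything reduces to understanding the differential of the Witt vector Frobenius at the identity.

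\emph{Step 1: a free model.} Specializing the Sekiguchi--Suwa exact sequence of Proposition~\ref{proposition: sekiguchi-suwa isomorphism} to $\lambda=0$ (equivalently, using the classical identification $\Gacheck=\mathbb G_a^\sharp=\ker(F\colon W\to W)$) gives a short exact sequence of group schemes $0\to\Gacheck\to W\xrightarrow{F}W\to0$ over $\Z_{(p)}$. Exactly as in the proof of Corollary~\ref{corollary: co-Lie of Ga dual}, the associated fiber sequence of cotangent complexes presents $\ell_\Gacheck$ as the two-term complex $C^\bullet=\bigl[\ell_W\xrightarrow{dF}\ell_W\bigr]$, with the source (free on $\{dS_i\}$) in cohomological degree $-1$ and the target (free on $\{dT_i\}$) in degree $0$. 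Both terms are free $\Z_{(p)}$-modules; since the cotangent complex commutes with base change, $C^\bullet\otimes_{\Z_{(p)}}\Z/p^2$ is a model for $\ell_{\Gacheck\otimes\Z/p^2}$ by free $\Z/p^2$-modules, and $\ell_{\Gacheck/\Fp}=C^\bullet\otimes_{\Z_{(p)}}\Fp$. By the definition recalled in §\ref{subsection: Bocksteins}, $\Bock$ is then the connecting map of the short exact sequence $0\to C^\bullet/p\to C^\bullet\to C^\bullet/p\to0$ of $\Z/p^2$-complexes, computed by the familiar recipe: lift a cocycle to $C^\bullet$, apply the differential, and take the preimage mod $p$ under multiplication by $p$.

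\emph{Step 2: the differential of the Witt Frobenius.} The content is the computation of $dF\colon\ell_W\to\ell_W$, which I would carry out in ghost coordinates over $\Z_{(p)}$. From $\Phi_i=T_0^{p^i}+pT_1^{p^{i-1}}+\cdots+p^iT_i$, only the last (linear) term survives after pulling back along the identity, so $e^*d\Phi_i=p^i\,dT_i$ in $\ell_W$, and likewise in the codomain coordinates $S_j$. Since the ghost components of the Witt vector Frobenius are $F^*\Phi_i=\Phi_{i+1}$, naturality of $d$ gives
\[ p^i\,dF(dS_i)=dF(e^*d\Phi_i)=e^*d(F^*\Phi_i)=e^*d\Phi_{i+1}=p^{i+1}\,dT_{i+1} \]
in the free $\Z_{(p)}$-module $\ell_W$; cancelling $p^i$ yields $dF(dS_i)=p\,dT_{i+1}$. (Alternatively one can read this off the congruence $(FT)_i\equiv p\,T_{i+1}$ modulo the square of the augmentation ideal, under which every quadratic term --- in particular $T_i^p$ --- is annihilated by $e^*d$.) Keeping the powers of $p$ in the ghost components straight is the one genuinely delicate point; everything else is formal.

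\emph{Step 3: conclusion.} By Step~2 the differential of $C^\bullet$ is $p\sigma$, where $\sigma\colon\ell_W\to\ell_W$ is the $\Z_{(p)}$-linear shift $dS_i\mapsto dT_{i+1}$; in particular it vanishes modulo $p$, recovering the shape of $\ell_{\Gacheck/\Fp}$ from Corollary~\ref{corollary: co-Lie of Ga dual}. To evaluate $\Bock(dS_i)$, lift the degree-$(-1)$ cocycle $dS_i$ to $C^{-1}$, apply the differential to obtain $p\,dT_{i+1}\in C^0$, and divide by $p$: the result is the class $dT_{i+1}$ in $\ell_{\Gacheck/\Fp}$, which is automatically a cocycle since it sits in top degree. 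Hence $\Bock(dS_i)=dT_{i+1}$. (The same degree count forces $\Bock$ to vanish on the classes $dT_i$, so this also determines $\Bock$ entirely.)
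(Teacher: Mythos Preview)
Your proof is correct and takes a genuinely different route from the paper's. The paper establishes $dF(dS_i)\equiv p\,dT_{i+1}\pmod{p^2}$ by quoting the explicit Sekiguchi--Suwa formula $F^*T_i=T_i^p+pG_i(T)$ and then inducting on the recursion for the polynomials $G_i$ to show $dG_i|_{T=0}\equiv dT_{i+1}\pmod p$. You instead use the ghost components: from $e^*d\Phi_i=p^i\,dT_i$ and the standard identity $F^*\Phi_i=\Phi_{i+1}$ you obtain $p^i\,dF(dS_i)=p^{i+1}\,dT_{i+1}$ in the torsion-free $\Z_{(p)}$-module $\ell_W$, and then cancel $p^i$ to get the \emph{exact} identity $dF(dS_i)=p\,dT_{i+1}$, which is slightly stronger than what the paper proves and immediately yields the Bockstein. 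Your approach is shorter and avoids the case split $p=2$ versus $p>2$ in the $G_i$ recursion; the paper's approach has the minor advantage of being entirely self-contained from the cited Sekiguchi--Suwa lemma without invoking the ghost description of $F$, but that description is standard. Both routes reduce the Bockstein computation to the same chain-level recipe, which you carry out correctly in Step~3.
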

\begin{proof}
  Recall that \(T_0,T_1,\ldots\) and \(S_0,S_1,\ldots\) are the standard coordinates on the domain and codomain of \(F: W \to W\). We claim that for \(F: W\to W\) the Witt vector Frobenius,
  \[ F^* dS_i = p dT_{i+1}\]
  in the co-Lie complex \(\ell_{W/\Z}\).
  If \(\Phi_i(T) = T_0^{p^i} + \cdots + p^i T_i\) is the \(i\)th ghost polynomial, then in the co-Lie complex \(d\Phi_i(T)|_{T=0} = p^i dT_i|_{T=0}\).
  The ghost polynomials \(\Phi_i\) satisfy \(F^*\Phi_i(S) = \Phi_{i+1}(T)\), and thus 
  \[ p^{i+1}dS_{i+1} = d\Phi_{i+1}(S)|_{S=0} = F^*d\Phi_i(T)|_{T=0} = p^i (F^*dT_{i})|_{T=0}.\]
  Since \(\ell_{W/\Z}\) is a free abelian group, we conclude that \(F^*dS_i = pdT_{i+1}\) in \(\ell_{W/\Z}\).
\end{proof}

We need to know how the Bockstein interacts with the tensor structure on quasi-coherent sheaves.

\begin{lemma}\label{lemma: leibniz for Bockstein}
  Let \(k\) be a perfect field of characteristic \(p\).
  Suppose \(X/k\) is a prestack with a lift to \(\Xtilde/W_2(k)\).
  Let \(M_0,M'_0 \in \QCoh(X)\) equipped with lifts \(M,M' \in \QCoh(\Xtilde)\) and associated Bockstein morphisms \(\Bock_M,\Bock_{M'}\).
  Then there are homotopies natural in \(\Xtilde\), \(M\), and \(M'\):
  \begin{enumerate}
    \item \(\Bock_{M \otimes M'} \sim \Bock_M \otimes 1_{M'_0} + 1_{M_0} \otimes \Bock_{M'}\).
    \item \(\Bock_{\Homs(M,M')} \sim (\Bock_{M'})_* - (\Bock_M)^*\).
  \end{enumerate}
\end{lemma}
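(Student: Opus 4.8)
The plan is to exhibit $\Bock_M$ as the first differential in the spectral sequence of a two-step multiplicative filtration, so that (i) becomes an instance of the Leibniz rule for the differentials of a multiplicative filtered complex and (ii) follows from (i) by adjunction. To this end, equip $\OO_{\tilde X}$ with the two-step descending filtration $F^0=\OO_{\tilde X}\supseteq F^1=p\,\OO_{\tilde X}\supseteq F^2=0$; since $p^2=0$ this is a filtration by ideals, hence a commutative algebra object $\underline{\OO}_{\tilde X}$ in filtered $\OO_{\tilde X}$-modules (a filtered stack over $\tilde X\times\mathbb A^1/\mathbb G_m$ in the sense of §\ref{subsection: filtrations}), with associated graded the graded ring $\OO_X\oplus L$, where $L:=p\,\OO_{\tilde X}$ lies in weight $1$ and $L\otimes_{\OO_X}L\to\gr^2=0$; here $L$ is the free rank-one $\OO_X$-module, with canonical trivialization $\OO_X\xrightarrow{\sim}p\,\OO_{\tilde X}$, $\bar f\mapsto p\tilde f$ (well defined because $p^2\OO_{\tilde X}=0$). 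For $M\in\QCoh(\tilde X)$ set $\underline M:=M\otimes_{\OO_{\tilde X}}\underline{\OO}_{\tilde X}$, a filtered $\underline{\OO}_{\tilde X}$-module with underlying object $M$, with $F^1\underline M=M\otimes_{\OO_{\tilde X}}p\,\OO_{\tilde X}=M_0\otimes_{\OO_X}L$, and with $\gr\underline M=M_0\oplus(M_0\otimes_{\OO_X}L)$. Unwinding §\ref{subsection: filtrations}, the canonical extension map $e$ of \eqref{eq: canonical extension for n split} for $\underline M$ (which is split to order $0$, a vacuous condition) is the boundary of $M_0\otimes_{\OO_X}L\to M\to M_0\xrightarrow{+1}$; after the trivialization $L\simeq\OO_X$ this is precisely the triangle $M\otimes(\Z/p\to\Z/p^2\to\Z/p\xrightarrow{+1})$ that defines $\Bock_M$. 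So $\Bock_M\colon M_0\to M_0[1]$ is the first differential of the filtered module $\underline M$, naturally in $\tilde X$ and $M$.

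For part (i), the canonical equivalence $\underline M\otimes_{\underline{\OO}_{\tilde X}}\underline{M'}\simeq(M\otimes_{\OO_{\tilde X}}M')\otimes_{\OO_{\tilde X}}\underline{\OO}_{\tilde X}=\underline{M\otimes_{\OO_{\tilde X}}M'}$ identifies the filtered module computing $\Bock_{M\otimes M'}$ with the tensor product, over the filtered ring $\underline{\OO}_{\tilde X}$, of those computing $\Bock_M$ and $\Bock_{M'}$. Because $\gr$ is symmetric monoidal on filtered modules, $\gr(\underline M\otimes_{\underline{\OO}_{\tilde X}}\underline{M'})\simeq\gr\underline M\otimes_{\gr\underline{\OO}_{\tilde X}}\gr\underline{M'}$, and the spectral sequence of a tensor product of filtered modules is a module over that of the filtered ring, its differentials obeying the Leibniz rule --- the same mechanism by which a map $\Omega^1\to\Omega^p[p]$ induces a derivation of $\Omega^\bullet$, as discussed after Theorem~\ref{maintheorem: differential}. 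Reading off the first differential yields $\Bock_{M\otimes_{\OO_{\tilde X}}M'}\sim\Bock_M\otimes 1_{M'_0}+1_{M_0}\otimes\Bock_{M'}$ (the Koszul sign being absorbed into the tensor of morphisms), naturally in $\tilde X$, $M$, and $M'$.

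For part (ii), apply (i) to $\Homs_{\OO_{\tilde X}}(M,M')\otimes_{\OO_{\tilde X}}M$ and compose with the evaluation $\mathrm{ev}\colon\Homs_{\OO_{\tilde X}}(M,M')\otimes_{\OO_{\tilde X}}M\to M'$. Naturality of $\Bock$ applied to $\mathrm{ev}$ gives $\Bock_{M'}\circ\mathrm{ev}_0\sim\mathrm{ev}_0\circ\bigl(\Bock_{\Homs(M,M')}\otimes 1_{M_0}+1\otimes\Bock_M\bigr)$; transposing through the Hom--tensor adjunction and moving the $\Bock_M$ term to the other side yields $\Bock_{\Homs(M,M')}\sim(\Bock_{M'})_\ast-(\Bock_M)^\ast$, via the canonical map $\Homs_{\OO_{\tilde X}}(M,M')_0\to\Homs_{\OO_X}(M_0,M'_0)$, an equivalence when $M$ is perfect as in the intended applications.

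The step I expect to be the main obstacle is the identification in the first paragraph --- matching the abstract extension map of the two-step filtered module $\underline M$ with the Bockstein triangle --- together with checking, in the $\infty$-categorical setting, that the package consisting of $\underline{M\otimes M'}\simeq\underline M\otimes_{\underline{\OO}_{\tilde X}}\underline{M'}$, the symmetric monoidality of $\gr$, and the multiplicativity of the associated spectral sequence is coherently natural rather than merely valid on homotopy categories. Granting that bookkeeping, (i) is the standard fact that a multiplicative filtration has derivation differentials and (ii) is pure adjunction.
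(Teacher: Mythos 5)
Your argument is correct, but takes a genuinely different route from the paper's. The paper's proof is a two-line chain-level computation: reduce to $\tilde X=\Spec\tilde R$ by descent, model $M$ by a dg $\tilde R$-module flat over $\Z/p^2$, identify $\Bock_M(m_0)=dm/p$ on the nose, and read off (i) and (ii) from the usual Koszul formulas $d_{M\otimes M'}=d_M\otimes 1+1\otimes d_{M'}$ and $d_{\Homs(M,M')}=(d_{M'})_*-(d_M)^*$. You instead package the Bockstein as the extension class of the two-step $p$-adic filtration on $\OO_{\tilde X}$ pushed to $M$, and derive (i) from symmetric monoidality of $\gr$ together with multiplicativity of the filtered spectral sequence, and (ii) from (i) by dualizing along the evaluation pairing. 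Your route invokes heavier machinery, but it has the advantage that the coherent naturality in $\tilde X,M,M'$ is manifest at every step (all operations are natural constructions on filtered objects), whereas the paper's chain-level formula $dm/p$ leaves the assembly into a coherent homotopy somewhat implicit.

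Two points worth flagging in your write-up. First, upgrading the classical filtration $F^0=\OO_{\tilde X}\supseteq p\OO_{\tilde X}\supseteq 0$ to an $E_\infty$-algebra object $\underline\OO_{\tilde X}$ in filtered $\OO_{\tilde X}$-modules takes a little care: the required nullhomotopy of the multiplication $F^1\otimes_{\OO_{\tilde X}}F^1\to\underline\OO_{\tilde X}$ is not literally "$p^2=0$" in the derived sense, since $p\OO_{\tilde X}\otimes^L p\OO_{\tilde X}$ has a $\pi_1$; it does exist because the target is discrete and the map vanishes on $\pi_0$, and the higher coherences follow by observing $\underline\OO_{\tilde X}$ arises by base change from the discrete filtered ring $\Z/p^2$. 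Second, your derivation of (ii) delivers the identity only after postcomposition with the canonical comparison $\iota\colon\Homs(M,M')_0\to\Homs(M_0,M'_0)$, an equivalence when $M$ is perfect but not in general; you acknowledge this. In fact the paper's chain-level statement is implicitly interpreted the same way (the right-hand side $(\Bock_{M'})_*-(\Bock_M)^*$ naturally lands in $\Homs(M_0,M'_0)$), so this is a gap of precision in the lemma's statement rather than in your argument alone, and it is harmless in the applications.
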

\begin{proof}
  By descent, it suffices to treat \(\Xtilde = \Spec \tilde R\).
  If \(M\) is a dg \(\tilde R\)-module which is flat over \(\Z/p^2\Z\), then the Bockstein is defined at the chain level by \(\Bock_M(m_0) = dm/p\),
  where \(m_0\) is a cocycle in \(M_0\) and \(m\) is a chain lift of \(m_0\) to \(M\).
  \begin{enumerate}
    \item If \(M\) and \(M'\) are dg \(\tilde R\)-modules, then the differential on \(M \otimes M'\) is \(d_{M \otimes M'} = d_M \otimes 1_{M'} + 1_M \otimes d_{M'}\).
    \item The differential on \(\Homs(M,M')\) is
      \(d_{\Homs(M,M')} = (d_{M'})_* - (d_M)^*\).\qedhere
  \end{enumerate}
\end{proof}

\begin{remark}
  In modern terms, $\Bock_M$ is the tensor product of $M$ with $\alpha: \Z/p \to \Z/p[1]$, which is a $\Z/p^2$-linear derivation in the sense that the map $\Z/p \to \Z/p \oplus \Z/p[1]$ is a $\Z/p^2$-algebra homomorphism.
  The identity for the Bockstein for the tensor product follows from the Leibniz rule for $\alpha$.
\end{remark}

\begin{corollary}\label{corollary: deformation class is Bockstein}
  The class \([\Glhat^\vee] \in \Ext^2_{B\Gacheck/\Fp}(\ell_{B\Gacheck/\Fp},\Fp \otimes (\lambda^{p-1})/(\lambda^p))\) satisfies
  \[
    [\Glhat^\vee] = -\lambda^{p-1}\Bock(\tau \partial_{T_1}).
  \]
\end{corollary}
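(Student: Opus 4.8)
The plan is to deduce the corollary from the explicit formula $[\Glhat^\vee] = -\tau\partial_{S_0}\cdot\lambda^{p-1}$ of Theorem~\ref{theorem: first formula for deformation class} by establishing the single Bockstein identity
\[
  \Bock(\tau\partial_{T_1}) = \tau\partial_{S_0}\qquad\text{in}\qquad\Ext^2_{B\Gacheck/\Fp}(\ell_{\Gacheck/\Fp},\Fp);
\]
tensoring with the one-dimensional coefficient space $\Fp\otimes\lambda^{p-1}/(\lambda^p)$ and substituting into Theorem~\ref{theorem: first formula for deformation class} then gives the corollary. Here $\Bock$ denotes the Bockstein on $\Ext^\ast_{B\Gacheck/\Fp}(\ell_{\Gacheck/\Fp},\Fp)$ induced by the lifts $\Gacheck\otimes\Z/p^2$ and $\ell_{\Gacheck}\otimes\Z/p^2$ together with the trivial lift of $\Fp$, and $\tau\partial_{T_1}$ is the class obtained by acting on $\partial_{T_1}\in\Hom_{B\Gacheck}(\ell_{\Gacheck/\Fp},\Fp)$ through $\tau\in H^1(B\Gacheck/\Fp,\Fp)$, as in the $\Ext$-computation preceding Theorem~\ref{theorem: first formula for deformation class}.

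To compute $\Bock(\tau\partial_{T_1})$ I would use that the Bockstein is a graded derivation for the cup-product action of $H^\ast(B\Gacheck/\Fp,\Fp)$ on $\Ext^\ast_{B\Gacheck/\Fp}(\ell_{\Gacheck/\Fp},\Fp)$; this follows from Lemma~\ref{lemma: leibniz for Bockstein}(1) applied to the pairing $\Fp\otimes\Homs(\ell_{\Gacheck/\Fp},\Fp)\to\Homs(\ell_{\Gacheck/\Fp},\Fp)$ together with naturality of the Bockstein, so that $\Bock(\tau\partial_{T_1}) = \Bock(\tau)\cdot\partial_{T_1}\pm\tau\cdot\Bock(\partial_{T_1})$. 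The first term vanishes because $\Bock(\tau)$ lands in $\Ext^2_{B\Gacheck/\Fp}(\Fp,\Fp) = H^2(B\Gacheck/\Fp,\Fp)$, which is zero by \eqref{eq: cohomology of classifying stack}. For the second term, by Lemma~\ref{lemma: leibniz for Bockstein}(2) the Bockstein on $\Homs(\ell_{\Gacheck/\Fp},\Fp)$ equals $-(\Bock_{\ell_{\Gacheck}})^\ast$ (the summand $(\Bock_\Fp)_\ast$ vanishes since $\Fp$ carries the trivial lift), hence $\Bock(\partial_{T_1}) = -\,\partial_{T_1}\circ\Bock_{\ell_{\Gacheck}}$; and by Lemma~\ref{lemma: bockstein on co-Lie}, under the equivalence $\ell_{\Gacheck/\Fp}\simeq(\ell_W\to\ell_W)$ of Corollary~\ref{corollary: co-Lie of Ga dual} the map $\Bock_{\ell_{\Gacheck}}$ sends $dS_0\mapsto dT_1$ and annihilates $dT_i$ for all $i$ as well as $dS_j$ for $j\neq 0$, so $\partial_{T_1}\circ\Bock_{\ell_{\Gacheck}} = \partial_{S_0}$. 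Combining, $\Bock(\tau\partial_{T_1}) = \pm\tau\partial_{S_0}$, and checking that the sign is $+$ gives the displayed identity and hence the corollary.

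The only genuinely delicate step is the sign bookkeeping: the Koszul sign in the derivation property of $\Bock$, the sign in $\Bock_{\Homs} = (\Bock_\Fp)_\ast - (\Bock_{\ell_{\Gacheck}})^\ast$, and the Koszul sign relating the two evident ways of naming $\tau\partial_{S_0}$. I would fix all of these uniformly at the chain level via the formula $\Bock(m_0) = dm/p$ used in the proof of Lemma~\ref{lemma: leibniz for Bockstein}; with those conventions in place, the coefficient $\lambda^{p-1}$ and the overall sign match Theorem~\ref{theorem: first formula for deformation class} exactly, and all remaining choices are forced by the $\Ext$-computation there. The computation itself is short: the substance of the corollary is the repackaging of Theorem~\ref{theorem: first formula for deformation class} in terms of a Bockstein.
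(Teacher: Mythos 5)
Your proposal is correct and follows essentially the same route as the paper: reduce to $\Bock(\tau\partial_{T_1}) = \tau\partial_{S_0}$, apply the Leibniz rule from Lemma~\ref{lemma: leibniz for Bockstein}, kill the $\Bock(\tau)$ term, and compute $\Bock(\partial_{T_1}) = -\partial_{S_0}$ by dualizing Lemma~\ref{lemma: bockstein on co-Lie}. The only substantive divergence is your reason that $\Bock(\tau)=0$: the paper observes directly that $\tau$ is defined over $\Z$ (Definition~\ref{definition: jordan block extension}), whereas you argue that $\Bock(\tau)$ lands in $H^2(B\Gacheck/\Fp,\Fp)=0$. Both are one-liners and both are valid (the two-term injective resolution preceding \eqref{eq: cohomology of classifying stack} base-changes and kills $H^{\geq 2}$ over any ring), so this is a matter of taste rather than substance. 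One small slip: $\Bock_{\ell_{\Gacheck}}$ itself does not annihilate $dS_j$ for $j\neq 0$ — it sends $dS_j\mapsto dT_{j+1}$; it is only the composite $\partial_{T_1}\circ\Bock_{\ell_{\Gacheck}}$ that kills those terms, which is what you actually use, so the conclusion $\partial_{T_1}\circ\Bock_{\ell_{\Gacheck}}=\partial_{S_0}$ stands. On the sign, the paper just writes the Koszul sign $\Bock(\tau\partial_{T_1})=-\tau\Bock(\partial_{T_1})$ coming from $|\tau|=1$; if you track that through, your ``$\pm$'' resolves to the stated $+\tau\partial_{S_0}$ without needing a separate chain-level check.
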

\begin{proof}
  By Lemma \ref{lemma: bockstein on co-Lie}, the Bockstein on \(\ell_{\Gacheck/\Fp}\) satisfies \(\Bock(dS_i) = dT_{i+1}\).
  By Lemma \ref{lemma: leibniz for Bockstein}, it follows that dually, \(\Bock(\partial_{T_{i+1}}) = - \partial_{S_i}\).
  Further \(\Bock(\tau) = 0\) since \(\tau\) is defined over \(\Z\).
  Again by the Leibniz rule, \(\Bock(\tau \partial_{T_1}) = -\tau \Bock(\partial_{T_1}) = \tau \partial_{S_0}\).
  Thus by Theorem \ref{theorem: first formula for deformation class},
  \[
    [\Glhat^\vee] = - \lambda^{p-1}\tau \partial_{S_0} = -\lambda^{p-1}\Bock(\tau \partial_{T_1}).\qedhere
  \]
\end{proof}

\subsection{Proof of Theorem \ref{maintheorem: differential}}\label{subsection: proof of A}

All the calculations about the filtered circle we require are now in hand. We must now apply the mapping stack construction to make conclusions about Hochschild homology.

First, we recall how deformations and derivations on \(B\Gacheck\) and \(\Mapsu(B\Gacheck,-)\) are related.
Let \(J\) be an \(A\)-module and \(\delta: \ell_{\Gacheck}[-1] = L\Omega^1_{B\Gacheck} \to J\) be a morphism in \(\QCoh(B\Gacheck/A)\).
Since the cotangent complex represents the functor of derivations, \(\delta\) induces a morphism
\[ \delta: B\Gacheck \times \Spec(A \oplus J) \to B\Gacheck.\]
Taking \(\Maps(\delta, X)\) gives a morphism
\[ \delta^*: \Mapsu(B\Gacheck, X) \to \Mapsu(B\Gacheck \times \Spec(A \oplus J),X).\]
By the adjunction between mapping stack and product, we have equivalences
\begin{align*}
  \Maps(\cX, \Mapsu(\cY \times \mathcal U, \mathcal Z)) \simeq \Maps(\cX \times \cY \times \mathcal U, Z)
  \simeq \Maps(\cX \times \mathcal U, \Mapsu(\cY,\mathcal Z))
\end{align*}
for all prestacks \(\cX,\cY, \mathcal Z, \mathcal U\).
Thus \(\delta^*\) above is equivalent to a morphism
\[ \delta^*: \Mapsu(B\Gacheck,X)\times \Spec(A \oplus J) \to \Mapsu(B\Gacheck,X),\]
natural in \(X/A\).
We have constructed a morphism from \(J\)-valued derivations of \(B\Gacheck\) to \(J\)-valued derivations of the mapping stack \(\Mapsu(B\Gacheck,-)\).
These derivations act on functions on the mapping stack, and this action is given by a morphism
\begin{equation}
  \label{eq: action morphism}
  act: R\Hom_{B\Gacheck}(L\Omega^1_{B\Gacheck},J) \otimes \OO_{\Mapsu(B\Gacheck,X)} \to \OO_{\Mapsu(B\Gacheck, X)} \otimes J.
\end{equation}
Recall that in Corollary \ref{corollary: deformation class is Bockstein} we showed that for \(J = (\lambda^{p-1})/(\lambda^p)\), the deformation class \([\Glhat^\vee] \in R^1\Hom(L\Omega^1_{B\Gacheck/\Fp}, J)\) is equal to \(-\lambda^{p-1}\Bock(\tau \partial_{T_1})\),
where \(\tau\) is as in Definition \ref{definition: jordan block extension} and \(\partial_{T_1}\) is the restriction of \(\partial/\partial T_1\in \Lie(W)\) to \(\Gahat^\vee \subseteq W\).
\begin{definition}\label{definition: V}
  If \(A\) is an \(\Fp\)-algebra,
  define the natural transformation \(V: L\Omega^1_{R/A}[1] \to L\Omega^p_{R/A}[p]\) on animated \(A\)-algebras \(R\) by
  \[
    V = act(\tau \partial_{T_1}).
  \]
\end{definition}

\begin{remark}
  There is another description of \(V\) by Cartier duality: it is the action on \(\Maps(B\Gacheck,-)\) of an element of the Lie algebra \(\End(\Gacheck)\) of \(\mathrm{Aut}(B\Gacheck)\) of cohomological degree zero defined over \(\mathbb{F}_p\).
  Under Cartier duality, 
  \[
    \pi_0\Hom_{\mathbb{F}_p}(\Gacheck,\Gacheck) \cong \pi_0\Hom_{\mathbb{F}_p}(\Gahat,\Gahat) = \mathbb{F}_p[F],
  \] 
  and \(V\) corresponds exactly to the endomorphism \(F\). That is, \(V\) is the action of the infinitesimal automorphism of \(B\Gacheck\) which is Cartier dual to \(1 + \epsilon F\).
\end{remark}

\emph{A priori}, \(V\) is a derivation of the ring of functions on the shifted tangent bundle \(T[-1]\Spec R\). However, this derivation is homogeneous of weight \(p-1\) and cohomological degree zero and thus defines a map \(L\Omega^\bullet[\bullet] \to L\Omega^{\bullet + p-1}[\bullet + p-1]\).
By Zariski descent, \(V\) extends to a natural transformation \(L\Omega^1_{X/A}[1] \to L\Omega^p_{X/A}[p]\) for characteristic \(p\) schemes \(X/A\).

\begin{theorem}\label{theorem: formula for differential}
  Let \(k\) be a field of characteristic \(p\).
  Let \(e: L\Omega^1 \to L\Omega^p[p]\) be the extension of the HKR filtration induced by the splitting to order \(p-2\) induced by the truncated exponential map.
  If \(X/k\) is a scheme with a lift to \(\Xtilde / W_2(k)\), then there is a homotopy
  \[ e \sim [V,\Bock_{\Xtilde}]: L\Omega^1_{X/k} \to L\Omega^p_{X/k}[p],\]
  natural in \(\Xtilde\).
\end{theorem}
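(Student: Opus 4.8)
The plan is to push the formula for the deformation class of the filtered circle through the mapping-stack construction and recognize the output as a commutator; the crux is the last step. First I would reduce to the filtered circle: by Theorem~\ref{theorem: filtered circle and HKR} and Zariski descent, the filtered complex underlying the HKR spectral sequence is obtained from the filtered stack $\Mapsu(\filS, X)$ (equipped with its evaluation map $p$ to $X$ and its map to $\mathbb A^1/\mathbb G_m$) by pushing forward along $p$ and taking $R\Gamma(X,-)$. By Theorem~\ref{theorem: initial-split} this filtered stack is split to order $p-2$ via the truncated exponential, and by Corollary~\ref{corollary: associated graded is shifted tangent bundle} its associated graded is $\Mapsu(B\Gacheck, X) = T[-1]X$, with structure sheaf $\bigoplus_j L\Omega^j_{X/k}[j]$. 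Since the extension class $e$ of \eqref{eq: canonical extension for n split} is functorial and multiplicative, $e$ is identified, up to sign, with the image under the action morphism \eqref{eq: action morphism} of the order-$p$ square-zero deformation class $[\Glhat^\vee] \in R^1\Hom_{B\Gacheck}(\Omega^1_{B\Gacheck/\Fp}, \Fp\cdot\lambda^{p-1}/(\lambda^p))$ of $\filS$ over $B\Gacheck$; that is, $e \simeq \pm\, act([\Glhat^\vee])$. This is precisely the compatibility anticipated by Definition~\ref{definition: V}, where $V := act(\tau\partial_{T_1})$, and it uses only functoriality of $\Mapsu(-,X)$ and of the constructions in §\ref{subsection: filtrations}, together with the identification $\OO_{T[-1]X} \simeq \bigoplus_j L\Omega^j_{X/k}[j]$.

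Next I would insert the Bockstein formula. By Corollary~\ref{corollary: deformation class is Bockstein}, $[\Glhat^\vee] = -\lambda^{p-1}\,\Bock(\tau\partial_{T_1})$, where $\Bock$ is the Bockstein on $\Omega^1_{B\Gacheck/\Fp} = \ell_{\Gacheck/\Fp}[-1]$ coming from the $\Z/p^2$-form of $\Gacheck$. Hence, up to sign and the weight-$(p-1)$ twist recorded by $\lambda^{p-1}$, $e \simeq act(\Bock(\tau\partial_{T_1}))$, and the theorem reduces to a Leibniz-type identity for the action morphism and the Bockstein:
\[ act\bigl(\Bock(\psi)\bigr) \simeq \pm\,[\Bock_{\tilde X},\, act(\psi)] \qquad (\psi = \tau\partial_{T_1}), \]
since then $act(\Bock(\tau\partial_{T_1})) \simeq \pm[\Bock_{\tilde X}, V] = \mp[V, \Bock_{\tilde X}]$, and the signs collapse to $e \sim [V, \Bock_{\tilde X}]$.

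To prove this Leibniz identity: because $\Gacheck$ is defined over $\Z$ and $X$ lifts to $\tilde X/W_2(k)$, the stack $\mathcal M := \Mapsu(B\Gacheck, X) = T[-1]X$ lifts to $\tilde{\mathcal M} := \Mapsu(B\Gacheck_{W_2(k)}, \tilde X) = T[-1]\tilde X$, and the action morphism \eqref{eq: action morphism} — assembled from the evaluation map $B\Gacheck \times \mathcal M \to X$ and the base-change formula for the cotangent complex of a mapping stack, all of which are defined integrally — lifts to a $W_2(k)$-linear morphism $\widetilde{act}$ reducing mod $p$ to $act$. In this lift the Bockstein $\Bock$ on $\Omega^1_{B\Gacheck/\Fp}$ enters through the cotangent complex of $B\Gacheck_{W_2(k)}$, while the Bockstein on $\OO_{\tilde{\mathcal M}}$ is, via $\OO_{T[-1]\tilde X} \simeq \bigoplus_j L\Omega^j_{\tilde X/W_2(k)}[j]$, the direct sum over $j$ of the Bocksteins $\Bock_{\tilde X}$ on $L\Omega^j$. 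Running the chain-level computation of the Bockstein — exactly as in the proof of Lemma~\ref{lemma: leibniz for Bockstein}, now applied to the pairing $\widetilde{act}$ — yields $\Bock_{\tilde{\mathcal M}}\circ act(\psi) - act(\psi)\circ\Bock_{\tilde{\mathcal M}} \simeq \pm\, act(\Bock(\psi))$, which is the required identity. Combining the three steps and tracking the signs in \eqref{eq: canonical extension for n split}, in Corollary~\ref{corollary: deformation class is Bockstein}, and in the graded Leibniz rule gives $e \sim [V, \Bock_{\tilde X}]$, naturally in $\tilde X$.

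The main obstacle is the last step. One cannot simply lift the derivation $\psi = \tau\partial_{T_1}$ to $W_2(k)$ and reduce, because the co-Lie complex $\ell_{\Gacheck} \simeq (\ell_W \xrightarrow{dF} \ell_W)$ of Corollary~\ref{corollary: co-Lie of Ga dual} has differential $dF \equiv 0 \bmod p$ but not modulo $p^2$, so $\partial_{T_1}$ is not a $W_2(k)$-cocycle and its obstruction to lifting is precisely $\Bock(\tau\partial_{T_1})$ — which is exactly why the Bockstein \emph{image}, rather than any honest lift, is what governs the differential. The fix is to lift the pairing $act$ itself (not its arguments) and to run the chain-level Bockstein computation there; the delicate checks are that this lift of $act$ is compatible with the decomposition of $\Bock_{\tilde{\mathcal M}}$ into the operators $\Bock_{\tilde X}$ on the graded pieces, that the weight-$(p-1)$ twist is carried through correctly, and that all sign conventions match. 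A secondary point, handled in the first step, is that $\Mapsu(-, X)$ transports the extension class of a filtered stack split to order $p-2$ to the corresponding extension class of $\fhkr\HHs(X/k)$ compatibly with $act$; this is formal once one knows $\Mapsu(\filS, X)$ is split to order $p-2$ with the stated associated graded.
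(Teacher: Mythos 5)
Your proposal matches the paper's proof almost verbatim in structure: reduce to affine via Zariski descent, identify $e$ with $act([\Glhat^\vee])$ through the filtered circle and the order-$(p-2)$ splitting, substitute the Bockstein formula of Corollary~\ref{corollary: deformation class is Bockstein}, and then use the $W_2(k)$-lift of the action morphism together with the Leibniz rule for Bocksteins (Lemma~\ref{lemma: leibniz for Bockstein}) to convert $act(-\Bock(\tau\partial_{T_1}))$ into $[V,\Bock_{\tilde X}]$. Your extra commentary on lifting $act$ rather than $\tau\partial_{T_1}$, and on the weight/sign bookkeeping, makes explicit what the paper leaves implicit, but it is the same argument.
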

\begin{proof}
  By Zariski descent, it suffices to assume \(X\) is affine. By Theorem \ref{theorem: filtered circle and HKR}, the HKR filtration on Hochschild homology is the filtration induced by the filtered circle \(\filS\) on \(\Mapsu(\filS, X)\). Thus the extension \(e\) is exactly the action of \([\Glhat^\vee]\) under the action morphism \eqref{eq: action morphism} on the ring of functions on \(\Mapsu(B\Gacheck, X) \simeq T[-1]X\).
  By Corollary \ref{corollary: deformation class is Bockstein},
  \[ [\Glhat^\vee]=  -\lambda^{p-1}\Bock(\tau \partial_{T_1})\]
  in \(\Ext^2_{B\Gacheck}(\ell_{B\Gacheck},(\lambda^{p-1})/(\lambda^p))\).
  Thus we find a natural homotopy between the extension \(e\) and the action of \(-\Bock(\tau \partial_{T_1})\).

  In the presence of a lift \(\Xtilde \to \Spec W_2(k)\), the action morphism \eqref{eq: action morphism} lifts over \(W_2(k)\), so Lemma \ref{lemma: leibniz for Bockstein} applies and provides a natural homotopy
  \begin{align*}
    act(-\Bock(\tau \partial_{T_1})) &\sim [\Bock_{\Xtilde}, -act(\tau \partial_{T_1})] \\
    &= [V,\Bock_{\Xtilde}]. \qedhere
  \end{align*}
\end{proof}

\section{Atiyah class and Verschiebung}

In Definition \ref{definition: V}, the natural transformation
\[V: L\Omega^1[1] \to L\Omega^p[p]\]
for \(A\)-schemes when \(A\) has characteristic \(p\) was defined by a rather indirect method: first, the shifted differential forms are viewed as functions on the mapping stack \(\Mapsu(B\Ghat_a^\vee,-)\); then, a vector field on \(B\Ghat_a^\vee\) is used to define a natural vector field on those mapping stacks. From this definition, it is not immediately clear how to calculate the morphism \(V\). In this section, the morphism \(V\) is related to the Atiyah class, which makes \(L\Omega^1[1]\) into a homotopy Lie coalgebra. The main result, Theorem \ref{theorem: V is a pth power} is that the endomorphism \(V\) is a \(p\)th power operation for the Atiyah cobracket. This allows \(V\) to be computed in more examples.

In §\ref{subsection: tannakian}, we introduce the rudiments of Tannakian formalism for derived algebraic geometry, following the work of Nuiten and Toën \cite{NT25}.
In §\ref{subsection: shifted tangent}, we describe the shifted tangent bundle of prestacks using Tannakian reconstruction and define \(V\) for such prestacks.
In §\ref{subsection: atiyah and shifted tangent bundle}, we find that \(V\) is a \(p\)th power operation for the Atiyah cobracket. In §\ref{subsection: restricted Lie algebras}, we explain that the identity from §\ref{subsection: atiyah and shifted tangent bundle} is an analogue of one of the identities of a restricted Lie algebra. Finally, in §\ref{subsection: V on classifying stack}, we show that the morphism \(V\) on the classifying stack \(BG\) of a group recovers the classical \(p\)th power operation on the Lie algebra of \(G\).

\subsection{Tannakian reconstruction}\label{subsection: tannakian}

In \cite{luriedagviii}, Lurie developed a version of Tannakian reconstruction in spectral algebraic geometry. In this version, one compares maps of spectral stacks with symmetric monoidal functors on quasi-coherent sheaves.
This is based on the fundamental identification of $\Einfty$-homomorphisms $A\to B$ with symmetric monoidal functors $\Mod_A \to \Mod_B$.
Since we work in derived algebraic geometry (that is, with animated rings instead of \(\Einfty\)-rings), we need an enhancement of the theory of symmetric monoidal categories, which accounts for the extra structure that an animated ring carries compared to the underlying \(\Einfty\)-ring.
An appropriate enhancement, the notion of \emph{\(\Theta\)-category}, was introduced in recent work of Nuiten and Toën \cite{NT25}. These are stable presentable symmetric monoidal categories equipped with a monad suitably compatible with the \(\Einfty\)-monad.

We now recall the precise definition of \(\Theta\)-category. To do so, we first need to introduce the 2-category of sifted-colimit-preserving monads. Let \(\Cat\) be the 2-category of categories, and let \(\Cat_{pr}^R\) be the 2-category of presentable stable categories and right adjoints. 

\begin{definition}[\cite{NT25}, Definition 1.1-1.2]\label{definition: mu-category}
  \begin{enumerate}
    \item A \(\mu\)-category is an object \(f_*: T' \to T\) of \(\Fun(\Delta^1,\Cat_{pr}^R)\) such that \(f_*\) is conservative and preserves sifted colimits.
    \item If \(f_*: T'_1 \to T_1\) and \(g_*: T'_2 \to T_2\) are \(\mu\)-categories, a \(\mu\)-morphism is a commutative square in \(\Cat\) 
    \begin{equation}\label{eq: mu-morphism}
    % https://q.uiver.app/#q=WzAsNCxbMCwwLCJUXzEnIl0sWzAsMSwiVF8xIl0sWzEsMCwiVF8yJyJdLFsxLDEsIlRfMiJdLFswLDIsInUiXSxbMSwzLCJ2Il0sWzAsMSwiZl8qIl0sWzIsMywiZ18qIl1d
\begin{tikzcd}[ampersand replacement=\&]
	{T_1'} \& {T_2'} \\
	{T_1} \& {T_2}
	\arrow["u", from=1-1, to=1-2]
	\arrow["{f_*}", from=1-1, to=2-1]
	\arrow["{g_*}", from=1-2, to=2-2]
	\arrow["v", from=2-1, to=2-2]
\end{tikzcd}
\end{equation}
%which is \emph{left adjointable}\footnote{\cite[Definition 4.7.4.13]{lurieha}}, that is, %NOT QUITE SEE NT25 1.4
where \(u\) and \(v\) preserve colimits
such that the induced natural transformation \(g^* \circ v \implies u \circ f^*\) is an equivalence.
  \item The 2-category of \(\mu\)-categories \(\Cat^\mu\) is the 1-full subcategory of \(\Fun(\Delta^1,\Cat)\) consisting of \(\mu\)-categories and \(\mu\)-morphisms.
  \end{enumerate}
\end{definition}

\begin{remark}[\cite{NT25}, Remark 1.4]
  By passing to the right adjoints \(u_*\) and \(v_*\) of \(u\) and \(v\), we obtain a 1-full functor 
  \(r: \Cat^\mu \to \Fun(\Delta^1,\Cat_{pr}^R)^{op}\)
  which sends a commutative square \eqref{eq: mu-morphism} to the left-adjointable square 
  \[ 
    % https://q.uiver.app/#q=WzAsNCxbMSwwLCJUXzEnIl0sWzEsMSwiVF8xIl0sWzAsMCwiVF8yJyJdLFswLDEsIlRfMiJdLFsyLDAsInVfKiIsMl0sWzMsMSwidl8qIiwyXSxbMCwxLCJmXyoiXSxbMiwzLCJnXyoiXV0=
\begin{tikzcd}[ampersand replacement=\&]
	{T_2'} \& {T_1'} \\
	{T_2} \& {T_1}
	\arrow["{u_*}"', from=1-1, to=1-2]
	\arrow["{g_*}", from=1-1, to=2-1]
	\arrow["{f_*}", from=1-2, to=2-2]
	\arrow["{v_*}"', from=2-1, to=2-2]
\end{tikzcd}
  \]
\end{remark}

\begin{example}
  If \(\Cat^\otimes_{pr}\) is the 2-category of presentable symmetric monoidal categories, there is a functor 
  \(\Einfty: \Cat^{\otimes}_{pr} \to \Cat^\mu\)
  sending a symmetric monoidal category \(T\) to \(T^{\Einfty} \to T\), the category \(T\) equipped with the \(\Einfty\)-algebra monad \cite[6]{NT25}.
\end{example}

With the definition of \(\mu\)-category in hand, we can now define a \(\Theta\)-category as a category equipped with both a \(\mu\)-structure and a symmetric monoidal structure satisfying certain compatibilities.
Let \(C\) be the 2-category defined by the lax pullback 
  \[
% https://q.uiver.app/#q=WzAsNCxbMCwwLCJDIl0sWzEsMCwiXFxDYXReXFxtdSJdLFswLDEsIlxcQ2F0Xlxcb3RpbWVzX3twcn0iXSxbMSwxLCJcXEZ1bihcXERlbHRhXjEsXFxDYXRfe3ByfV5SKV57b3B9Il0sWzIsMywiciBcXGNpcmMgXFxFaW5mdHkiLDJdLFsxLDMsInIiXSxbMCwxXSxbMCwyXSxbMiwxLCIiLDAseyJsZXZlbCI6Mn1dXQ==
\begin{tikzcd}[ampersand replacement=\&]
	C \& {\Cat^\mu} \\
	{\Cat^\otimes_{pr}} \& {\Fun(\Delta^1,\Cat_{pr}^R)^{op}}
	\arrow[from=1-1, to=1-2]
	\arrow[from=1-1, to=2-1]
	\arrow["r", from=1-2, to=2-2]
	\arrow[Rightarrow, from=2-1, to=1-2]
	\arrow["{r \circ \Einfty}"', from=2-1, to=2-2]
\end{tikzcd}
  \]
An object of \(C\) is a triple \((T_1, (T_2'\to T_2),u)\) where \(T_1 \in \Cat^\otimes_{pr}\) and \(u\) is a commutative square in \(\Cat^R_{pr}\) of the form
\[
% https://q.uiver.app/#q=WzAsNCxbMSwwLCJUXzFee1xcRWluZnR5fSJdLFswLDAsIlRfMiciXSxbMCwxLCJUXzIiXSxbMSwxLCJUXzEiXSxbMSwyXSxbMSwwXSxbMiwzXSxbMCwzXV0=
\begin{tikzcd}[ampersand replacement=\&]
	{T_2'} \& {T_1^{\Einfty}} \\
	{T_2} \& {T_1}
	\arrow[from=1-1, to=1-2]
	\arrow[from=1-1, to=2-1]
	\arrow[from=1-2, to=2-2]
	\arrow[from=2-1, to=2-2]
\end{tikzcd}\]

\begin{definition}[\cite{NT25}, Definition 1.5]\label{definition: theta-category}
  A \emph{\(\Theta\)-category} is an object \((T_1, (T_2'\to T_2),u)\) of \(C\) satisfying 
  \begin{enumerate}
    \item the induced (right adjoint) functor \(T_2 \to T_1\) is an equivalence; 
    \item the induced (right adjoint) functor \(T_2' \to T_1^{\Einfty}\) commutes with arbitrary colimits.
  \end{enumerate}
  The 2-category \(\thetaCat\) of \(\Theta\)-categories is the 0-full subcategory of \(C\) whose objects are \(\Theta\)-categories.
  Given a \(\Theta\)-category \(T = (T_1,(T_2'\to T_2),u)\), the category \(T_2'\) will be referred to by the notation \(\thetaAlg(T)\).
\end{definition}
Condition (2) of Definition \ref{definition: theta-category} is equivalent to the following: if \(\mu: T \to T\) is the endofunctor underlying the associated monad, then we have a map \(\Einfty \to \mu\) such that \(1 \to \mu(0)\) is an equivalence and \(\mu(x) \otimes \mu(y) \to \mu(x \oplus y)\) is an equivalence for all \(x,y \in T\).

\begin{lemma}[\cite{NT25}, Remark 1.6]
  \label{lemma: forget theta}
  The functor \(\thetaCat \to \Cat_{pr}\) sending a \(\Theta\)-category to its underlying category is conservative and preserves limits.
\end{lemma}

The main example of a \(\Theta\)-category we will need is the category of modules over an animated ring. Recall from §\ref{subsection: affine stacks} that the category of derived commutative $A$-algebras $\DAlg_A$ is the category of algebras for the monad $\Sym_A$ on $\Mod_A$.
There is a natural map \(\mathbb{E}_{\infty,A} \to \Sym_A\) and by \cite[Proposition 4.2.27]{Rak20}, the induced functor \(\DAlg_A \to (\Mod_A)^\Einfty\) preserves colimits. Thus, \(\Mod_A\) is equipped with a \(\Theta\)-structure.

\begin{definition}
  An \(A\)-linear \(\Theta\)-category is a morphism of \(\Theta\)-categories \(\Mod_A \to T\).
\end{definition}

Only using the \(\Theta\)-structure on \(\Mod_A\), we can construct an \(A\)-linear \(\Theta\)-structure on every accessible prestack over \(A\).
Following \cite[Definition 1.11]{NT25}, if \(T\) is a \(\Theta\)-category and \(B \in \thetaAlg(T)\), the category \(B\modc(T)\) carries a \(\Theta\)-structure where the functor \(\thetaAlg(B\modc(T)) \to B\modc(T)\) is given by the composite 
\[ \thetaAlg(B\modc(T)) := B/\thetaAlg(T) \to B/T^{\Einfty} \simeq (B\modc(T))^{\Einfty} \to B\modc(T).\]
This defines a functor 
\[ \thetaAlg(T) \to \Cat^\Theta_{T/}\]
which is a fully faithful left adjoint \cite[Proposition 1.12]{NT25}.
In particular, if \(T = \Mod_A\), then \(B\modc(T)\) is naturally identified with \(\Mod_B\) equipped with the monad \(\Sym_B\).

We can now define the \(\Theta\)-structure on the category of quasi-coherent sheaves on an accessible prestack.

\begin{definition}
  \(\QCoh^{\Sym}: \left(\PreStk^{acc}_A\right)^{op} \to \thetaCat\) is defined to be the right Kan extension of the functor 
  \[ \Mod: \CAlg_A \to \thetaCat\]
  sending an animated \(A\)-algebra \(B\) to the \(\Theta\)-category 
  \[B\modc(\Mod_A) \simeq \Mod_B.\]
\end{definition}

\begin{remark}
  Explicitly, for a prestack \(\cX\), the \(\Theta\)-category \(\QCoh^{\Sym}(\cX)\) is given by the limit 
  \[ \QCoh^{\Sym}(\cX) = \lim_{\Spec R \to \cX} \Mod_R\]
  with the limit \(\Theta\)-structure.
  Since the forgetful functor \(\thetaCat \to \Cat_{pr}\) is conservative and preserves limits, the underlying category of \(\QCoh^{\Sym}(\cX)\) is naturally identified with \(\QCoh(\cX)\).
\end{remark}

Given \(\Theta\)-categories \(C\) and \(D\) equipped with t-structures, 
let \(\FunthcA(C,D)\) be the space of \(A\)-linear morphisms of \(\Theta\)-categories \(C \to D\) which send \(C^{\leq 0}\) into \(D^{\leq 0}\).

\begin{definition}\label{definition: derived Tannakian prestack}
  A prestack \(\cX/\Spec A\) is \emph{Tannakian} if for all affine schemes \(Y/\Spec A\), the natural morphism 
  \[ \Maps_A(Y,\cX) \to \FunthcA(\QCoh^{\Sym}(\cX),\QCoh^{\Sym}(Y))\]  
  is an equivalence.
  %\TODO{what about almost-perfect complexes??}
\end{definition}
By \cite[Proposition 1.12]{NT25}, if \(\cX\) is an affine scheme, then \(\cX\) is Tannakian.
\begin{remark}
  The analogous definition in spectral algebraic geometry was introduced by Bhatt and Halpern-Leistner in \cite{BH17}, following work of Lurie \cite{luriedagviii}. By \cite[Theorem 4.1]{BH17}, if \(\cX\) is an fpqc spectral stack with quasi-affine diagonal such that \(\QCoh(\cX)\) is compactly generated, then for all affine spectral schemes \(Y/\Spec A\), the natural morphism 
  \[ \Maps_A(Y,\cX) \to \FuntcA(\QCoh(\cX),\QCoh(Y))\]
  is an equivalence.
  When \(\QCoh(\cX)\) is not necessarily compactly generated, \cite{luriedagviii} and \cite{BH17} prove similar statements about tensor functors required to preserve flat or almost-perfect complexes, respectively.
  The notion of derived Tannakian prestack in Definition \ref{definition: derived Tannakian prestack} will be studied in future work with Shubhankar Sahai \cite{MS}.
\end{remark}
Given \(A\)-linear \(\Theta\)-categories \(C\) and \(D\) equipped with \(t\)-structures, 
define the prestack \(\FunsthcA(C,D)\) by the functor of points
\[ \FunsthcA(C,D)(S) = \FunthcA(C, D \otimes \Mod_S).\]
\begin{lemma}
  The following are equivalent:
  \begin{enumerate}
    \item \(\cX/\Spec A\) is Tannakian;
    \item for all affine schemes \(Y/\Spec A\), the natural morphism 
    \[ \Mapsu_{A}(Y,\cX) \to \FunsthcA(\QCoh(\cX),\QCoh(Y))\]
    is an equivalence;
    \item for all prestacks \(\cY/\Spec A\), the natural morphism 
    \[\Mapsu_A(\cY,\cX) \to \FunsthcA(\QCoh(\cX),\QCoh(\cY))\]
    is an equivalence.
  \end{enumerate}
\end{lemma}
\begin{proof}
  The lemma follows from the definitions and that a prestack is the colimit of all affine schemes over it.
\end{proof}

\subsection{Shifted tangent bundles}\label{subsection: shifted tangent}

\begin{definition}
  Suppose \(\cX\) is a prestack and \(E \in \QCoh(\cX)\). Then the \emph{vector prestack} \(\V(E^\vee)\) is the prestack over \(\cX\) with functor of points the space of pairs
  \[ \V(E^\vee)(S) = \{( x \in \cX(S), f: x^*E \to S) \}.
  \]
\end{definition}

If \(E\) is connective, then \(\V(E^\vee) \to \cX\) is represented by the relative spectrum \(\underline{\Spec}_{\cX}(\Sym(E))\). More generally, if \(E\) has amplitude in \([-n,\infty)\), then \(\V(E^\vee) \to \cX\) is a relative \(n\)-stack.
%\footnote{Our notation for vector stacks is taken from \cite{GM25}.}

The shifted tangent bundle of a morphism of prestacks will be defined as a vector prestack. However, the shifted tangent bundle is only defined for those morphisms admitting a cotangent complex; this notion is reviewed in Appendix \ref{subsection: cotangent complex for stacks}. Implicitly, if a morphism \(\cX \to \cY\) admits a cotangent complex \(L\Omega^1_{\cX/\cY}\), then \(L\Omega^1_{\cX/\cY}\) is almost connective.

\begin{definition}\label{definition: shifted tangent bundle}
  Let \(\cX \to \cY\) be a morphism of prestacks admitting a cotangent complex.
  Then the \emph{\(n\)-shifted tangent bundle}
  \(T_{\cY}[n]\cX\) of \(\cX \to \cY\) is by definition the vector prestack
  \[ T_{\cY}[n]\cX = \V((L\Omega^1_{\cX/\cY}[-n])^\vee).\]
\end{definition}

If the base prestack is \(\cY = \Spec A\), then we will simply write \(T[n]\cX\) for the \(n\)-shifted tangent bundle of \(\cX \to \Spec A\).
It follows from the definition of the cotangent complex that if \(n \geq 0\), then \(T[n]\cX = \Mapsu(\Spec(A\oplus A[n]),\cX)\).

Moulinos-Robalo-Toën proved that \(\Mapsu(B\Gahat^\vee, \Spec R) \simeq T[-1]\Spec R\) for all animated \(A\)-algebras \(R\) (Corollary \ref{corollary: associated graded is shifted tangent bundle}). 
It turns out that for more general prestacks, the shifted tangent bundle is not the stack of maps out of \(B\Gahat^\vee\), but instead the stack of maps out of the closely related stack \(B\Ga^\vee\), where \(\Ga^\vee\) is the Cartier dual of \(\Ga\).
Concretely, \(\Ga^\vee = \Spf A[\partial^{(n)} | n \geq 0]\) where \(A[\partial^{(n)} | n \geq 0]\) is the free divided power polynomial ring on \(\delta\) with the divided power filtration.
\(\Ga^\vee\) is a group ind-scheme.

To apply Tannakian reconstruction to \(\Maps(B\Ga^\vee,-)\), we must have a description of the \(\Theta\)-category \(\QCoh^{\Sym}(B\Ga^\vee)\).
As an intermediate step, we first describe the category \(\QCoh(B\Ga^\vee)\) via the Fourier-Mukai transform.
%\footnote{We do not attempt to explain the compatibility between the Fourier-Mukai transform and the \(\Theta\)-structure.}

\begin{lemma}[\cite{AM25}, Theorem C]
  \label{lemma: reps of Ga dual}
  Let \(R\) be an animated ring and let \(\pi: \Ga \to \Spec R\) be the additive group over \(\Spec R\). Let \(z: \Spec R \to B\Ga^\vee\) be the zero section of \(B\Ga^\vee\). Then there is a symmetric monoidal equivalence
  \[ \Phi_{\Ga}: (\QCoh(\Ga),\star) \overset{\sim}{\rightarrow} \QCoh(B\Ga^\vee)\]
  fitting into a commutative square 
  \[
  % https://q.uiver.app/#q=WzAsNCxbMSwwLCJcXFFDb2goQlxcR2FeXFx2ZWUpIl0sWzEsMSwiXFxNb2RfUiJdLFswLDAsIlxcUUNvaChcXEdhKSJdLFswLDEsIlxcTW9kX1IiXSxbMiwwLCJcXFBoaV97XFxHYX0iXSxbMCwxLCJ6XioiXSxbMywxLCI9Il0sWzIsMywiXFxwaV8qIl1d
\begin{tikzcd}
	{\QCoh(\Ga)} & {\QCoh(B\Ga^\vee)} \\
	{\Mod_R} & {\Mod_R}
	\arrow["{\Phi_{\Ga}}", from=1-1, to=1-2]
	\arrow["{\pi_*}", from=1-1, to=2-1]
	\arrow["{z^*}", from=1-2, to=2-2]
	\arrow["{=}", from=2-1, to=2-2]
\end{tikzcd}
.\]
\end{lemma}
In other words, Lemma \ref{lemma: reps of Ga dual} identifies \(\QCoh(B\Ga^\vee)\) with modules over a polynomial ring in one variable \(R[\dd]\).

\begin{remark}
  Note that \(B\Ga^\vee\) is the 
  \emph{fpqc classifying stack}
  of \(\Ga^\vee\). 
  This may be understood as the geometric realization \(\colim{[n] \in \Delta^{op}} (\Ga^\vee)^{\times n}\) of the nerve of \(\Spec A \to B\Ga^\vee\) in fpqc stacks.
  This does not agree with the corresponding colimit in prestacks, unlike the case of the classifying stack of a flat affine group scheme. However, \(\QCoh\) is insensitive to this sheafification \cite[Chapter 3, Corollary 1.3.8]{GR17I}.
\end{remark}

\begin{remark}
  The Lemma holds for any base animated ring \(R\), but since \(\Ga\) is defined over \(\Z\) and \(\QCoh(\cX \times \Spec R) = \QCoh(\cX) \otimes \Mod_R\) for any prestack \(\cX\) and any animated ring \(R\) \cite[Chapter 3, 3.5.1]{GR17I}, it suffices to prove the Lemma when \(R = \Z\). Although the statement only involves quasi-coherent sheaves, the proof passes through ind-coherent sheaves.
\end{remark}

Our description of \(\QCoh^{\Sym}(B\Ga^\vee)\)
is constructed using an element \(\DD: \Spec A[\epsilon]/\epsilon^2 \to \mathbb G_a^\vee\) of the Lie algebra of \(\Ga^\vee\). 
Viewing \(\Ga^\vee = \Hom(\Ga,\mathbb{G}_m)\), \(\DD\) is the homomorphism \(x \mapsto 1 + \epsilon x: \mathbb G_a \to \mathbb G_m\).
Then \(\DD\) induces a map 
\begin{equation}\label{eq: lie algebra element of Gadual}% https://q.uiver.app/#q=WzAsNSxbMCwwLCJcXFNwZWMgQVtcXGVwc2lsb25dL1xcZXBzaWxvbl4yIl0sWzEsMSwiXFxtYXRoYmIgR19hXlxcdmVlIl0sWzEsMiwiXFxhc3QiXSxbMiwxLCJcXGFzdCJdLFsyLDIsIkJcXG1hdGhiYiBHX2FeXFx2ZWUiXSxbMiw0XSxbMSwyXSxbMSwzXSxbMyw0XSxbMCwxXSxbMSw0LCIiLDIseyJzdHlsZSI6eyJuYW1lIjoiY29ybmVyIn19XV0=
  \begin{tikzcd}[ampersand replacement=\&]
    {\Spec A[\epsilon]/\epsilon^2} \\
    \& {\mathbb G_a^\vee} \& \ast \\
    \& \ast \& {B\mathbb G_a^\vee}
    \arrow[from=1-1, to=2-2]
    \arrow[from=2-2, to=2-3]
    \arrow[from=2-2, to=3-2]
    \arrow["\lrcorner"{anchor=center, pos=0.125}, draw=none, from=2-2, to=3-3]
    \arrow[from=2-3, to=3-3]
    \arrow[from=3-2, to=3-3]
  \end{tikzcd}
.
\end{equation}

\begin{remark}\label{remark: FM transform and derivative for Ga}
  The equivalence in Lemma \ref{lemma: reps of Ga dual} sends \(E \in \QCoh(B\Ga^\vee)\) to the \(R[\dd]\)-module whose underlying \(R\)-module is \(z^*E\), where the action of \(\dd\) is given by the infinitesimal action of \(\DD\), as in Lemma \ref{lemma: category of Ga check reps}.
\end{remark}

\begin{lemma}\label{lemma: reps of Ga dual as pullback}
  For all animated \(A\)-algebras \(R\), 
  the diagram \eqref{eq: lie algebra element of Gadual} induces a pullback square of \(\Theta\)-categories
  \begin{equation}\label{eq: endomorphisms as pullback}
    % https://q.uiver.app/#q=WzAsNCxbMCwwLCJcXFFDb2hee1xcU3ltfShCXFxtYXRoYmJ7R31fYV5cXHZlZS9SKSJdLFswLDEsIlxcTW9kX1IiXSxbMSwwLCJcXE1vZF9SIl0sWzEsMSwiXFxNb2Rfe1JbXFxlcHNpbG9uXS9cXGVwc2lsb25eMn0iXSxbMCwxXSxbMCwyXSxbMSwzXSxbMiwzXV0=
\begin{tikzcd}[ampersand replacement=\&]
	{\QCoh^{\Sym}(B\Ga^\vee/R)} \& {\Mod_R} \\
	{\Mod_R} \& {\Mod_{R[\epsilon]/\epsilon^2}}
	\arrow[from=1-1, to=1-2]
	\arrow[from=1-1, to=2-1]
	\arrow[from=1-2, to=2-2]
	\arrow[from=2-1, to=2-2]
\end{tikzcd}
  .
  \end{equation}

\end{lemma}
\begin{proof}
  By Lemma \ref{lemma: forget theta}, the functor of forgetting the \(\Theta\)-structure is conservative and preserves limits. 
  Hence, it suffices to show that \eqref{eq: endomorphisms as pullback} is a pullback square of categories.

  Under the Fourier-Mukai equivalence \(\Phi_{\Ga}:\Mod_{R[\dd]} \simeq \QCoh(B\Ga^\vee)\) of Lemma \ref{lemma: reps of Ga dual}, the map 
  \[ T: \Mod_{R[\dd]} \to \Mod_R \underset{\Mod_{R[\epsilon]/\epsilon^2}}{\times} \Mod_R \]
  sends an \(R[\dd]\)-module \(M\) to the restriction of scalars of \(M\) along \(R \to R[\dd]\),
  together with the automorphism \(1 + \epsilon \dd\) 
  of \(M \otimes_R R[\epsilon]/\epsilon^2\).
  
  First, we check that \(T\) is fully faithful on the generator \(R[\dd]\) of \(\Mod_{R[\dd]}\).
  Given \(R[\dd]\)-modules \(M\) and \(N\),
  \(T\) induces a sequence
  \[ R\Hom_{R[\dd]}(M,N) \to R\Hom_{R}(M,N) \oplus R\Hom_R(M,N) \overset{a}{\to} R\Hom_{R[\epsilon]/\epsilon^2}(M[\epsilon]/\epsilon^2, N[\epsilon]/\epsilon^2), \]
  where the map \(a\) sends \((x,y)\) to \(x- (1+\epsilon \dd)y(1 - \epsilon \dd) = (x-y) + \epsilon[\dd,y]\).
  This is easily seen to be a fiber sequence when \(M = N = R[\dd]\), and thus \(T\) is fully faithful on the generator.

  Secondly, we check that \(T\) is essentially surjective.
  Given a pair of \(R\)-modules \(M\) and \(M'\) and an isomorphism \(\phi: M[\epsilon]/\epsilon^2 \to M'[\epsilon]/\epsilon^2\),
  let \(a = \phi \mod \epsilon : M \to M'\).
  Then \(a\) is an isomorphism, and so 
  \((M,M',\phi) \cong (M,M, a^{-1}\phi)\).
  As \(a^{-1}\phi\) is equivalent to 1 mod \(\epsilon\),
  it is of the form \(1 + \epsilon \dd\) for \(\dd:M \to M\), so \((M,M,a^{-1}\phi)\) is in the essential image of \(T\).
\end{proof}

\begin{theorem}\label{theorem: general tangent stack as mapping stack}
  If \(\cX/\Spec A\) is a Tannakian prestack admitting a cotangent complex, then there is a natural equivalence
  \[ \Psi: \Mapsu(B\Ga^\vee, \cX) \simeq T[-1]\cX.\]
\end{theorem}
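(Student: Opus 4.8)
The plan is to check the equivalence on $S$-points, naturally in $S \in \CAlg_A$: one must produce a natural equivalence $\Mapsu(B\Gahat^\vee,\mathcal X)(S) = \Maps(\Spec S \times_{\Spec A} B\Gahat^\vee, \mathcal X) \simeq T[-1]\mathcal X(S)$. First I would identify the source. By Proposition~\ref{prop: coaffine associated graded}, $B\Gahat^\vee \simeq \Spec^\Delta(A[\eta])$, and since the formation of a trivial square-zero extension commutes with base change, $\Spec S \times_{\Spec A} B\Gahat^\vee$ is the prestack $T \mapsto \Maps_{\DAlg}(S \oplus S[-1], T)$ associated to the trivial square-zero extension of $S$ by $S[-1]$. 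The basepoint $\Spec A \to B\Gahat^\vee$ (equivalently, the augmentation $S \oplus S[-1] \to S$) gives a section $s$ of the projection $\pi \colon \Spec S \times B\Gahat^\vee \to \Spec S$, so the restriction $s^* \colon \Maps(\Spec S \times B\Gahat^\vee, \mathcal X) \to \mathcal X(S)$ is split by $\pi^*$; on the other side $T[-1]\mathcal X = \V((L\Omega^1_{\mathcal X/A}[1])^\vee)$ carries the zero section $\mathcal X \to T[-1]\mathcal X$. Hence it suffices to identify the fibers of both maps over a fixed $x \in \mathcal X(S)$, compatibly with the sections and naturally in $S$ and $\mathcal X$.

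By the definition of the vector prestack, the fiber of $T[-1]\mathcal X(S) \to \mathcal X(S)$ over $x$ is $\Maps_{\Mod_S}(x^*L\Omega^1_{\mathcal X/A}[1], S) \simeq \Maps_{\Mod_S}(x^*L\Omega^1_{\mathcal X/A}, S[-1])$. On the other side, the fiber of $\Maps(\Spec S \times B\Gahat^\vee, \mathcal X) \to \mathcal X(S)$ over $x$ is the space of extensions of $x$ along the square-zero extension $\Spec S \times B\Gahat^\vee \to \Spec S$; since $\mathcal X$ admits a cotangent complex in the sense of Appendix~\ref{subsection: cotangent complex for stacks} --- which controls square-zero extensions by almost connective modules, and $S[-1]$ is one --- this space is again $\Maps_{\Mod_S}(x^*L\Omega^1_{\mathcal X/A}, S[-1])$. (Alternatively, using infinitesimal cohesion of $\mathcal X$, one may write $S \oplus S[-1] \simeq S \times_{S \oplus S} S$, reducing to the deformation space attached to the \emph{connective} module $S$ and then passing to based loops.) The two descriptions of the fiber agree; tracking the identification through the sections assembles it into the asserted equivalence of prestacks. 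For $\mathcal X = \Spec R$ affine this specializes to Corollary~\ref{corollary: associated graded is shifted tangent bundle} --- the case in which ``$\Maps(\Spec S \times B\Gahat^\vee, \Spec R) \simeq \Maps_{\DAlg}(R, S \oplus S[-1])$'' and ``extensions of $x$ $=$ derivations'' are literal statements, the cotangent complex of $R$ in $\DAlg$ computing maps into an arbitrary derived ring.

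The step I expect to be the main obstacle is identifying $\Maps(\Spec S \times B\Gahat^\vee, \mathcal X)$ with an $\mathcal X$-valued deformation functor corepresented by $L\Omega^1_{\mathcal X/A}$ when the relevant square-zero ideal $S[-1]$ is coconnective rather than connective. One cannot simply reduce the general case to the affine Corollary~\ref{corollary: associated graded is shifted tangent bundle}, since neither $\Mapsu(B\Gahat^\vee, -)$ nor $T[-1](-)$ commutes with the colimits that present a general prestack as glued from affines; this is precisely why the hypothesis must be that $\mathcal X$ admits a cotangent complex in the strong (infinitesimally cohesive) sense reviewed in the Appendix. A secondary point demanding care is that $\Spec S \times B\Gahat^\vee$ is in general neither affine nor coaffine, so the phrases ``its structure sheaf is $S \oplus S[-1]$'' and ``extensions of $x$ to it'' must be interpreted through the derived coaffine formalism of \S\ref{subsection: derived coaffine stacks} rather than taken literally.
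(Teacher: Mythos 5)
Your fiberwise route faces exactly the obstacle you flag, and as written it is not fully resolved. The cotangent complex of the Appendix is defined to represent $\Der_{\mathcal X/A}$ on \emph{connective} modules only; its extension to almost connective modules via Lemma~\ref{lemma: restrict excisive} is a formal Yoneda-style extension and does not by itself say that $\Maps(x^*L\Omega^1_{\mathcal X/A}, S[-1])$ computes maps out of the coaffine ``square-zero extension'' $\Spec^\Delta(S\oplus S[-1])$. You offer two fixes. The first --- appealing to ``the strong (infinitesimally cohesive) sense reviewed in the Appendix'' --- invokes a hypothesis the theorem does not assume (the Appendix reviews Lurie's representability definition, not infinitesimal cohesion), so if the argument genuinely required cohesion the theorem would be wrong as stated. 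The second fix, the decomposition $S\oplus S[-1] \simeq S\times_{S\oplus S} S$, is the correct idea, but you also misattribute cohesion to it: once $B\Gahat^\vee$ is exhibited as the pushout $\Spec A\sqcup_{\Spec A[\epsilon]}\Spec A$ in $\PreStk_A$ (a statement about prestacks with no hypothesis on $\mathcal X$), the functor $\Mapsu(-,\mathcal X)$ carries this to the pullback $\mathcal X\times_{T\mathcal X}\mathcal X$ \emph{for any} prestack $\mathcal X$, and the cotangent-complex hypothesis enters only in identifying $\Mapsu(\Spec A[\epsilon],\mathcal X)\simeq T\mathcal X$, which involves just the connective module $S$.

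That is precisely the paper's proof: starting from Proposition~\ref{prop: coaffine associated graded}, it writes $A[\eta]=A\times_{A[\epsilon]}A$ as a pullback in $\DAlg_A$, passes to the corresponding pushout square $B\Gahat^\vee\simeq \Spec A\sqcup_{\Spec A[\epsilon]}\Spec A$ in $\PreStk_A$, and then applies $\Mapsu(-,\mathcal X)$ to obtain $\mathcal X\times_{T\mathcal X}\mathcal X\simeq T[-1]\mathcal X$. The argument never confronts the coconnective-module issue because it never works fiberwise. Your proposal, if you commit to the parenthetical alternative and drop the cohesion requirement, becomes a pointwise unwinding of the same argument; as the main route is currently written, it has a gap.
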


\begin{proof}
  Since \(\cX/\Spec A\) admits a cotangent complex, applying \(\Mapsu_A(-,\cX)\) to \eqref{eq: lie algebra element of Gadual} gives a commutative square
  \begin{equation}\label{eq: from mapping stack to shifted tangent bundle}
  % https://q.uiver.app/#q=WzAsNCxbMCwwLCJcXE1hcHN1KEJcXG1hdGhiYntHfV9hXlxcdmVlLFgpIl0sWzAsMSwiWCJdLFsxLDAsIlgiXSxbMSwxLCJUWCJdLFswLDFdLFsxLDNdLFsyLDNdLFswLDJdXQ==
  \begin{tikzcd}[ampersand replacement=\&]
  	{\Mapsu(B\Ga^\vee,\cX)} \& \cX \\
  	\cX \& T\cX
  	\arrow[from=1-1, to=1-2]
  	\arrow[from=1-1, to=2-1]
  	\arrow[from=1-2, to=2-2]
  	\arrow[from=2-1, to=2-2]
  \end{tikzcd}
  \end{equation}
  which induces a map \(\Psi: \Maps(B\Ga^\vee,\cX) \to T[-1]\cX\).
  We will show that \(\Psi\) is an equivalence.

  Apply \(\FunsthcA(\QCoh^{\Sym}(\cX),\QCoh^{\Sym})\) to \eqref{eq: endomorphisms as pullback}.
  If we did not require our functors to preserve connective objects, we would immediately obtain a pullback square.
  But since \(B\Ga^\vee\) is the fpqc classifying stack of \(\Ga^\vee\), the zero section \(z: \Spec A \to B\Ga^\vee\) is an fpqc cover, 
  so for \(E \in \QCoh(B\Ga^\vee)\), \(E\) is connective if and only if \(z^*E\) is connective.
  Thus \(\alpha \in \FunsthA(\QCoh(\cX),\QCoh(B\Ga^\vee))(S)\) preserves connective objects if and only if \(z^*\alpha\) does.
  
  Thus we have a pullback square
\begin{equation}\label{eq: tannakian square for shifted tangent bundle}
% https://q.uiver.app/#q=WzAsNCxbMCwwLCJcXEZ1bnN0aGNBKFxcUUNvaChcXG1hdGhjYWwgWCksIFxcUUNvaChCXFxtYXRoYmJ7R31fYV5cXHZlZSkpIl0sWzEsMCwiXFxGdW5zdGhjQShcXFFDb2goXFxtYXRoY2FsIFgpLFxcTW9kX1IpIl0sWzAsMSwiXFxGdW5zdGhjQShcXFFDb2goXFxtYXRoY2FsIFgpLFxcTW9kX1IpIl0sWzEsMSwiXFxGdW5zdGhjQShcXFFDb2goXFxtYXRoY2FsIFgpLFxcTW9kX3tSW1xcZXBzaWxvbl0vXFxlcHNpbG9uXjJ9KSJdLFswLDJdLFswLDFdLFsyLDNdLFsxLDNdXQ==
\begin{tikzcd}[ampersand replacement=\&]
	{\FunsthcA(\QCoh(\cX), \QCoh(B\Ga^\vee))} \& {\FunsthcA(\QCoh(\cX),\Mod_R)} \\
	{\FunsthcA(\QCoh(\cX),\Mod_R)} \& {\FunsthcA(\QCoh(\cX),\Mod_{R[\epsilon]/\epsilon^2})}
	\arrow[from=1-1, to=1-2]
	\arrow[from=1-1, to=2-1]
	\arrow[from=1-2, to=2-2]
	\arrow[from=2-1, to=2-2]
\end{tikzcd}
\end{equation}
  If \(\cX\) is Tannakian, then \eqref{eq: tannakian square for shifted tangent bundle} is equivalent to \eqref{eq: from mapping stack to shifted tangent bundle}, and thus \(\Psi\) is an equivalence.
\end{proof}

\begin{remark}
  By construction, the composition of $\Psi$ with the structure map $\pi: T[-1]\cX \to \cX$ is induced by pullback along the quotient map $z: \Spec A \to B\Ga^\vee$.
\end{remark}

\begin{remark}
  If \(\mathbb{Q} \subseteq A\), then \(\Ga^\vee = \Gahat\); in this case, Naef and Safranov \cite{NS23} proved Theorem \ref{theorem: general tangent stack as mapping stack} without any Tannakian hypothesis using Lie theory.
  In order to obtain the shifted tangent stack away from characteristic zero, one must work with \(\Ga^\vee\) instead of \(\Gahat\).
\end{remark}

Consider the morphism \(B\Ga^\vee \to B\Gacheck\) induced by the Cartier dual of the inclusion \(\Gahat \to \Ga\).
By Theorem \ref{theorem: B H dual affine stack}, \(B\Gacheck\) is an affine stack. 
From Lemma \ref{lemma: reps of Ga dual}, it follows that \(B\Ga^\vee\) and \(B\Gacheck\) have the same global functions, that is, the natural map \(R\Gamma(B\Gacheck, \OO_{B\Gacheck}) \to R\Gamma(B\Ga^\vee,\OO_{B\Ga^\vee})\) is an equivalence.
Thus \(B\Gacheck\) is the affinization of \(B\Ga^\vee\).
Together with Theorem \ref{theorem: general tangent stack as mapping stack}, this gives another proof of Corollary \ref{corollary: associated graded is shifted tangent bundle}: for an animated \(A\)-algebra \(R\),
\[ \Mapsu(B\Gacheck, \Spec R) \simeq \Mapsu(B\Ga^\vee, \Spec R) \simeq T[-1]\Spec R.\]

However, when \(\cX\) is not an affine scheme, the induced map \(\Mapsu(B\Gahat^\vee, \cX) \to \Maps(B\Ga^\vee, \cX) \simeq T[-1]\cX\) is not necessarily an equivalence.

\begin{example}
  Consider \(\cX = BG\) for a group scheme \(G/\Spec A\).
  For any group prestack \(H\), \(\Maps(BH, BG)\) is equivalent to the space of homomorphisms \(H \to G\) modulo conjugation by \(G\). 
  
  For the rest of this example, let us ignore derived structure and calculate the classical homomorphism sets \(\Hom(\Gacheck,G) \to \Hom(\Ga^\vee,G)\) over a classical ring \(R\).
  Let \(\Delta\) and \(\epsilon\) be the coproduct and counit of the Hopf algebra \(\OO_G\).
  Viewing \(\Ga^\vee = \Spf R[\partial^{(n)} | n \geq 0]\), we find that a homomorphism \(\Ga^\vee \to G\) is exactly 
  %a compatible collection of morphisms from \(\OO_G\) to truncations of \(R[\partial^{(n)} | n \geq 0]\), i.e. 
  an infinite sum of the form \(\alpha = \sum_{n=0}^\infty \beta_n \partial^{(n)}\) where \(\beta_n: \OO_G \to R\) are linear maps satisfying the following four conditions:
  \begin{itemize}
    \item (unit) \[\beta_n(1) = \begin{cases}
      1 & n = 0\\
      0 & n \neq 0 
    \end{cases}\]
    \item (counit) \(\beta_0 = \epsilon\)
    \item (product) \(\beta_n(fg) = \sum_{i=0}^n \binom{n}{i}\beta_i(f)\beta_{n-i}(g)\) for all \(n\)
    \item (coproduct) \(\beta_n(f) = \beta_i\beta_j(\Delta f)\) whenever \(n = i+ j\).
  \end{itemize}
  The product rule implies \(\beta_1(fg) = \epsilon(f)\beta_1(g) + \beta_1(f)\epsilon(g)\), that is. that \(\beta_1 \in \Lie G\).
  Given such an element, \(\beta_n\) is uniquely defined by \(\beta_n = \beta_1^n(\Delta^{(n)}f)\),
  which satisfies the product rule thanks to the coassociativity of \(\Delta\).
  Thus the natural map 
  \[ \Hom(\Ga^\vee,G) \to \Hom(\Lie \Ga^\vee,\Lie G) \overset{\DD}{\to} \Lie G\]
  is an isomorphism.
  
  Now a homomorphism \(\Gacheck \to G\) is exactly such an infinite sum \(\sum_{n=0}^\infty \beta_n\partial^{(n)}\) satisfying the condition that for all \(f \in \OO_G\), \(\beta_n(f) = 0\) for \(n \gg 0\).
  This is equivalent to requiring that \(\beta_1 \in \Lie G\) defines a locally nilpotent operator on \(\OO_G\). 
  Hence \(\Hom(\Gahat^\vee, G) \to \Hom(\Ga^\vee, G)\) is exactly the inclusion of the ind-scheme of nilpotent elements of \(\Lie(G)\).  
  This agrees with Chen's calculation in characteristic zero \cite[Proposition 2.1.25]{Che20}.
\end{example}

Let \(\cX/\Spec A\) be a prestack admitting a \((-1)\)-connective cotangent complex. Then \(T[-1]\cX \simeq \underline{\Spec}_{\cX}(\Sym(L\Omega^1_{\cX/\Spec A}[1]))\) is relatively affine over \(\cX\). It follows that morphisms \(T[-1]\cX \to T[-1]\cX\) over \(\cX\) are equivalent to maps \(L\Omega^1_{\cX/\Spec A}[1] \to \Sym(L\Omega^1_{\cX/\Spec A}[1])\). 
If \(\cX\) is Tannakian, then we can apply Theorem \ref{theorem: general tangent stack as mapping stack} to generalize Definition \ref{definition: V}:

\begin{definition}\label{definition: general V}
  If \(A\) is an \(\Fp\)-algebra and \(\cX/\Spec A\) is a Tannakian prestack admitting a \((-1)\)-connective cotangent complex,
  then define
  \[V: L\Omega^1_{\cX/A}[1] \to \Sym^p(L\Omega^1_{\cX/A}[1]) \simeq L\Omega^p_{\cX/A}[p]\]
  to be the map on functions on \(T[-1]\cX \simeq \Mapsu(B\Ga^\vee,\cX)\)
  induced by the action of the infinitesimal automorphism of \(B\Ga^\vee\) Cartier dual to \(1 + \epsilon F: \Ga \to \Ga\).
\end{definition}

As the Frobenius is a natural transformation, the action of the Cartier dual of \(1+ \epsilon F\) is compatible with the morphism \(B\Ga^\vee \to B\Gacheck\).
Thus Definition \ref{definition: general V} is compatible with Definition \ref{definition: V}.

\begin{remark}
  The map \(T[-1]\cX \to \cX\) is the relative spectrum of the derived ring \(\Sym( L\Omega^1_{\cX/A}[1]) \in \DAlg_{\cX}\). \cite[Theorem 1.3]{MM25} states that taking the relative spectrum of a derived ring is fully faithful on bounded below derived rings.
  Thus, for given \(\cX/A\),
  if \(\Sym(L\Omega^1_{\cX/A}[1])\) is bounded below, then Definition \ref{definition: general V} makes sense for \(\cX\).
\end{remark}

\subsection{The Atiyah class}\label{subsection: atiyah and shifted tangent bundle}

Let $\cX/\Spec A$ be a prestack admitting a cotangent complex.
The canonical Lie algebra element \(\DD \in \Lie \Ga^\vee\) induces a map 
\[\Mapsu(B\Ga^\vee,\cX) \to T[-1]\cX,\]
which by Theorem \ref{theorem: general tangent stack as mapping stack} is an equivalence when \(\cX\) is Tannakian.
In this section, we will reinterpret this equivalence in terms of the Atiyah class, following an idea of Markarian. 
In his work on the Atiyah class and Hochschild homology, Markarian showed that in the setting of the triangulated category \(\pi_0\QCoh(\cX)\), the Atiyah class is the universal assignment to each \(E \in \QCoh(\cX)\) an endomorphism of \(E\) satisfying the Leibniz rule \cite[Introduction]{Mar09}. As a direct consequence of this interpretation, the morphism \(V\) is seen to be a \(p\)th-power operation for the Atiyah cobracket on \(L\Omega^1[1]\).

We now recall the Atiyah class and prove in Proposition \ref{proposition: mapping stack by Atiyah class} a universal property of the Atiyah class in our setting of derived stacks.
If \(R' \to R\) is a morphism of animated rings and \(M\) is an \(R\)-module, then the \emph{Atiyah class} of \(M\) is the morphism
\[
  at_M: M \to M \otimes L\Omega^1_{R/R'}[1]
\] induced by the first principal parts sequence
\[ M \otimes L\Omega^1_{R/R'} \to LP^1(M) \to M \to^{+1},\]
see e.g.\ \cite[\href{https://stacks.math.columbia.edu/tag/09DF}{Tag 09DF}]{stacks-project}.
In the more general context of a morphism \(\cX \to \cY\) of prestacks admitting a cotangent complex, every
\(E \in \QCoh(\cX)\) admits an Atiyah class
\begin{equation}\label{eq: atiyah class}
  at_E: E \to E \otimes L\Omega^1_{\cX/\cY}[1],
\end{equation}
whose definition we now give. The relative loop space 
\[
% https://q.uiver.app/#q=WzAsNCxbMCwwLCJcXG1hdGhjYWx7TH1fe1xcbWF0aGNhbHtZfX1cXG1hdGhjYWx7WH0iXSxbMSwwLCJcXG1hdGhjYWx7WH0iXSxbMCwxLCJcXG1hdGhjYWx7WH0iXSxbMSwxLCJcXG1hdGhjYWx7WH0gXFx1bmRlcnNldHtcXG1hdGhjYWx7WX19XFx0aW1lcyBcXG1hdGhjYWx7WH0iXSxbMCwyXSxbMiwzLCJcXERlbHRhIiwyXSxbMSwzLCJcXERlbHRhIl0sWzAsMV0sWzAsMywiIiwxLHsic3R5bGUiOnsibmFtZSI6ImNvcm5lciJ9fV1d
\begin{tikzcd}[ampersand replacement=\&,cramped]
	{\mathcal{L}_{\cY}\cX} \& {\cX} \\
	{\cX} \& {\cX \underset{\cY}\times \cX}
	\arrow[from=1-1, to=1-2]
	\arrow[from=1-1, to=2-1]
	\arrow["\lrcorner"{anchor=center, pos=0.125}, draw=none, from=1-1, to=2-2]
	\arrow["\Delta", from=1-2, to=2-2]
	\arrow["\Delta"', from=2-1, to=2-2]
\end{tikzcd}
\]
is naturally a groupoid over $\cX$, equipped with a morphism from its quotient stack $c: \cX/\mathcal{L}_{\cY}\cX \to \cX \times_{\cY}\cX$ to $\mathcal{X} \times_{\cY} \cX$. The first projection $q_1: \cX \times_{\cY} \cX \to \cX$ gives $\mathcal{L}_{\cY}\cX$ the structure of a group over $\cX$. Now any $E \in \QCoh(\cX)$ becomes a representation of this group by taking $(q_2 \circ c)^*E \in \QCoh(X/\mathcal{L}_{\cY}\cX)$.
The Atiyah class is then the infinitesimal action of the group $\mathcal{L}_{\cY}\cX$ on $(q_2 \circ c)^*E$. The co-Lie complex of this group is naturally identified with $L\Omega^1_{\cX/\cY}[1]$, and thus the infinitesimal action of $\mathcal{L}_{\cY}\cX$ on $E$ is of the form \eqref{eq: atiyah class}.
See Appendix \ref{subsection: d of inertia} for more details.

Our interpretation is that the Atiyah class arises naturally when a point \(x \in T[-1]\cX\) is viewed as a symmetric monoidal functor \(\QCoh(\cX) \to \QCoh(B\Ga^\vee) \simeq (\QCoh(\Ga),\star)\).
By definition, such functors are assignments of a quasi-coherent sheaf \(E\) to an endomorphism of \(E\) satisfying the Leibniz rule. 
Tautologically, there is a universal such functor over the mapping stack \(\Mapsu(B\Ga^\vee,\cX)\).
Proposition \ref{proposition: mapping stack by Atiyah class} below states that this functor sends an object \(E\) to its Atiyah class, viewed as an endomorphism with values in \(L\Omega^1_{\cX/A}[1]\).

Note that $\QCoh(B\Ga^\vee) \simeq \QCoh(\Ga)$ is dualizable, and hence for any prestack $\cY$,
\[ \QCoh(B\Ga^\vee) \otimes \QCoh(\cY) \to \QCoh(B\Ga^\vee \times \cY)\]
is an equivalence \cite[Chapter 3, Proposition 3.1.7]{GR17I}.
Hence, the Cartier duality equivalence of Lemma \ref{lemma: reps of Ga dual} gives an equivalence 
\[ \QCoh(B\Ga^\vee \times \cY) \simeq \QCoh(\Ga \times \cY).\]

\begin{proposition}\label{proposition: mapping stack by Atiyah class}
  Let \(\cX/\Spec A\) be a Tannakian prestack admitting a cotangent complex,  
  \[\Psi: \Mapsu(B\Ga^\vee, \cX) \overset{\sim}{\to} T[-1]\cX\] 
  be the equivalence of Theorem \ref{theorem: general tangent stack as mapping stack}, and let 
  \[ \widetilde{ev} = ev (1 \times \Psi^{-1}): B\Ga^\vee \times T[-1]\cX \to B\Ga^\vee \times \Mapsu(B\Ga^\vee,\cX) \to \cX.\]
  Then for $E \in \QCoh(\cX)$,
  \[\widetilde{ev}^*E \in \QCoh(B\Ga^\vee \times T[-1]\cX) \simeq \QCoh(\Ga \times T[-1]\cX)\]
  is identified under Cartier duality with $\pi^*E \in \QCoh(T[-1]\cX)$
  equipped with the endomorphism 
  \[
  \dd_E:
  % https://q.uiver.app/#q=WzAsMyxbMCwwLCJcXHBpXipFIl0sWzEsMCwiXFxwaV4qRSBcXG90aW1lcyBMXFxPbWVnYV4xX3tcXG1hdGhjYWx7WH0vQX1bMV0iXSxbMiwwLCJcXHBpXipFIl0sWzAsMSwiXFxwaV4qYXRfRSJdLFsxLDJdXQ==
\begin{tikzcd}
	{\pi^*E} & {\pi^*E \otimes L\Omega^1_{\cX/A}[1]} & {\pi^*E}
	\arrow["{\pi^*at_E}", from=1-1, to=1-2]
	\arrow[from=1-2, to=1-3]
\end{tikzcd}
\]

\end{proposition}
\begin{proof}

  By definition, $ev^*E$ is the sheaf whose pullback over $\eta: \cY \times B\Ga^\vee \to \cX$ is $\eta^*E$.
  If $z: \Spec A \to B\Ga^\vee$ is the quotient map, then the Cartier duality equivalence sends $M \in \QCoh(B\Ga^\vee \times \cY)$ to $z^*M \in \QCoh(\cY)$ equipped with the endomorphism given by the infinitesimal action of $\DD$.
  Let us view $\eta$ as a family of maps $B\Ga^\vee \to \mathcal{X}$ parameterized by $\mathcal{Y}$.
  By the naturality of the Atiyah class, there is a commutative square
  \begin{equation}\label{eq: atiyah class and evaluation map}
  % https://q.uiver.app/#q=WzAsNCxbMCwwLCJcXGV0YV4qRSJdLFsxLDAsIlxcZXRhXipFIFxcb3RpbWVzIFxcZXRhXipMXFxPbWVnYV4xX3tcXGNYL0F9WzFdIl0sWzEsMSwiXFxldGFeKkUgXFxvdGltZXMgTFxcT21lZ2FeMV97XFxjWSBcXHRpbWVzIEJcXEdhXlxcdmVlL0F9WzFdIl0sWzAsMSwiXFxldGFeKkUiXSxbMCwzLCI9IiwxXSxbMCwxLCJcXGV0YV4qYXRfRSIsMl0sWzMsMiwiYXRfe1xcZXRhXipFfSIsMl0sWzEsMiwiZFxcZXRhIl1d
\begin{tikzcd}
	{\eta^*E} & {\eta^*E \otimes \eta^*L\Omega^1_{\cX/A}[1]} \\
	{\eta^*E} & {\eta^*E \otimes L\Omega^1_{\cY \times B\Ga^\vee/A}[1]}
	\arrow["{\eta^*at_E}"', from=1-1, to=1-2]
	\arrow["{=}"{description}, from=1-1, to=2-1]
	\arrow["{d\eta}", from=1-2, to=2-2]
	\arrow["{at_{\eta^*E}}"', from=2-1, to=2-2]
\end{tikzcd}
.
\end{equation}
  Since the Atiyah class is by definition the infinitesimal action of the inertia group, the infinitesimal action of $\DD$ is given by the composite $(1\otimes \DD) \circ at_{\eta^*E} : \eta^*E \to \eta^* E$.

  Now $\Psi$ is defined by applying $\Mapsu(-,\cX)$ to the diagram 
  \[
% https://q.uiver.app/#q=WzAsNSxbMiwxLCJCXFxHYV5cXHZlZSJdLFswLDAsIlxcU3BlYyBBW1xcZXBzaWxvbl0vXFxlcHNpbG9uXjIiXSxbMiwwLCJcXFNwZWMgQSJdLFsxLDEsIlxcU3BlYyBBIl0sWzEsMCwiXFxHYV5cXHZlZSJdLFsxLDQsIlxcREQiXSxbNCwzXSxbNCwyXSxbMiwwXSxbMywwXV0=
\begin{tikzcd}
	{\Spec A[\epsilon]/\epsilon^2} & {\Ga^\vee} & {\Spec A} \\
	& {\Spec A} & {B\Ga^\vee}
	\arrow["\DD", from=1-1, to=1-2]
	\arrow[from=1-2, to=1-3]
	\arrow[from=1-2, to=2-2]
	\arrow[from=1-3, to=2-3]
	\arrow[from=2-2, to=2-3]
\end{tikzcd}.
  \]
  Thus the map $\Psi(\eta) :\cY \to T[-1]\cX$ is the map over $y = z^* \eta: \cY \to \cX$ classified by the composite
  \[ % https://q.uiver.app/#q=WzAsMyxbMCwwLCJ5XipMXFxPbWVnYV4xX3tcXGNYL0F9Il0sWzEsMCwiXFxPT197XFxjWX0gXFxvdGltZXMgTFxcT21lZ2FeMV97QlxcR2FeXFx2ZWUvQX1bMV0iXSxbMiwwLCJcXE9PX3tcXGNZfSJdLFswLDEsInleKmRcXGV0YSJdLFsxLDIsIlxcREQiXV0=
\begin{tikzcd}
	{y^*L\Omega^1_{\cX/A}} & {\OO_{\cY} \otimes L\Omega^1_{B\Ga^\vee/A}[1]} & {\OO_{\cY}}
	\arrow["{y^*d\eta}", from=1-1, to=1-2]
	\arrow["\DD", from=1-2, to=1-3]
\end{tikzcd}\]
  which is $z^*((1 \otimes \DD) d\eta$.
Taking the universal case $\eta = ev$ and applying $z^*$ to \eqref{eq: atiyah class and evaluation map}, we obtain the following commutative diagram:
\[
% https://q.uiver.app/#q=WzAsNixbMCwxLCJcXHBpXipFIl0sWzEsMSwiXFxwaV4qRSBcXG90aW1lcyBMXFxPbWVnYV4xX3tCXFxHYV5cXHZlZX1bMV0iXSxbMiwxLCJcXHBpXipFIl0sWzAsMCwiXFxwaV4qRSJdLFsxLDAsIlxccGleKkUgXFxvdGltZXMgTFxcT21lZ2FeMV97XFxjWC9BfVsxXSJdLFsyLDAsIlxccGleKkUiXSxbMCwxLCJhdF97XFxwaV4qRX0iXSxbMSwyLCIxIFxcb3RpbWVzIFxcREQiXSxbMyw0LCJcXHBpXiphdF9FIl0sWzQsMSwiZFxcZXRhIl0sWzMsMCwiPSIsMV0sWzQsNSwiMSBcXG90aW1lcyBcXFBzaShldikiXSxbNSwyLCI9IiwxXV0=
\begin{tikzcd}
	{\pi^*E} & {\pi^*E \otimes L\Omega^1_{\cX/A}[1]} & {\pi^*E} \\
	{\pi^*E} & {\pi^*E \otimes L\Omega^1_{B\Ga^\vee}[1]} & {\pi^*E}
	\arrow["{\pi^*at_E}", from=1-1, to=1-2]
	\arrow["{=}"{description}, from=1-1, to=2-1]
	\arrow["{1 \otimes \Psi(ev)}", from=1-2, to=1-3]
	\arrow["{d\eta}", from=1-2, to=2-2]
	\arrow["{=}"{description}, from=1-3, to=2-3]
	\arrow["{at_{\pi^*E}}", from=2-1, to=2-2]
	\arrow["{1 \otimes \DD}", from=2-2, to=2-3]
\end{tikzcd}
.
\]
Hence, the infinitesimal action of $\DD$ on $\widetilde{ev}^*E$ agrees with $\dd_E$, as desired.
\end{proof}

We are ready to prove Theorem \ref{maintheorem: restricted}.

\begin{theorem}\label{theorem: V is a pth power}
  Let \(A\) be an \(\Fp\)-algebra.
  If \(\cX/\Spec A\) is a Tannakian prestack admitting a \((-1)\)-connective cotangent complex, then for all \(E \in \QCoh(\cX)\), the following diagram commutes:
  \[% https://q.uiver.app/#q=WzAsNCxbMCwwLCJFIl0sWzAsMSwiRSBcXG90aW1lcyBMXFxPbWVnYV4xX3tcXG1hdGhjYWx7WH0vQX1bMV0iXSxbMSwwLCJFIFxcb3RpbWVzIChMXFxPbWVnYV4xX3tcXG1hdGhjYWx7WH0vQX1bMV0pXntcXG90aW1lcyBwfSJdLFsxLDEsIkUgXFxvdGltZXMgXFxTeW1ecChMXFxPbWVnYV4xX3tcXG1hdGhjYWx7WH0vQX1bMV0pIl0sWzAsMSwiYXRfRSJdLFswLDIsImF0X0Vee1xcb3RpbWVzIHB9IiwyXSxbMSwzLCIxIFxcb3RpbWVzIFYiXSxbMiwzXV0=
  \begin{tikzcd}[ampersand replacement=\&]
    E \& {E \otimes (L\Omega^1_{\cX/A}[1])^{\otimes p}} \\
    {E \otimes L\Omega^1_{\cX/A}[1]} \& {E \otimes \Sym^p(L\Omega^1_{\cX/A}[1])}
    \arrow["{at_E^{\otimes p}}"', from=1-1, to=1-2]
    \arrow["{at_E}", from=1-1, to=2-1]
    \arrow[from=1-2, to=2-2]
    \arrow["{1 \otimes V}", from=2-1, to=2-2]
  \end{tikzcd}
  .\]
\end{theorem}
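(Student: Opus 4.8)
The plan is to carry the statement through the equivalence $T[-1]X\simeq\Mapsu(B\Gacheck,X)$ and reduce it to the restricted structure of $\Lie(\Gacheck)$. \emph{Step 1: rephrase as a commutator identity.} Both composites in the square are natural in $E\in\QCoh(X)^{acn}$ and stable under base change, and since the hypothesis makes $T[-1]X=\underline{\Spec}_X\Sym_{\OO_X}(L\Omega^1_{X/A}[1])$ relatively affine over $X$ (with projection $\pi$), a weight-$p$ morphism $g\colon E\to E\otimes\Sym^p(L\Omega^1_{X/A}[1])$ amounts to the same datum as the weight-$p$, $\OO_{T[-1]X}$-linear endomorphism $\tilde g:=m\circ(g\otimes\id)$ of $\pi^*E$, where $m$ is the multiplication of $\pi_*\OO_{T[-1]X}=\Sym_{\OO_X}(L\Omega^1_{X/A}[1])$. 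By Theorem \ref{theorem: mapping stack by Atiyah class} applied over $T[-1]X$ itself (to $\mathrm{ev}\colon B\Gacheck\times T[-1]X\to X$) and Lemma \ref{lemma: category of Ga check reps}, the Atiyah class $at_E$ corresponds under this dictionary to the endomorphism $\theta_E=\widetilde{at_E}$, which is the infinitesimal $\Theta$-action on $\mathrm{ev}^*E$; a short unwinding shows that $at_E^{\otimes p}$ followed by the projection to $\Sym^p$ corresponds to the $p$-fold composite $\theta_E^{\circ p}$; and, since $V$ is the weight-$1$ component of the derivation $D:=act(\tau\partial_{T_1})$ of $\OO_{T[-1]X}$, the Leibniz rule for $D$ gives that $(1\otimes V)\circ at_E$ corresponds to the commutator $[D,\theta_E]$ (which is $\OO_{T[-1]X}$-linear precisely because $D$ is a derivation and $\theta_E$ is linear). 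Thus the theorem is equivalent to producing a natural homotopy $[D,\theta_E]\sim\theta_E^{\circ p}$.

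\emph{Step 2: reduce to $\Lie(\Gacheck)$.} The derivation $D$ is transported from the vector field $\tau\partial_{T_1}$ on $B\Gacheck$ by the construction of §\ref{subsection: proof of A}, and $\theta_E$ is the $\Theta$-action of Lemma \ref{lemma: category of Ga check reps}. Tracking $act(-)$ through that construction — the cup with $\tau$ (the class of $\eta$, the degree-$1$ generator of $\OO(B\Gacheck)=A[\eta]$, which generates $R^1\Gamma(B\Gacheck,A)$) supplying exactly the degree shift — the desired homotopy $[D,\theta_E]\sim\theta_E^{\circ p}$ reduces to the purely arithmetic assertion that $\partial_{T_1}\in\Lie(\Gacheck)$ acts on every $\Gacheck$-representation $(M,\theta)$ as the $p$-fold iterate $\theta^{\circ p}$; equivalently, that $\partial_{T_1}=\Theta^{[p]}$ in the restricted Lie algebra $\Lie(\Gacheck)$, with $\Theta=\partial_{T_0}$ its canonical generator.

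\emph{The main obstacle is this last step.} One cannot merely cite a restricted-Lie fact as a black box: a coherent homotopy natural in $E$ is required (not an equality on $\pi_0$), and since $\Gacheck=\mathbb G_a^\sharp$ is not of finite type, $\Lie(\Gacheck)$ is infinite-dimensional and $\ell_{\Gacheck}$ is a genuine two-term complex. I would do the computation at the cochain level, using the injective resolution $A\to\OO(\Gacheck)\xrightarrow{\Theta}\OO(\Gacheck)$ of the trivial representation together with the presentation $\ell_{\Gacheck}\simeq(\ell_W\xrightarrow{dF}\ell_W)$ of Corollary \ref{corollary: co-Lie of Ga dual}: write down an explicit cocycle for $\tau\partial_{T_1}$ and verify that contracting the Atiyah class of an arbitrary $\Gacheck$-representation against it yields $\theta^{\circ p}$. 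The arithmetic input is the identity $F^*T_i=T_i^p+pG_i(T)$ with $G_i\equiv T_{i+1}$ modulo decomposables (as in Lemma \ref{lemma: bockstein on co-Lie}), which over $\Fp$ makes $F$ the coordinatewise $p$-th power and pins down $\partial_{T_1}=\Theta^{[p]}$; this is precisely where ``Frobenius is the $p$-th power'' enters. As a consistency check, specializing to $X=BG$ returns the classical restricted structure on $\Lie(G)$ with $V$ dual to its $p$-operation, as verified in §\ref{subsection: V on classifying stack}; one might alternatively hope to bootstrap the general case from $BG$, but the direct cochain computation seems the most self-contained.
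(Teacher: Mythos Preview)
Your proposal is correct and lands on the same computation as the paper, but the packaging differs in two visible ways.

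First, organization. The paper does not pass through your commutator identity $[D,\theta_E]\sim\theta_E^{\circ p}$; instead it works pointwise on $T[-1]X$. It first proves the universal statement on $B\Gacheck$---that the derivation $\tau\partial_{T_1}$ induces $\delta^*(U,\theta)\simeq(U[\epsilon],\theta+\epsilon\theta^p)$ on every representation---and then, for general $X$, specializes to an arbitrary $S$-point $x$ of $T[-1]X$, uses Theorem~\ref{theorem: mapping stack by Atiyah class} to identify $\theta$ with $(1\otimes x)at_E$, and takes the limit over all such $x$. Your commutator reformulation is an equivalent repackaging of this same pointwise argument (indeed $[D,\theta_E]$ evaluated at $x$ is exactly the $\epsilon$-linear part of $\delta^*$), and it has the virtue of making the ``restricted'' shape of the identity visible from the start.

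Second, the core arithmetic. You propose a cochain-level computation via the presentation $\ell_{\Gacheck}\simeq(\ell_W\xrightarrow{dF}\ell_W)$ and the formula $F^*T_i=T_i^p+pG_i(T)$. The paper avoids this entirely: it simply observes that $\partial_{T_1}$ acts on $\OO(\Gacheck)$ by $s^{(i)}\mapsto s^{(i-p)}$, hence on any representation $(U,\theta)$ the composite $U\xrightarrow{at_U}U\otimes L\Omega^1_{B\Gacheck}[1]\xrightarrow{\partial_{T_1}}U$ is $\theta^p$. The role of $\tau$ is then handled by an explicit two-step extension, not just as a ``degree shift.'' This route is shorter and produces the coherent homotopy directly, so you need not worry about the issues you flag (infinite-dimensionality of $\Lie(\Gacheck)$, coherence beyond $\pi_0$)---they dissolve once you use the $s^{(i)}$ description rather than the Witt presentation.
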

\begin{proof}
  We begin with a universal calculation on \(B\Ga^\vee\).
  Consider the Cartier duality equivalence 
  \[ \Phi_{\Ga}: \QCoh(\Ga) \to \QCoh(B\Ga^\vee).\]
  Suppose $(E, \dd) \in \QCoh(\Ga)$ is a complex equipped with an endomorphism.
  Since $1 + \epsilon V$ is the Cartier dual of $1 + \epsilon F: \Ga \to \Ga$, 
  \[ 
    (1 + \epsilon V)^* \Phi_{\Ga}(E,\dd) = \Phi_{\Ga}\left((1 + \epsilon F)^* (E,\dd)\right) = \Phi_{\Ga}((E, \dd + \epsilon \dd^p)).
  \]

  Now let \(\cX/\Spec A\) be a Tannakian prestack admitting a (-1)-connective cotangent complex.
  By definition, \(V:L\Omega^1_{\cX/A}[1] \to \Sym^p(L\Omega^1_{\cX/A}[1])\) is induced by the action of \(1 + \epsilon V\) on \(\Mapsu(B\Ga^\vee,\cX)\).
  By Proposition \ref{proposition: mapping stack by Atiyah class}, 
  $(\pi^*E, at_E)$ is endomorphism associated to the Fourier-Mukai transform of $\widetilde{ev}^*E \in \QCoh(B\Ga^\vee \times T[-1]\cX)$.
  Thus 
  \[ (1 + \epsilon V)^*\Phi_{\Ga}(\pi^*E, at_E) = \Phi_{\Ga}(\pi^*E, at_E + \epsilon at_E^p),\]
  as desired.
\end{proof}

\subsection{Restricted Lie algebras}\label{subsection: restricted Lie algebras}

Theorem \ref{theorem: V is a pth power} is a derived analogue of one of the identities of a restricted Lie algebra.
To see how, we give a non-standard formulation of the definition of a restricted Lie algebra.
The results in this section may be read independently of the rest of the paper.

Let \(\Vect_k^\heartsuit\) denote the 1-category of vector spaces over \(k\).
Recall the definition of a restricted Lie algebra in \(\Vect_k^\heartsuit\):

\begin{definition}[see e.g.\ \cite{DG70}, II.7.3.3]
  \label{definition: restricted Lie algebra}
  A \emph{restricted Lie algebra} \(\mathfrak g\) over a field \(k\) of characteristic \(p > 0\) is a Lie algebra in \(\Vect_k^\heartsuit\) and a set map \(-^\pop: \g \to \g\) satisfying
  \begin{enumerate}
    \item \((\lambda x)^\pop = \lambda^p x^\pop\) for \(\lambda \in k, x \in \g\);
    \item \((x+y)^\pop = x^\pop + y^\pop + L(x,y)\) for all \(x,y \in \g\), where \(L\) is Jacobson's universal Lie polynomial;
    \item \(ad(x^\pop) = ad(x)^p\) for all \(x \in \g\).
  \end{enumerate}
\end{definition}
Fresse described restricted Lie algebras in terms of the Lie operad. Let \(\Lie(n)\) be the arity \(n\) operations in the Lie operad; the structure of an (operadic) Lie algebra is the structure of an algebra over the monad
\(\mathfrak g \mapsto \sum_{n\geq 1} (\Lie(n)\otimes \mathfrak g^{\otimes n})_{\Sigma_n}\).
\begin{theorem}\cite[Theorem 1.2.5]{Fre00}\label{theorem: restricted is pd Lie}
  The structure of a restricted Lie algebra on \(\g\) is equivalent to the structure of an algebra over the divided power monad
  \[ \mathfrak g \mapsto \sum_{n \geq 1} (\Lie(n) \otimes \mathfrak g^{\otimes n})^{\Sigma_n}.\]
\end{theorem}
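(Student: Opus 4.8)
\emph{Overall strategy.} The plan is to identify the monad $\Gamma(\Lie)\colon\g\mapsto\bigoplus_{n\ge1}(\Lie(n)\otimes\g^{\otimes n})^{\Sigma_n}$ by generators and relations and to match the outcome against Definition \ref{definition: restricted Lie algebra}. Write also $S(\Lie)(\g)=\bigoplus_{n\ge1}(\Lie(n)\otimes\g^{\otimes n})_{\Sigma_n}$ for the free Lie algebra monad; the norm (transfer) maps $(-)_{\Sigma_n}\to(-)^{\Sigma_n}$ will assemble, once $\Gamma(\Lie)$ is known to be a monad, into a morphism of monads $N\colon S(\Lie)\Rightarrow\Gamma(\Lie)$.

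\emph{The formal skeleton.} Following \cite{Fre00}, one first verifies that $\Gamma(\mathcal P)$ is a monad for every reduced operad $\mathcal P$ — the delicate point being that operadic composition has to be shown to descend through the lax-monoidal maps $A^{G}\otimes B^{H}\to(A\otimes B)^{G\times H}$ to honest $\Sigma_N$-invariants of the target — and that $N$ is a map of monads. Restriction along $N$ then equips every $\Gamma(\Lie)$-algebra with an underlying Lie algebra, which supplies the bracket. Thus a $\Gamma(\Lie)$-algebra is a Lie algebra together with the extra structure recorded by the cokernel of $N$, i.e. by the Tate groups $\widehat{H}^0(\Sigma_n,\Lie(n)\otimes\g^{\otimes n})$ and by the way these interact under the monad multiplication.

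\emph{Locating the new operations.} To pin down where they live I would invoke the classical description of $\Lie(n)$ as a $\Sigma_n$-representation — Klyachko's theorem over a field with enough roots of unity, together with a suitable integral refinement — which exhibits $\Lie(n)$ as induced from a one-dimensional module $\chi_n$ over the cyclic subgroup $C_n=\langle(1\,2\cdots n)\rangle$. By the projection formula $\Lie(n)\otimes\g^{\otimes n}\cong\Ind_{C_n}^{\Sigma_n}(\chi_n\otimes\mathrm{Res}_{C_n}\g^{\otimes n})$, so Shapiro's lemma for Tate cohomology gives $\widehat{H}^0(\Sigma_n,\Lie(n)\otimes\g^{\otimes n})\cong\widehat{H}^0(C_n,\chi_n\otimes\g^{\otimes n})$, compatibly with the norm. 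Writing $n=p^k m$ with $p\nmid m$ and $C_n=C_{p^k}\times C_m$, the prime-to-$p$ factor $C_m$ has order invertible in $k$ and contributes nothing to Tate cohomology; hence $\widehat{H}^0$ vanishes unless $n$ is a power of $p$, and in that case it is computed by $C_{p^k}$ acting by cyclic permutation on (a twist of) $\g^{\otimes p^k}$.

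\emph{The $p$-operation and the axioms.} For $n=p$, the elementary orbit count for $C_p$ permuting the tensor factors gives $\widehat{H}^0(C_p,\g^{\otimes p})\cong\g^{(1)}$ (the Frobenius twist), generated by the class of $x^{\otimes p}$; since every mixed monomial has a free $C_p$-orbit, $x\mapsto[x^{\otimes p}]$ is $p$-homogeneous and additive modulo the norm — precisely axioms (i) and (ii) of Definition \ref{definition: restricted Lie algebra}, with Jacobson's polynomial $L(x,y)$ appearing as the norm-correction term (the twist $\chi_p$ only reindexes the target line). Axiom (iii), $\ad(x^{\pop})=\ad(x)^p$, is then forced by associativity of the $\Gamma(\Lie)$-action, which glues the arity-$p$ operation to the arity-$2$ bracket through operadic composition landing in $\Lie(p+1)$. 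For $k\ge2$ one argues by induction, using the filtration of $C_{p^k}$ by the $C_{p^j}$ together with the compatible composition $\Lie(p)\circ(\Lie(p^{k-1}),\dots,\Lie(p^{k-1}))$, that $\widehat{H}^0(\Sigma_{p^k},\Lie(p^k)\otimes\g^{\otimes p^k})$ produces no new generator — every class there is built from the single arity-$p$ operation and the bracket, matching the iterate $(x^{\pop})^{\pop}\cdots$ in a restricted Lie algebra. This presents $\Gamma(\Lie)$ by $\Lie$ and one arity-$p$ generator subject to relations (i)--(iii), so $\Gamma(\Lie)$-algebras are exactly restricted Lie algebras, the two passages being mutually inverse by construction. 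The two places I expect to fight hardest are (a) the formal verification that $\Gamma(\mathcal P)$ is a monad and $N$ a monad map, and (b) the $k\ge2$ computation showing that $\widehat{H}^0$ in arities $p^k$ contributes only decomposables.
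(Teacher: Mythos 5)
The paper gives no proof of this statement: it is cited to Fresse \cite[Theorem 1.2.5]{Fre00}, and the surrounding discussion (the element $w\in\Lie(p)^{\Sigma_p}$ and its relation to the $p$-operation) indicates that the intended argument is Fresse's operadic one, which proceeds by explicit manipulation of the Lie operad and the classical theory of free restricted Lie algebras inside tensor algebras, not by reduction to cyclic groups. So your route via Klyachko's theorem plus Shapiro's lemma for Tate cohomology is a genuinely different approach, and it has to be judged on its own.

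Two of your steps do not go through as written. First, Klyachko's isomorphism $\Lie(n)\cong\Ind_{C_n}^{\Sigma_n}\chi_n$ needs a \emph{faithful} character of $C_n$, hence a primitive $n$th root of unity in the coefficient field. Over $\bar{\mathbb{F}}_p$ there is no such character once $p\mid n$: $x^{p^k}-1=(x-1)^{p^k}$, so the $p$-part $C_{p^k}\le C_n$ has only the trivial character. Your ``suitable integral refinement'' is exactly the entire difficulty; it is not a known black box and there is no evident $\mathbb{Z}_{(p)}$-form of the isomorphism to reduce modulo $p$. Second, and more seriously, the vanishing claim ``$\widehat{H}^0(\Sigma_n,\Lie(n)\otimes\g^{\otimes n})=0$ unless $n$ is a power of $p$'' is false. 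Accepting the theorem for a moment, the cokernel of the norm $N\colon\bigoplus_n(\Lie(n)\otimes\g^{\otimes n})_{\Sigma_n}\to\bigoplus_n(\Lie(n)\otimes\g^{\otimes n})^{\Sigma_n}$ is the cokernel of the free Lie algebra $L(\g)$ inside the free restricted Lie algebra $\hat L(\g)$, and Jacobson's basis theorem gives $\hat L(\g)_n/L(\g)_n$ a basis consisting of the classes $y^{[p^e]}$ with $y$ a Lie basis element of degree $n/p^e$, $e\ge1$. These exist whenever $p\mid n$, not only when $n$ is a power of $p$. Concretely, for $p=2$, $n=6$ and $\dim\g\ge 2$ the class of $[[a,b],a]^{[2]}$ already witnesses $\widehat{H}^0(\Sigma_6,\Lie(6)\otimes\g^{\otimes 6})\ne 0$. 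What is true, and what your outline needs, is that in arities $n=p\cdot n'$ with $n'>1$ the Tate class is \emph{decomposable} under the monad multiplication, produced by the arity-$p$ divided power applied to an arity-$n'$ Lie operation; but that is exactly the hard computation you already flag for $n=p^k$, $k\ge2$, and it is needed uniformly across all $n$ divisible by $p$, not only prime powers. The elementary arity-$p$ identification $\widehat{H}^0(C_p,\g^{\otimes p})\cong\g^{(1)}$ and the reading-off of axioms (i)--(iii) are fine, but the argument preceding them needs to be rebuilt on a foundation that does not pass through Klyachko or the incorrect vanishing claim. Fresse's route avoids both issues by working directly with the operad, the explicit invariant $w$, and the known PBW-type structure of $\hat L(V)$.
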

Note the appearance of invariants instead of coinvariants.
The structure of a restricted Lie algebra is recovered from the above as follows: in characteristic \(p\), there is a \(\Sigma_p\)-invariant element
\begin{equation} \label{eq: w}
  w = \sum_{\sigma(1)=1} [\cdots[[X_{\sigma(1)},X_{\sigma(2)}],X_{\sigma(3)}],\cdots ,X_{\sigma(p)}] \in \Lie(p)^{\Sigma_p}.
\end{equation}
If \(act: (\Lie(p)\otimes \g^{\otimes p})^{\Sigma_p}\to \g\) is the action map of the divided power monad in arity \(p\) and \(x \in \g\), then \(w \otimes x^{\otimes p}\) is invariant under \(\Sigma_p\), and
\[ x^\pop  = act(w \otimes x^{\otimes p}) \in \g.\]
The advantage of the divided power monad formulation is that the nonlinear map \(x\mapsto x^\pop\) is replaced by a linear map
% https://q.uiver.app/#q=WzAsMyxbMCwwLCJcXEdhbW1hXnBcXGciXSxbMSwwLCIoXFxMaWUocClcXG90aW1lcyBcXGdee1xcb3RpbWVzIHB9KV57XFxTaWdtYV9wfSJdLFsyLDAsIlxcZyJdLFsxLDIsImFjdCJdLFswLDEsIncgXFxvdGltZXMgMSJdXQ==
\begin{equation}\label{eq: abstract V}
  \begin{tikzcd}[cramped]
    {\Gamma^p\g} & {(\Lie(p)\otimes \g^{\otimes p})^{\Sigma_p}} & \g
    \arrow["act", from=1-2, to=1-3]
    \arrow["{w \otimes 1}", from=1-1, to=1-2]
  \end{tikzcd}
  ,
\end{equation}
and the nonlinear behavior of \(x \mapsto x^\pop\) is entirely explained by the nonlinearity of the assignment \(x \mapsto x^{\otimes p}\).

While the divided power Lie monad gives a linear formulation of the structure of a restricted Lie algebra, much of its data is redundant. The following Corollary gives a more economical linear formulation of this structure:

\begin{corollary}\label{corollary: restricted Lie algebra definition}
  The following are equivalent for a Lie algebra \(\g\):
  \begin{enumerate}
    \item a restricted Lie structure on \(\g\);
    \item a compatible structure of an algebra over the divided power Lie monad;
    \item a linear map \(V: \Gamma^p \g \to \g\) such that the following diagrams commute:
      % https://q.uiver.app/#q=WzAsMyxbMSwwLCJcXEdhbW1hXnBcXGciXSxbMCwwLCJcXFN5bV5wXFxnIl0sWzEsMSwiXFxnIl0sWzEsMCwiTiJdLFswLDIsIlYiXSxbMSwyLCJ3IiwyXV0=
      % https://q.uiver.app/#q=WzAsNCxbMCwwLCJcXEdhbW1hXnAgXFxnIFxcb3RpbWVzIFxcZyJdLFsxLDAsIlxcZyBcXG90aW1lcyBcXGciXSxbMSwxLCJcXGciXSxbMCwxLCJcXGdee1xcb3RpbWVzIHArMX0iXSxbMCwxLCIxIFxcb3RpbWVzIFYiXSxbMSwyLCJbLSwtXSJdLFswLDNdLFszLDIsIltcXGNkb3RzW1stLC1dLC1dXFxjZG90c10iLDJdXQ==
      \[
        \begin{tikzcd}[cramped]
          {\Sym^p\g} & {\Gamma^p\g} \\
          & \g
          \arrow["N", from=1-1, to=1-2]
          \arrow["V", from=1-2, to=2-2]
          \arrow["w"', from=1-1, to=2-2]
        \end{tikzcd}
      \]
      and
      \begin{equation}\label{eq: restricted Lie V and bracket}
        \begin{tikzcd}[cramped]
          {\Gamma^p \g \otimes \g} & {\g \otimes \g} \\
          {\g^{\otimes p+1}} & \g
          \arrow["{V \otimes 1}", from=1-1, to=1-2]
          \arrow["{[-,-]}", from=1-2, to=2-2]
          \arrow[from=1-1, to=2-1]
          \arrow["{[\cdots [-,[-,-]]]}"', from=2-1, to=2-2]
        \end{tikzcd}
      \end{equation}
  \end{enumerate}
\end{corollary}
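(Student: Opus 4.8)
The plan is to obtain the equivalence (i)~$\Leftrightarrow$~(ii) for free from Fresse's Theorem~\ref{theorem: restricted is pd Lie}, and then to establish (ii)~$\Leftrightarrow$~(iii) by extracting from, and rebuilding into, the divided power Lie monad structure the single arity-$p$ operation that $V$ encodes. The structural principle, which I would cite from Fresse, is that in characteristic $p$ the divided power Lie operad is generated over the ordinary Lie operad by one ``$p$-th power'' operation in arity $p$, namely the element $w$ of \eqref{eq: w} (the further new operations arising by iterating it). Hence a divided power Lie algebra structure compatible with a fixed operadic Lie structure on $\g$ is exactly the extra datum of the action of $w$, which is what $V$ records via \eqref{eq: abstract V}, subject to the relations forced by operadic associativity; the two displayed diagrams are the translations of those relations.

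\emph{From (ii) to (iii).} Given an algebra over the divided power Lie monad compatible with the operadic structure, let $V\colon\Gamma^p\g\to\g$ be the composite \eqref{eq: abstract V}, $\Gamma^p\g\xrightarrow{w\otimes 1}(\Lie(p)\otimes\g^{\otimes p})^{\Sigma_p}\xrightarrow{act}\g$. The first diagram is formal: since $w\in\Lie(p)^{\Sigma_p}$ is invariant, $w\otimes 1$ commutes with the norm maps $N$ on $(\g^{\otimes p})_{\Sigma_p}=\Sym^p\g$ and $(\g^{\otimes p})^{\Sigma_p}=\Gamma^p\g$; and since the compatibility hypothesis says precisely that $act$ restricts along $N$ to the operadic action, we get that $V\circ N$ is the operadic action of $w$, i.e.\ the map labelled $w$ in the Corollary. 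The second diagram~\eqref{eq: restricted Lie V and bracket} is an instance of operadic associativity: both composites in the square compute the action of the arity-$(p+1)$ operation obtained by composing $w$ with the binary bracket, read either through the divided power action on the $\Sigma_p$-invariant tensor (left-then-bottom) or, via the combinatorial identity flagged below, through the ordinary action of a right-normed iterated bracket (top-then-right).

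\emph{From (iii) to (ii), via (i).} Given $V$ as in~(iii), set $x^\pop := V(\gamma_p(x))$, where $\gamma_p\colon\g\to\Gamma^p\g$ is the universal degree-$p$ homogeneous polynomial law. Axiom~(i) of Definition~\ref{definition: restricted Lie algebra} is immediate from $\gamma_p(\lambda x)=\lambda^p\gamma_p(x)$. For axiom~(ii), expand $\gamma_p(x+y)=\sum_{i=0}^{p}\gamma_i(x)\gamma_{p-i}(y)$ in $\Gamma^{\bullet}\g$; the terms $i=0,p$ give $x^\pop+y^\pop$, and for $0<i<p$ one has $\gamma_i(x)\gamma_{p-i}(y)=\tfrac{1}{i!(p-i)!}\,N(\overline{x^{\otimes i}\otimes y^{\otimes(p-i)}})$, so the first diagram rewrites $\sum_{0<i<p}V(\gamma_i(x)\gamma_{p-i}(y))$ as a universal sum of iterated brackets; that this sum is Jacobson's polynomial $L(x,y)$ is the computation underlying the classical equivalence. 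For axiom~(iii), feed $\gamma_p(x)\otimes y$ into the second diagram: since $\gamma_p(x)\mapsto x^{\otimes p}$ under $\Gamma^p\g\hookrightarrow\g^{\otimes p}$, the bottom composite returns $\ad(x)^p(y)$ while the top returns $[x^\pop,y]$, so $\ad(x^\pop)=\ad(x)^p$. Conversely, given a restricted structure, the $p$-operation is a degree-$p$ homogeneous polynomial law (this uses~(i) for homogeneity and~(ii) for polynomiality, a classical point), hence factors as $V\circ\gamma_p$ for a unique linear $V\colon\Gamma^p\g\to\g$, and the two diagrams are then the reformulations of~(ii) and~(iii) just described. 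Combined with (i)~$\Leftrightarrow$~(ii) this closes all three equivalences.

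The hardest part will be the bookkeeping at the interface of the three languages involved — operadic ($w$ and composites in $\Lie$), divided-power ($\Gamma^p$ and $\gamma_p$), and the pointwise Jacobson axioms — and in particular nailing down two classical combinatorial identities: that $\sum_{0<i<p}\tfrac{1}{i!(p-i)!}\,w(\overline{x^{\otimes i}\otimes y^{\otimes(p-i)}})$ equals Jacobson's $L(x,y)$, and that composing $w$ with one further bracket yields the right-normed operation $[X_1,[\cdots,[X_p,X_{p+1}]]\cdots]$ modulo the $\Sigma_p$-symmetrization that is invisible on $\Gamma^p\g\otimes\g$. A second, more structural, point of care is that $\Gamma^p\g$ need not be spanned by the pure divided powers $\gamma_p(x)$ over $\Fp$, so the linear diagrams in~(iii) are a priori stronger than their evaluations on such elements; I would bridge this by observing that every map in sight is a natural transformation of polynomial functors of $\g$, so the diagrams may be checked on free Lie algebras on finitely many generators over an infinite extension of $\Fp$, where the pure divided powers do span.
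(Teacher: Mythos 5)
Your overall strategy matches the paper's: both proofs run the cycle (i)\,$\Rightarrow$\,(ii) by Fresse, then (ii)\,$\Rightarrow$\,(iii), then (iii)\,$\Rightarrow$\,(i) by setting $x^\pop = V(x^{\otimes p})$ and checking the Jacobson axioms directly. Your construction of $V$ from a divided power structure via $w\otimes 1$ and the action map, your use of the norm to see the first diagram, and your verification of the axioms in (iii)\,$\Rightarrow$\,(i) (including the trick of isolating the cross terms of $(x+y)^{\otimes p}$ as elements of $\im N$, and evaluating on free generators to identify Jacobson's $L$) are all the same steps as the paper, just with $\gamma_p(x)$ written out more explicitly where the paper simply writes $x^{\otimes p}$. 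Your ``point of care'' about $\Gamma^p\g$ not being spanned by pure powers over $\Fp$ is a legitimate subtlety and the polynomial-functor argument is a reasonable way to bridge it.

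Where your proposal diverges from the paper — and where it has a genuine gap — is the second diagram of (ii)\,$\Rightarrow$\,(iii). You invoke ``operadic associativity'' and say both composites compute the action of the arity-$(p+1)$ operation $[w,-]$; but the divided power Lie monad is built from $\Sigma_n$-\emph{invariants}, not coinvariants, and there is no formal associativity exchanging the $\Sigma_p$-invariant leg with an additional free tensor factor. You also need the combinatorial identity relating $[w(X_1,\ldots,X_p),X_{p+1}]$ to the right-normed iterated bracket appearing in the bottom arrow, which you flag as ``needing nailing down'' but do not supply. The paper's proof is precisely designed to sidestep this: it passes to the associative operad, where the norm makes \emph{every} associative algebra a divided power algebra, observes that $w$ maps to $N(X_1\cdots X_p)$ so that $V$ on an associative algebra is literally multiplication, and then uses the fact that $\ad:\g\to\End(\g)$ is a restricted (hence divided power) Lie homomorphism to obtain $\ad(V(x)) = \ad(x_{(1)})\circ\cdots\circ\ad(x_{(p)})$ directly. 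That argument requires no computation in $\Lie(p+1)$ at all. If you want to keep your purely operadic route, you will have to actually prove the combinatorial identity you flagged; otherwise, adopting the paper's detour through $\End(\g)$ is strongly advisable, as it is both shorter and rigorous.
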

\begin{proof}
  i.\ \(\implies\) ii.\ is Fresse's theorem (Theorem \ref{theorem: restricted is pd Lie}). Now assume ii. If \(\g\) is given a compatible PD structure, then define \(V: \Gamma^p \g \to \g\) by
  \[ V:% https://q.uiver.app/#q=WzAsMyxbMCwwLCJcXEdhbW1hXnAgXFxnIl0sWzEsMCwiKFxcTGllKHApXFxvdGltZXMgXFxnXntcXG90aW1lcyBwfSlee1xcU2lnbWFfcH0iXSxbMiwwLCJcXGciXSxbMSwyXSxbMCwxLCJ3IFxcb3RpbWVzIDEiXV0=
    \begin{tikzcd}[cramped]
      {\Gamma^p \g} & {(\Lie(p)\otimes \g^{\otimes p})^{\Sigma_p}} & \g
      \arrow[from=1-2, to=1-3]
      \arrow["{w \otimes 1}", from=1-1, to=1-2]
    \end{tikzcd}.
  \]
  The first identity follows from that the Lie structure on \(\g\) is compatible with the divided power Lie structure via the norm map
  \[ N: (\Lie(n)\otimes \g^{\otimes n})_{\Sigma_n} \to (\Lie(n) \otimes \g^{\otimes n})^{\Sigma_n}.\]
  For the second identity, we compare to associative algebras.
  Let \(\Ass\) be the associative operad. Then \(\Ass(n)\) is a free \(\Sigma_n\)-module of rank 1 for all \(n \geq 1\). 
  Thus, for any associative algebra \(\mathcal A\), the norm map 
  \[ N : (\Ass(n) \otimes \mathcal{A}^{\otimes n})_{\Sigma_n} \overset{\sim}{\to} (\Ass(n) \otimes \mathcal{A}^{\otimes n})^{\Sigma_n}\]
  is an isomorphism, and both sides are naturally identified with \(\mathcal{A}^{\otimes n}\). 
  Under the natural injection \(\Lie(p) \to \Ass(p)\), \(w\) is sent to the associative word
  \[\sum_{\sigma \in \Sigma_p}X_{\sigma(1)}\cdots X_{\sigma(p)} = N(X_1X_2\cdots X_p)\in \Ass(p)\] 
  \cite[(1.2.8)]{Fre00}.
  Thus, if \(\mathcal A\) is an associative algebra and \(z \in \Gamma^p\mathcal A\), then \(w \otimes z = N(X_1\cdots X_p \otimes z)\), so the induced map
  % https://q.uiver.app/#q=WzAsNCxbMSwwLCIoXFxBc3MocClcXG90aW1lcyBcXG1hdGhjYWwgQV57XFxvdGltZXMgcH0pXntcXFNpZ21hX3B9Il0sWzIsMCwiKFxcQXNzKHApIFxcb3RpbWVzIFxcbWF0aGNhbCBBXntcXG90aW1lcyBwfSlfe1xcU2lnbWFfcH0iXSxbMCwwLCJcXEdhbW1hXnAgXFxtYXRoY2FsIEEiXSxbMywwLCJcXG1hdGhjYWwgQSJdLFsyLDAsIndcXG90aW1lcyAxIl0sWzAsMSwiTl57LTF9Il0sWzEsM11d
  \[
    \begin{tikzcd}[cramped]
      {\Gamma^p \mathcal A} & {(\Ass(p)\otimes \mathcal A^{\otimes p})^{\Sigma_p}} & {(\Ass(p) \otimes \mathcal A^{\otimes p})_{\Sigma_p}} & {\mathcal A}
      \arrow["{w\otimes 1}", from=1-1, to=1-2]
      \arrow["{N^{-1}}", from=1-2, to=1-3]
      \arrow[from=1-3, to=1-4]
    \end{tikzcd}
  \]
  is exactly the multiplication map \(\Gamma^p\mathcal A \to \mathcal A^{\otimes p} \to \mathcal A\).
  The adjoint action \(\g \to \End(\g)\) is a restricted Lie homomorphism, thus a divided power Lie homomorphism by Fresse's theorem, so for \(x \in \Gamma^p\g\), \(\ad(V(x)) = V(\Gamma^p\ad(x))\) where \(\Gamma^p\ad: \Gamma^p \g \to \Gamma^p \End(\g)\) is the \(p\)th divided power of the adjoint action.
  But \(\End(\g)\) is an associative algebra, so \(V(\Gamma^p\ad(x)) = \ad(x_{(1)}) \circ \ad(x_{(2)}) \circ \cdots \circ \ad(x_{(p)})\) (using Sweedler notation), as desired. Thus ii.\ \(\implies\) iii.

  Now assume iii. For \(x \in \g\), define
  \[ x^\pop = V(x^{\otimes p}).\]
  We check that \(x \mapsto x^\pop\) satisfies Definition \ref{definition: restricted Lie algebra}.
  This operation satisfies \((\lambda x)^\pop = V(\lambda^px^{\otimes p}) = \lambda^px^\pop\).
  If \(x,y \in \g\), then \((x+y)^{\otimes p} - x^{\otimes p} - y^{\otimes p}\) is in the image of \(N: \Sym^p\g \to \Gamma^p \g\). The first diagram implies
  \[ (x+y)^\pop - x^\pop - y^\pop = w(\widetilde{(x+y)^{\otimes p} - x^{\otimes p} - y^{\otimes p}})\]
  where \(\widetilde{-}\) means a lift through the norm map. Taking \(x\) and \(y\) to be generators of a free associative algebra implies that this Lie polynomial must be Jacobson's Lie polynomial \(L(x,y)\).
  % since the image of \(w\) in \(\Ass(p)\) is a basis for \(\Ass(p)^{\Sigma_p}\).
  Finally, the second diagram implies
  \[
    [x^\pop,y] = [V(x^{\otimes p}),y] = \ad(x)^p(y).
  \]
  We have defined a restricted Lie structure on \(\g\). Thus iii.\ \(\implies\) i.
\end{proof}

Theorem \ref{theorem: V is a pth power} is an analogue of \eqref{eq: restricted Lie V and bracket} in the derived setting. More precisely, taking \(E = L\Omega^1[1]\) proves that \(V\) satisfies the dual of \eqref{eq: restricted Lie V and bracket} up to homotopy. 
We expect the following more general statement: $L\Omega^1[1]$ has the structure of a partition Lie algebroid in the sense of Branter and Mathew \cite{BM25} with bracket given by the Atiyah class and $p$th power operation given by $V$; then $L\Omega^1[1]$ acts on $\QCoh$ with bracket given by the Atiyah class. 
Theorem \ref{theorem: V is a pth power} would then follow from that action being compatible not just with the bracket, but with all power operations on $L\Omega^1[1]$.
See §\ref{closing sub: partition Lie}.

\subsection{Classifying stacks}\label{subsection: V on classifying stack}

It is well-known that the classical Lie algebra of a group scheme in characteristic \(p\) is a restricted Lie algebra. Further, if \(G/\Spec A\) is a group scheme, then $L\Omega^1_{BG/A}[1] \simeq \ell_{G/A}$ is the co-Lie complex.
The Atiyah class of $\ell_{G/A}$
\[ at_{\ell_{G/A}}: \ell_{G/A} \to \ell_{G/A} \otimes \ell_{G/A}\]
is the derivative of the coadjoint action, and thus 
induces the dual of the usual Lie bracket on the classical Lie algebra $\g = \Lie G = \Hom(\ell_{G/A},A)$.
The map $V: \ell_{G/A} \to \Sym^p \ell_{G/A}$ also recovers the classical $p$th power operation on $\g$:

\begin{theorem}\label{theorem: V on classifying stack}
  Suppose that \(A\) has characteristic \(p\)
  and \(G/\Spec A\) is a group scheme.
  Then the operation \(V: \ell_G \to \Sym^p(\ell_G)\) acts on \(\g\) by
  \begin{align*}
    V^\vee: &\Gamma^p(\g) \to \g \\
    V^\vee: &\xi^{\otimes p} \mapsto \xi^\pop
  \end{align*}
  where \(\xi^\pop\) is the \(p\)th power of \(\xi\) as a vector field on \(G\).
\end{theorem}
\begin{proof}
  Since the Atiyah class is the infinitesimal action of the inertia group, the Atiyah class of $E \in \QCoh(BG)$ recovers the usual infinitesimal action of $\g$ on a representation of $G$.
  The classical \(p\)th power of vector fields is defined by the formula
  \[ \xi^\pop(f) = \underbrace{\xi(\cdots \xi(\xi}_{p}(f))\cdots)\]
  for \(\xi \in \g\) and \(f \in H^0(\OO_G)\).
  This is exactly taking the \(p\)th power of the Atiyah class of the representation \(\OO_G\), then contracting with \(\xi^{\otimes p}\).
  By Theorem \ref{theorem: V is a pth power},
  \[ (1 \otimes V)at_{\OO_G} \sim at_{\OO_G}^p: \OO_G \to \OO_G \otimes \Sym^p \ell_{G/A},\]
  so dually \(V^\vee (\xi^{\otimes p})\) acts on \(\OO_G\) by \(\xi^\pop\).
  Since the action of $\g$ on \(\OO_G\) is faithful, we see \(V^\vee(\xi^{\otimes p}) = \xi^{\pop}\) for \(\xi \in \pi_0(\ell_G)\), as desired.
\end{proof}

Theorem \ref{theorem: V on classifying stack} allows for the computation of \(V\) on Hodge classes pulled back from classifying stacks.

\begin{example}[Projective space]
  Let \(X = \mathbb P^n/\Fp\). The Hodge cohomology ring of \(\mathbb P^n\) is
  \[ \bigoplus_{i,j} H^i(\mathbb P^n, \Omega^j_{\mathbb P^n}) = \Fp[c]/c^{n+1}\]
  where \(c \in H^1(\mathbb P^n, \Omega^1_{\mathbb P^n})\) is the Chern class of \(\mathcal O(1)\).
  If \(\rho: X \to B\mathbb G_m\) is the map classifying the line bundle \(\mathcal O(1)\),
  then the pullback map
  \[
    \rho^*: H^\bullet(B\mathbb G_m, L\Omega^\bullet_{B\mathbb G_m}) \to H^\bullet(\mathbb P^n, L\Omega^\bullet_{\mathbb P^n})
  \]
  is identified with the quotient map \(k[c] \to k[c]/c^{n+1}\).
  Since \(c\) is dual to the generator \(\partial\) of the Lie algebra of \(\mathbb G_m\), and \(\partial^\pop = \partial\),
  Theorem \ref{theorem: V on classifying stack} implies that \(V(c) = c^p\) on \(B\mathbb G_m\).
  Hence \(V(c) = c^p\) also on \(\mathbb P^n\).
\end{example}

\begin{example}[Roots of unity]
  \label{example: mup}
  Let \(X = B\mu_p\), the classifying stack of the \(p\)th roots of unity. As \(\mu_p\) is the kernel of multiplication by \(p\) on \(\mathbb G_m\), the co-Lie complex of \(\mu_p/\Spec \Z\) is
  % https://q.uiver.app/#q=WzAsMixbMCwwLCJrIl0sWzEsMCwiayJdLFswLDEsInAiXV0=
  \[\ell_{\mu_p/\Z} = \left(\begin{tikzcd}
    \Z & \Z
    \arrow["p", from=1-1, to=1-2]
  \end{tikzcd}\right).\]
  Thus \(H^\ast(B\mu_p/\Fp, L\Omega^1_{B\mu_p/\Fp})\) has basis a class \(d\) in cohomological degree \(0\) and a class \(c\) in cohomological degree 1, and \(\Bock(d)=c\). Moreover, \(H^\ast(B\mu_p/\Fp, L\Omega^1_{B\mu_p/\Fp}) = \Fp[c,d]\) is the graded polynomial algebra on \(c\) and \(d\) \cite[Proposition 4.4]{ABM21}.
  %As \(c\) is pulled back from the inclusion \(\mu_p \to \mathbb G_m\), we find \(V(c) = c^p\).

  Antieau-Bhatt-Mathew calculated that \(d_p(d)\) is a unit multiple of \(c^p\) \cite[Proposition 4.5]{ABM21}, and thus the HKR spectral sequence for \(B\mu_p\) does not degenerate. Here is how to recover their calculation from our work, following a suggestion of Vologodsky.
  Since \(B\mu_p\) is a Tannakian stack, Theorem \ref{theorem: general tangent stack as mapping stack} implies \(T[-1]B\mu_p = \Maps(B\Ga^\vee, B\mu_p)\).
  Since \(\mu_p\) is abelian, \(\Maps(B\Ga^\vee,B\mu_p) = B\mu_p \times \Homs(\Ga^\vee,\mu_p)\).
  By Cartier duality,
  \[ \Homs(\Ga^\vee, \mu_p) = \Homs(\Z/p\Z, \Ga),\]
  which fits into a pullback square 
  % https://q.uiver.app/#q=WzAsNCxbMCwwLCJcXEhvbXMoXFxaL3BcXFosXFxHYSkiXSxbMCwxLCJcXEdhIl0sWzEsMSwiXFxHYSJdLFsxLDAsIjAiXSxbMSwyLCJwIl0sWzMsMl0sWzAsMV0sWzAsM10sWzAsMiwiIiwxLHsic3R5bGUiOnsibmFtZSI6ImNvcm5lciJ9fV1d
\[\begin{tikzcd}[ampersand replacement=\&]
	{\Homs(\Z/p\Z,\Ga)} \& 0 \\
	\Ga \& \Ga
	\arrow[from=1-1, to=1-2]
	\arrow[from=1-1, to=2-1]
	\arrow["\lrcorner"{anchor=center, pos=0.125}, draw=none, from=1-1, to=2-2]
	\arrow[from=1-2, to=2-2]
	\arrow["p", from=2-1, to=2-2]
\end{tikzcd}\]
  Thus \(\Homs(\Z/p\Z, \mathbb G_a) = \Spec(A[c] \otimes_{A[s]} A)\)
  where \(A[c]\) is an \(A[s]\)-algebra via \(s \mapsto pc\).
  If \(A\) is an \(\Fp\)-algebra, then 
  \[ A[c] \otimes_{A[s]} A \simeq A[c] \otimes_A (A \otimes_{A[s]} A) \simeq A[c,d]\]
  where \(d\) is a generator for \(\mathrm{Tor}^1_{A[s]}(A,A) \cong (s)/(s^2)\), sitting in homological degree 1.

  Now \(1 + \epsilon V\) is the Cartier dual of the automorphism \(1 + \epsilon F\) of \(\Ga\).
  Thus 
  \((1 + \epsilon V)(c) = c + \epsilon c^p\),
  while \((1 +\epsilon V)(s) = s + \epsilon s^p\).
  Thus \((1 + \epsilon V)(d) = d\) since \(s^p\) is zero in \((s)/(s^2)\).

  We conclude \(V(c) = c^p\) and \(V(d) = 0\).
  Thus, by Theorem \ref{theorem: formula for differential}, 
  \[ d_p(d) = [V,\Bock](d) = c^p.\]
\end{example}

\section{Future work}\label{section: closing}

We close this paper with some remarks on what is left undone.

\subsection{Partition Lie structure}\label{closing sub: partition Lie}

In §\ref{subsection: atiyah and shifted tangent bundle}-\ref{subsection: restricted Lie algebras}, it was shown that the map \(V: L\Omega^1[1] \to L\Omega^p[p]\) is a \(p\)th power for the Atiyah class in the sense that \((1 \otimes V)at_E \sim at_E^p\) for almost connective quasi-coherent sheaves \(E\). This is only one of the two identities in Corollary \ref{corollary: restricted Lie algebra definition} which define a classical restricted Lie algebra; the other involves compatibility with the norm map \(N: \Sym^p(L\Omega^1[1]) \to \Gamma^p(L\Omega^1[1])\).
It is not clear to the author whether that norm identity even holds in the general context of derived geometry, or whether a more complicated identity holds instead.

Indeed, a more natural way to define \(V\) would be to proceed as follows. The loop space \(\mathcal{L}X \to X\) of a stack \(X\) is a (derived, higher) groupoid, and the Atiyah bracket is the Lie structure on its Lie algebra \(T[-1]X\). The Lie algebra of a groupoid should have the structure of a \emph{partition Lie algebroid}, a concept that has only very recently been defined \cite{Fu24, BMN25}.
Partition Lie algebras (algebroids where the base is the spectrum of a field) were defined in \cite{BM25} and used to classify deformation problems over a field. Over a general base, Gaitsgory and Rozenblyum explained that one must use algebroids instead of algebras \cite[Chapter 7]{GR17II}. The algebroid \(T[-1]X\) should be tangent to deformations of the diagonal \(X \to X \times X\), and thus admit a partition Lie algebroid structure. The definition of \(V\) and its identities would then be subsumed into the larger structure of a monad acting on \(T[-1]X\). These methods are out of reach of the current paper, and I hope to return to them in future work.

\subsection{Adams operations}
This paper only treats the pages in the Hochschild-Kostant-Rosenberg spectral sequence up to page \(p\). This is in some sense as far as we can go. If one knows that the differentials \(d_r\) in a spectral sequence \(E_r\) are zero for \(r < R\) and one has a formula for \(d_R\), then there is most likely not a general formula for \(E_{R+1} = \ker d_R / \im d_R\), and the train grinds to a halt.

However, we can say more. Many of the differentials of the Hochschild-Kostant-Rosenberg spectral sequence are zero, thanks to the Adams operations
\[ \psi_m: \fhkr \HHs(X/A) \to \fhkr \HHs(X/A)\]
induced by multiplication by \(m\) on \(\sone\).
The idea of using the Adams operations to study the HKR filtration goes back at least to Loday, who used them to split the HKR filtration in characteristic zero \cite[§4.5-6]{Lod92}.

\begin{theorem}\label{theorem: adams ops and differentials}
  Let \(A\) be an \(\Fp\)-algebra and \(X/A\) a scheme.
  Then the differentials \(d_r\) in the Hochschild-Kostant-Rosenberg spectral sequence are zero unless \(r \equiv 1 \mod p-1\).
\end{theorem}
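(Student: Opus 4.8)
The plan is to exploit the weight grading induced by the Adams operations $\psi_m$ acting on the whole filtered complex $\fhkr\HHs(X/A)$. The first step is to recall, following Loday, that $\psi_m$ acts on $\gr^t\HHs(X/A) = L\Omega^t_{X/A}[t]$ by the scalar $m^t$; this is the defining property of the Adams operations and is preserved by left Kan extension from polynomial algebras, hence holds for all schemes. In particular, on the $E_2$-page $E_2^{s,t} = H^{s+t}(X, L\Omega^{-t}_{X/A}[\,\cdot\,])$ the operator $\psi_m$ acts by the scalar $m^{-t}$, and since $\psi_m$ is a morphism of filtered complexes it commutes with every differential $d_r: E_r^{s,t} \to E_r^{s+r, t-r+1}$.

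The second step is the elementary algebra of eigenvalues. A differential $d_r$ lowers the internal weight $-t$ to $-t + r - 1$ (since $t \mapsto t - r + 1$). If $d_r$ is not identically zero, then for some $m$ coprime to $p$ the corresponding eigenspace map is nonzero, which forces $m^{-t} = m^{-t+r-1}$, i.e. $m^{r-1} \equiv 1 \pmod p$, for all $m \in (\Z/p)^\times$. Since $(\Z/p)^\times$ is cyclic of order $p-1$, this happens if and only if $(p-1) \mid (r-1)$, that is, $r \equiv 1 \pmod{p-1}$. (Strictly speaking one should either work integrally with $\psi_m$ for $m$ ranging over all of $\Z$, or pass to $\Fpbar$ so that a generator of $\Fp^\times$ is available; either way the argument is the same. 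The case $r < p$ is of course already covered by Theorem \ref{maintheorem: differential}(i), and the new content is for $r \geq p$.)

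The only genuine point requiring care — and the step I expect to be the main obstacle — is making precise that $\psi_m$ really is defined as an automorphism of the \emph{filtered} object $\fhkr\HHs(X/A)$, compatibly with its identification via the filtered circle, so that it descends to the spectral sequence and acts on $\gr^t$ by $m^t$ on the nose. For affine $X$ this is immediate: by Theorem \ref{theorem: filtered circle and HKR} the filtered complex is $\OO$ of $\Mapsu(\filS, X)$, and multiplication by $m$ on $S^1$ induces an endomorphism of $\filS$ (it is a group-scheme endomorphism of $\Glhat^\vee$, dual to the $m$th power on $\Glhat$) which respects the filtration and acts on $\Ghat_a = (\filS)^{gr}$ by the scalar $m$, hence on $\gr^t = L\Omega^t[t] = \Sym^t(L\Omega^1[1])$ by $m^t$. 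For general $X$ one extends by Zariski descent, using that $\HHs(-/A)$ and its HKR filtration satisfy Zariski descent. Once this compatibility is in place, the eigenvalue argument above finishes the proof, and in fact shows the conclusion already at the level of the underlying filtered complex $R\Gamma(X, \fhkr\HHs(X/A))$, not merely its cohomology.
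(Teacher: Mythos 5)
Your argument is essentially the paper's: both rest on the facts that the Adams operation $\psi_m$ acts on $\gr^t\HHs(X/A) \simeq L\Omega^t_{X/A}[t]$ by the scalar $m^t$, that $\psi_m$ is a filtered endomorphism and hence commutes with every $d_r$, and that choosing $m$ a primitive root modulo $p$ forces $d_r = 0$ unless $(p-1) \mid (r-1)$. The paper outsources the weight computation to a reference (citing the Connes differential $B$ having weight $m$), whereas you rederive it from the filtered-circle picture via the multiplication-by-$m$ endomorphism of $\Glhat^\vee$; both are fine. One small point: your parenthetical about ``passing to $\Fpbar$'' is unnecessary — $\psi_m$ is indexed by integers $m$, not by elements of the base ring, so one simply picks an integer primitive root modulo $p$ regardless of $A$.
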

\begin{proof}
  The $m$th Adams operation \(\psi_m\) acts by \(m^i\) on \(\gr^i \HHs(X/A)\).
  \cite[Theorem 3.5(e)]{Lod89}.
  Picking \(m\) to be a primitive root modulo \(p\) shows that \(\psi_m\) has different weights on \(\gr^i \HHs(X)\) and \(\gr^j \HHs(X)\) unless \(i \equiv j \mod p-1\).
  As the differentials \(d_r\) intertwine the Adams operations, we see \(d_r = 0\) unless \(p-1 \mid 1-r\).
\end{proof}

Our knowledge of the differentials beyond page \(p\) is rather limited. One example with nontrivial differentials beyond page \(p\) is \(X = B\mu_{p^n}\). Following the arguments of \cite[Proposition 4.5]{ABM21}, it follows that the HKR spectral sequence for \(B\mu_{p^n}\) has a nontrivial differential exactly on page \(p^n\).
This mirrors a similar phenomenon in formal group theory: two one-dimensional formal group laws over an \(\Fp\)-algebra can always be made isomorphic up to degree a power of \(p\) \cite[Lemma 1.6.6]{Haz78}.
As we have seen that deformations of one-dimensional formal groups are closely related to HKR, we may pose

\begin{conjecture}
  If \(A\) is an \(\Fp\)-algebra and \(X/A\) is a scheme,
  then the differentials \(d_r\) in the Hochschild-Kostant-Rosenberg spectral sequence for \(X/A\) are zero unless \(r\) is a power of \(p\).
\end{conjecture}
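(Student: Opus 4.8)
The plan is to carry the analysis behind Theorem \ref{maintheorem: differential} to all higher orders. By Theorem \ref{theorem: filtered circle and HKR} and Zariski descent it suffices to work with the filtered circle $\filS = B\Glhat^\vee$ over $\mathbb A^1/\mathbb G_m$ and its associated graded $B\Gacheck$: whenever the differentials $d_2,\dots,d_{r-1}$ of the HKR spectral sequence vanish, $d_r$ is induced, through the action morphism \eqref{eq: action morphism} on $\OO$ of $\Mapsu(B\Gacheck, X)\simeq T[-1]X$, by the order-$r$ deformation class of $\filS$, a class in $\Ext^2_{B\Gacheck}(\ell_{\Gacheck},\Fp\otimes\lambda^{r-1}/(\lambda^{r}))$. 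Thus the conjecture reduces to two statements: (a) the deformation class of $\filS$ is concentrated in orders $r$ that are powers of $p$; and (b) a rigidity statement guaranteeing that (a) still pins down $d_r$ once one has passed pages $p, p^2, \dots$ --- that is, that no ``spurious'' higher differentials arise from Massey-product-type interactions of the power-of-$p$ classes.

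For (a) I would extend the computation of Theorem \ref{theorem: first formula for deformation class}. The universal operations available are the classes $\partial_{T_j},\tau\partial_{T_j},\partial_{S_j},\tau\partial_{S_j}$ computed there via Lemma \ref{lemma: category of Ga check reps} and Corollary \ref{corollary: co-Lie of Ga dual}; since $\partial_{T_j}$ acts on $\OO_{\Gacheck}$ by lowering divided-power degree by $p^{j}$ (extending the fact that $\partial_{T_1}$ lowers it by $p$), the operation $act(\tau\partial_{T_j})$ is a natural map $L\Omega^1_{X/A}[1]\to L\Omega^{p^{j}}_{X/A}[p^{j}]$ of filtration weight exactly $p^{j}-1$, so no single universal operation can contribute to a differential other than some $d_{p^{j}}$. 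It then remains to identify the actual deformation class $[\Glhat^\vee\bmod\lambda^{n+1}]$, for every $n$, as a $\Z_{(p)}[\lambda]$-combination of the classes $\lambda^{p^{j}-1}\tau\partial_{S_{j-1}}=-\lambda^{p^{j}-1}\Bock(\tau\partial_{T_j})$. In the Sekiguchi--Suwa presentation $\Glhat^\vee=\ker(F-[\lambda^{p-1}]\colon W\to W)$ of Proposition \ref{proposition: sekiguchi-suwa isomorphism}, the perturbation $-[\lambda^{p-1}]$ of $F$ has $j$th Witt component $-\lambda^{p^{j}(p-1)}x_j$, so the crossed-homomorphism analysis of Theorem \ref{theorem: first formula for deformation class} produces one contribution per Witt slot; carrying through that bookkeeping, while tracking the nontrivial $\mathbb G_m$-grading on the filtered Witt vectors --- which is precisely what converts the naive $\lambda$-degree $p^{j}(p-1)$ into the filtration weight $p^{j}-1$ and should ensure no intermediate weights occur --- is what establishes (a).

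The main obstacle is (b): the ``train grinds to a halt'' phenomenon noted in \S\ref{section: closing}. The universal-operations analysis above controls only the differentials emanating from $E_2=\Sym L\Omega^1_{X/A}[1]$; but $d_r$ for $r$ not a power of $p$ lives on $E_r$, a subquotient $\ker d_{p^{j}}/\im d_{p^{j}}$ with $p^{j}<r<p^{j+1}$, and there is no formal reason the control descends. Theorem \ref{theorem: adams ops and differentials} trims the list of possibly-nonzero $d_r$ in each gap $(p^{j},p^{j+1})$ down to those with $r\equiv1\bmod p-1$, but that still leaves $p-1$ candidates per gap, so a genuinely new input is needed. The most promising options seem to be either a formality statement --- that the HKR filtered complex, after its first $j$ nonzero differentials, is again modelled on a truncation of the filtered circle, so that the analysis of (a) re-applies verbatim --- or an inductive deformation-theoretic description of $E_{p^{j}+1}$ in the spirit of the successive approximation of one-dimensional formal group laws (Hazewinkel, \cite[Lemma 1.6.6]{Haz78}), of which the explicit computation for $X=B\mu_{p^{n}}$, where the differential appears exactly on page $p^{n}$, is the guiding example. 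Carrying out either for an arbitrary scheme $X$ is where I expect the substantive difficulty to lie.
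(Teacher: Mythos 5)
This statement is labeled a \emph{Conjecture} in the paper, and the paper provides no proof of it — only heuristic evidence (Theorem~\ref{theorem: adams ops and differentials}, the $B\mu_{p^n}$ example, and the analogy with Hazewinkel's result on approximating formal group laws up to $p$-power degree). So there is no proof to compare your attempt against, and you should not expect to find one.

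That said, your proposal is a clear-eyed assessment of the problem rather than a proof, and it correctly identifies the central obstruction. Your step (a) — extending the deformation-class computation of Theorem~\ref{theorem: first formula for deformation class} to all orders in $\lambda$, using the fact that $\partial_{T_j}$ lowers divided-power degree by $p^j$ and that the perturbation $-[\lambda^{p-1}]$ contributes $-\lambda^{p^j(p-1)}x_j$ in the $j$th Witt slot — is a plausible and worthwhile calculation, and it is precisely the kind of extension the paper gestures at in \S\ref{section: closing}. But you rightly flag step (b) as the genuine gap: the formalism of \S\ref{subsection: filtrations} only identifies $d_r$ with the obstruction to extending a splitting of the filtered complex when all previous differentials vanish, i.e.\ when the splitting has already been extended to order $r-2$. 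Once a nonzero differential has occurred on some page $p^j$, $E_{p^j+1}$ is a subquotient of $\gr\,F_{\HKR}$, and the universal-operations framework (which lives on $T[-1]X$, i.e.\ on the $E_2$-page) no longer applies directly. This is exactly what the paper means by ``the train grinds to a halt.''

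To be explicit about where (a) alone falls short of the conjecture even if it succeeds: knowing the full deformation class of $\filS$ over $\Z_{(p)}[\lambda]$, concentrated in $\lambda$-degrees $p^j-1$, tells you the first nonvanishing differential is on a page of the form $p^j$; it does not tell you that all subsequent nonvanishing differentials are also on such pages, because those are differentials of a filtered complex on which the filtered-circle model no longer gives a universal description after passage to a subquotient. Your suggestion of a formality or inductive rigidity statement — or of a successive-approximation argument in the style of one-dimensional formal group laws — is where a proof would have to go, but you are right that carrying this out for an arbitrary scheme $X$ is the substantive open difficulty, and the paper leaves it open as well.
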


\subsection{More examples of non-degeneration}

For each prime \(p\), Antieau, Bhatt, and Mathew constructed a \(2p\)-dimensional smooth projective variety \(X\) over \(\bar{\mathbb{F}}_p\) such that the HKR spectral sequence has a nonzero differential on page \(p\). On the other hand, if \(\ch k = p > 0\) and \(X/k\) is smooth, then the HKR spectral sequence degenerates if \(\dim X < p\) \cite{Yek02}\footnote{Note that Theorem \ref{maintheorem: differential} part i. implies degeneration when \(\dim X < p\): when \(r < p\), the Theorem implies \(d_r= 0\); when \(r \geq p\), the differentials \(d_r\) are zero for dimension reasons.} and if \(\dim X = p\) and \(X/k\) is proper \cite{AV20}.

\begin{question}
  If \(\ch k = p > 0\), do there exist smooth proper varieties \(X/k\) of dimension \(p+1 \leq \dim X \leq 2p-1\) such that the HKR spectral sequence does not degenerate?
\end{question}

We do not know of such examples, but offer some comments on the problem.
Assume \(X/k\) is smooth and proper. If \(\dim X = p+1\) and \(p>2\), then by Theorem \ref{theorem: adams ops and differentials}, the only possibly nonzero differentials can be on page \(p\).
Following Antieau and Vezzosi \cite[Proposition 3.9]{AV20}, duality in the Hochschild complex implies that if the differential on page \(p\) is nonzero, then \(d_p: H^1(X,\Omega^1_{X/k}) \to H^{p+1}(X, \Omega^p_{X/k})\) is nonzero. Much of the difficulty in constructing examples seems to come from ensuring torsion in \(H^2(\Omega^1)\) and \(H^{p+1}(\Omega^p)\) of a lift.

\subsection{Beyond the multiplicative group}

The filtered circle is the classifying stack of the Cartier dual of the rescaled multiplicative group \(\Ghat_\lambda\).
In \cite[§6.3]{MRT22} and \cite{Mou24}, it is pointed out that for any one-dimensional formal Lie group \(E\), one can form a ``filtered \(E\)-circle'' \(\sone_E = BG_E\), where \(G_E \to \mathbb A^1/\mathbb G_m\) is the Cartier dual of the filtered deformation of \(E\) to the normal cone. Then one can define \(\HH^E(\Spec R)\) to be the filtered ring of functions on the mapping stack of \(E\)-loops
\[ \mathcal{L}_E \Spec R = \Mapsu(BG_E, \Spec R).\]
The special fiber of \(G_E\) over \(B\mathbb G_m\) is always \(\Gahat^\vee\), and thus the associated graded of \(\HH^G\) is also the ring of functions on the shifted tangent bundle \(T[-1]\Spec R\).
Thus, one also has an \(E\)-HKR spectral sequence converging to \(\HH^E(\Spec R)\).

If \(k\) is an algebraically closed field of characteristic \(p\), then one-dimensional formal groups over \(k\) are classified by their height \cite[§19.4]{Haz78}.
If \(h < \infty\) is the height of a one-dimensional formal Lie group \(E\), then \(E\) has a group law of the form
\[ v,w \mapsto v+w + \frac{v^{p^h} + w^{p^h}-(v+w)^{p^h}}{p}  \mod (v,w)^{p^h+1}\]
\cite[(18.3.5)]{Haz78}, and thus the associated deformed group law \(E_\lambda\) will be trivial modulo \(\lambda^{p^h-1}\). Following the proof of Theorem \ref{theorem: initial-split}, we find that in characteristic \(p\), the HKR filtration on \(\HH^E(\Spec R)\) is split to order \(p^h-2\), and in the associated spectral sequence, the differentials are zero before page \(p^h\). We expect that by following the ideas of §\ref{subsection: filtered circle as deformation}, there is a formula $d_{p^h}$ of the form $[\Bock, V^h]$ where $V^h: L\Omega^1[1] \to L\Omega^{p^h}[p^h]$ is the $h$th iterate of $V$.

\subsection{de Rham-to-cyclic spectral sequences}

Theorem \ref{maintheorem: differential} also has implications for the Rham-to-cyclic spectral sequence. 
Let $X$ be a classical smooth variety over a perfect field $k$ of characteristic $p$.
By construction, the filtered loop space $\mathcal{L}_{\mathrm{fil}}X = \Mapsu(\filS, X)$ carries an action of the group scheme $\filS$,
and so $\HH_{\mathrm{fil}}(X/k)$ carries an action of $\filS$.
Taking the homotopy fixed points or Tate fixed points of this action gives filtered complexes 
\[\mathrm{HC}_{\mathrm{fil}}^-(X/k) := \HH_{\mathrm{fil}}(X/k)^{h\filS}\]
\[\mathrm{HP}_{\mathrm{fil}}(X/k) := \HH_{\mathrm{fil}}(X/k)^{t\filS}.\]
Since $X/k$ is classical and smooth, the spectral sequence associated to these filtered complexes can be identified with the usual de Rham-to-cyclic spectral sequences \cite[Example 6.3.8]{Rak20}\cite[Theorem 5.4.1(d)]{MRT22}.
Hence, the differentials in the spectral sequences associated to $\mathrm{HC}^-$ and $\mathrm{HP}$ can in principle be understood as the fixed points of the differentials on the Hochschild-Kostant-Rosenberg spectral sequence. 
We do not do so here, but below we show that $V$ descends to an operation on homotopy fixed points.

The category of $(\filS)^{gr}$-representations can be identified with strict modules over 
\[ \Lambda = R\Gamma(B\Gacheck,\OO)^*\]
\cite[Proposition 4.2.3]{MRT22}.
Under this identification, 
the de Rham differential on corresponds to the action of a generator of $H^{-1}(\Lambda) = H^1(B\Gacheck,\OO)^*$.
\begin{proposition}
  The de Rham differential on $\gr \HH(X/k)$ commutes with $V$. 
  More precisely, the map $V: \gr \HH(X/k) \to \gr \HH(X/k)$ has the structure of a $\Lambda$-linear map.
\end{proposition}
\begin{proof}
  Let $\alpha$ be the automorphism of $B\Gacheck \otimes \Fp[\epsilon]/\epsilon^2$ induced by the Cartier dual of $(1 + \epsilon F): \Gahat \to \Gahat$.
  The map $(1+\epsilon V) :\gr \HH(X/k)[\epsilon]/\epsilon^2 \to \gr \HH(X/k)[\epsilon]/\epsilon^2$ is induced by $\alpha^*: \Mapsu(B\Gacheck, X) \to \Mapsu(B\Gacheck,X)$.
  Let $\alpha_\Lambda: \Lambda[\epsilon]/\epsilon^2 \to \Lambda[\epsilon]/\epsilon^2$ be the natural map.
  Since $\alpha$ is a group homomorphism, $\alpha^*$ is naturally a homomorphism
  \[  
  \gr\HH(X/k)[\epsilon]/\epsilon^2 \to (\alpha_\Lambda)_* \gr \HH(X/k)[\epsilon]/\epsilon^2
  \]
  of $\Lambda$-modules.
  Thus, it suffices to show an equivalence $\alpha_\Lambda \simeq \id$ of homomorphisms of associative rings.
  
  By \cite[preceding Lemma 6.5]{MM25}, for a formal Lie group $F$ there is a natural identification 
  \[ R\Gamma(BF^\vee, \OO) \simeq \Gamma^\star(\Hom(F^\vee,\Ga)[-1])\]
  of augmented commutative coalgebras,
  where $\Gamma^\star(-)$ means the divided power symmetric algebra.
  Let $i: \Gacheck \to \Ga$ be the generator of $\Hom(\Gacheck, \Ga)$.
  Since the Verschiebung $V$ is natural, $i \circ V_{\Gacheck} = V_{\Ga} \circ i$.
  But $V_{\Ga} = 0$. This proves the claim.
\end{proof}

According to \cite[Remark 5.3.4]{MRT22}, since $X/k$ is classical and smooth,
\[ \gr^i \HHfil(X/k)^{h\filS} = \begin{cases} \dR_{X/k}^{\geq i}[2i] & i \geq 0 \\
  \dR_{X/k}[2i] & i < 0 
\end{cases}
\]
Thus $V$ induces a map 
\[ V: \dR_{X/k}^{\geq i} \to \dR_{X/k}^{\geq (i+p-1)}[2p-2]\]
and in particular a map $\dR_{X/k} \to \dR_{X/k}[2p-2]$.

\appendix

\section{The Atiyah class for prestacks}\label{appendix: atiyah}

In this appendix, we review the cotangent complex of a morphism of prestacks and the Atiyah class of an almost-connective quasi-coherent sheaf.
The results as formulated in this section are all due to Lurie \cite{Lurie-SAG} and are replicated here for the readers' convenience.

\subsection{Cotangent complex}\label{subsection: cotangent complex for stacks}

We first recall Lurie's treatment of the cotangent complex of a morphism of prestacks \cite[§17.2]{Lurie-SAG}.
The cotangent complex will represent the functor of derivations. A derivation of an animated ring \(R\) into a connective \(R\)-module \(M\) will be a section of the projection \(R \oplus M \to R\), where \(R \oplus M\) is the square-zero extension. %However, this square-zero extension of animated rings only makes sense if \(M\) is a connective module; in the stacky setting one must work around this.

We now define the functor of relative derivations at a point of a prestack, following \cite[Definition 17.2.4.2]{Lurie-SAG}. 
%Let \(\Sp\) be the \(\infty\)-category of spectra.
Let \(f: X \to Y\) be a morphism of prestacks. If \(x: \Spec R \to X\), let \(\Der_{x/Y}: \Mod_R^{\leq 0} \to \Spc\) be the functor sending \(M \in \Mod_R^{\leq 0}\) to
\begin{equation}\label{eq: local derivation functor}
  \Der_{x/Y}(M) = \fib(X(R \oplus M) \to \{x\} \times_{Y(R)} Y(R \oplus M)),
\end{equation}
where we consider \(x\) as a point of \(X(R)\) mapping to \(Y(R)\).
\begin{remark}
  Formula \eqref{eq: local derivation functor} should be interpreted as ``derivations \(\OO_X \to M\) which are \(\OO_Y\)-linear.'' More geometrically, we can think of \(\Der_{x/Y}\) as the space of vertical tangent vectors at \(x\).
\end{remark}
In \cite[§17.2.3]{Lurie-SAG}, Lurie shows that the functors \(\Der_x\) fit together into a functor
\[ \Der_{X/Y} = \lim_{x: \Spec R \to X} \Der_{x/Y}: \QCoh(X)^{\leq 0} \to \Spc.\]

We would like to define the cotangent complex as a quasi-coherent sheaf representing the functor of derivations. However, the functor of derivations is only defined on connective sheaves.
Thus, to define the cotangent complex, we need to first consider extending functors from the full subcategory of connective sheaves.

Let \(C\) be a stable \(\infty\)-category. A functor \(F: C \to \Sp\) is \emph{reduced} if it preserves final objects, and \emph{excisive} if it carries pushout squares to pullback squares.
Let \(\Excast(C,\Sp)\) be the full subcategory of functors \(C \to \Sp\) which are reduced and excisive.

\begin{lemma}\cite[Lemma 17.2.1.2]{Lurie-SAG}
  \label{lemma: restrict excisive}
  Let \(C\) be a stable \(\infty\)-category equipped with a bounded-above \(t\)-structure.
  Then the restriction functor
  \[ \Excast(C, \Spc) \to \Excast(C^{\leq 0},\Spc)\]
  is a trivial Kan fibration.
\end{lemma}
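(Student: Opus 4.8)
The plan is to exhibit the restriction functor as simultaneously a categorical equivalence and a categorical fibration of quasicategories; since between quasicategories a map that is both a categorical fibration and a categorical equivalence is a trivial Kan fibration — it is then a fibration and a weak equivalence for the Joyal model structure, whose cofibrations are the monomorphisms — this proves the lemma. Before doing either, I would record two preliminary facts. First, a functor between stable $\infty$-categories is reduced and excisive exactly when it is exact, so $\Excast(C,\Sp)$ is the $\infty$-category of exact functors $C\to\Sp$. Second, for every $n$ the subcategory $C^{\geq -n}$ is closed in $C$ under finite colimits and contains the zero object: a cofiber of a map between $(-n)$-connective objects is again $(-n)$-connective, since it is fibers, not cofibers, that drop connectivity. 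Hence the inclusions $C^{\geq -n}\hookrightarrow C^{\geq -n-1}\hookrightarrow C$ preserve the zero object and finite colimits, restriction along them sends reduced excisive functors to reduced excisive functors, and — applying excisiveness to the square presenting $X[1]$ as the pushout of $0\leftarrow X\to 0$ — every $F_0\in\Excast(C^{\geq -n},\Sp)$ satisfies $F_0(X[1])\simeq\Sigma F_0(X)$, naturally in $X$.

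For the equivalence: because the $t$-structure is right bounded, every object of $C$ is $(-n)$-connective for some $n$, so $C=\bigcup_n C^{\geq -n}$ and thus $C=\varinjlim_n C^{\geq -n}$, even at the level of simplicial sets. Consequently $\mathrm{Fun}(C,\Sp)=\varprojlim_n\mathrm{Fun}(C^{\geq -n},\Sp)$, and since every pushout square in $C$ has its four objects in $C^{\geq -N}$ for $N$ large and $C^{\geq -N}$ is closed under pushouts, this descends to $\Excast(C,\Sp)\simeq\varprojlim_n\Excast(C^{\geq -n},\Sp)$, under which the restriction functor of the lemma is the projection onto the $n=0$ term. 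It then suffices to show that each transition map $\Excast(C^{\geq -n-1},\Sp)\to\Excast(C^{\geq -n},\Sp)$ is an equivalence. Its inverse should be $G\mapsto\bigl(X\mapsto\Omega\,G(X[1])\bigr)$, well defined on $C^{\geq -n-1}$ because $X[1]\in C^{\geq -n}$; and the identity $F(X[1])\simeq\Sigma F(X)$ above, used once on $C^{\geq -n}$ and once on $C^{\geq -n-1}$, provides the two homotopies witnessing this as an inverse. Since the limit of a tower of equivalences is equivalent, via each structure projection, to the corresponding term, the bottom projection — the restriction functor — is a categorical equivalence.

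For the fibration: because $\Sp$ is a quasicategory and $C^{\geq 0}\hookrightarrow C$ is a monomorphism of simplicial sets, the pushout-product axiom for the cartesian Joyal model structure makes $\mathrm{Fun}(C,\Sp)\to\mathrm{Fun}(C^{\geq 0},\Sp)$ a categorical fibration, hence an isofibration. Now $\Excast(C,\Sp)$ and $\Excast(C^{\geq 0},\Sp)$ are full sub-simplicial-sets closed under equivalence, and restriction carries the first into the second; a short check — an inner horn contains all its vertices, and an equivalence lifted through an isofibration can be corrected to land in an equivalence-closed full subcategory — shows that restricting an isofibration of quasicategories to equivalence-closed full subcategories that it preserves yields an isofibration again, i.e.\ a categorical fibration between quasicategories. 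Combining this with the previous paragraph gives that the restriction functor is a trivial Kan fibration.

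The main obstacle I anticipate is the coherence concealed in ``the transition maps are equivalences'': the rule $G\mapsto\Omega\,G(-[1])$ must be promoted to a genuine functor of $\infty$-categories equipped with the homotopies realizing it as an inverse, not merely a prescription on objects and morphisms. The cleanest way around this is to not construct the inverse at all, but to check directly that each transition restriction is fully faithful and essentially surjective — both of which are immediate from the object/morphism formulas together with $F(X[1])\simeq\Sigma F(X)$ — and conclude equivalence from that. I would also be careful to invoke the identity ``categorical fibration $\cap$ categorical equivalence $=$ trivial fibration'' only between fibrant objects, where categorical fibrations and isofibrations agree, and to cite the precise form of the pushout-product axiom being used for $\mathrm{Fun}(-,\Sp)$.
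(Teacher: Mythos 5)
The paper gives no proof of this lemma; it is cited verbatim from Lurie's SAG (Lemma 17.2.1.2), whose argument is exactly the one you reconstruct: exhaust $C$ by the $C^{\geq -n}$, identify $\Excast(C,\Sp)$ with the inverse limit, and show each transition restriction is simultaneously a categorical equivalence and a categorical fibration. Your proof is correct in outline and matches that approach. Two refinements are worth recording. First, the step ``the limit of a tower of equivalences is equivalent, via each structure projection, to the corresponding term'' is not automatic for a strict limit of simplicial sets; it requires that the transition maps be isofibrations so that the strict limit computes the homotopy limit. This does hold here, by the same pushout-product argument you later give for the bottom projection, but a clean writeup should establish the fibration property first and then invoke it. Second, full faithfulness of each transition is not quite ``immediate from the object/morphism formulas,'' and the cleanest way to discharge the coherence worry you flag is in fact to build the inverse $e(G)=\Omega\circ G\circ[1]$ as a bona fide $\infty$-functor (pre- and post-composition with fixed functors), and then observe that for \emph{any} reduced $F$, not only excisive ones, there is a canonical comparison transformation $F\Rightarrow\Omega\circ F\circ[1]$, arising from the universal map of $F$ applied to the suspension square into the pullback of its image; this transformation is natural in $F$ and is an equivalence precisely on the excisive objects. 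Since $e\circ r$ and $r\circ e$ are literally the operator $\Omega\circ(-)\circ[1]$ on the two $\Excast$ categories, that single natural transformation supplies both homotopies $e\circ r\simeq\id$ and $r\circ e\simeq\id$ at once, so you never need to verify full faithfulness separately.
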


Thus, reduced excisive functors on \(\QCoh(X)\) are recovered from their restriction to \(\QCoh(X)^{\leq 0}\). It follows \cite[Example 17.2.1.4]{Lurie-SAG} that sending \(M \in \QCoh(X)^{-}\) to \(\Hom(M,-)\) gives a fully faithful embedding
\[ (\QCoh(X)^{-})^{op} \to \Excast(\QCoh(X)^{\leq 0},\Spc).\]
That is, an almost connective quasi-coherent sheaf can be recovered from its values on connective objects.

\begin{definition}[\cite{Lurie-SAG}, 17.2.4.2]
  The morphism \(f: X \to Y\) of prestacks \emph{admits a cotangent complex} if and only if the functor \(\Der_{X/Y}: \QCoh(X)^{\leq 0} \to \Spc\) is almost represented by some \(L\Omega^1_{X/Y} \in \QCoh(X)^{-}\).
\end{definition}

\begin{remark}
  For \(f: X \to Y\) to admit a cotangent complex, the functor \(\Der_{X/Y}\) must be reduced and excisive. This is not automatic. There are various criteria to determine whether a morphism admits a cotangent complex, see e.g.\ \cite[§17.2.3-4]{Lurie-SAG}.
\end{remark}

\begin{lemma}[\cite{Lurie-SAG},Proposition 17.2.5.2]
  % https://q.uiver.app/#q=WzAsMyxbMCwxLCJYIl0sWzIsMSwiWiJdLFsxLDAsIlkiXSxbMCwyLCJmIl0sWzIsMSwiZyJdLFswLDEsImgiLDJdXQ==
  Let \[
    \begin{tikzcd}
      & Y \\
      X && Z
      \arrow["g", from=1-2, to=2-3]
      \arrow["f", from=2-1, to=1-2]
      \arrow["h"', from=2-1, to=2-3]
    \end{tikzcd}
  \]
  be a commutative diagram of prestacks. If \(g\) and \(h\) admit cotangent complexes, so does \(f\), and there is a canonical fiber sequence \(f^*L\Omega^1_{Y/Z} \to L\Omega^1_{X/Z} \to L\Omega^1_{X/Y}\).
\end{lemma}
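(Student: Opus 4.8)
The plan is to reduce the statement to a fiber sequence of \emph{derivation functors} attached to $f$, $g$, and $h$, and then transport corepresentability across it. Throughout I use Lurie's setup from \S17.2 of \cite{Lurie-SAG}: for $x\colon \Spec R\to X$ the functor $\Der_{x/Z}\colon R\modc^{cn}\to\Sp$ is defined by the analogue of \eqref{eq: local derivation functor}, with $\Der_{X/Z}=\lim_{x\colon\Spec R\to X}\Der_{x/Z}$, and similarly for $\Der_{X/Y}$ and, over points of $Y$, for $\Der_{Y/Z}$. Since $g$ and $h$ admit cotangent complexes, $\Der_{x/Z}$ is corepresented by $x^{*}L_{X/Z}$, and for $y\colon\Spec R\to Y$ the functor $\Der_{y/Z}$ is corepresented by $y^{*}L_{Y/Z}$ \cite[\S17.2.4]{Lurie-SAG}; in particular all of these are reduced and excisive.

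\emph{Step 1: a pointwise fiber sequence.} Fix $x\colon\Spec R\to X$, put $y=f(x)$, and for $M\in R\modc^{cn}$ write $S=R\oplus M$ and $y_{\mathrm{triv}}\in Y(S)$ for the image of $y$ under $Y(R)\to Y(S)$. Applying $f_{S}\colon X(S)\to Y(S)$ to a vertical tangent vector and using $h=g\circ f$ gives a pointed map $\Der_{x/Z}(M)\to\Der_{y/Z}(M)$, natural in $x$ and $M$. I claim its fiber over the basepoint is $\Der_{x/Y}(M)$: unwinding, a point of the fiber is a point $\xi\in X(S)$ restricting to $x\in X(R)$, with $f_{S}(\xi)=y_{\mathrm{triv}}$ and $h_{S}(\xi)$ the trivial extension of $h(x)$ --- but the last condition follows from the previous one, so the fiber is exactly the space of $\xi\in X(S)$ over $x$ with $f_{S}(\xi)=y_{\mathrm{triv}}$, which is $\Der_{x/Y}(M)$ by \eqref{eq: local derivation functor}. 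Passing to the associated reduced excisive $\Sp$-valued functors via Lemma \ref{lemma: restrict excisive} turns this into a fiber sequence
\[
  \Der_{x/Y}\longrightarrow\Der_{x/Z}\longrightarrow\Der_{y/Z}
\]
of reduced excisive functors $R\modc^{cn}\to\Sp$, natural in $x$; in particular $\Der_{x/Y}$ is automatically reduced and excisive, being the fiber of a map between such.

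\emph{Step 2: globalize and corepresent.} Take the limit of the sequences of Step 1 over all points $x\colon\Spec R\to X$. Using $(f\circ x)^{*}L_{Y/Z}=x^{*}(f^{*}L_{Y/Z})$ together with $\QCoh(X)=\lim_{\Spec R\to X}R\modc$, one identifies $\lim_{x}\Der_{f(x)/Z}$ with the functor corepresented by $f^{*}L_{Y/Z}\in\QCoh(X)^{acn}$, so the limit sequence becomes a fiber sequence $\Der_{X/Y}\to\Der_{X/Z}\to\theta(f^{*}L_{Y/Z})$ in $\Excast(\QCoh(X)^{cn},\Sp)$, in which $\Der_{X/Z}\simeq\theta(L_{X/Z})$ by hypothesis on $h$. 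Since $\theta$ is fully faithful, the second arrow is $\theta$ of a canonical morphism $\alpha\colon f^{*}L_{Y/Z}\to L_{X/Z}$; since $\theta$ sends cofiber sequences in $\QCoh(X)^{acn}$ to fiber sequences, it follows that $\Der_{X/Y}$ is corepresented by $L_{X/Y}:=\cofib(\alpha)\in\QCoh(X)^{acn}$. Hence $f$ admits a cotangent complex $L_{X/Y}$, and the cofiber sequence $f^{*}L_{Y/Z}\xrightarrow{\ \alpha\ }L_{X/Z}\to L_{X/Y}$ is the asserted (canonical) fiber sequence.

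The main obstacle is Step 1, specifically the basepoint bookkeeping: the naive fiber of $X(S)\to Y(S)$ over $y_{\mathrm{triv}}$ sees the whole fiber of $X(R)\to Y(R)$ over $y$, not just $x$, so one must use that the condition ``$\xi$ restricts to $x$'' is already imposed inside $\Der_{x/Z}$ in order to cut out the correct fiber. The other ingredients --- naturality in $x$ so the pointwise sequences glue under the limit, the passage from the space-valued functors $M\mapsto X(R\oplus M)$ to their reduced excisive $\Sp$-valued extensions, and the exactness of $\theta$ --- are routine given the material recalled in \S\ref{subsection: cotangent complex for stacks}.
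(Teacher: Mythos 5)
The paper does not actually prove this lemma: it is stated with a citation to \cite[Proposition 17.2.5.2]{Lurie-SAG}, and the only related content supplied in the paper is Remark~\ref{remark: functoriality of cotangent complex}, which constructs the comparison map $f^{*}L_{Y/Z}\to L_{X/Z}$ (exactly the morphism $\alpha$ you isolate in Step~2). So there is no ``paper's own proof'' to compare against; I am judging your argument on its own.

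Your proof is correct and is, in fact, the standard argument (it is essentially what Lurie does). Step~1 reduces to a fiber-pasting observation: writing $A=\fib_{x}(X(R\oplus M)\to X(R))$, $B=\fib_{y}(Y(R\oplus M)\to Y(R))$, $C=\fib_{z}(Z(R\oplus M)\to Z(R))$, one has $\Der_{x/Z}(M)=\fib(A\to C)$, $\Der_{y/Z}(M)=\fib(B\to C)$, $\Der_{x/Y}(M)=\fib(A\to B)$, and the square
\[
  \begin{tikzcd}[cramped]
    A \arrow[r,"h"] \arrow[d,"f"'] & C \arrow[d,equal]\\
    B \arrow[r,"g"] & C
  \end{tikzcd}
\]
commutes because $h=g\circ f$; taking iterated fibers in the two orders gives exactly $\fib\bigl(\Der_{x/Z}\to\Der_{y/Z}\bigr)\simeq\fib(A\to B)=\Der_{x/Y}$. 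That is the rigorous version of your informal ``the last condition follows from the previous one,'' and it is good that you flag the basepoint bookkeeping as the nontrivial point. Step~2 (globalize over $x\colon\Spec R\to X$, identify $\lim_{x}\Der_{f(x)/Z}$ with $\Hom(f^{*}L_{Y/Z},-)$ via $(f\circ x)^{*}=x^{*}f^{*}$, and invoke full faithfulness and exactness of $\theta$) is exactly right, and correctly produces $L_{X/Y}=\cofib(\alpha)$, which is almost connective because $f^{*}L_{Y/Z}$ and $L_{X/Z}$ are.

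One small inaccuracy worth fixing: in Step~1 you cite Lemma~\ref{lemma: restrict excisive} for ``passing to the associated reduced excisive $\Sp$-valued functors.'' That lemma is about extending reduced excisive functors from $\QCoh(X)^{cn}$ to all of $\QCoh(X)$; it is not the statement that a reduced excisive functor valued in pointed spaces factors uniquely through spectra. The latter is a separate (standard) fact from Goodwillie calculus, already baked into Lurie's definition of the derivation functors as spectrum-valued. The logical point you actually need is simply that $\Der_{x/Z}$ and $\Der_{y/Z}$ are reduced and excisive (since $g,h$ admit cotangent complexes) and that fibers of maps of reduced excisive functors are reduced and excisive; you do state this, so the argument goes through once the citation is corrected.
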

\begin{remark}\label{remark: functoriality of cotangent complex}
  Note that \(f^*L\Omega^1_{Y/Z} \to L\Omega^1_{X/Z}\) is defined by taking the fiber of the horizontal morphisms in the commutative square
  % https://q.uiver.app/#q=WzAsNCxbMCwwLCJYKFIgXFxvcGx1cyBNKSJdLFsxLDAsIlxce3hcXH1cXHRpbWVzX3taKFIpfVooUlxcb3BsdXMgTSkiXSxbMSwxLCJcXHtmKHgpXFx9XFx0aW1lc197WihSKX1aKFIgXFxvcGx1cyBNKSJdLFswLDEsIlkoUiBcXG9wbHVzIE0pIl0sWzAsMywiZiJdLFszLDJdLFsxLDJdLFswLDFdXQ==
  \[
    \begin{tikzcd}
      {X(R \oplus M)} & {\{x\}\times_{Z(R)}Z(R\oplus M)} \\
      {Y(R \oplus M)} & {\{f(x)\}\times_{Z(R)}Z(R \oplus M)}
      \arrow[from=1-1, to=1-2]
      \arrow["f", from=1-1, to=2-1]
      \arrow[from=1-2, to=2-2]
      \arrow[from=2-1, to=2-2]
    \end{tikzcd},
  \]
  which yields a transformation \(\Der_{x/Z} \to \Der_{f(x)/Z}\). Taking the limit of these morphisms over \(x: \Spec R \to X\) gives a morphism \(\Hom(L\Omega^1_{X/Z} ,-) \to \Hom(f^*L\Omega^1_{Y/Z},-)\).
\end{remark}

\begin{lemma}[\cite{Lurie-SAG}, 17.2.4.6]\label{lemma: cotangent complex and pullback}
  Let
  \begin{equation}\label{eq: square for cotangent complex}
    % https://q.uiver.app/#q=WzAsNCxbMCwwLCJYJyJdLFswLDEsIlknIl0sWzEsMCwiWCJdLFsxLDEsIlkiXSxbMSwzXSxbMCwyLCJnIl0sWzAsMSwiZiciLDJdLFsyLDMsImYiLDJdXQ==
    \begin{tikzcd}
      {X'} & X \\
      {Y'} & Y
      \arrow["g", from=1-1, to=1-2]
      \arrow["{f'}"', from=1-1, to=2-1]
      \arrow["f"', from=1-2, to=2-2]
      \arrow[from=2-1, to=2-2]
    \end{tikzcd}
  \end{equation}
  be a commutative diagram in prestacks.
  \begin{itemize}
    \item Suppose \(f\) and \(f'\) admit cotangent complexes. Then there is a canonical morphism \(g^*L\Omega^1_{X/Y} \to L\Omega^1_{X'/Y'}\).
    \item Suppose \eqref{eq: square for cotangent complex} is a pullback square and \(f\) admits a cotangent complex. Then \(f'\) admits a cotangent complex and the canonical morphism \(g^*L\Omega^1_{X/Y} \to L\Omega^1_{X'/Y'}\) is an equivalence.
  \end{itemize}
\end{lemma}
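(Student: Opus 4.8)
The plan is to argue with the functors of derivations $\Der_{x/Y}$ themselves, and to pass to the representing objects only at the end through the fully faithful embedding $\theta$ recalled after Lemma \ref{lemma: restrict excisive}. The one preliminary I would record is that, unwinding the definition \eqref{eq: local derivation functor}, there is a natural fiber sequence expressing $\Der_{x/Y}$ in terms of the corresponding absolute functors over $\Spec A$. Writing $\Der_x(M) := \fib\bigl(X(R\oplus M)\to X(R)\bigr)$ for the fiber over the point $x$, one has
\[
  \Der_{x/Y}(M)\ \simeq\ \fib\bigl(\Der_x(M)\to\Der_{y}(M)\bigr),\qquad y := f(x),
\]
the map induced by $f$ and the fiber formed at the canonical basepoint (the trivial square-zero extension). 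This is the shape I want, since it exhibits $\Der_{x/Y}$ as built out of the functors of points of $X$ and $Y$ by forming fibers.

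For the first assertion, fix $x'\colon\Spec R\to X'$ and set $x = g\circ x'$, with induced $R$-points $y'$ of $Y'$ and $y$ of $Y$; by commutativity of the square, $y$ is at once the image of $x$ under $f$ and of $y'$ under $Y'\to Y$. The square then induces, naturally in $M\in R\modc^{cn}$, a commutative square of spectra
\[
  \begin{tikzcd}
    {\Der_{x'}(M)} & {\Der_{y'}(M)} \\
    {\Der_{x}(M)} & {\Der_{y}(M)}
    \arrow[from=1-1, to=1-2]
    \arrow[from=1-1, to=2-1]
    \arrow[from=1-2, to=2-2]
    \arrow[from=2-1, to=2-2]
  \end{tikzcd}
\]
and taking the fibers of its two horizontal maps yields a natural transformation $\Der_{x'/Y'}(M)\to\Der_{x/Y}(M)$, compatible as $x'$ and $R$ vary. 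Because $f$ and $f'$ admit cotangent complexes, $\Der_{x/Y}(M)\simeq\Map_R\bigl(x^*L_{X/Y},M\bigr)=\Map_R\bigl((x')^*g^*L_{X/Y},M\bigr)$ and $\Der_{x'/Y'}(M)\simeq\Map_R\bigl((x')^*L_{X'/Y'},M\bigr)$; passing the above natural transformation through $\theta$ then produces the desired canonical morphism $g^*L_{X/Y}\to L_{X'/Y'}$, which one checks to be the one described in Remark \ref{remark: functoriality of cotangent complex}.

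For the second assertion, suppose the square is cartesian, so $X'(S)\simeq X(S)\times_{Y(S)}Y'(S)$ for every animated ring $S$. Applying this with $S=R\oplus M$ and with $S=R$, and using that the fiber of a pullback of spaces is the pullback of the fibers, one obtains $\Der_{x'}(M)\simeq\Der_x(M)\times_{\Der_y(M)}\Der_{y'}(M)$. Under this identification the commutative square displayed above is precisely the defining pullback square of that fiber product, so the induced map on horizontal fibers is an equivalence; in other words, the canonical transformation $\Der_{x'/Y'}(M)\to\Der_{x/Y}(M)$ is an equivalence for all $x'$ and all $M$. Composing with $\Der_{x/Y}(M)\simeq\Map_R\bigl((x')^*g^*L_{X/Y},M\bigr)$, each $\Der_{x'/Y'}$ is almost represented by $(x')^*\bigl(g^*L_{X/Y}\bigr)$, compatibly with base change in $\Spec R$ (inherited from the analogous compatibilities for $L_{X/Y}$). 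Lurie's criterion for the existence of a cotangent complex [\cite{Lurie-SAG}, \S17.2.3--4] then shows that $f'$ admits a cotangent complex, that $L_{X'/Y'}\simeq g^*L_{X/Y}$, and hence that the canonical morphism of the first part is this equivalence.

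The only geometric input is ``the fiber of a pullback square is again a pullback square,'' applied to functors of points evaluated on trivial square-zero extensions; everything else is bookkeeping. The step I expect to need the most care is confirming that the equivalence obtained in the cartesian case genuinely is the canonical comparison morphism of the first part rather than merely some abstract equivalence — it is this identification that licenses the conclusion — which, in the present formulation, comes down to checking that the ``first projection'' $\Der_x(M)\times_{\Der_y(M)}\Der_{y'}(M)\to\Der_x(M)$ is the map induced by $g$. Reduced-excisiveness of $\Der_{x'/Y'}$, needed so that the pointwise statement globalizes through $\theta$, is immediate from its equivalence with $\Der_{x/Y}$.
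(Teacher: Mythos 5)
This lemma is cited directly from Lurie's SAG and the paper does not reprove it, so there is no internal argument to compare against. Your proof is correct and is the standard one: rewrite relative derivations as a fiber of absolute ones, observe that evaluating prestacks on $R\oplus M$ preserves the defining pullback $X'=X\times_Y Y'$ so that $\Der_{x'}(M)\simeq\Der_x(M)\times_{\Der_y(M)}\Der_{y'}(M)$, and use the total-fiber identity $\fib(\Der_x\times_{\Der_y}\Der_{y'}\to\Der_{y'})\simeq\fib(\Der_x\to\Der_y)$ to see that the comparison $\Der_{x'/Y'}\to\Der_{x/Y}$ is an equivalence; your closing remarks correctly flag the two genuine bookkeeping points (matching the resulting equivalence with the canonical map of the first bullet, and assembling the pointwise statements into the limit over $\Spec R\to X'$ before passing through $\theta$), both of which go through as you describe.
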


% It is not true in general that if $G \to X$ is a group prestack admitting a cotangent complex, then $BG \to X$ admits a cotangent complex.
%  Nick Rozenblyum supplied the counterexample $G = \Omega_0 \mathbb{A}^1 = 0 \times_{\mathbb{A}^1} 0$}.

Suppose that \(Z\) is a prestack and \(\mathcal{M} \in \QCoh(Z)^{\leq 0}\).
If \(z: \Spec R \to Z\), then we have the square-zero extension \(R \oplus z^*\mathcal{M}\) of \(R\), and thus the prestack \(\Spec(R \oplus z^*\mathcal{M})\).
\begin{definition}
  If \(\mathcal{M} \in \QCoh(Z)^{\leq 0}\),
  then the trivial square-zero extension \(Z^{\mathcal{M}}\) of \(Z\) by \(\mathcal{M}\) is the relative spectrum
  \begin{equation*}
    Z^{\mathcal{M}} = \underline{\Spec}(\OO_Z \oplus \mathcal{M}) = \colim{x: \Spec R \to Z} \Spec(R \oplus x^*\mathcal{M}).
  \end{equation*}
\end{definition}
The zero map \(\OO_Z \oplus \mathcal{M} \to \OO_Z\) defines a morphism \(Z \to Z^{\mathcal{M}}\).
Now suppose that \(X/Y\) admits a cotangent complex.
By definition, if \(x: \Spec R \to X\), then the space of sections of \(\Spec R \to \Spec(R \oplus x^*\mathcal{M})\) together with a trivialization over \(Y\) is equivalent to \(\Der_{x/Y}(x^*\mathcal{M})\).
Expressing $Z$ as a colimit of affine schemes shows that if $z: Z \to X$, then 
\[\Maps(z^*L\Omega^1_{X/Y},\mathcal{M})\] is equivalent to the space of lifts 
\[
% https://q.uiver.app/#q=WzAsNCxbMCwwLCJaIl0sWzAsMSwiWl5NIl0sWzEsMCwiWCJdLFsxLDEsIlkiXSxbMSwzXSxbMCwxXSxbMiwzXSxbMCwyLCJ6Il0sWzEsMiwiIiwxLHsic3R5bGUiOnsiYm9keSI6eyJuYW1lIjoiZGFzaGVkIn19fV1d
\begin{tikzcd}
	Z & X \\
	{Z^M} & Y
	\arrow["z", from=1-1, to=1-2]
	\arrow[from=1-1, to=2-1]
	\arrow[from=1-2, to=2-2]
	\arrow[dashed, from=2-1, to=1-2]
	\arrow[from=2-1, to=2-2]
\end{tikzcd}
\]
where the map $Z^M \to Y$ is the composite $Z^M \to Z \to X \to Y$.

\subsection{First-order deformations of quasi-coherent sheaves}%\label{subsection: definition of Atiyah}

Let $X$ be a prestack. In this section, we discuss the deformation theory of quasi-coherent sheaves on $X$ along a (possibly non-split) square-zero extension of $X$.
%Let \(f: X \to Y\) be a morphism of prestacks. In this section, we define the Atiyah class of an almost-connective quasi-coherent sheaf \(E \in \QCoh(X)^{-}\). The approach is first to define the \emph{Atiyah transformation} \[ at_E: \Der_{X/Y} \to \Hom(E, -[1]\otimes E)\] which obstructs the prolongation of \(E\) along square-zero extensions. A version of the Yoneda lemma then defines the Atiyah class when \(f\) admits a cotangent complex. We follow \cite[§19.2.2]{Lurie-SAG}. 

To begin with, suppose that
\begin{equation}\label{eq: pullback of algebras}
% https://q.uiver.app/#q=WzAsNCxbMCwwLCJBIl0sWzAsMSwiQV8wIl0sWzEsMCwiQV8xIl0sWzEsMSwiQV97MDF9Il0sWzAsMV0sWzEsM10sWzIsM10sWzAsMl0sWzAsMywiIiwxLHsic3R5bGUiOnsibmFtZSI6ImNvcm5lciJ9fV1d
\begin{tikzcd}[ampersand replacement=\&,cramped]
	A \& {A_1} \\
	{A_0} \& {A_{01}}
	\arrow[from=1-1, to=1-2]
	\arrow[from=1-1, to=2-1]
	\arrow["\lrcorner"{anchor=center, pos=0.125}, draw=none, from=1-1, to=2-2]
	\arrow[from=1-2, to=2-2]
	\arrow[from=2-1, to=2-2]
\end{tikzcd}
\end{equation}
is a pullback diagram of algebras in $\QCoh(X)^{\leq 0}$. Extension of scalars along the morphisms in \eqref{eq: pullback of algebras} defines a functor 
\[ A\modc \to A_0\modc \underset{A_{01}\modc}{\times} A_1\modc.\]
\begin{proposition}[\cite{Lur09}, Proposition 16.2.1.1, Proposition 16.2.2.1]\label{lemma: modules over pullback}
  The functor 
  \[ A\modc \to A_0\modc \underset{A_{01}\modc}{\times} A_1\modc\]
  is fully faithful. 
  If $\tau^{\geq 0}(A_{0}) \to \tau^{\geq 0}(A_{01})$ is surjective in $\QCoh(X)^\heartsuit$, then the functor restricts to an equivalence 
  \[ A\modc^- \overset{\sim}{\to} A_0\modc^- \underset{A_{01}\modc^-}{\times} A_1\modc^-.\]
\end{proposition}

Suppose that $\tilde X$ is a square-zero extension of the prestack $X$ by $\mathcal{M} \in \QCoh(X)^{\leq 0}$. 
There is a pushout square 
\[ 
% https://q.uiver.app/#q=WzAsNCxbMSwwLCJYIl0sWzAsMSwiWCJdLFswLDAsIlhee1xcbWF0aGNhbHtNfVsxXX0iXSxbMSwxLCJcXHRpbGRlIFgiXSxbMiwxLCJ1IiwyXSxbMiwwLCJ2Il0sWzEsMywiXFx0aWxkZSB2Il0sWzAsMywiXFx0aWxkZSB1Il1d
\begin{tikzcd}[ampersand replacement=\&,cramped]
	{X^{\mathcal{M}[1]}} \& X \\
	X \& {\tilde X}
	\arrow["v", from=1-1, to=1-2]
	\arrow["u"', from=1-1, to=2-1]
	\arrow["{\tilde u}", from=1-2, to=2-2]
	\arrow["{\tilde v}", from=2-1, to=2-2]
\end{tikzcd}
\]
of schemes under $X$,
where $u$ and $v$ are sections of the canonical closed immersion $j: X \to X^{\mathcal{M}[1]}$.
and thus a fully faithful functor
\begin{equation}%\label{eq: sheaves on square zero extensions}
  \QCoh(\tilde X) \to \QCoh(X) \underset{u*^, \QCoh(X^{\mathcal{M}[1]}), v^*}{\times} \QCoh(X)
\end{equation}
which is an equivalence when $\QCoh$ is replaced by $\QCoh^-$.
The following Proposition is a generalization of [\cite{Lurie-SAG}, Proposition 19.2.2.2].
\begin{proposition} \label{prop: first-order-deformations-in-qcoh}
  Let \(E \in \QCoh(X)\). 
  \begin{enumerate}
    \item  There is a fully faithful functor 
    \[ \{E\} \underset{\QCoh(X),\tilde u^*}{\times} \QCoh(\tilde X) 
    \to \Isom(u^*E, v^*E) \times_{j^*, \Isom(E,E)} \{id_E\}
    \]
    Thus $E \times_{\QCoh(X)} \QCoh(\tilde X)$ is an $\infty$-groupoid. If $E \in \QCoh(X)^-$, then the functor is an equivalence.
    \item The $\infty$-groupoid $\Isom(u^*E, v^*E) \times_{j^*,\Isom(E,E)}\{\id_E\}$ is nonempty if and only if $u^*E \cong v^*E$. Choosing such an isomorphism gives an equivalence to 
    \[\Aut(u^*E,u^*E) \times_{\Aut(E)} \{\id_E\} \cong \Hom_{\QCoh(X)}(E, E \otimes \mathcal{M}[1])\]
    \item Suppose that $\alpha: \tilde X \to X$ is a section of the structure map $X\to \tilde X$, which in particular induces an isomorphism $u^* \simeq v^*$. Then the map 
    \[ 
      \{E\} \times_{\QCoh(X),u^*} \QCoh(\tilde X) \to \Hom_{\QCoh(X)}(E, E \otimes \mathcal{M}[1])
    \]
    induced by i.\ and ii.\
    sends $\tilde E \in \QCoh(\tilde X)$ to the morphism
    \[ E \to \cofib(\alpha_*\tilde E \to E)\]
    where $\alpha_*\tilde E \to E$ is the map induced by applying $\alpha_*$ to the unit $1 \to \tilde u_* \tilde u^*$.
    %\QCoh(X) \underset{u^*,\QCoh(X^{\mathcal{M}[1]}),v^*}{\times} \{E\}.
  \end{enumerate}
\end{proposition}
\begin{proof}
  \begin{enumerate}
  \item Consider the fully faithful functor 
  \[ \QCoh(\tilde X) \to \QCoh(X) \times_{v^*, \QCoh(X)^{\mathcal{M}[1]}, u^*} \QCoh(X)\]
  induced by pullback along $\tilde u$ and $\tilde v$.
  Taking the fiber product along $E$ in the first factor gives a fully faithful functor 
  \[ \{E\}\underset{\tilde v^*, \QCoh(X)}\times \QCoh(\tilde X) \to \{E\} \times_{u^*, \QCoh(X^{\mathcal{M}[1]}),v^*} \QCoh(X).
  \]
  The latter category is the category of objects $F \in \QCoh(X)$ equipped with an isomorphism $v^*F \to u^*E$. 
  Applying $j^*$ to this map shows that $F \simeq E$, and thus the map 
  \[ 
    \Isom(v^*E,u^*E) \times_{j^*,\Isom(E,E)} id_E \to \{E\}  \underset{u^*,\QCoh(X),v^*}{\times} \QCoh(X)
  \]
  sending $g: v^*E \simeq u^*E$ to $E$ equipped with $g: v^*E \to u^*E$ 
  is an equivalence.
  \item Suppose that $\Isom(v^*E,u^*E)$ is nonempty.
  Fixing $g: v^*E \to u^*E$ gives an equivalence 
  \[ \Isom(v^*E,u^*E) \to \Isom(u^*E,u^*E).\]
  Since $j$ is a square-zero immersion, any map $u^*E \to u^*E$ whose pullback along $j$ is $\id_E$ is an isomorphism. Thus the space we seek is 
  \begin{align*} 
    \Hom(u^*E,u^*E) \underset{j^*,\Hom(E,E)}{\times} \{\id_E\}
      &\cong \Hom(E, u_*u^*E) \times_{\Hom(E,E)}\{\id_E\}  \\
      &\cong \Hom(E, E \oplus E \otimes \mathcal{M}[1]) \times_{\Hom(E,E)} \{\id_E\}\\
      &= \Hom(E, E \otimes \mathcal{M}[1]).
  \end{align*}
    \item The pullback square 
    \[
% https://q.uiver.app/#q=WzAsNCxbMCwwLCJcXE9PX3tcXHRpbGRlIFh9Il0sWzAsMSwiXFx0aWxkZSB1XypcXE9PX1giXSxbMSwwLCJcXHRpbGRlIHZfKlxcT09fWCJdLFsxLDEsIihcXHRpbGRlIHZ1KV8qIChcXE9PX1ggXFxvcGx1cyBcXG1hdGhjYWx7TX1bMV0pIl0sWzAsMV0sWzAsMl0sWzEsM10sWzIsM11d
\begin{tikzcd}[ampersand replacement=\&,cramped]
	{\OO_{\tilde X}} \& {\tilde v_*\OO_X} \\
	{\tilde u_*\OO_X} \& {(\tilde vu)_* (\OO_X \oplus \mathcal{M}[1])}
	\arrow[from=1-1, to=1-2]
	\arrow[from=1-1, to=2-1]
	\arrow[from=1-2, to=2-2]
	\arrow[from=2-1, to=2-2]
\end{tikzcd}
    \]
    implies that for any $\tilde E \in \QCoh(\tilde X)$, there is a fiber sequence 
    \[ \tilde E \to \tilde u_*\tilde u^* \tilde E \oplus \tilde v_*\tilde v^*\tilde E \to (\tilde v u)_*(\tilde v u)^* \tilde E.\]
    Suppose we are given an isomorphism $\tilde u^*\tilde E \to E$.
    Applying $\alpha$ and using the equivalences $\alpha \tilde u \sim \id_X$
    gives a fiber sequence
    \[ \alpha_*\tilde E \to E \oplus \tilde v^* \tilde E \to u_*u^* \tilde v^* \tilde E.
    \]
    Hence, the cofiber of $\alpha_*\tilde E \to E$ is identified with the cofiber of $\tilde v^*\tilde E \to u_*u^*\tilde v^* \tilde E$.
    If $g: \tilde v^*\tilde E \to \tilde u^*\tilde E = E$ is the isomorphism induced by $\alpha \tilde u\sim \alpha \tilde v$, the map $E \to E \otimes \mathcal{M}[1]$ we seek is exactly the map $E \to \cofib(E \to u_*u^*E)$ induced by $g$. Hence it agrees with $E \to \cofib(\alpha_*\tilde E \to E)$.
  \end{enumerate}
\end{proof}

\subsection{The infinitesimal action of inertia}\label{subsection: d of inertia}

Fix a morphism of prestacks $f: X \to Y$. 
The relative loop space of $f$ is 
\[ \mathcal{L}X = X \underset{X \times_Y X}{\times} X\]
where the fiber product is along the diagonal maps $\Delta: X \to X \times_Y X$.

The relative loop space of $f$ is the beginning of the \v{C}ech nerve of $\Delta: X \to X \times_Y X$ and thus $q_1,q_2: \mathcal{L}X \rightrightarrows X$ has the structure of a groupoid.
We have a canonical map
\[c: X/\mathcal{L}X\to X \times X.\]
Picking a cosection $X \times_Y X \to X$ of $\Delta$ realizes $\mathcal{L}X$ as a group over $X$.\footnote{
Recall that a groupoid over $X$ is the same as an effective epimorphism $X \to Z$ \cite[Theorem 6.1.0.6, Proposition 6.2.2.7]{Lur09}, while a group over $X$ is an effective epimorphism $X \to Z$ with a cosection $Z \to X$.
}
There are two natural cosections of the diagonal, namely the two projections $q_1,q_2: X \times_Y X \to X$.
Let us make $\mathcal{L}X$ into a group via $q_1$.
Then we obtain a map $q_1 \circ c: X/\mathcal{L}X \to X$ which makes $\mathcal{L}X$ into a group over $X$.

The other projection $p_2: X \times_Y X \to X$ gives a morphism $q_2 \circ c: X/\mathcal{L}X \to X$ distinct from the structure map $q_1 \circ c$. This gives each $E \in \QCoh(X)$ a nontrivial structure of a representation of $\mathcal{L}X$, as noted in \cite[Chapter 8, 6.1.2]{GR17II} and \cite[Proposition 1.2.5]{KoP21}.
\begin{definition}
  For $E \in \QCoh(X)$, the \emph{canonical $\mathcal{L}X$-equivariant structure} on $E$ is $(q_2 \circ c)^*F \in \QCoh(X/\mathcal{L}X)$.
  Let $\can := (q_2 \circ c)^*: \QCoh(X) \to \QCoh(X/\mathcal{L}X)$.
\end{definition}

Now suppose that $G/Y$ is a group prestack admitting a cotangent complex. We now define the \emph{infinitesimal action} associated to a representation $E \in \QCoh(BG)$ of $G$.
If $q: Y \to BG$ is the quotient map,
the infintesimal action on $E$ is just the derivative of the action map of $G$ on $q^*E$ at the identity of $G$.
More formally, consider the beginning of the Cech nerve of $Y \to BG$:
\[
% https://q.uiver.app/#q=WzAsNCxbMCwxLCJZIl0sWzEsMSwiQkciXSxbMCwwLCJHIl0sWzEsMCwiWSJdLFswLDEsInEiLDJdLFsyLDAsInAiLDIseyJvZmZzZXQiOjF9XSxbMiwzLCJwIl0sWzMsMSwicSJdXQ==
\begin{tikzcd}
	G & Y \\
	Y & BG
	\arrow["p", from=1-1, to=1-2]
	\arrow["p"', shift right, from=1-1, to=2-1]
	\arrow["q", from=1-2, to=2-2]
	\arrow["q"', from=2-1, to=2-2]
\end{tikzcd}
\]
The homotopy making the square commute induces an isomorphism $act_G: p^*q^*E \overset{\sim}{\to}p^*q^*E$.
Suppose we are given a tangent vector to $G$ at the identity, that is, a diagram of the form 
\begin{equation}\label{eq: tangent vector at identity}
% https://q.uiver.app/#q=WzAsNCxbMSwwLCJHIl0sWzEsMSwiWSJdLFswLDAsIlxcU3BlYyhSIFxcb3BsdXMgTSkiXSxbMCwxLCJcXFNwZWMgUiJdLFszLDJdLFsxLDAsImUiXSxbMiwwLCJ4Il0sWzMsMSwieSJdXQ==
\begin{tikzcd}
	{\Spec(R \oplus M)} & G \\
	{\Spec R} & Y
	\arrow["x", from=1-1, to=1-2]
	\arrow[from=2-1, to=1-1]
	\arrow["y", from=2-1, to=2-2]
	\arrow["e", from=2-2, to=1-2]
\end{tikzcd}
\end{equation}
Taking the pullback of $act_G$ along $x$ gives a morphism $x^*act_G$ whose reduction to $\Spec R$ is the identity of $y^*q^*E$. The space of such morphisms is exactly 
\[\Hom_R( y^*q^*E, y^*q^*E \otimes M)\]
As such tangent vectors $x$ as in \eqref{eq: tangent vector at identity} are classified by $\Hom_R(e^*\ell_{G/Y}, M)$, we obtain a natural transformation 
\[ \Hom_R(e^*\ell_{G/Y},-) \to \Hom_R(y^*q^*E, y^*q^*E \otimes -)\]
of functors on $\Mod_R^{\leq 0}$.
This natural transformation is natural in $y$, and thus assembles to a natural transformation 
\[ \Hom_{\QCoh(Y)}(\ell_{G/Y}, -) \to \Hom_{\QCoh(Y)}(q^*E,q^*E \otimes -)\]
of functors on $\QCoh(Y)^{\leq 0}$.
By Lemma \ref{lemma: restrict excisive} and the Yoneda Lemma, this natural transformation is induced by a map 
\[\dact_G: q^*E \to q^*E \otimes e^*\ell_{G/Y}\]
unique up to contractible choices.

If $f:X \to Y$ admits a cotangent complex, then according to the diagram 
\[
% https://q.uiver.app/#q=WzAsNCxbMCwwLCJcXG1hdGhjYWx7TH1YIl0sWzAsMSwiWCJdLFsxLDAsIlgiXSxbMSwxLCJYIFxcdGltZXNfWSBYIl0sWzEsMywiXFxEZWx0YSJdLFsyLDMsIlxcRGVsdGEiXSxbMCwxLCJwXzEiXSxbMCwyLCJwXzIiXV0=
\begin{tikzcd}
	{\mathcal{L}X} & X \\
	X & {X \times_Y X}
	\arrow["{p_2}", from=1-1, to=1-2]
	\arrow["{p_1}", from=1-1, to=2-1]
	\arrow["\Delta", from=1-2, to=2-2]
	\arrow["\Delta", from=2-1, to=2-2]
\end{tikzcd}
\]
so does $\mathcal{L}X/X$, and 
\[ L\Omega^1_{\mathcal{L}X/X} \simeq p_2^* L\Omega^1_{\Delta} \simeq p_2^* L\Omega^1_{X/Y}[1],\]
so that the co-Lie complex of $\mathcal{L}X/X$ is identified with 
\[ \ell_{\mathcal{L}X/X} := e^*L\Omega^1_{\mathcal{L}X/X} \simeq L\Omega^1_{X/Y}[1].\] 
\begin{definition}
  For $f: X \to Y$ a morphism of prestacks admitting a cotangent complex and $E \in \QCoh(X)$, the \emph{Atiyah class} of $E/Y$ is the infinitesimal action of the canonical $\mathcal{L}X$-equivariant structure on $E$:
  \[ at_{E/Y} : E \to E \otimes \ell_{\mathcal{L}X/X} \simeq E \otimes L\Omega^1_{X/Y}[1].\]
\end{definition}
\begin{remark}
  In characteristic zero, Kondryev and Prikhodko also defined the Atiyah class of $E \in \QCoh(X)$ as the infinitesimal action of inertia on the canonical $\mathcal{L}X$-equivariant structure \cite[Definition 1.3.2]{KoP21}, except they used the correspondence between formal groups and Lie algebras in characteristic zero. 
\end{remark}
\begin{remark}
  We consider the Atiyah class only as a linear map, but since $\mathcal{L}X$ is a group, it should enhance to the structure of a coaction of the co-partition Lie algebra $\ell_{\mathcal{L}X/X}$.
\end{remark}
Let us spell out how to calculate the action map for the canonical $\mathcal{L}X$-equivariant structure on $E \in \QCoh(X)$.
The map $q_1 \circ c: X/\mathcal{L}X \to X$ gives a homotopy between $p_1$ and $p_2$ according to the diagram 
\begin{equation}\label{eq: infinitesimal-inertia-1}
  % https://q.uiver.app/#q=WzAsOCxbMCwwLCJcXG1hdGhjYWx7TH1YIl0sWzAsMSwiWCJdLFsxLDAsIlgiXSxbMSwxLCJYIFxcdGltZXNfWSBYIl0sWzMsMSwiWCJdLFszLDIsIlgiXSxbMiwyLCJYIl0sWzIsMSwiWCJdLFswLDEsInBfMSIsMl0sWzAsMiwicF8yIl0sWzMsNSwicV8xIiwxLHsibGFiZWxfcG9zaXRpb24iOjIwfV0sWzcsNl0sWzYsNV0sWzcsNF0sWzQsNV0sWzEsNl0sWzIsNF0sWzIsM10sWzEsM10sWzAsN11d
\begin{tikzcd}
	{\mathcal{L}X} & X && \\
	X & {X \times_Y X} & X & X \\
	&& X & X
	\arrow["{p_2}", from=1-1, to=1-2]
	\arrow["{p_1}"', from=1-1, to=2-1]
	\arrow[from=1-1, to=2-3]
	\arrow[from=1-2, to=2-2]
	\arrow[from=1-2, to=2-4]
	\arrow[from=2-1, to=2-2]
	\arrow[from=2-1, to=3-3]
	\arrow["{q_1}"{description, pos=0.2}, from=2-2, to=3-4]
	\arrow[from=2-3, to=2-4]
	\arrow[from=2-3, to=3-3]
	\arrow[from=2-4, to=3-4]
	\arrow[from=3-3, to=3-4]
\end{tikzcd}.
\end{equation}
On the other hand, the action map for $\can(E)$ is the isomorphism $p_1^*q^*E \to p_2^*q^*E$ induced by the diagram 
\begin{equation}\label{eq: infinitesimal-inertia-2}
  % https://q.uiver.app/#q=WzAsOCxbMCwwLCJcXG1hdGhjYWx7TH1YIl0sWzAsMSwiWCJdLFsxLDAsIlgiXSxbMSwxLCJYIFxcdGltZXNfWSBYIl0sWzMsMSwiWCJdLFszLDIsIlgiXSxbMiwyLCJYIl0sWzIsMSwiWCJdLFswLDEsInBfMSIsMl0sWzAsMiwicF8yIl0sWzMsNSwicV8yIiwxLHsibGFiZWxfcG9zaXRpb24iOjIwfV0sWzcsNl0sWzYsNV0sWzcsNF0sWzQsNV0sWzEsNl0sWzIsNF0sWzIsM10sWzEsM10sWzAsN11d
\begin{tikzcd}
	{\mathcal{L}X} & X && \\
	X & {X \times_Y X} & X & X \\
	&& X & X
	\arrow["{p_2}", from=1-1, to=1-2]
	\arrow["{p_1}"', from=1-1, to=2-1]
	\arrow[from=1-1, to=2-3]
	\arrow[from=1-2, to=2-2]
	\arrow[from=1-2, to=2-4]
	\arrow[from=2-1, to=2-2]
	\arrow[from=2-1, to=3-3]
	\arrow["{q_2}"{description, pos=0.2}, from=2-2, to=3-4]
	\arrow[from=2-3, to=2-4]
	\arrow[from=2-3, to=3-3]
	\arrow[from=2-4, to=3-4]
	\arrow[from=3-3, to=3-4]
\end{tikzcd}
.
\end{equation}
Hence, the action map of $\can(E)$ is the composite 
\[ p_1^*E \to p_2^*E \leftarrow p_1^*E\]
where the two maps are induced by $p_1$ and $p_2$ according to the diagrams \eqref{eq: infinitesimal-inertia-1} and \eqref{eq: infinitesimal-inertia-2}.

\begin{example}\label{ex: classical atiyah class}
  Suppose that $f: X \to Y$ is a smooth morphism of smooth schemes.
  Let $I$ be the ideal of the diagonal $\Delta: X \to X \times_Y X$, so that $L\Omega^1_{\Delta} \simeq I/I^2[1]$.
  Let $X^{(2)} = \Specu(\OO_{X\times_Y X}/I^2)$ be the first-order neighborhood of the diagonal
  and $i: X^{(2)} \to X \times_Y X$ the inclusion. 
  We have a commutative diagram 
  \[% https://q.uiver.app/#q=WzAsOCxbMSwxLCJYXnsoMil9Il0sWzAsMCwiWF57SS9JXjJbMV19Il0sWzAsMSwiWCJdLFsxLDAsIlgiXSxbMiwyLCJYIl0sWzIsMSwiXFxtYXRoY2Fse0x9WCJdLFszLDEsIlgiXSxbMywyLCJYIFxcdGltZXNfWVgiXSxbMSwyXSxbMSwzXSxbMiwwXSxbMywwXSxbMiw0XSxbMyw2XSxbMSw1XSxbNSw0XSxbNSw2XSxbNiw3XSxbNCw3XSxbMCw3XV0=
\begin{tikzcd}
	{X^{I/I^2[1]}} & X && \\
	X & {X^{(2)}} & {\mathcal{L}X} & X \\
	&& X & {X \times_YX}
	\arrow[from=1-1, to=1-2]
	\arrow[from=1-1, to=2-1]
	\arrow[from=1-1, to=2-3]
	\arrow[from=1-2, to=2-2]
	\arrow[from=1-2, to=2-4]
	\arrow[from=2-1, to=2-2]
	\arrow[from=2-1, to=3-3]
	\arrow[from=2-2, to=3-4]
	\arrow[from=2-3, to=2-4]
	\arrow[from=2-3, to=3-3]
	\arrow[from=2-4, to=3-4]
	\arrow[from=3-3, to=3-4]
\end{tikzcd}\]
where the map $X^{I/I^2[1]} \to \mathcal{L}X$ induces an isomorphism $\ell_{\mathcal{L}X/X} \overset{\sim}{\to} I/I^2[1]$.
The square-zero extension $X \to X^{(2)}$ is split by
\[q_1' = q_1 \circ i: X^{(2)} \to X \times_Y X \to X.\]
Let $q_2' = q_2 \circ i$. The infinitesimal action of $\mathcal{L}X$ on $E$ is the image of $q_2'^*E$ under the map
\[
  \QCoh(X^{(2)}) \times_{\QCoh(X)} \{E\} \to \Hom(E, E \otimes I/I^2[1])
\]
of Proposition \ref{prop: first-order-deformations-in-qcoh}.
By Proposition \ref{prop: first-order-deformations-in-qcoh}.iii, this map is exactly the boundary morphism of the exact triangle 
\[ I/I^2 \to (q_1')_* (q_2')^* E \to E \to^{+1},\]
which is exactly the classical definition of the Atiyah class (see e.g.\ \cite[§10.1.5]{HL97}).
\end{example}

\begin{remark}
  In principle, the argument in Example \ref{ex: classical atiyah class} can be extended to an arbitrary morphism of prestacks admitting a cotangent complex using Gaitsgory and Rozenblyum's construction of the derived first-order neighborhood \cite[Chapter 9, §5.1]{GR17II}. Gaitsgory and Rozenblyum restrict to laft prestacks in order to use ind-coherent sheaves, but this is not essential for constructing the Atiyah class, although we do not pursue this idea further. {Moore25}
\end{remark}

\subsection{The Atiyah class and connections}
\label{subsection: atiyah and comparison to Lurie}

In \cite[§19.2.2]{Lurie-SAG}, Lurie gives a different treatment of the Atiyah class based on the following idea:
Given $E \in \QCoh(X)$ and a derivation 
\[ \delta \in \Der_{X/Y}(\mathcal{M}) \]
associated to $s_\delta: X^\mathcal {M} \to X$,
the pullback $s_\delta^*E$ is a square-zero deformation of $E$, thus by Proposition \ref{prop: first-order-deformations-in-qcoh} classified by an object in $\Maps(E, E \otimes \mathcal{M}[1])$.
Thus we have a natural transformation 
\[ \Der_{X/Y} \to \Maps(E, E \otimes -[1]).\]
Lurie defines the Atiyah class of $E$ to be the morphism $E \to E \otimes L\Omega^1_{X/Y}[1]$ representing this transformation.
\begin{remark}
  This transformation should be interpreted as the obstruction for $E$ to admit a connection.
\end{remark}
\begin{remark}
  Lurie defines this transformation only for $E \in \QCoh(X)^-$, but this restriction is not necessary, since the functors in Proposition \ref{prop: first-order-deformations-in-qcoh} are defined for all $E$.
\end{remark}
\begin{proposition}
  Suppose that $f: X \to Y$ is a morphism of prestacks admitting a cotangent complex. 
  Then the natural transformation 
  \[ \Der_{X/Y} \to \Maps(E, -[1] \otimes E)\]
  induced by the Atiyah class is equivalent to the transformation sending 
  \[ 
    \left(\delta \in \Der_{X/Y}(\mathcal{M})\right) \mapsto  [s_\delta^*E] \in \Maps(E, \mathcal{M}[1]\otimes E).
  \]
\end{proposition}
The argument here is similar to S.\ Moore's discussion of the Atiyah class in \cite[§3.1-3]{Moore25}, although Moore does not use the language of infinitesimal inertia.
\begin{proof}
  Suppose that $x: \Spec R \to X$ and $\delta: x^* L\Omega^1_{X/Y} \to M$ is a point of $\Der_{x/Y}(M)$.
  Let $s_\delta: \Spec(R \oplus M) \to X$ be the corresponding map.
  Then $s_\delta$ fits into a commutative diagram
  \begin{equation}\label{eq: comparison to Lurie diagram}
  % https://q.uiver.app/#q=WzAsNyxbMCwxLCJcXFNwZWMgUiJdLFsxLDAsIlxcU3BlYyBSIl0sWzAsMCwiXFxTcGVjKFIgXFxvcGx1cyBNWzFdKSJdLFszLDIsIlggXFx1bmRlcnNldHtZfXtcXHRpbWVzfVgiXSxbMiwyLCJYIl0sWzMsMSwiWCJdLFsxLDEsIlxcU3BlYyhSIFxcb3BsdXMgTSkiXSxbMiwwXSxbNCwzXSxbNSwzXSxbMSw2XSxbMCw2XSxbMCw0XSxbNiwzLCIoc18wLHNfXFxkZWx0YSkiLDFdLFsyLDFdLFsxLDVdXQ==
\begin{tikzcd}
	{\Spec(R \oplus M[1])} & {\Spec R} && \\
	{\Spec R} & {\Spec(R \oplus M)} && X \\
	&& X & {X \underset{Y}{\times}X}
	\arrow[from=1-1, to=1-2]
	\arrow[from=1-1, to=2-1]
	\arrow[from=1-2, to=2-2]
	\arrow[from=1-2, to=2-4]
	\arrow[from=2-1, to=2-2]
	\arrow[from=2-1, to=3-3]
	\arrow["{(s_0,s_\delta)}"{description}, from=2-2, to=3-4]
	\arrow[from=2-4, to=3-4]
	\arrow[from=3-3, to=3-4]
\end{tikzcd}.
\end{equation}
Now $q_2^*E$ pulls back along $(s_0,s_\delta)$ to $s_\delta^*E \in \Mod_{R \oplus M}$.
Thus 
$[s_\delta^*E] \in \Hom(x^*E, x^*E \otimes M[1])$ is obtained by comparing the two pullbacks of $q_2^*E$ to $\Spec(R\oplus M[1])$.

The diagram \eqref{eq: comparison to Lurie diagram}
induces a map $\alpha: \Spec(R \oplus M[1]) \to \mathcal{L}X$. The map $\alpha$ is classified by 
\[\delta[1]: x^*\ell_{\mathcal{L}X} = x^*L\Omega^1_{X/Y}[1] \to \mathcal{M}[1].\]
The Atiyah class $at_E: E \to E \otimes \ell_{\mathcal{L}X/X}[1]$ 
is obtained by comparing the two isomorphisms of the pullbacks of $E$ to $\mathcal{L}X$.
After pulling back to $\Spec(R \oplus M[1])$,
we see that 
\[\delta[1] \circ at_E: x^*E \to x^*E \otimes M[1]\]
is the difference of the two maps $x^*E \to x^*E \otimes M[1]$ obtained by pulling back along $s_\delta$ and $s_0$. But $s_0$ induces the zero map $\ell_{\mathcal{L}X/X} \to M[1]$.
\end{proof}

\section{Loops on stacks}\label{appendix: loops on stacks}

Let \(X\) be a stack. In \cite{BN12}, Ben-Zvi and Nadler explain that since the loop space functor \(X \mapsto \mathcal{L}X\) does not satisfy étale descent, the loop space is ``nonlocal,'' unlike the shifted tangent bundle \(T[-1]X\). For example, if \(G\) is a finite discrete group, \(\mathcal{L}BG = G/G\) is the stack of conjugacy classes and has many components, but \(T[-1]BG = BG\) has one component.

To address the problem of non-locality, various authors have introduced ``smaller'' loop spaces.
Ben-Zvi and Nadler introduced two smaller loop spaces, the formal and unipotent loops.
Furthermore, in \cite{ABM21}, Antieau, Bhatt, and Mathew deal with the sheafification of Hochschild homology in syntomic topology.
In this appendix, we compare these constructions. In Theorem \ref{theorem: abm is formal}, it is shown that Antieau-Bhatt-Mathew's sheafified Hochschild homology agrees with functions on the formal loop space.

\subsection{Formal completions of morphisms of prestacks}

Gaitsgory and Rozenblyum introduced the formal completion of a morphism of prestacks. \cite[Chapter 2, 3.1.3 (iii)]{GR17II}

If \(R\) is an animated ring, then let \(\red{R}\) denote the quotient of the discrete ring \(\pi_0(R)\) by its nilradical, considered as an animated ring \cite[Chapter 4, 6.1.4]{GR17I}.

\begin{definition}
  If \(X\) is a prestack, we define the \emph{infinitesimal prestack} of \(X\) to be
  \[ X_{\mathrm{inf}}(R) = X(\red{R}).\]
\end{definition}
In characteristic zero, the infinitesimal prestack is Gaitsgory-Rozenblyum's de Rham prestack, but away from characteristic zero, the infinitesimal prestack does not compute de Rham cohomology.

\begin{definition}[\cite{GR17II}, Chapter 2, 3.1.3 (iii)]\label{definition: formal completion}
  The \emph{formal completion} of a morphism \(f: X \to Y\) of prestacks is the fiber product
  \[
    Y_X^{\wedge} = Y \times_{Y_{\mathrm{inf}}} X_{\mathrm{inf}}.
  \]
\end{definition}

\begin{example}\label{ex: classical completion}
  Let \(A\) be a discrete ring, \(Y = \Spec A\), \(I \subseteq A\) an ideal, and \(X = \Spec A/I\).
  For any animated ring \(R\), since \(A\) is discrete,
  \begin{align*}
    Y_X^\wedge(\Spec R) &= \Maps(A,R) \times_{\Maps(A, \red{R})} \Maps(A/I, \red{R}).
  \end{align*}
  That is, \(Y_X^\wedge(\Spec R)\) is the space of ring homomorphisms \(A \to R\) such that the image of every element in \(I\) is nilpotent.
  If the ideal \(I\) is finitely generated, then the image of \(I\) is elementwise nilpotent if and only if there exists some \(n \geq 0\) such that \(I^n\) maps to zero. Thus, if \(I\) is finitely generated, the completion \(Y_X^\wedge\) in the sense of Definition \ref{definition: formal completion} is equivalent to \(\varinjlim_{n} \Spec(A/I^n)\) in prestacks, that is, the usual completion of \(\Spec A\) along \(I\) as a formal scheme.
\end{example}

As Example \ref{ex: classical completion} shows, some finiteness conditions are needed for the formal completion of Definition \ref{definition: formal completion} to behave as expected.

\begin{definition}[\cite{GR17I}, Chapter 2, 1.7.1]
  Let \(A\) be a discrete commutative ring.
  A simplicial \(A\)-algebra \(R\) is of \emph{finite type} if \(\pi_0(R)\) is an \(A\)-algebra of finite type and \(\bigoplus_{i=1}^\infty \pi_i(R)\) is a finitely generated \(\pi_0(R)\)-module.
\end{definition}
Let \({}^{\leq n}\CAlg_{A}\) be the category of \(n\)-coconnective simplicial \(A\)-algebras and \({}^{\leq n}\CAlg_{A,ft}\) be the full subcategory of finite-type \(A\)-algebras.

\begin{definition}[\cite{GR17I}, Chapter 2, 1.7.2]
  A prestack \(X\) is \emph{locally almost of finite type} over \(A\) if:
  \begin{itemize}
    \item \(X\) is convergent: \(X(R) = \lim_n X(\tau_{\leq n}R)\) for all \(R\);
    \item For all \(n \geq 0\), the restriction of \(X\) to \({}^{\leq n}\CAlg_A\) is left Kan extended from \({}^{\leq n}\CAlg_{A,ft}\).
  \end{itemize}
\end{definition}

\begin{lemma}\label{lemma: completion and cotangent complex}
  Let \(A\) be a Noetherian commutative ring and let \(X,Y,Z\) be prestacks locally almost of finite type over \(A\).
  Suppose
  % https://q.uiver.app/#q=WzAsMyxbMSwwLCJYIl0sWzAsMSwiWSJdLFsyLDEsIloiXSxbMCwxXSxbMSwyXSxbMCwyXV0=
  \[
    \begin{tikzcd}
      & X \\
      Y && Z
      \arrow[from=1-2, to=2-1]
      \arrow[from=1-2, to=2-3]
      \arrow[from=2-1, to=2-3]
    \end{tikzcd}
  \]
  is a commutative diagram of prestacks such that each morphism admits a cotangent complex and such that \(L\Omega^1_{X/Y} \to L\Omega^1_{X/Z}\) is an equivalence. Then
  \[ Y_X^\wedge \to Z_X^\wedge \]
  is an equivalence.
\end{lemma}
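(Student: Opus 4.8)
The plan is to construct the natural map $g\colon Y_X^{\wedge}\to Z_X^{\wedge}$, to show that it is an equivalence on reduced prestacks and has vanishing relative cotangent complex, and then to invoke the faithfulness of deformation theory for formal completions. The map $g$ comes from applying $(-)_{inf}$ to the triangle $X\to Y\to Z$, which produces a map of the Cartesian squares defining $Y_X^{\wedge}$ and $Z_X^{\wedge}$ (Definition~\ref{definition: formal completion}). Since $(-)_{inf}$ is precomposition with $R\mapsto\red{R}$, it preserves all limits; using $\red{(\red{R})}\simeq\red{R}$ one gets $(Y_X^{\wedge})_{inf}\simeq X_{inf}\simeq(Z_X^{\wedge})_{inf}$, and $g$ commutes with the canonical equivalences of both sides to $X_{inf}$, so $g$ is an equivalence on infinitesimal prestacks, in particular on reductions. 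Next, for any prestack $W/\Spec A$ one has $L_{W_{inf}/A}\simeq 0$: for $M$ connective, $\pi_0 M$ is a nilpotent ideal of $\pi_0(R\oplus M)$, so $\red{(R\oplus M)}\simeq\red{R}$, hence $W_{inf}(R\oplus M)\to W_{inf}(R)$ is an equivalence and every fiber of $\Der_{W_{inf}/A}$ is contractible. Feeding $L_{X_{inf}/A}\simeq L_{Y_{inf}/A}\simeq 0$ into the transitivity fiber sequence \cite[Proposition~17.2.5.2]{Lurie-SAG} for $X_{inf}\to Y_{inf}\to\Spec A$ gives $L_{X_{inf}/Y_{inf}}\simeq 0$, and likewise $L_{X_{inf}/Z_{inf}}\simeq 0$.

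Now I would compute $L_{Y_X^{\wedge}/Z_X^{\wedge}}$. The projection $p\colon Y_X^{\wedge}\to Y$ is the base change of $X_{inf}\to Y_{inf}$ along $Y\to Y_{inf}$, so by Lemma~\ref{lemma: cotangent complex and pullback} it admits a cotangent complex and $L_{Y_X^{\wedge}/Y}\simeq p^{\ast}L_{X_{inf}/Y_{inf}}\simeq 0$; similarly $L_{Z_X^{\wedge}/Z}\simeq 0$. Feeding $L_{Z_X^{\wedge}/Z}\simeq 0$ into the transitivity sequence for $Y_X^{\wedge}\xrightarrow{g}Z_X^{\wedge}\to Z$ gives $L_{Y_X^{\wedge}/Z_X^{\wedge}}\simeq L_{Y_X^{\wedge}/Z}$, and feeding $L_{Y_X^{\wedge}/Y}\simeq 0$ into the sequence for $Y_X^{\wedge}\xrightarrow{p}Y\to Z$ gives $L_{Y_X^{\wedge}/Z}\simeq p^{\ast}L_{Y/Z}$. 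On the other hand, the hypothesis that $L_{X/Y}\to L_{X/Z}$ is an equivalence forces, through the transitivity sequence for $X\xrightarrow{f}Y\to Z$, that $f^{\ast}L_{Y/Z}\simeq 0$. Writing $\iota\colon X\to Y_X^{\wedge}$ for the canonical map, we get $\iota^{\ast}p^{\ast}L_{Y/Z}\simeq f^{\ast}L_{Y/Z}\simeq 0$. Since $A$ is noetherian and $Y,Z$ are locally almost of finite type, $L_{Y/Z}$ is almost perfect, so $p^{\ast}L_{Y/Z}$ is an almost-connective sheaf on $Y_X^{\wedge}$ whose restriction to $X$ vanishes; bootstrapping over the nilpotent thickenings of $\red X$ inside $Y_X^{\wedge}$ — on any noetherian test ring $R\to Y_X^{\wedge}$ the pullback of $L_{Y/Z}$ is an almost perfect complex whose derived base change along $R\to\red R$ vanishes (the reduction factors through $X$), hence it vanishes by a derived Nakayama argument against the nilpotent nilradical — and passing to the colimit using convergence and local finiteness gives $p^{\ast}L_{Y/Z}\simeq 0$, i.e.\ $L_{Y_X^{\wedge}/Z_X^{\wedge}}\simeq 0$.

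Finally, $g$ is a map of prestacks locally almost of finite type over the noetherian ring $A$ which is an equivalence on reductions and has vanishing relative cotangent complex; since the formal completion of a prestack admitting deformation theory again admits deformation theory, deformation theory is faithful here and such a $g$ is an equivalence. Concretely, one checks by obstruction theory against $L_{Y_X^{\wedge}/Z_X^{\wedge}}\simeq 0$ that $g$ is an equivalence on each infinitesimal neighborhood of $\red X$, and then takes the colimit over these neighborhoods. The hard part is precisely this last step, together with the bootstrapping in the previous paragraph: the passage from a vanishing relative cotangent complex to a genuine equivalence is where the noetherian and finite-type hypotheses are indispensable, and where one must either import the inf-scheme formalism of Gaitsgory--Rozenblyum or run the induction over square-zero thickenings by hand.
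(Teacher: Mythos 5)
Your proof is correct in outline and shares the paper's key engine — induction over square-zero thickenings of a finite-type coconnective test ring — but routes through a more abstract intermediate statement. The paper goes directly: reduce to finite-type $n$-coconnective $R$ (so $\pi_0 R$ is noetherian, hence has nilpotent nilradical, hence $R$ is an iterated square-zero extension of $\red{R}$), and then show that the hypothesis $L_{X/Y}\simeq L_{X/Z}$ handles each square-zero step. You instead first package the hypothesis into the claim $L_{Y_X^\wedge/Z_X^\wedge}\simeq 0$, then note $g$ is an equivalence on $(-)_{inf}$, and finally appeal to the general principle that a nil-isomorphism with vanishing relative cotangent complex is an equivalence — which, as you yourself say, is precisely the same square-zero induction run once more. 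So your route does strictly more work: you first convert the hypothesis into $L_g\simeq 0$, then still have to run (or cite from the inf-scheme formalism) the induction. The paper's phrasing is more economical.

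That said, your derived-Nakayama bootstrap — getting from $f^\ast L_{Y/Z}\simeq 0$ on $X$ to $p^\ast L_{Y/Z}\simeq 0$ on all of $Y_X^\wedge$ by exploiting almost-perfectness of $L_{Y/Z}$ and nilpotence of the nilradical — is a genuine point worth surfacing. If you unwind the paper's inductive step carefully, the lifting fiber at stage $i$ is governed by $y_i^\ast L_{Y/Z}$ for a point $y_i\in Y(S_i)$ that only restricts to a point of $X$ after passing to $\red{R}$, so one does need a Nakayama-type argument to conclude $y_i^\ast L_{Y/Z}\simeq 0$; the paper's "thus by induction" compresses this. You also have two minor slips: the pullback identifying $L_{Y_X^\wedge/Y}$ should be along the projection $Y_X^\wedge\to X_{inf}$, not along $p\colon Y_X^\wedge\to Y$; and you should flag that the reduction to noetherian test rings uses that finite limits of prestacks locally almost of finite type remain so (needed to know that $Y_X^\wedge$ itself is lafT). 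Neither affects the substance.
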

\begin{proof}
  Since \(X,Y,Z\) are locally almost of finite type, it suffices to show that \(Y_X^\wedge(R) \to Z_X^\wedge(R)\) is an equivalence if \(R\) is \(n\)-coconnective and of finite type.
  Since \(A\) is Noetherian and \(R\) is of finite type, \(\pi_0(R)\) is Noetherian, so its nilradical is finitely generated and thus nilpotent. Thus \(R\) is an iterated square-zero extension of \(\red{\pi_0(R)}\). Since \(L\Omega^1_{X/Y} \to L\Omega^1_{X/Z}\) is an equivalence, whenever \(\tilde S\) is a square-zero extension of \(S\), the map
  \[ X(S) \times_{Y(S)} Y(\tilde S) \to X(S) \times_{Z(S)} Z(\tilde S) \]
  is an equivalence. Thus by induction,
  \[ X(\red{\pi_0(R)}) \times_{Y(\red{\pi_0(R)})} Y(R) \to X(\red{\pi_0(R)}) \times_{Z(\red{\pi_0(R)})} Z(R)\]
  is an equivalence.
\end{proof}

\subsection{Formal loops and unipotent loops}

\begin{definition}[\cite{BN12}, Definition 1.22]
  The \emph{formal loop space} \(\hat{\mathcal{L}}X\) of a stack \(X\) is the formal completion of \(\mathcal{L}X\) at the constant loops \(X \to \mathcal{L}X\).
\end{definition}

Let \(\aff(-) = \Spect(R\Gamma(-,\OO))\) denote the affinization functor (see §\ref{subsection: affine stacks}).

\begin{definition}[\cite{BN12}, Definition 1.24]
  The \emph{unipotent loop space} of a stack \(X\) is
  \[ \mathcal{L}^uX = \Mapsu(\aff(\sone),X) = \Mapsu(B\Ghat_m^\vee,X).\]
\end{definition}

By definition, the unipotent loop space \(\mathcal{L}^u X\) is the underlying stack of Moulinos-Robalo-Toën's filtered mapping stack \(\Mapsu(\filS,X)\).

Let \(\hat{\mathcal{L}^u}X\) be the formal completion of \(\mathcal{L}^uX\) along the constant maps.
The map \(\mathcal{L}^u X \to \mathcal{L}X\) induced by affinization of \(\sone\) induces a map
\begin{equation}\label{eq: compare formal unipotent and formal loops}
  \hat{\mathcal{L}}^uX \to \hat{\mathcal{L}}X
\end{equation}

\begin{theorem}[c.f. \cite{BN12}, §6.3]\label{formal and unipotent}
  Let \(A\) be a Noetherian commutative ring and \(X/A\) be a prestack locally almost of finite type admitting a cotangent complex.
  Then the map \(\hat{\mathcal{L}}^uX \to \hat{\mathcal{L}}X\) is an equivalence.
\end{theorem}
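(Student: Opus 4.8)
The plan is to deduce the theorem from Lemma~\ref{lemma: completion and cotangent complex}, applied to the commutative triangle
\[ X \to \mathcal L^u X \to \mathcal L X, \]
in which $X \to \mathcal L X$ and $X \to \mathcal L^u X$ are the constant-loops inclusions and $\mathcal L^u X \to \mathcal L X$ is induced by the affinization map $\sone \to (\sone)^{\aff} = B\Ghat_m^\vee$ of Theorem~\ref{theorem: filtered circle is affinization}. The formal completions of $X$ inside $\mathcal L^u X$ and $\mathcal L X$ are $\hat{\mathcal L}^u X$ and $\hat{\mathcal L}X$, and the induced map of completions is \eqref{eq: compare formal unipotent and formal loops}. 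Granting that all three morphisms in the triangle admit cotangent complexes and that $X$, $\mathcal L X$, $\mathcal L^u X$ are locally almost of finite type over $A$, Lemma~\ref{lemma: completion and cotangent complex} reduces the theorem to showing that the relative cotangent complex $L_{\mathcal L^u X/\mathcal L X}$ vanishes after restriction along the constant loops $X \to \mathcal L^u X$; equivalently, that the canonical map $L_{X/\mathcal L X} \to L_{X/\mathcal L^u X}$ is an equivalence.

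First I dispose of the standing hypotheses. Since $\mathcal L X = X \times_{X \times X} X$ is a fiber product of prestacks locally almost of finite type admitting a cotangent complex, it is again such, and the basepoint evaluation $\mathcal L X \to X$ admits a cotangent complex by base change along the diagonal (Lemma~\ref{lemma: cotangent complex and pullback}); as this morphism has the constant loops as a section, transitivity shows $X \to \mathcal L X$ admits a cotangent complex. For $\mathcal L^u X = \Mapsu(B\Ghat_m^\vee, X)$ the analogous statements follow from the deformation theory of mapping stacks \cite[Chapter 19]{Lurie-SAG}: $B\Ghat_m^\vee$ is an affine stack whose structure sheaf has perfect, cohomologically bounded global sections (by Theorem~\ref{theorem: filtered circle is affinization}, $R\Gamma(B\Ghat_m^\vee,\OO) \simeq R\Gamma(\sone,\OO)$), so the basepoint evaluation $\mathcal L^u X \to X$ admits a cotangent complex and $\mathcal L^u X$ is locally almost of finite type whenever $X$ is; then $X \to \mathcal L^u X$ and $\mathcal L^u X \to \mathcal L X$ admit cotangent complexes by transitivity.

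It remains to show that $L_{\mathcal L^u X/\mathcal L X}$ vanishes on the constant loops. By the deformation theory of mapping stacks, the relative cotangent complex of the precomposition map $\Mapsu(B\Ghat_m^\vee, X) \to \Mapsu(\sone, X)$ at a point $\phi$ is built, up to a shift, from the relative cochains of the affinization map $\sone \to B\Ghat_m^\vee$ with coefficients in $\phi^* L_{X/A}$. When $\phi$ is a constant loop, factoring through a point $x\colon \Spec S \to X$, the coefficient sheaf $\phi^* L_{X/A}$ is pulled back from $x^* L_{X/A}$ along the structure maps to $\Spec S$; since $R\Gamma(\sone, \OO)$ and $R\Gamma(B\Ghat_m^\vee, \OO)$ are perfect $S$-complexes, the projection formula reduces the relevant cochain complex to a shift of
\[ \mathrm{fib}\big(R\Gamma(\sone, \OO) \to R\Gamma(B\Ghat_m^\vee, \OO)\big) \otimes_S x^* L_{X/A}. \]
By Theorem~\ref{theorem: filtered circle is affinization} and the universal property of affinization (global sections of a prestack agree with those of its affinization), the map $R\Gamma(\sone, \OO) \to R\Gamma((\sone)^{\aff}, \OO) = R\Gamma(B\Ghat_m^\vee, \OO)$ is an equivalence, so this fiber is zero. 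This identification is natural in $x$, so $L_{\mathcal L^u X/\mathcal L X}$ restricts to $0$ along the constant loops, and Lemma~\ref{lemma: completion and cotangent complex} completes the proof.

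The main obstacle is the second paragraph: one must know that $\Mapsu(B\Ghat_m^\vee, X)$ admits a cotangent complex and is locally almost of finite type even though its source $B\Ghat_m^\vee$ is a (derived) coaffine stack rather than a scheme, and one must identify its relative cotangent complex along the constant loops with the appropriate cochain complex so that the projection formula applies. Once that formalism is in place, the appeal to affinization and to Lemma~\ref{lemma: completion and cotangent complex} is formal.
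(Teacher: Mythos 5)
Your proof is correct and takes essentially the same approach as the paper: apply Lemma~\ref{lemma: completion and cotangent complex} to the triangle $X \to \mathcal L^u X \to \mathcal L X$, then reduce to comparing relative cotangent complexes along constant loops using the formula for the cotangent complex of a mapping stack and the fact that $\sone \to B\Ghat_m^\vee$ is the affinization map, so that global sections of any pulled-back sheaf agree on the two sides.
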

\begin{proof}
  Consider the triangle
  \[ X \to {\mathcal{L}}^uX \to \mathcal{L}X.\]
  Since \(X\) is locally almost of finite type and admits a cotangent complex over \(A\), the same holds for \(\mathcal{L}X\).
  Since \(\aff(\sone)\) is the affinization of a finite CW complex, it is represented by a finite cosimplicial algebra, and thus \(\Mapsu(\aff(\sone),X)\) is also locally almost of finite type and admits a cotangent complex.
  By Lemma \ref{lemma: completion and cotangent complex}, it is sufficient to check that \(L\Omega^1_{X/\mathcal{L}^uX} \to L\Omega^1_{X/\mathcal{L}X}\) is an equivalence.

  It is equivalent to check that if \(f:\Spec R \to X\), then \(f^*L\Omega^1_{\mathcal{L}X /A} \to f^*L\Omega^1_{\mathcal{L}^uX/A}\) is an equivalence.
  But now if \(g: \Spec R \to \Mapsu(Y,Z)\), then
  \[
    g^* L\Omega^1_{\Maps(Y,Z)/A} = R\Gamma(Y, g^*L\Omega^1_{Z/A}).
  \]
  We consider the above formula where \(g\) is the constant loop at \(f\). Since \(\sone \to \aff(\sone)\) is the affinization morphism, the map
  \[
    R\Gamma(\aff(\sone), f^*L\Omega^1_{X/A}) \to R\Gamma(\sone, f^*L\Omega^1_{X/A})
  \]
  is an equivalence.
\end{proof}

Theorem \ref{formal and unipotent} shows that the map \(\hat{\mathcal{L}}X \to \mathcal{L}X\) factors through the unipotent loop space as \(\hat{\mathcal{L}}X \to \mathcal{L}^uX \to \mathcal{L}X\).

\subsection{Antieau-Bhatt-Mathew's Hochschild homology of syntomic stacks}

In \cite{ABM21}, Antieau, Bhatt, and Mathew define a variant of Hochschild homology \(\HH^{ABM}(\cX/k)\) of a syntomic stack \(\cX/k\) by descent in the syntomic topology, and stated that it computes a completion of Hochschild homology \cite[Warning 2.9]{ABM21}. For example, \(\HH^{ABM}_0(B\mathbb{G}_m/k) = k\series{t-1}\).

\begin{theorem}\label{theorem: abm is formal}
  Let \(A\) be a Noetherian commutative ring.
  Let \(X/A\) be a syntomic stack. Then
  \(\HH^{ABM}(X/A) \simeq R\Gamma(\hat{\mathcal{L}}\cX, \mathcal O_{\hat{\mathcal{L}}\cX})\).
\end{theorem}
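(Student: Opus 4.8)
The plan is to show that both $\HH^{ABM}(-/A)$ and the functor $X \mapsto R\Gamma(\hat{\mathcal L}X, \OO_{\hat{\mathcal L}X})$ are sheaves for the syntomic topology on syntomic stacks over $A$, and that the two agree on affine schemes. Since every syntomic stack is the geometric realization of the Čech nerve of a syntomic cover by affine schemes, these two facts force the functors to coincide. That $\HH^{ABM}(-/A)$ is a syntomic sheaf which restricts to $\HH(R/A)$ on $\Spec R$ is built into its construction in \cite{ABM21}: Hochschild homology of rings satisfies fppf (hence syntomic) descent on affine schemes, and $\HH^{ABM}$ is by definition its syntomic sheafification \cite[Warning 2.9]{ABM21}. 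So the real work is to establish the analogous two statements for the formal loop side.

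First I would treat the affine case. If $X = \Spec R$, then $\mathcal L\Spec R = \Spec \HH(R/A)$, and because $\pi_0 \HH(R/A) = \HH_0(R/A) = R$, the derived affine schemes $\mathcal L\Spec R$ and $\Spec R$ have the same reduction; since the infinitesimal prestack of an affine scheme depends only on its reduction, $(\mathcal L\Spec R)_{inf} = (\Spec R)_{inf}$. Hence by Definition \ref{definition: formal completion}
\[ \hat{\mathcal L}\Spec R \;=\; \mathcal L\Spec R \times_{(\mathcal L\Spec R)_{inf}} (\Spec R)_{inf} \;\simeq\; \mathcal L\Spec R; \]
concretely, $\mathcal L\Spec R$ is a derived infinitesimal thickening of its own reduction, so completing it along the constant loops changes nothing. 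Therefore $R\Gamma(\hat{\mathcal L}\Spec R, \OO) \simeq R\Gamma(\mathcal L\Spec R, \OO) = \HH(R/A) = \HH^{ABM}(\Spec R/A)$.

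The main obstacle is syntomic descent for $X \mapsto R\Gamma(\hat{\mathcal L}X, \OO_{\hat{\mathcal L}X})$; unlike $\mathcal L X$, the functor $\hat{\mathcal L}(-)$ is "formally local". The point is that completing $\mathcal L X = X \times_{X \times_A X} X$ along the constant loops only records the formal neighborhood of the diagonal $X \to X \times_A X$, which is controlled by the cotangent complex $L_{X/A}$ (this is the content of \cite[§6.3]{BN12}; cf.\ also Lemma \ref{lemma: completion and cotangent complex}). Since $X/A$ is syntomic over a noetherian base it is locally almost of finite type and has a perfect cotangent complex, so Theorem \ref{formal and unipotent} applies and $\hat{\mathcal L}X \simeq \hat{\mathcal L^u}X$, the formal completion of $\Mapsu(B\Ghat_m^\vee, X)$ along the constant maps. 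As the cotangent complex, and hence the formal completion of the diagonal, satisfies flat — in particular syntomic — descent, so does $X \mapsto \hat{\mathcal L}X$, and taking global functions, so does $X \mapsto R\Gamma(\hat{\mathcal L}X, \OO)$. Concretely I would choose a syntomic cover $U \to X$ with $U$ a disjoint union of affines, form its Čech nerve $U^\bullet$ (each $U^n$ affine), and use Lemma \ref{lemma: completion and cotangent complex} — whose noetherian and finite-type hypotheses are exactly what the assumptions on $A$ and $X$ supply — together with the formal-locality above to obtain
\[ R\Gamma(\hat{\mathcal L}X, \OO) \;\simeq\; \lim_{[n]\in\Delta} R\Gamma(\hat{\mathcal L}U^n, \OO) \;=\; \lim_{[n]\in\Delta} \HH(\OO(U^n)/A). \]
By the affine case this common limit also computes $\HH^{ABM}(X/A)$, which gives the claim. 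The genuinely delicate step is this descent: because $\mathcal L(-)$ itself is not a syntomic sheaf (Ben-Zvi–Nadler's non-locality of the loop space), one cannot avoid passing through the formal completion, and it is precisely here that Theorem \ref{formal and unipotent} and Lemma \ref{lemma: completion and cotangent complex} — and hence the finiteness hypotheses — are needed.
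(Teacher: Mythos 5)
Your overall plan matches the paper's: reduce to the affine case plus syntomic descent for $X \mapsto R\Gamma(\hat{\mathcal L}X, \OO)$. The affine case you do carefully and correctly — noting that $\pi_0\HH(R/A) = R$ so $\mathcal L\Spec R$ and $\Spec R$ share a reduction, hence the formal completion is trivial — and this is more detail than the paper records.

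The gap is in the descent step, which is where the real content lives. You argue: ``the cotangent complex satisfies flat descent, hence so does the formal completion of the diagonal, hence so does $\hat{\mathcal L}X$, hence so does $R\Gamma(\hat{\mathcal L}X, \OO)$.'' None of these implications is a direct deduction. First, ``$\hat{\mathcal L}$ satisfies descent'' as a prestack-valued functor is not a statement you can get from descent of the cotangent complex alone, and in any case the values $\hat{\mathcal L}U^n$ live over different bases, so it is unclear what cosimplicial limit you even intend. Second, even if one could make sense of a prestack-level descent statement, $R\Gamma(-,\OO)$ of a cosimplicial limit of prestacks is not in general the limit of the global sections; passing from descent on stacks to descent on global functions requires an argument, and that argument is precisely what is missing. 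Your invocation of Theorem \ref{formal and unipotent} and Lemma \ref{lemma: completion and cotangent complex} is also not put to work: those show that $\hat{\mathcal L}X \simeq \hat{\mathcal L}^uX$, which is not what the descent step needs.

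The paper's actual mechanism is the HKR filtration: because $\hat{\mathcal L}X$ is the formal completion along constant loops, $R\Gamma(\hat{\mathcal L}X, \OO)$ is \emph{complete} with respect to the HKR filtration, whose associated graded pieces are $\Sym^i(L\Omega^1_{X/A}[1])$. These satisfy fpqc descent by a nontrivial theorem of Bhatt--Morrow--Scholze \cite[Theorem 3.1]{BMS19}, and a complete filtered object whose graded pieces are sheaves is itself a sheaf. Without the completeness (which is exactly what the formal completion buys) and without BMS's theorem (which is not a formality about the cotangent complex but a genuine descent result for its derived exterior powers), the argument does not go through. Your intuition that ``the formal completion only sees the cotangent complex'' is the right heuristic, but it needs to be cashed out via the filtration.
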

\begin{proof}
  By definition, \(\HH^{ABM}(-/A)\) is the sheafification of \(X \mapsto R\Gamma(\mathcal{L}X,\OO_{\mathcal{L}X})\) on affine schemes \(X/A\). If \(X\) is affine, then the loop space and formal loop space on \(X\) agree, so it suffices to show that \(X \mapsto R\Gamma(\hat{\mathcal{L}}X, \OO_{\hat{\mathcal{L}}X})\) is a syntomic sheaf. 
  In fact, we show \(X \mapsto R\Gamma(\hat{\mathcal{L}}X, \OO_{\hat{\mathcal{L}}X})\) satisfies fpqc descent.
  Since \(\hat{\mathcal{L}}X\) is the completion of the loop space at the constant loops, \(R\Gamma(\hat{\mathcal{L}}X,\OO_{\hat{\mathcal{L}}X})\) is complete with respect to the HKR filtration. The associated graded of the HKR filtration is the symmetric algebra on \(L\Omega^1[1]\), and \(\Sym^i(L\Omega^1[1])\) satisfies fpqc descent by \cite[Theorem 3.1]{BMS19}.
  Thus \(X \mapsto R\Gamma(\hat{\mathcal{L}}X, \OO_{\hat{\mathcal{L}}X})\) satisfies fpqc descent. 
\end{proof}

%\section{References}
\printbibliography

\end{document}